\numberwithin{equation}{section}
\newtheorem{thm}{Theorem}[section]
\newtheorem{lemma}[thm]{Lemma}
\newtheorem{cor}[thm]{Corollary}
\theoremstyle{definition}
\newtheorem{defn}[thm]{Definition}
\newtheorem{remark}[thm]{Remark}
\newtheorem{prop}[thm]{Proposition}
\title[Curvewise characterizations and a Sobolev Differential]{Curvewise characterizations of minimal upper gradients and the construction of a Sobolev differential}
\author{Sylvester Eriksson-Bique}
\author{Elefterios Soultanis}
\address{Sylvester Eriksson-Bique\\
Research Unit of Mathematical Sciences \\
P.O.Box 3000\\
FI-90014 Oulu}
\email{\tt sylvester.eriksson-bique@oulu.fi}
\address{Elefterios Soultanis\\
Department of Mathematics\\
University of Fribourg, 1700 Fribourg, Chemin du Musee 23, Switzerland}
\email{\tt elefterios.soultanis@gmail.com}
\def\Xint#1{\mathchoice
	{\XXint\displaystyle\textstyle{#1}}%
	{\XXint\textstyle\scriptstyle{#1}}%
	{\XXint\scriptstyle\scriptscriptstyle{#1}}%
	{\XXint\scriptscriptstyle\scriptscriptstyle{#1}}%
	\!\int}
\def\XXint#1#2#3{{\setbox0=\hbox{$#1{#2#3}{\int}$}
		\vcenter{\hbox{$#2#3$}}\kern-.5\wd0}}
\def\dashint{\Xint-}
\newcommand{\N}{\ensuremath{\mathbb{N}}}
\newcommand{\R}{\ensuremath{\mathbb{R}}}
\newcommand{\Q}{\ensuremath{\mathbb{Q}}}
\newcommand{\LIP}{\ensuremath{\mathrm{LIP}}}
\newcommand{\diam}{\ensuremath{\ \mathrm{diam}}}
\newcommand{\Lip}{\ensuremath{\mathrm{Lip}}}
\newcommand{\lip}{\ensuremath{\mathrm{lip}}}
\newcommand{\defeq}{\mathrel{\mathop:}=}
\newcommand{\Diff}{\mathrm{Diff}}
\newcommand{\len}{\ensuremath{\mathrm{Len}}}
\newcommand{\Mod}{\ensuremath{\mathrm{Mod}}}
\newcommand{\eps}{\varepsilon}
\newcommand{\ud}{\mathrm{d}}
\newcommand{\Ne}[2]{N^{1,#1}(#2)}
\newcommand{\inv}{^{-1}}
\newcommand{\esssup}{{\rm ess \,\, sup}}
\newcommand{\essinf}{{\rm ess \,\, inf}}
\newcommand{\spt}{\mathrm{spt}}
\begin{document}

\begin{abstract}
	We represent minimal upper gradients of  Newtonian functions, in the range $1\le p<\infty$, by maximal directional derivatives along ``generic'' curves passing through a given point, using plan-modulus duality and disintegration techniques. As an application we introduce the notion of $p$-weak charts and prove that every Newtonian function admits a differential with respect to such charts, yielding a linear approximation along $p$-almost every curve. The differential can be computed curvewise, is linear, and satisfies the usual Leibniz and chain rules.
	
	The arising $p$-weak differentiable structure exists for spaces with finite Hausdorff dimension and agrees with Cheeger's structure in the presence of a Poincar\'e inequality. It is moreover compatible with, and gives a geometric interpretation of, Gigli's abstract differentiable structure, whenever it exists. The $p$-weak charts give rise to a finite dimensional $p$-weak cotangent bundle and pointwise norm, which recovers the minimal upper gradient of Newtonian functions and can be computed by a maximization process over generic curves. As a result we obtain new proofs of reflexivity and density of Lipschitz functions in Newtonian spaces, as well as a characterization of infinitesimal  Hilbertianity in terms of the pointwise norm.

\end{abstract}
\maketitle
\tableofcontents

\section{Introduction}
\subsection{Overview} Minimal weak upper gradients of Sobolev type functions on metric measure spaces were first introduced by Cheeger \cite{che99}, building on the notion of upper gradients from \cite{hei98}. Shanmugalingam \cite{sha00} developed \emph{Newtonian spaces} $\Ne pX$ using the modulus perspective of \cite{hei98}, and proved that they coincide with the Sobolev space defined by Cheeger up to modification of its elements on a set of measure zero. Further notions of Sobolev spaces, based on test plans, were developed by Ambrosio--Gigli--Savare \cite{AGS14}, with a corresponding notion of minimal gradient. Earlier, Haj\l asz \cite{haj96} had introduced a Sobolev space whose associated minimal gradient however lacks suitable locality properties. While the various Sobolev spaces (with the exception of Haj\l asz's definition) are equivalent for generic metric measure spaces, Newtonian spaces consist of representatives which are absolutely continuous along generic curves, a property central to the results in this paper.

The minimal $p$-weak upper gradient $g_f\in L^p(X)$ of a Newtonian function $f\in N^{1,p}(X)$ on a metric measure space $X$ is a Borel function characterized (up to a null-set) as the minimal function satisfying 
\begin{equation}\label{eq:ug}
|(f\circ\gamma)'_t|\le g_f(\gamma_t)|\gamma_t'| \quad\textrm{ for a.e. }t\in I
\end{equation}
for all absolutely continuous $\gamma:I\to X$ outside a curve family of zero $p$-modulus. Here $|\gamma_t'|$ denotes the metric derivative of $\gamma$ for a.e. $t$, see Section \ref{sec:prelims}. When $X=\R^n$ and $f\in C^\infty_c(\R^n)$, $g_f$ is given by $g_f=\|\nabla f\|$; in this case, for each $x\in X$, there exists a (smooth) gradient curve $\gamma:(-\eps,\eps)\to X$ with $\gamma_0=x$, satisfying
\begin{align}\label{eq:revug}
(f\circ\gamma)_0'=g_f(x)|\gamma_0'|.
\end{align}

In general however, despite the minimality of $g_f$, the equality in \eqref{eq:revug} is not always attained. For example the fat 
Sierpi\'nski carpet (with the Hausdorff 2-measure and Euclidean metric) constructed in \cite{mac13} with a sequence in $\ell^2\setminus\ell^1$, as pointed out in the introduction of \cite{mac13}, gives zero $p$-modulus ($p>1$) to the family of curves parallel to the $x$-axis, and thus to the family of gradient curves of the function $f(x,y)=x$. 
We remark that the example above is measure doubling and supports a Poincar\'e inequality; in this context an approximate form of \eqref{eq:revug} for Lipschitz functions was proven in \cite[Theorem 4.2]{cheegerkleiner}.

Towards a positive answer for generic spaces, an ``integral formulation'' of \eqref{eq:revug} given by a result of Gigli \cite[Theorem 3.14]{gig15} states that, when $p>1$ and $f\in \Ne pX$, there exist probability measures $\bm\eta$ on $C(I;X)$ (known as test plans representing the gradient of $f$) such that 
\begin{align*}
\lim_{t\to 0}\int\frac{f(\gamma_t)-f(\gamma_0)}{t}\ud\bm\eta=\lim_{t\to 0}\int\frac 1t\int_0^tg_f(\gamma_s)|\gamma_s'|\ud s\ud\bm\eta.
\end{align*}

In this paper we obtain a ``pointwise'' variant of \eqref{eq:revug} for general metric measure spaces using a combination of plan-modulus duality, developed in \cite{amb13,shanmunsemmes,exnerova2019plans}, and disintegration techniques. Theorem \ref{thm:curvewise-p=1} below expresses the minimal weak upper gradient of a Newtonian function as the supremum of directional derivatives along generic curves passing through a given point. Here, it is crucial to use Newtonian functions, which are absolutely continuous along almost every curve.

This curvewise characterization of minimal upper gradients yields the existence of an abundance of curves in a given region of the space, provided the region supports non-trivial Newtonian functions. The idea of constructing an abundance of curves goes back to Semmes \cite{sem96} in the presence of a Poincar\'e inequality. Under this assumption Cheeger showed that $g_f=\Lip f$, where $\Lip f$ denotes the pointwise Lipschitz constant of a Lipschitz function $f$.  Note that inequality $\Lip f\le g$ for \emph{continuous} upper gradients $g$ of a Lipschitz function $f$ on a geodesic space is a direct, but central, observation made in \cite[p. 432--433]{che99}. 

The work of Cheeger lead to many developments, including the paper \cite{cheegerkleiner} pioneering the idea of using directional derivatives along curves (and the early version of Theorem \ref{thm:curvewise-p=1} in \cite[Theorem 4.2]{cheegerkleiner}) as well as the development and detailed analysis of \emph{Lipschitz differentiability spaces}, cf. \cite{keith04,bate12diff,bate2013differentiability,CKS,sch16,sch16b}. In the latter, curves are replaced by \emph{curve fragments} whose abundance is expressed using \emph{Alberti representations}. Alberti representations are similar to \emph{plans} used in this paper. The connection between such representations and the ideas in \cite{cheegerkleiner} was first observed by Preiss, see \cite[p.2]{bate12diff}, and can be used to prove the self-improvement of the $\Lip-\lip$ inequality to the $\Lip-\lip$ equality, cf. \cite{bate12diff,sch16b,CKS}.

Similarly the abundance of curves, obtained here using duality, yields geometric information on \emph{Sobolev} functions on \emph{general} metric measure spaces. (Indeed, duality, in the disguise of a minimax principle, was previously used to find Alberti representations in Lipschitz differentiability spaces, see \cite[Theorem 5.1]{bate12diff} which uses \cite[Lemma 9.4.3]{rud}.) As an important first application, we use curvewise directional derivatives to define \emph{$p$-weak charts} and a differential for Newtonian functions with respect to such charts. The arising \emph{$p$-weak differentiable structure}, i.e. a covering by $p$-weak charts, exists for any Lipschitz differentiability space but also in more general situations, see  Proposition \ref{prop:exists-p-weak-diff-str}. With the aid of Theorem \ref{thm:curvewise-p=1} we adapt Cheeger's construction to produce a measurable $L^\infty$-bundle, called the \emph{$p$-weak cotangent bundle}, over spaces admitting a $p$-weak differentiable structure; differentials of Newtonian functions are sections over this bundle. While the Cheeger differential $\ud_C f$ yields a linearization of a Lipschitz function $f$, our $p$-weak differential $\ud f$ is given by a linearization along $p$-almost every curve, and the pointwise norm of $\ud f$ recovers the minimal weak upper gradient, cf. Theorem \ref{thm:differentiability}.

This definition of a weak differentiable structure seems to be the natural extension of Cheeger's seminal work in \cite{che99} to settings without a Poincar\'e inequality and yields a "partial differentiable structure", which has been the aim of many authors previously, cf. \cite{luvcic2020characterisation,almar16,sch16b, CKS}. Namely, the $p$-weak cotangent bundle measures and makes precise the set of accessible directions (for positive modulus) in the space. By constructing the differential using directional derivatives along curves, we give it a concrete geometric meaning. A sequence of recent work has sought such concrete descriptions, see e.g. \cite{tet2020,luvcic2020characterisation}.  Our approach yields a new unification of the concrete and abstract cotangent modules of Cheeger \cite{che99} and Gigli \cite{gig18}, respectively; the $p$-weak cotangent bundle is compatible with Gigli's cotangent module when the latter is locally finitely generated, and with Cheeger's cotangent bundle when the space satisfies a Poincar\'e inequality, see Theorems \ref{thm:gigli} and \ref{thm:PI}. 

The geometric approach in this paper has many natural applications. We mention here the \emph{tensorization of Cheeger energy},  pursued in \cite{ APS14, AGS14,luvcic2020characterisation}, and the identification of abstract tangent bundles with geometric tangent cones, cf. \cite{almar16, luvcic2020characterisation}. Our methods give tools to generalize and refine the results mentioned above, and moreover enable a blow-up analysis to study analogues of \emph{generalized linearity} considered for example in \cite{che99, CKS}. Indeed, blow-ups of plans that define the pointwise norm on a $p$-weak chart (see Lemmas \ref{lem:canonicalplan}, \ref{lem:properties} and \ref{lem:representative}) along a sequence of re-scaled spaces yield curves in the limiting space along which limiting maps of rescaled Newtonian maps behave linearly. In this context we highlight \cite{sch16b}, which gives a similar geometric and blow-up analysis in the context of abstract Weaver derivations. We leave the detailed exploration and development of these ideas for future work.

\subsection{Curvewise characterization of minimal upper gradients}

Throughout the paper, we fix a \emph{metric measure space} $X=(X,d,\mu)$, that is, a complete separable metric space $(X,d)$ together with a Radon measure $\mu$ which is finite on bounded sets. A \emph{plan} is a finite measure $\bm\eta$ on $C(I;X)$ that is concentrated on the set $AC(I;X)$ of absolutely continuous curves. The  natural evaluation map $e:C(I;X)\times I\to X,\ (\gamma,t)\mapsto\gamma_t$ gives rise to the \emph{barycenter} $\ud\bm\eta^\#:=e_\ast(|\gamma_t'|\ud t\ud\bm\eta)$ of $\bm\eta$. If $\ud\bm\eta^\#=\rho\ud\mu$ for some $\rho\in L^q(\mu)$, we say that $\bm\eta$ is a $q$-plan. (Not to be confused with $q$-test plans, see e.g. Section \ref{sec:prelims} or \cite{amb13}.) Every finite measure $\bm\pi$ on $C(I;X)\times I$ admits a disintegration with respect to $e$: for $e_\ast\bm\pi$-almost every $x\in X$, there exists a (unique) measure $\bm\pi_x\in \mathcal P(C(I;X)\times I)$, concentrated on $e\inv(x)$, such that the collection $\{\bm\pi_x\}$ satisfies

\[
\bm\pi(B)=\int \bm\pi_x(B)\ud(e_*\bm\pi)(x)
\]
for all Borel sets $B\subset C(I;X)\times I$. We refer to \cite{AGS08,bogachev07} for more details.

We use these notions to define a ``generic curve'': if $\bm\eta$ is a $q$-plan on $X$ and $\{\bm\pi_x\}$ the disintegration of $\ud\bm\pi:=|\gamma_t'|\ud t\ud\bm\eta$ with respect to $e$, then $\bm\pi_x$-a.e.-curve passes through $x$, for $e_*\bm\pi$-a.e. $x\in X$. In the forthcoming discussion, we omit the reference to $e$ in the disintegration. We now formulate our first result, in which the equality in \eqref{eq:revug} is obtained as an essential supremum with respect to the disintegration for almost every point. In the statement below we denote
\[
\Diff(f)=\{ (\gamma,t)\in AC(I;X)\times I: f\circ\gamma\in AC(I;\R), (f\circ\gamma)_t'\textrm{ and }|\gamma_t'|>0 \textrm{ exist} \}
\]
for a $\mu$-measurable function $f:X\to [-\infty,\infty]$.

\begin{thm}\label{thm:curvewise-p=1}
	Let $1\le p<\infty$, and let $1<q\le \infty$ satisfy $1/p+1/q=1$. Suppose $f\in N^{1,p}(X)$, $g_f$ is a Borel representative of the minimal $p$-weak upper gradient of $f$, and $D:=\{ g_f>0 \}$. There exists a $q$-plan $\bm\eta$ with $\mu|_D \ll \bm\eta^\#$ so that the disintegration $\{\bm\pi_x\}$ of $\ud\bm\pi:=|\gamma_t'|\ud t\ud\bm\eta$ is concentrated on $ e\inv(x)\cap \Diff(f)$ and
		\begin{align}\label{eq:almost-attained-p=1}
		g_f(x)= \left\|\frac{(f\circ \gamma)'_t}{|\gamma'_t|}\right\|_{L^\infty(\bm\pi_x)}
		\end{align}
	for $\mu$-almost every $x\in D$.
\end{thm}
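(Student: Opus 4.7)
My plan is to construct the plan $\bm\eta$ by combining a duality-derived family of local plans, and then verify the two inequalities in \eqref{eq:almost-attained-p=1} separately. The upper bound
\[
\left\|\frac{(f\circ\gamma)_t'}{|\gamma_t'|}\right\|_{L^\infty(\bm\pi_x)} \le g_f(x)
\]
holds for \emph{every} $q$-plan $\bm\eta$: by plan-modulus duality \cite{amb13,shanmunsemmes,exnerova2019plans}, the curve family along which \eqref{eq:ug} fails is $\bm\eta$-null, $f\circ\gamma$ is absolutely continuous on $\bm\eta$-a.e.\ curve by the Newtonian property, and disintegrating $\bm\pi=|\gamma_t'|\,\ud t\,\ud\bm\eta$ along $e$ localizes the inequality to each $\bm\pi_x$.

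The essential content is the reverse inequality together with the absolute continuity $\mu|_D \ll \bm\eta^\#$. As a local ingredient, for every Borel $E\subset D$ of finite positive measure on which $g_f$ is bounded away from $0$ and $\infty$, and each $\eps\in(0,1)$, I would produce a $q$-plan $\bm\eta_{E,\eps}$ with barycenter charging $E$ whose curves satisfy $|(f\circ\gamma)_t'|\ge(1-\eps)g_f(\gamma_t)|\gamma_t'|$ on a $\bm\pi_{E,\eps}$-set projecting onto most of $E$. Existence follows by contradiction from the minimality of $g_f$: if no such plan existed, then $|(f\circ\gamma)_t'|\le(1-\eps\chi_E)g_f(\gamma_t)|\gamma_t'|$ would hold $\bm\eta$-a.e.\ for every $q$-plan, hence outside a curve family of zero $p$-modulus by the converse direction of plan-modulus duality, making $(1-\eps\chi_E)g_f$ a strictly smaller $p$-weak upper gradient of $f$. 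Replacing $f$ by $-f$ (or time-reversing curves) handles the signed directional derivative in \eqref{eq:almost-attained-p=1}.

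I would then assemble a single plan $\bm\eta=\sum_{n,k}c_{n,k}\,\bm\eta_{E_n,\eps_k}$ via a countable convex combination indexed by a family $\{E_n\}$ covering $D$ (refined so as to approximate every Borel subset of $D$ from inside) and scales $\eps_k\downarrow 0$. The weights $c_{n,k}>0$ must be small enough for $\bm\eta$ to remain a $q$-plan (using $L^q$-subadditivity of barycenters) and large enough that each $\bm\eta_{E_n,\eps_k}^\#$ stays absolutely continuous with respect to $\bm\eta^\#$; this simultaneously delivers $\mu|_D\ll\bm\eta^\#$.

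The step I expect to be the main obstacle is the transfer from the plan-level lower bound on each component $\bm\eta_{E_n,\eps_k}$ to the \emph{pointwise} $L^\infty$-identity with respect to the disintegration $\bm\pi_x$, for $\mu$-a.e.\ $x\in D$. One has to show that for $\mu$-a.e.\ $x\in E_n$ the contribution of $\bm\pi_{E_n,\eps_k}$ to $\bm\pi_x$ is nontrivial, so that its presence forces $\|(f\circ\gamma)_t'/|\gamma_t'|\|_{L^\infty(\bm\pi_x)}\ge(1-\eps_k)g_f(x)$. I expect this to follow from a density/Lebesgue-point argument on the refined family $\{E_n\}$, combined with the upper bound already established, letting $\eps_k\downarrow 0$ along a diagonal sequence.
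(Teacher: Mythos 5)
Your upper bound and the overall contradiction scheme are sound, and for $p>1$ your route is essentially a contrapositive rearrangement of the paper's argument (the paper defines $g_{\bm\eta}(x)=\|(f\circ\gamma)_t'/|\gamma_t'|\|_{L^\infty(\bm\pi_x)}$, takes the essential supremum over all $q$-plans, realizes it by one countable convex combination, and shows by contradiction that it equals $g_f$). However, there is a genuine gap at the heart of your reverse-inequality step, and it hits exactly the case the theorem is advertised to cover, $p=1$. Your contradiction argument needs the implication ``if a curvewise property holds $\bm\eta$-a.e.\ for \emph{every} $q$-plan, then it holds outside a family of zero $p$-modulus,'' i.e.\ the converse direction of plan--modulus duality. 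As Remark \ref{rmk:modnull} records, this converse is available for $p\in(1,\infty)$ but is precisely the subtle point when $p=1$ ($q=\infty$); invoking it as a black box is not justified there. The paper's Lemma \ref{lem:lessthangradient} exists to do the work your black box skips: starting from the positive-modulus family on which a candidate $g<g_f$ fails the upper-gradient inequality, it makes $g$ lower semicontinuous (Vitali--Carath\'eodory), uses Egorov compacts $K_n$, passes to Lipschitz subcurves with controlled length, diameter, endpoints in $K_n$, and controlled length outside the $K_n$, proves this subfamily is compact (Lemma \ref{lem:compact}), and only then applies duality in the \emph{safe} direction — Proposition \ref{prop:duality} (Sion's minimax), which produces a $q$-plan on a \emph{compact} positive-modulus family and is valid also for $p=1$. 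Without some such compactness reduction your proposal proves the theorem only for $p>1$.

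A secondary, smaller point: the step you flag as the main obstacle — transferring a plan-level lower bound on each $\bm\eta_{E_n,\eps_k}$ to the pointwise $L^\infty(\bm\pi_x)$ identity via Lebesgue-density arguments — can be avoided entirely. The paper instead runs the contradiction with the \emph{integrated} endpoint inequality $|f(\gamma_1)-f(\gamma_0)|>\int_\gamma g\,\ud s$ and the notion of minimal $\bm\eta$-upper gradient (Lemma \ref{lem:uppergradient}): if the essential supremum $|D_\pi f|$ over all plans were smaller than $g_f$ on a positive-measure set, Lemma \ref{lem:lessthangradient} applied with $g=|D_\pi f|$ yields a plan whose curves violate the upper-gradient inequality for $|D_\pi f|$, contradicting that $g_{\bm\eta'}\le|D_\pi f|$ is an $\bm\eta'$-upper gradient; the absolute continuity $\mu|_D\ll\bm\eta^\#$ then falls out a posteriori from the equality $g_{\bm\eta}=g_f>0$ on $D$, rather than being engineered through the choice of weights. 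If you keep your local-plans architecture, you would still need to make the ``projecting onto most of $E$'' quantifier and the density argument precise; adopting the esssup formulation sidesteps both.
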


\begin{remark}\label{rmk:local} The statement also holds when $f\in N^{1,p}_{loc}(X)$, that is, $f|_{B(x,r)}\in N^{1,p}(B(x,r))$ for each ball $B(x,r) \subset X$. Indeed, a localization argument, replacing $f$ by $f\eta_n$ with $\eta_n$ a sequence of Lipschitz functions with bounded support and $\eta_n|_{B(x_0,n-1)}=1$ for some $x_0$, reduces the statement for $f\in N^{1,p}_{loc}(X)$ to Theorem \ref{thm:curvewise-p=1}. Similarly, other notions in this paper, such as charts, could use a local Sobolev space, but to avoid technicalities we do not discuss this point further.  A reader can see Lemma \ref{lem:difflip} and its proof for a prototypical form of such a localization argument.
\end{remark}

In particular, we have the following corollary.
\begin{cor}\label{cor:integral}
Let $p,q$ and $f, g_f, D$ be as in Theorem \ref{thm:curvewise-p=1}. There exists a $q$-plan $\bm\eta$ and, for every $\varepsilon>0$, a Borel set $B=B_\varepsilon\subset \Diff(f)$ with the following property. If $\{\bm\pi_x\}$ denotes the disintegration of $\ud\bm\pi:=|\gamma_t'|\ud t\ud\bm\eta$,  then $\bm\pi_x(B)>0$ and
\begin{align*}
(1-\varepsilon)g_f(x)|\gamma_t'|\le (f\circ\gamma)_t'\le g_f(x)|\gamma_t'| \quad\textrm{ for every }(\gamma,t)\in e\inv(x)\cap B
\end{align*}
for $\mu$-a.e. $x\in D$.
\end{cor}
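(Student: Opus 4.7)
The plan is to take $\bm\eta$ to be the $q$-plan provided by Theorem \ref{thm:curvewise-p=1} and, for each $\varepsilon>0$, to define
\begin{align*}
B_\varepsilon := \left\{ (\gamma,t)\in\Diff(f) : (1-\varepsilon)g_f(\gamma_t)|\gamma_t'|\le (f\circ\gamma)_t'\le g_f(\gamma_t)|\gamma_t'| \right\}.
\end{align*}
This set is Borel because $g_f$ is Borel and the maps $(\gamma,t)\mapsto\gamma_t$, $|\gamma_t'|$, $(f\circ\gamma)_t'$ are Borel on $\Diff(f)$. For any $(\gamma,t)\in e\inv(x)\cap B_\varepsilon$ one has $\gamma_t=x$, so the defining inequalities become exactly the bounds claimed in the corollary. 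It therefore suffices to verify $\bm\pi_x(B_\varepsilon)>0$ for $\mu$-a.e.\ $x\in D$.

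The two inequalities in the definition of $B_\varepsilon$ have different origins, and I would handle them separately. The upper bound relies on the fact that any $q$-plan does not charge curve families of zero $p$-modulus, a standard consequence of the plan--modulus duality developed in \cite{amb13,shanmunsemmes,exnerova2019plans}. Hence the upper gradient inequality \eqref{eq:ug} applies to $\bm\eta$-a.e.\ curve, and Fubini together with the disintegration of $\bm\pi=|\gamma_t'|\ud t\ud\bm\eta$ with respect to $e$ gives $(f\circ\gamma)_t'\le g_f(\gamma_t)|\gamma_t'|$ for $\bm\pi_x$-a.e.\ $(\gamma,t)$, and $e_\ast\bm\pi$-a.e.\ $x$. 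Since $\mu|_D\ll \bm\eta^\#=e_\ast\bm\pi$, this holds for $\mu$-a.e.\ $x\in D$.

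For the lower bound, I would invoke the essential supremum identity \eqref{eq:almost-attained-p=1}. Concentration of $\bm\pi_x$ on $e\inv(x)$ means that $\gamma_t=x$ holds $\bm\pi_x$-a.e., so the identity becomes $\|(f\circ\gamma)_t'/|\gamma_t'|\|_{L^\infty(\bm\pi_x)}=g_f(x)$. Because $g_f(x)>0$ on $D$, the very definition of the $L^\infty$-norm yields
\begin{align*}
\bm\pi_x\left\{(\gamma,t):\frac{(f\circ\gamma)_t'}{|\gamma_t'|}>(1-\varepsilon)g_f(x)\right\}>0.
\end{align*}
Intersecting this set with the $\bm\pi_x$-full-measure set on which the upper bound holds and $\gamma_t=x$ produces a positive-$\bm\pi_x$-measure subset of $e\inv(x)\cap B_\varepsilon$, as required.

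The main obstacle is really a matter of bookkeeping rather than substance: one must combine the several $\mu$-null exceptional sets---coming from the upper gradient inequality along $\bm\eta$-a.e.\ curve, from the disintegration identity, and from the essential supremum identity of Theorem \ref{thm:curvewise-p=1}---into a single $\mu$-null subset of $D$, outside which all three statements hold simultaneously and $\bm\pi_x(B_\varepsilon)>0$ can be asserted.
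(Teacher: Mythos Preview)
Your definition of $B_\varepsilon$ and the treatment of the upper bound are fine, but there is a genuine gap in the lower bound step. The identity \eqref{eq:almost-attained-p=1} is an $L^\infty$-\emph{norm} equality, so what it yields is
\[
\bm\pi_x\Bigl\{(\gamma,t):\ \Bigl|\tfrac{(f\circ\gamma)_t'}{|\gamma_t'|}\Bigr|>(1-\varepsilon)g_f(x)\Bigr\}>0,
\]
not the signed inequality you wrote. A priori the positive $\bm\pi_x$-mass could sit entirely on the set where $(f\circ\gamma)_t'/|\gamma_t'|<-(1-\varepsilon)g_f(x)$, and then $e^{-1}(x)\cap B_\varepsilon$ would be $\bm\pi_x$-null. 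Nothing in Theorem~\ref{thm:curvewise-p=1} rules this out for the plan it produces.

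The paper closes this gap by symmetrizing the plan under path reversal: starting from the plan $\bm\eta'$ of Theorem~\ref{thm:curvewise-p=1}, it sets $\bm\eta:=\bm\eta'+r_\ast\bm\eta'$, where $r$ reverses orientation. Reversal flips the sign of $(f\circ\gamma)_t'$ while leaving $|\gamma_t'|$ and $g_f(\gamma_t)$ unchanged, so any $\bm\pi'_x$-mass on the ``negative'' side is mapped to the ``positive'' side in $r_\ast\bm\eta'$. For the symmetrized plan one can therefore drop the absolute value and conclude $\bm\pi_x(B_\varepsilon)>0$. Your argument goes through once you make this modification; without it the sign issue is a real obstruction.
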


Theorem \ref{thm:curvewise-p=1} notably covers the case $p=1$. In Section \ref{subsec:pgtr1plan} we also prove a variant (Theorem \ref{thm:curvewise}) when $p>1$, using test plans representing a gradient instead of plan-modulus duality.


\subsection{Application: $p$-weak differentiable structure}

Cheeger \cite{che99} showed that PI-spaces (metric measure spaces with a doubling measure supporting some Poincar\'e inequality) admit a countable cover by \emph{Cheeger charts}, also called a \emph{Lipschitz differentiable structure} (see \cite{kei04}). Let $\LIP(X)$ denote the collection of Lipschitz functions on $X$, and let $\LIP_b(X)$ consist those Lipschitz functions with bounded support. 
A Cheeger chart $(U,\varphi)$ of dimension $n$ consists of a Borel set $U$ with $\mu(U)>0$, and a Lipschitz function $\varphi:X\to \R^n$ such that, for every $f\in \LIP(X)$ and $\mu$-a.e. $x\in U$ there exists a unique linear map $\ud_{C,x}f:\R^n\to\R$, called the Cheeger differential of $f$, such that
\begin{align}\label{eq:cheegerdiff}
f(y)-f(x)=\ud_{C,x}f(\varphi(y)-\varphi(x))+o(d(x,y)) \quad \textrm{as}\quad  y\to x.
\end{align}

Not every space admits Lipschitz differentiable structure, as shown by the so called \emph{Rickman's rug} $X:=[0,1]^2$ equipped with the metric $d((x_1,y_1), (x_2,y_2))=|x_1-x_2|+|y_1-y_2|^\alpha$, where $\alpha \in (0,1)$, and $\mu=\mathcal L^2|_X$. Indeed, a Weierstrass-type function in the $y$-variable combined with \cite[Theorem 1.14]{sch16b} would yield non-horizontal rectifiable curves if the space were a differentiability space, contradicting the fact that all rectifiable curves in $X$ are horizontal.

Here, we introduce \emph{$p$-weak differentiable structures}, which exist in much more generality (including Rickman's rug, see the discussion after Definition \ref{def:pweakchart}), adapting Cheeger's construction by substituting \eqref{eq:cheegerdiff} for a weaker curve-wise control. To accomplish this, we replace the pointwise Lipschitz constant by the minimal $p$-weak upper gradient in the definition of ``infinitesimal linear independence'' \eqref{eq:weakcha} and use Theorem \ref{thm:curvewise-p=1} to circumvent the difficulties arising from the fact that the latter is defined only up to a null-set.

In the remainder of the introduction, we use the notation $|Df|_p$ for the minimal $p$-weak upper gradient of $f\in N^{1,p}_{loc}(X)$ and refer to Section \ref{sec:prelims} for more discussion on this notation. Given $p\ge 1$ and $N\in\N$, we say that a Sobolev map $\varphi\in N_{loc}^{1,p}(X;\R^N)$ is \emph{$p$-independent} in $U\subset X$, if
\begin{align}\label{eq:weakcha}
\underset{v\in S^{N-1}}\essinf \, |D(v\cdot\varphi)|_p>0\quad\mu\textrm{-a.e. in }U,
\end{align}
and \emph{$p$-maximal} in $U$, if no \emph{Lipschitz} map into a higher dimensional target is $p$-independent in a positive measure subset of $U$.  Here, we use the essential infimum of an uncountable collection, which agrees $\mu$-a.e. with the pointwise infimum over any countable dense collection of $S^{N-1}$, see Appendix \ref{subsec:esssup}. Note that $p$-maximality does not depend on the particular map $\varphi$ but rather the dimension of its target space.

\begin{defn}\label{def:pweakchart}
	An $N$-dimensional $p$-weak chart $(U,\varphi)$ of $X$ consists of a Borel set $U\subset X$ with positive measure and a Lipschitz function $\varphi:X\to \R^N$ which is $p$-independent and $p$-maximal in $U$. We say that $X$ admits a $p$-weak differentiable structure if it can be covered up to a null set by countably many $p$-weak charts. 
\end{defn}

By convention, zero dimensional $p$-weak charts satisfy $\varphi\equiv 0$ and \eqref{eq:weakcha} is a vacuous condition, while maximality means that $|D f|_p = 0$ $\mu$-a.e on $U$ for every $f\in \LIP_b(X)$ (see also Proposition \ref{prop:zerochart}).  In Section \ref{sec:sobchart} we briefly discuss a lower regularity requirement in Definition \ref{def:pweakchart} and the fact the resulting notion yields essentially the same $p$-weak differentiable structure. We also show that and $N$-dimensional $p$-weak chart $(U,\varphi)$ satisfies $N\le \dim_HU$, where $\dim_HU$ denotes the Hausdorff dimension of $U$, see Proposition \ref{prop:chahausd}. In particular, we have the following theorem. 
	
\begin{thm}
A metric measure space of finite Hausdorff dimension admits a $p$-weak differentiable structure for any $p\ge 1$.
\end{thm}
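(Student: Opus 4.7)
The plan is to reduce the statement to a local existence result for $p$-weak charts and then apply a standard measure-exhaustion argument. The finite Hausdorff dimension enters exactly once but in a crucial way: by Proposition \ref{prop:chahausd}, every $N$-dimensional $p$-weak chart $(U,\varphi)$ satisfies $N\le \dim_H U\le \dim_H X<\infty$, so dimensions of charts take values in the finite set $\{0,1,\dots,\lfloor\dim_HX\rfloor\}$. This is what lets an extremal argument terminate.

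First I would prove the following local claim: \emph{for every Borel set $W\subset X$ with $\mu(W)>0$, there exist a Borel subset $V\subset W$ with $\mu(V)>0$ and a Lipschitz map $\varphi\co X\to\R^N$ such that $(V,\varphi)$ is a $p$-weak chart.} Consider
\[
\mathcal{N}=\{N\ge 0:\exists\text{ Borel }V\subset W,\ \mu(V)>0,\ \exists \varphi\co X\to\R^N\text{ Lipschitz and }p\text{-independent on }V\}.
\]
By convention, $N=0$ belongs to $\mathcal{N}$ (take $V=W$, $\varphi\equiv 0$, where $p$-independence is vacuous), and by the dimension bound $\mathcal{N}\subset\{0,\dots,\lfloor\dim_HX\rfloor\}$. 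Let $M=\max\mathcal{N}$, and fix a corresponding $V$ and $\varphi$. Then $p$-maximality of $(V,\varphi)$ in $V$ is automatic: a Lipschitz map $\psi\co X\to\R^{M+1}$ that were $p$-independent on a positive-measure Borel subset $V'\subset V\subset W$ would place $M+1$ in $\mathcal{N}$, contradicting the choice of $M$.

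Next I would run the exhaustion. Since $\mu$ is Radon and finite on bounded sets, $X=\bigsqcup_k X_k$ with $\mu(X_k)<\infty$; fix $k$. A key observation is that if $(U,\varphi)$ is a $p$-weak chart and $V\subset U$ is Borel with $\mu(V)>0$, then $(V,\varphi)$ is also a $p$-weak chart, since both $p$-independence (a pointwise a.e.\ condition) and $p$-maximality (a condition negating existence on positive-measure subsets) pass to measurable subsets. Let $\mathcal{F}$ be the family of Borel sets $B\subset X_k$ that are the union of countably many pairwise disjoint $p$-weak chart domains contained in $X_k$; by the usual disjointification $U_i\mapsto U_i\setminus\bigcup_{j<i}U_j$, the family $\mathcal{F}$ is closed under countable unions. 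Set $s=\sup_{B\in\mathcal{F}}\mu(B)\le\mu(X_k)<\infty$, choose $B_n\in\mathcal{F}$ with $\mu(B_n)\to s$, and let $B^*=\bigcup_n B_n\in\mathcal{F}$, so $\mu(B^*)=s$. If $\mu(X_k\setminus B^*)>0$, applying the local claim to $W=X_k\setminus B^*$ produces a $p$-weak chart domain $V$ disjoint from $B^*$, yielding $B^*\cup V\in\mathcal{F}$ of measure greater than $s$, a contradiction. Therefore $\mu(X_k\setminus B^*)=0$, and taking the union of the resulting countable chart families over $k$ gives the desired $p$-weak differentiable structure.

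The essential step is the local claim, and its only non-routine ingredient is the finite bound $N\le\dim_HX$ from Proposition \ref{prop:chahausd}; without this bound the integer-valued maximum $M$ could fail to exist, and the construction of a chart on a given positive-measure set would break down. The rest is a textbook exhaustion argument on a $\sigma$-finite space, with the mild care that sub-charts remain charts so that disjointification is legal.
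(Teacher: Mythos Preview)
Your proof is correct and follows essentially the same approach as the paper: the key ingredient is Proposition~\ref{prop:chahausd} to bound chart dimensions by $\dim_H X$, after which a maximal-dimension argument on each positive-measure set produces a local chart, and an exhaustion covers $X$ up to a null set. The paper outsources the exhaustion step to \cite[Proposition~3.1]{kei04} (via Proposition~\ref{prop:exists-p-weak-diff-str}), whereas you have written it out explicitly; the content is the same.
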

We refer to Proposition \ref{prop:exists-p-weak-diff-str} for a more technical statement, which immediately implies the theorem. Next, we give an analogue of the Cheeger differential \eqref{eq:cheegerdiff} using $p$-weak charts.


\begin{defn}\label{def:p-weakdiff}
Given an $N$-dimensional $p$-weak chart $(U,\varphi)$ of $X$, a Newtonian function $f\in N^{1,p}(X)$ has a \emph{$p$-weak differential}  $\ud f:U\to (\R^N)^*$ with respect to $(U,\varphi)$, whose values at $x\in U$ are denoted $\ud_x f$, such that
\begin{align}\label{eq:differential}
f(\gamma_s)-f(\gamma_t) = \ud_{\gamma_t} f(\varphi(\gamma_s)-\varphi(\gamma_t)) + o(|t-s|) \quad \text{a.e. } \quad t\in \gamma\inv(U), \text{ as } s \to t
\end{align}
for $p$-a.e. absolutely continuous curve $\gamma$ in $X$. The map $ \ud f$ is called a \emph{$p$-weak differential} (for $f$).
\end{defn}


If the curve $\gamma$ does not enter $U$, or only spends zero length in the set, then condition \eqref{eq:differential} becomes vacuously satisfied with both sides vanishing. 
The $p$-weak differential is uniquely determined almost everywhere equivalence by \eqref{eq:differential}, cf. Lemma \ref{lem:properties}. Further, it is also local, i.e. if $g\in N^{1,p}(X)$ and $f|_A=g|_A$ on a positive measure subset $A \subset U$, then $\ud f|_A = \ud g|_A$. The differential satisfies natural computation rules, see Propositions \ref{prop:calcrules} and \ref{prop:sectioncalcrules}.

\begin{thm}\label{thm:differentiability} Suppose $p\ge 1$, and $\varphi: X \to \R^N$ is a $p$-weak chart on $U$. Then any $f\in N^{1,p}(X)$ has a $p$-weak differential $\ud f:U\to (\R^N)^*$ with respect to $\varphi$, and the map $f\mapsto \ud f$ is linear.
	
	Moreover, for $\mu$-a.e. $x\in U$, there is a norm $|\cdot|_x$ on $(\R^N)^*$, such that $x\mapsto |\xi|_x$ is Borel for every $\xi\in (\R^N)^*$, and
	$$|\ud f|_x = |Df|_p(x)\quad \mu\textrm{-a.e. }x\in X,$$
	for every $f\in N^{1,p}(X)$.
\end{thm}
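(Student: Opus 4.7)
The plan is to adapt Cheeger's construction of a chart-wise differential, replacing his use of the $\Lip$-$\lip$ inequality with the curvewise characterization of minimal upper gradients supplied by Theorem~\ref{thm:curvewise-p=1}. A central observation is that, by subadditivity and positive homogeneity of the minimal $p$-weak upper gradient, the map $\xi\mapsto|D(\xi\cdot\varphi)|_p(x)$ is a seminorm on $(\R^N)^*$ at each $x$; by $p$-independence of the chart this seminorm is a genuine norm for $\mu$-a.e.\ $x\in U$, and it is this $|\,\cdot\,|_x$ that appears in the statement of the theorem. Borel measurability in $x$ is immediate from that of minimal upper gradients, and separability of $(\R^N)^*$ yields a common Borel representative outside a single null set.

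The differential is defined as the \emph{best approximation}: at each $x\in U$ we seek the $\xi=\ud_x f\in(\R^N)^*$ satisfying $|D(f-\xi\cdot\varphi)|_p(x)=0$. Uniqueness is immediate, since two candidates would differ by an element of zero $|\,\cdot\,|_x$-norm. For \emph{existence} when $f\in\LIP_b(X)$, $p$-maximality of the chart applied to the Lipschitz map $(\varphi,f)\co X\to\R^{N+1}$ supplies at $\mu$-a.e.\ $x\in U$ a direction $(a,t)\in S^{N}$ with $|D(a\cdot\varphi+tf)|_p(x)=0$ (the essential infimum being attained pointwise by compactness of $S^N$ together with the Lipschitz dependence of $(a,t)\mapsto|D(a\cdot\varphi+tf)|_p(x)$ coming from the pointwise seminorm structure). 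The $p$-independence of $\varphi$ then forces $t\neq0$, and normalization yields $\xi=-a/t$. For a general $f\in N^{1,p}(X)$ we cannot invoke density of Lipschitz functions (which is itself a downstream consequence of the present theorem), so we proceed directly via Theorem~\ref{thm:curvewise-p=1}: choose plans realizing $|D(f-q_k\cdot\varphi)|_p$ for a countable dense sequence $\{q_k\}\subset\R^N$, amalgamate them into a single $q$-plan $\bm\eta$, and extract $\ud_x f$ by a closedness/limit argument on the disintegration $\bm\pi_x$. Borel measurability of $x\mapsto\ud_x f$ follows from a countable selection over a dense subset of $(\R^N)^*$.

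The curvewise property~\eqref{eq:differential} is then obtained by combining the curvewise upper-gradient inequality for $f-q_k\cdot\varphi$ (valid along $p$-a.e.\ curve $\gamma$ for every $k$ simultaneously) with the pointwise continuity of $\xi\mapsto|D(f-\xi\cdot\varphi)|_p(x)$: for $p$-a.e.\ $\gamma$ and a.e.\ $t$, letting $q_k\to\ud_{\gamma_t}f$ forces the right-hand side of the upper-gradient inequality to tend to zero while the left-hand side tends to $|((f-\ud_{\gamma_t}f\cdot\varphi)\circ\gamma)'_t|$, which must therefore vanish. Linearity of $f\mapsto\ud f$ follows from upper-gradient subadditivity together with uniqueness. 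Finally, the identity $|\ud f|_x=|Df|_p(x)$ is the pair of triangle inequalities
\[
|Df|_p(x)\le|D(f-\ud_x f\cdot\varphi)|_p(x)+|\ud_x f|_x,\qquad|\ud_x f|_x\le|D(\ud_x f\cdot\varphi-f)|_p(x)+|Df|_p(x),
\]
both collapsing via $|D(f-\ud_x f\cdot\varphi)|_p(x)=0$.

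The main obstacle I anticipate is the existence of $\ud_x f$ for a general Newtonian $f$, since $p$-maximality as formulated here is only assumed against Lipschitz test maps and density of $\LIP(X)$ in $N^{1,p}(X)$ is not yet available. The curvewise route via Theorem~\ref{thm:curvewise-p=1} must be implemented delicately to ensure that $\ud_x f$ is independent of the choice of $\{q_k\}$ and that the minimization on the disintegration $\bm\pi_x$ genuinely attains value zero rather than merely approaching it as an infimum.
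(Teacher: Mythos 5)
Your treatment of the Lipschitz case is essentially the paper's own argument (Lemma \ref{lem:difflip}): augment the chart to $(\varphi,f):X\to\R^{N+1}$, use $p$-maximality to force the essential infimum of directional gradients of the augmented map to vanish, use compactness of the sphere and Lipschitz dependence on the direction to attain it pointwise, and use $p$-independence to normalize the last coordinate; the norm $|\cdot|_x$, uniqueness, and the identity $|\ud f|_x=|Df|_p(x)$ then come out as you describe, modulo the standard care needed to get pointwise-coherent representatives (the paper does this with the plan-based canonical gradient of Lemmas \ref{lem:canonicalplan}--\ref{lem:representative}; your countable-dense-set version is an acceptable substitute).

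The genuine gap is exactly the point you flag and then do not resolve: existence of $\ud_x f$ for a general $f\in N^{1,p}(X)$. Your ``best approximation'' definition requires that $\inf_{\xi\in(\R^N)^*}|D(f-\xi\cdot\varphi)|_p(x)=0$ for $\mu$-a.e.\ $x\in U$, i.e.\ that the \emph{Sobolev} map $(\varphi,f)$ fails to be $p$-independent on every positive-measure subset of $U$. The chart definition only forbids $p$-independent \emph{Lipschitz} competitors, so this is a strong-maximality statement that needs proof, and the proposed remedy --- amalgamating plans realizing $|D(f-q_k\cdot\varphi)|_p$ for a dense sequence $q_k$ and ``extracting $\ud_x f$ by a closedness/limit argument on the disintegration'' --- does not supply it: Theorem \ref{thm:curvewise-p=1} only produces good pointwise representatives of these gradients as essential suprema along curves; it gives no mechanism forcing some $\xi$ to make the gradient vanish, which is precisely the existence assertion being proved. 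Moreover, your reason for avoiding the approximation route is a misreading of the logical structure: the paper does \emph{not} use its own Theorem \ref{thm:lipdense} here. It uses the \emph{energy} density of Lipschitz functions ($f_n\to f$ and $|Df_n|_p\to|Df|_p$ in $L^p$) from the external reference \cite{seb2020}, then equi-integrability of $(\ud f_n)$ in $\Gamma_p(T^*U)$, weak compactness (Dunford--Pettis when $p=1$) and Mazur's lemma, and finally a Fuglede-type limit lemma (Lemma \ref{lem:diff}) to show the limit section is the differential of $f$ and recovers $|Df|_p$; only the stronger norm density of Theorem \ref{thm:lipdense} is downstream of the present theorem. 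Either import that energy-density result (as the paper does), or prove a Sobolev-to-Lipschitz transfer of maximality (cf.\ Proposition \ref{prop:sobtolipchart}, which itself rests on the same density input); as written, the existence step for non-Lipschitz $f$ is unproven.
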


 Whereas Lipschitz functions are differentiable with respect to Cheeger charts, \eqref{eq:weakcha} yields only the curvewise control \eqref{eq:differential}. Indeed, if there are very few or no rectifiable curves, or if the curves only point into certain directions, then the $p$-weak differential vanishes, or measures only these directions, respectively.  For example, given a fat Cantor set $K\subset \R^n$ with $\mathcal L^n(K)>0$,  $X:=(K,d_{Eucl},\mathcal L^n|_K)$ is a Lipschitz differentiability space but the minimal weak upper gradient of every Lipshitz function is zero. 
 On the other hand, Rickman's rug admits non-trivial $p$-weak charts $\varphi(x,y)=x$.  The $p$-weak differential in this case can be identified with the $x$-derivative,  $\ud f \equiv \partial_x f$, and the only curves with positive $p$-modulus are those which are horizontal. These examples demonstrate that $p$-weak differentiable structures might exist for spaces not admitting a Cheeger structure, but the two need not coincide even if both exist.  However, if a Poincar\'e inequality is present, the two structures coincide.

\begin{thm}\label{thm:PI}
	Suppose $X$ is a $p$-PI space for $p\ge 1$. Then any $p$-weak chart $(U,\varphi)$ of $X$ is a Cheeger chart.	
\end{thm}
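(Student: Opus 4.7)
The plan is to leverage the existence of Cheeger's differentiable structure on a $p$-PI space together with Cheeger's identity $g_f=\Lip f$ for Lipschitz $f$, in order to translate the $p$-weak chart axioms into pointwise linear-algebraic statements. By Cheeger's theorem I would first cover $X$ up to a $\mu$-null set by countably many Cheeger charts $(V_i,\psi_i)$ with $\psi_i\co X\to\R^{n_i}$, intersect $U$ with each $V_i$, and reduce to the case $U\subset V$ for a single Cheeger chart $(V,\psi)$ with $\psi\co X\to\R^n$. Each component $\varphi_j$ of $\varphi$ is then Cheeger-differentiable on $V$, so assembling the Cheeger differentials into an $N\times n$ matrix $L(x)$ gives
\[
\varphi(y)-\varphi(x)=L(x)(\psi(y)-\psi(x))+o(d(x,y)), \qquad \ud_{C,x}(v\cdot\varphi)=v^T L(x).
\]

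The main task is then to show $N=n$ and that $L(x)$ is invertible for $\mu$-a.e. $x\in U$. Since $X$ is a $p$-PI space, $g_{v\cdot\varphi}(x)=\Lip(v\cdot\varphi)(x)=|v^T L(x)|_x^*$, where $|\cdot|_x^*$ is the pointwise Cheeger dual norm, uniformly equivalent to the Euclidean norm on $V$. Consequently $p$-independence of $\varphi$ forces the map $v\mapsto v^T L(x)$ to be injective for $\mu$-a.e.\ $x\in U$, giving $N\le n$. In the opposite direction, $\psi\co X\to\R^n$ is itself Lipschitz with $\ud_{C,x}(v\cdot\psi)=v$, so $g_{v\cdot\psi}(x)=|v|_x^*\ge c>0$ on $V$; hence $\psi$ is a Lipschitz $p$-independent map of dimension $n$ in $U$, and $p$-maximality of $\varphi$ yields $n\le N$. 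Combining, $N=n$ and $L(x)$ is an invertible $N\times N$ matrix $\mu$-a.e.\ on $U$.

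The last step is a change-of-basis argument. For any $f\in\LIP(X)$, Cheeger differentiability with respect to $\psi$ gives $f(y)-f(x)=\ud_{C,x}f(\psi(y)-\psi(x))+o(d(x,y))$. At a fixed $x$ where $L(x)$ is invertible, inverting the pointwise relation between $\varphi$ and $\psi$ yields $\psi(y)-\psi(x)=L(x)^{-1}(\varphi(y)-\varphi(x))+o(d(x,y))$. Substituting produces
\[
f(y)-f(x)=\bigl(\ud_{C,x}f\circ L(x)^{-1}\bigr)(\varphi(y)-\varphi(x))+o(d(x,y)),
\]
so $(U,\varphi)$ is a Cheeger chart with Cheeger differential $\ud_{C,x}f\circ L(x)^{-1}\in(\R^N)^*$; uniqueness of the linear map follows from the same invertibility.

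The main obstacle is the passage from the purely Sobolev notions of $p$-independence and $p$-maximality to the linear-algebraic statements about $L(x)$: the whole argument hinges on Cheeger's identity $g_f=\Lip f$ in PI spaces, which both identifies the minimal upper gradient appearing in \eqref{eq:weakcha} with the Cheeger cotangent norm of the Cheeger differential, and certifies $\psi$ itself as a legitimate competitor against which $p$-maximality can be tested. Without this identity, the lower bound $g_{v\cdot\psi}\ge c$ would be unavailable and the $N\ge n$ direction of the dimension count would collapse.
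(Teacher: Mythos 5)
Your argument is correct in substance, but it takes a genuinely different route from the paper. The paper never invokes Cheeger's existence theorem for a Lipschitz differentiable atlas: it works directly with the canonical minimal gradient $\Phi$ of the combined map $(\varphi,f)\in\LIP(X;\R^{N+1})$ and with the $p$-weak differential $\ud f$ already constructed in Lemma \ref{lem:difflip}, and uses the PI identity $\Lip h=|Dh|_p$ only to convert the defining property $\Phi^x(\ud_xf,-1)=0$ into the Cheeger condition \eqref{eq:cheeger-Lip}, i.e. $\Lip(\ud_{C,x}f\circ\varphi-f)(x)=0$, so that the $p$-weak differential is itself exhibited as the (unique) Cheeger differential. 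You instead presuppose a Cheeger atlas $(V,\psi)$, encode $\varphi$ through the transition matrix $L(x)=\ud_{C,x}\varphi$, deduce injectivity of $v\mapsto v^TL(x)$ from $p$-independence via \eqref{eq:weakcha}, deduce $n\le N$ from $p$-maximality tested against $\psi$, and conclude by a change of basis with $L(x)^{-1}$. Both proofs hinge on $\Lip=|D\cdot|_p$ in PI spaces; your route buys additional information (the $p$-weak chart dimension equals the local Cheeger dimension and the transition matrix is a.e.\ invertible, which is close to the bundle comparison in Theorem \ref{thm:bundles}), while the paper's route is shorter, avoids the full strength of Cheeger's differentiation theorem, identifies $\ud f$ with $\ud_Cf$ directly, and adapts to the weaker hypothesis $\Lip f\le\omega_x(|Df|_p)$ noted in Remark \ref{rmk:pweak-to-cheeger}. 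Two points you gloss over are routine but worth flagging: the identities $\Lip(v\cdot\varphi)(x)=|D(v\cdot\varphi)|_p(x)=|v^TL(x)|_{C,x}$ are a.e.\ statements for each fixed $v$, so to compare them with the essential infimum in \eqref{eq:weakcha} you must run them simultaneously over a countable dense set of directions and use continuity in $v$ (as the paper does in its own proof); and the uniform equivalence ``$\ge c>0$'' of the Cheeger norm with the Euclidean norm on $V$ is neither guaranteed without further subdividing the chart nor needed, since a.e.\ pointwise positivity of the norm already gives \eqref{eq:weakcha} for $\psi$.
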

It follows from the discussion after Definition \ref{def:pweakchart} that a $p$-PI space admits $p$-weak charts. In Subsection \ref{sec:intro-gigli-lip}, we obtain a precise statement on the relationship between the $p$-weak and Lipschitz differentiable structure, as well as a characterization of the existence  of $p$-weak differentiable structures in terms of Gigli's cotangent module, cf. \cite{gig18}. Here we mention a noteworthy corollary of the existence of a $p$-weak differentiable structure. 

\begin{thm}\label{thm:lipdense}
	Let $p\ge 1$. If $X$ admits a $p$-weak differential structure, then $\LIP_b(X)$ is norm-dense in $\Ne pX$.
\end{thm}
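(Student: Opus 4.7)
The plan is to combine the general density in energy of Lipschitz functions in $N^{1,p}(X)$ with the linear structure of the $p$-weak differential from Theorem~\ref{thm:differentiability}, upgrading density in energy to norm density via a Mazur-type argument. Density in energy --- valid in any complete metric measure space --- asserts that for every $f\in N^{1,p}(X)$ there exist $f_n\in\LIP_b(X)$ with $f_n\to f$ in $L^p(X)$ and $\|g_{f_n}\|_{L^p}\to\|g_f\|_{L^p}$; this is classical, following e.g.\ from the identification $N^{1,p}(X)=W^{1,p}(X)$ via relaxation.

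Writing $T_p^*X$ for the $p$-weak cotangent bundle assembled from a countable cover of $X$ by $p$-weak charts, Theorem~\ref{thm:differentiability} yields a linear map $d:N^{1,p}(X)\to L^p(T_p^*X)$ with $|\ud f|_x=|Df|_p(x)$ $\mu$-a.e. Hence $\Phi(f):=(f,\ud f)$ is a linear isometric embedding of $N^{1,p}(X)$ into the Banach space $L^p(X)\oplus L^p(T_p^*X)$, and it suffices to show that $\Phi(\LIP_b(X))$ is dense in $\Phi(N^{1,p}(X))$. For $p>1$ the sequence $(f_n,\ud f_n)$ is bounded in this reflexive space and admits a weak subsequential limit $(f,\omega)$. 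The key step is the identification $\omega=\ud f$: testing against $q$-plans and using Theorem~\ref{thm:curvewise-p=1} to recover the curvewise action of the differential, one verifies that $\omega$ satisfies the defining property \eqref{eq:differential} of $\ud f$ along $p$-a.e.\ curve, and uniqueness in Definition~\ref{def:p-weakdiff} forces $\omega=\ud f$. Mazur's lemma then produces convex combinations $h_n=\sum_k\lambda^n_k f_k\in\LIP_b(X)$ with $\Phi(h_n)\to\Phi(f)$ in norm, i.e.\ $h_n\to f$ in $N^{1,p}(X)$.

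The main obstacle is the identification of $\omega$ with $\ud f$ in the weak limit, which demands a careful closedness argument for $d$ with respect to $L^p$ convergence of the underlying functions; this is where the curvewise characterization from Theorem~\ref{thm:curvewise-p=1} plays its essential role. A secondary difficulty is the case $p=1$, where $L^1$ is not reflexive: here one can either reduce to $p>1$ by a truncation and local equi-integrability argument that controls the approximating upper gradients, or apply a Dunford--Pettis/Young-measure variant of Mazur's lemma directly to the energy-bounded sequence $\{(f_n,\ud f_n)\}$ to extract a strongly convergent convex combination.
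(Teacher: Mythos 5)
Your proposal follows essentially the same route as the paper's proof: energy density of Lipschitz functions (which for $p=1$ is precisely the result of \cite{seb2020}, not the classical $N=W$ relaxation argument), linearity of $f\mapsto \ud f$ and the pointwise-norm identity of Theorem~\ref{thm:differentiability}, weak compactness (reflexivity for $p>1$; Dunford--Pettis for $p=1$, where the needed equi-integrability comes from the $L^1$-convergence $|Df_n|_1\to|Df|_1$ rather than mere energy boundedness), Mazur's lemma, and a closedness argument identifying the limit section with $\ud f$, which in the paper is Lemma~\ref{lem:diff} (proved via Fuglede's theorem after passing to strongly convergent convex combinations, chart by chart with a diagonalization). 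The only deviations are cosmetic: you identify the weak limit before invoking Mazur, which is harder than necessary since Mazur reduces it to the strong-convergence closedness statement anyway, and your suggested truncation to reduce $p=1$ to $p>1$ is both doubtful and unnecessary, as your Dunford--Pettis alternative is exactly what the paper does.
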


Theorem \ref{thm:lipdense} has been obtained by other methods for $p>1$ in \cite{ambgigsav} but is new in the case $p=1$. In particular, we highlight that the density holds if $X$ has finite Hausdorff dimension. 

\subsection{Connections to Cheeger's and Gigli's differentiable structures}\label{sec:intro-gigli-lip} Together with the pointwise norm from Theorem \ref{thm:differentiability}, a $p$-weak differentiable structure gives rise to a \emph{$p$-weak cotangent bundle $T^*_pX$} over $X$, analogous to the measurable $L^\infty$-cotangent bundle $T_C^*X$ arising from the Lipschitz differentiable structure \cite{che99,kei04}, which is equipped with the pointwise norm
$$
|\xi|_{C,x}:=\Lip(\xi\circ\varphi)(x),\quad \xi\in (\R^N)^*
$$
for $\mu$-a.e. $x\in U$, where $(U,\varphi)$ is an $N$-dimensional Cheeger chart. For any $f\in \LIP_b(X)$, the differentials $\ud f$ and $\ud_Cf$ are sections of the cotangent bundles $T_p^*X$ and $T_C^*X$, respectively. We refer to Section \ref{sec:differentialstruct} for the precise definition of measurable $L^\infty$-bundles and their sections. 

In the next theorem we show that there is a submetric bundle map $T_C^*X\to T_p^*X$, and give a condition under which the bundle map is an isometric isomorphism. See Section \ref{sec:differentialstruct} for the definition of bundle maps. In the statement, a modulus of continuity is an increasing continuous function $\omega:[0,\infty)\to [0,\infty)$ with $\omega(0)=0$, and a linear submetry between normed spaces $V$ and $W$ is a surjective linear map $L:V\to W$ with $L(B_V(r))=B_W(r)$.

\begin{thm}\label{thm:bundles}
	Suppose $X$ admits a Cheeger structure and let $p\ge 1$. There is a bundle map $\pi=\pi_{C,p}:T^*_CX\to T^*_pX$ which is a linear submetry $\mu$-a.e. and satisfies 
	\begin{align}\label{eq:ch-pweak-compatible}
	\pi_x(\ud_{C,x}f)=\ud_{x}f,\quad \mu-\textrm{a.e. }x\in X
	\end{align}
	for every $f\in \LIP_b(X)$. If there exists a collection $\{\omega_x\}_{x\in X}$ of moduli of continuity satisfying
	\[
	\Lip f(x)\le \omega_x(|Df|_p(x))\quad  \mu-\textrm{a.e. on }X
	\]
	for every $f\in \LIP_b(X)$, then $\pi_{C,p}$ is an isometric bijection $\mu$-a.e.
\end{thm}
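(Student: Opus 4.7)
The plan is to construct $\pi$ fiberwise via the chain rule using $p$-weak differentials of the Cheeger chart maps, verify the submetry, and then use the modulus hypothesis to upgrade to an isometry. First, I would decompose a full-measure subset of $X$ into sets $V$ on which both a Cheeger chart $(U,\varphi)$ of dimension $N$ and a $p$-weak chart $(V,\psi)$ of dimension $M\le N$ are defined. Since $\varphi$ is Lipschitz, Theorem \ref{thm:differentiability} applied componentwise produces a Borel linear map $\ud_x\varphi:\R^M\to\R^N$ whose $i$-th row is $\ud_x\varphi^i\in(\R^M)^*$. I would then define
\[
\pi_x:(\R^N)^*\to (\R^M)^*,\qquad \pi_x(\eta):=\eta\circ \ud_x\varphi.
\]
Borel measurability of $x\mapsto\ud_x\varphi$ makes $\pi$ a bundle map. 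The intertwining relation $\pi_x(\ud_{C,x}f)=\ud_xf$ for $f\in\LIP_b(X)$ follows from a chain rule argument: substituting the $p$-weak expansion of $\varphi$ along a $p$-a.e. curve $\gamma$ into the Cheeger expansion $f(y)-f(x)=\ud_{C,x}f(\varphi(y)-\varphi(x))+o(d(x,y))$ and comparing with the $p$-weak expansion of $f$ yields $\ud_{\gamma_t}f=\ud_{C,\gamma_t}f\circ \ud_{\gamma_t}\varphi$ for a.e. $t$.

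The contracting estimate $|\pi_x\eta|_x\le |\eta|_{C,x}$ follows by applying the intertwining relation to $f=\eta\circ\varphi$ (truncated to have bounded support, which does not affect differentials at $x$): $|\pi_x\eta|_x=|Df|_p(x)\le \Lip f(x)=|\eta|_{C,x}$. For surjectivity, the identity $\ud_x\psi=I_M$ (immediate from \eqref{eq:differential} applied to each $\psi^i$) combined with the intertwining relation applied to each $\psi^i$ yields $\ud_{C,x}\psi\circ \ud_x\varphi=I_M$ at $\mu$-a.e. $x$. Hence $\ud_x\varphi:\R^M\to\R^N$ has a left inverse, is injective, and its dual $\pi_x$ is surjective.

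The submetry property is the main technical obstacle. Granted contraction and surjectivity, it reduces to the quotient-norm identity
\[
|\xi|_x=\inf\{\Lip f(x): f\in\LIP_b(X),\ \ud_xf=\xi\}\qquad\text{for $\mu$-a.e. }x,
\]
with $\le$ already established. For $\ge$ I would use the curvewise characterization of Theorem \ref{thm:curvewise-p=1} together with the canonical $q$-plans associated to $p$-weak charts produced in Lemmas \ref{lem:canonicalplan}, \ref{lem:properties} and \ref{lem:representative}: the norm $|\xi|_x$ is realized as an essential supremum of directional derivatives along these plans, so one can construct a specific Lipschitz representative of $\xi$ by choosing $\eta\in\pi_x^{-1}(\xi)\subset(\R^N)^*$ suitably within the finite-dimensional fiber so that the pointwise Lipschitz constant of $\eta\circ\varphi$ at $x$ collapses onto precisely these curve directions and attains the infimum.

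For the isometric bijection under the modulus hypothesis, applying $\Lip f(x)\le \omega_x(|Df|_p(x))$ to $f=\eta\circ\varphi$ yields $|\eta|_{C,x}\le \omega_x(|\pi_x\eta|_x)$. In particular $\pi_x\eta=0$ forces $|\eta|_{C,x}\le \omega_x(0)=0$, so $\eta=0$; combined with surjectivity from the first part, $\pi_x$ is a linear bijection. The submetry identity $\pi_x(B_{C,x}(r))=B_{p,x}(r)$ together with bijectivity forces $\pi_x^{-1}(B_{p,x}(r))=B_{C,x}(r)$, which by positive homogeneity gives $|\pi_x\eta|_x=|\eta|_{C,x}$ for every $\eta$. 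Hence $\pi_x$ is an isometric isomorphism at $\mu$-a.e. $x$.
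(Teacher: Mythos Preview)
Your construction of $\pi_x$, the intertwining relation, the contraction estimate, and the surjectivity argument are all correct and match the paper's approach essentially verbatim.

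The genuine gap is the submetry step. Your proposed argument is to use Theorem \ref{thm:curvewise-p=1} and the canonical plans to realize $|\xi|_x$ via directional derivatives along curves, and then ``choose $\eta\in\pi_x^{-1}(\xi)$ so that $\Lip(\eta\circ\varphi)(x)$ collapses onto precisely these curve directions.'' This does not work: the pointwise Lipschitz constant $\Lip(\eta\circ\varphi)(x)=|\eta|_{C,x}$ is \emph{not} determined by directional derivatives along rectifiable curves in a general Lipschitz differentiability space---that is the whole point of the distinction between the Cheeger and $p$-weak structures. There is no mechanism by which adjusting $\eta$ inside the finite-dimensional fiber $\pi_x^{-1}(\xi)$ forces $\Lip(\eta\circ\varphi)(x)$ down to $|\xi|_x$; the kernel directions of $\pi_x$ may well contribute to $\Lip$ without contributing to $|\cdot|_x$. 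The paper does not attempt anything like this. Instead it invokes an external, nontrivial result of Ikonen--Pasqualetto--Soultanis (Theorem \ref{thm:tet}, from \cite{tet2020}): there exists a module morphism $P:\Gamma_p(T_C^*X)\to L^p(T^*X)$ such that every $w\in L^p(T^*X)$ has a preimage $\omega$ with $|\omega|_C=|w|_G$ pointwise. Combined with the isomorphism $\iota:\Gamma_p(T_p^*X)\to L^p(T^*X)$ of Theorem \ref{thm:gigli}, this gives exactly the section-level submetry, and the paper then argues by contradiction to pass to the pointwise statement. You would need either this citation or an independent proof of its content; Theorem \ref{thm:curvewise-p=1} alone is not enough.

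Your argument for the isometric bijection under the modulus hypothesis is correct and in fact slightly more self-contained than the paper's. The paper invokes \cite[Theorem 1.1]{tet2020} to conclude $\Lip f=|Df|_p$ for all $f\in\LIP_b(X)$, whence the two structures and norms coincide directly. Your route---injectivity from $\omega_x(0)=0$, then the observation that a bijective linear submetry between finite-dimensional normed spaces is an isometry---is valid and avoids that second citation, though of course it relies on the submetry already being established.
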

Theorem \ref{thm:bundles} follows from \cite[Theorem 1.1]{tet2020} and the following theorem, which identifies the space $\Gamma_p(T_p^*X)$ of $p$-integrable sections of the $p$-weak cotangent bundle $T_p^*X$ with Gigli's cotangent module $L^p(T^*X)$. We refer to Section \ref{sec:connection-gigli-cheeger} for the relevant definitions, and remark here that Gigli's construction is the most general in the sense that $L^p(T^*X)$ can be defined for any metric measure space. It is a priori defined only as an abstract $L^p$-normed $L^\infty$-module in the sense of \cite{gig15,gig18}.

We say that $L^p(T^*X)$ is \emph{locally finitely generated} if $X$ has a countable Borel partition $\mathscr B$ so that each $B\in \mathscr B$ admits a finite generating set in $B$. Here, a collection $\mathscr V\subset L^p(T^*X)$ is a generating set in $B$ (or generates $L^p(T^*X)$ in $B$) if $\chi_BL^p(T^*X)$ is the smallest closed submodule of $L^p(T^*X)$ containing $\chi_Bv$ for every $v\in \mathscr V$. Gigli's cotangent modules admit a \emph{dimensional decomposition}, i.e. a Borel partition $\{A_N\}_{N\in\N\cup\{\infty\}}$ of $X$ so that $L^p(T^*X)$ admits a generating set of cardinality $N$ (and no smaller) in $A_N$, for each $N$. For $N=\infty$, no finite set generates $L^p(T^*X)$ in $A_N$. The dimensional decomposition is uniquely determined up to $\mu$-negligible sets.

In the statement below, we denote by $\ud_Gf$ and $|\cdot|_G$ the abstract differential and pointwise norm in the sense of Gigli, see Theorem \ref{thm:cotangmodule}. A morphism between $L^p$-normed $L^\infty$-modules (i.e. a continuous $L^\infty$-linear map) is said to be an isometric isomorphism if it preserves the pointwise norm, and has an inverse that is a morphism.



\begin{thm}\label{thm:gigli}
	Let $X$ be a metric measure space and $p\ge 1$. Then $X$ admits a $p$-weak differentiable structure if and only if $L^p(T^*X)$ is locally finitely generated. In this case, 
	\begin{itemize}
		\item[(a)] there exists an isometric isomorphism $\iota: \Gamma_p(T^*_pX)\to L^p(T^*X)$ of normed modules, satisfying $\iota(\ud f)=\ud_G f$ for every $f\in N^{1,p}(X)$ and uniquely determined by this property, 
		\item[(b)] each set $A_N$ in the dimensional decomposition of $X$ can be covered up to a null-set by $N$-dimensional $p$-weak charts, 
		\item[(c)] $N\le \dim_H(A_N)$ for each $N\in\N$.
	\end{itemize}
\end{thm}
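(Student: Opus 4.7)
\textbf{Proof proposal for Theorem \ref{thm:gigli}.} The plan is to use Theorem \ref{thm:differentiability} (more specifically the identity $|\ud f|_x = |Df|_p(x)$, the linearity $f \mapsto \ud f$, and the fact that in a chart $(U,\varphi)$ the covectors $\ud\varphi^1_x,\dots,\ud\varphi^N_x$ form a basis of $(\R^N)^*$) as a bridge between the explicit $p$-weak bundle $T_p^*X$ and Gigli's abstract module $L^p(T^*X)$. For the forward direction, assume $X$ is covered up to a null set by $p$-weak charts $\{(U_i,\varphi_i)\}$ with $\varphi_i=(\varphi_i^1,\dots,\varphi_i^{N_i})$ Lipschitz. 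Given any $f\in N^{1,p}(X)$, Theorem \ref{thm:differentiability} lets me write $\ud f|_{U_i}=\sum_{j=1}^{N_i} a_{ij}\,\ud\varphi_i^j$ with $a_{ij}\in L^\infty(U_i)$, because the $\ud\varphi_i^j$'s form a pointwise basis. I define a pre-map on the subalgebra of finite sums $\sum_k\chi_{A_k}\ud f_k$ (with $A_k$ disjoint Borel) by $\iota(\sum_k\chi_{A_k}\ud f_k):=\sum_k\chi_{A_k}\ud_G f_k$, and I check that it is an isometry by comparing pointwise norms on each $A_k$: $|\ud f_k|_x=|Df_k|_p(x)=|\ud_G f_k|_G(x)$. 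Since $\iota$ is an isometric pre-morphism, it extends by density/completion to a morphism $\iota:\Gamma_p(T_p^*X)\to L^p(T^*X)$. The image contains $\{\ud_G f:f\in N^{1,p}(X)\}$, which generates $L^p(T^*X)$ densely in Gigli's definition, so $\iota$ is surjective and hence an isometric isomorphism. In particular, $\chi_{U_i}L^p(T^*X)$ is generated by the finite set $\{\ud_G\varphi_i^j\}_{j=1}^{N_i}$, proving local finite generation, and the relation $\iota(\ud f)=\ud_G f$ that uniquely determines $\iota$ follows by construction.

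For the converse, suppose $L^p(T^*X)$ is locally finitely generated by a partition $\mathscr B$. Refining $\mathscr B$ and using the density of differentials of elements of $\LIP_b(X)$ in $L^p(T^*X)$, I pick for each $B\in\mathscr B$ a finite Lipschitz generating set $\psi_1,\dots,\psi_M$ (i.e.\ $\{\chi_B\ud_G\psi_j\}$ generates $\chi_B L^p(T^*X)$), and I may further refine $B$ so that $M$ is \emph{minimal}. I then form $\Psi=(\psi_1,\dots,\psi_M)\colon X\to\R^M$ and claim $(B,\Psi)$ is an $M$-dimensional $p$-weak chart. For $p$-independence, suppose some $v\in S^{M-1}$ satisfied $|D(v\cdot\Psi)|_p=0$ on a positive-measure subset $A\subset B$; then $\sum v_j\ud_G\psi_j=0$ on $A$, allowing me to drop one generator and contradicting minimality of $M$ (after a further Borel subdivision to localize the argument, using a measurable selection of such $v$). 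For $p$-maximality, suppose a Lipschitz $\Phi\colon X\to\R^{M+1}$ were $p$-independent on $A\subset B$. Using the generation of $\ud_G\Phi_i$ by the $\ud_G\psi_j$, I get an $L^\infty$ matrix $H\colon A\to\R^{(M+1)\times M}$ with $\ud_G\Phi=H\ud_G\Psi$; since $\ker H^T(x)\ne 0$ at every $x\in A$, a measurable selection yields $v\colon A\to S^M$ with $\ud_G(v\cdot\Phi)=0$ $\mu$-a.e.\ on $A$, contradicting $p$-independence of $\Phi$ on $A$.

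Finally, (b) is obtained by combining both directions at the level of local dimensions: the chart-dimension $N$ of a $p$-weak chart $(U,\varphi)$ coincides $\mu$-a.e.\ with Gigli's local dimension (the minimal number of generators), because $\iota$ from (a) intertwines the two notions of dimension fiberwise. Thus the sets $A_N$ from the dimensional decomposition are covered, up to null sets, by $N$-dimensional $p$-weak charts. Then (c) follows immediately from (b) and Proposition \ref{prop:chahausd}, which bounds the dimension of a $p$-weak chart by the Hausdorff dimension of its domain. The main obstacle I foresee is the converse direction, specifically the measurable selection step ensuring minimality of $M$ and the selection of a unit covector $v$ in $\ker H^T$: one must carefully justify that the essential-infimum characterization of $p$-independence (over the uncountable set $S^{N-1}$) interacts properly with Borel-measurable selections, and that the density of $\LIP_b$-differentials in $L^p(T^*X)$ can be used to replace abstract generators by differentials of Lipschitz functions without losing local finiteness.
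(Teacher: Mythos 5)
Your forward direction and part (a) essentially reproduce the paper's route (Lemmas \ref{lem:indep} and \ref{lem:compatible}): the chart differentials give a finite basis, and $\iota$ is defined on simple sections and extended by density. One small slip there: you write $\ud f|_{U_i}=\sum_j a_{ij}\,\ud\varphi_i^j$ with $a_{ij}\in L^\infty(U_i)$, but the coefficients of $\ud f$ in the chart basis are only Borel and may be unbounded, so to conclude $\ud_G f=\sum_j a_{ij}\ud_G\varphi_i^j$ you must first truncate to the sets $\{|a_{ij}|\le M\}$ and use locality, as in Remark \ref{rmk:differential-G}. Part (c) via Proposition \ref{prop:chahausd} is as in the paper.

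The genuine gap is in the converse direction, at the very first step: you ``pick for each $B\in\mathscr B$ a finite Lipschitz generating set $\psi_1,\dots,\psi_M$'', citing density of Lipschitz differentials. Local finite generation only provides finitely many \emph{abstract} generators $v_1,\dots,v_N\in L^p(T^*X)$, and density of the class $\sum_j\chi_{A_j}\ud_G f_j$ does not by itself allow you to replace them by finitely many Lipschitz (or even Sobolev) differentials that still generate, or are independent, on $B$: generation and independence are not automatically stable under $L^p$-small perturbations. This replacement is precisely the technical core of the paper's proof: one first passes to a subset of $A_N$ on which the basis satisfies the quantitative lower bound \eqref{eq:lowerboundbasis}, approximates each $v_k$ by $\sum_j\chi_{A^n_{j,k}}\ud_G f^n_{j,k}$, refines to a common product partition, and uses \eqref{eq:lowerboundbasis} together with Lemma \ref{lem:isometry} (identifying $\bigl|\sum_k g_k\ud_G f_k\bigr|_G$ with the canonical minimal gradient $\Phi^x(\bm g)$) to absorb the approximation error and conclude that some $N$-tuple of \emph{Sobolev} functions is $p$-independent on a positive-measure subset; maximality then comes from the dimension bound of Lemma \ref{lem:isometry}, Proposition \ref{prop:sobtolipchart} (which rests on energy density of Lipschitz functions) upgrades the Sobolev chart to a Lipschitz chart, and \cite[Proposition 3.1]{kei04} provides the exhaustion giving (b). You flag this step yourself as ``the main obstacle'', but without this quantitative stability argument your minimality and measurable-selection machinery (dropping a generator where some coordinate of $v$ is bounded below; selecting a unit covector in $\ker H^T$) has nothing to run on; those later arguments are fine in spirit and mirror Lemmas \ref{lem:isometry} and \ref{lem:properties}, but they sit downstream of the missing step. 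Note also that the definition of a $p$-weak chart requires $\varphi$ Lipschitz, so even after producing Sobolev generators the Sobolev-to-Lipschitz upgrade cannot be skipped.
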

Theorem \ref{thm:gigli} gives a concrete interpretation of Gigli's cotangent module, and bounds the Hausdorff dimension of the sets in the dimensional decomposition. As corollaries we obtain the reflexivity of $N^{1,p}(X)$ when $p>1$, and a characterization of infinitesimal Hilbertianity in terms of the pointwise norm of Theorem \ref{thm:differentiability} when $p=2$, for spaces admitting a $p$-weak differentiable structure, see Corollary \ref{cor:giglicor}.  Reflexivity could also be obtained directly from Theorem \ref{thm:differentiability} following the argument in \cite[Section 4]{che99}.

\subsection{Acknowledgements}

The first author was partially supported by National Science Foundation under Grant No. DMS-1704215 and by the Finnish Academy under Research postdoctoral Grant No. 330048.  The second author was supported by the Swiss National Science Foundation Grant 182423. Throughout the project the authors have had insightful discussions with Nageswari Shanmugalingam, which have been tremendously useful. A further thanks goes to Jeff Cheeger for helpful comments and inspiration for the project. The authors thank IMPAN for hosting the semester ``Geometry and analysis in function and mapping theory on Euclidean and metric measure space'' where this research was started. Through this workshop the authors were partially supported by grant \#346300 for IMPAN from the Simons Foundation and the matching 2015-2019 Polish MNiSW fund. The first author additionally thanks University of Jyv\"askyl\"a, where much of this work was done.

\section{Preliminaries}\label{sec:prelims}

Throughout this paper $X=(X,d,\mu)$ will be a complete separable metric measure space equipped with a Radon measure $\mu$ finite on balls. We denote by $C(I;X)$ the space of continuous curves $\gamma\colon I \to X$ equipped with the metric of uniform convergence, and by $AC(I;X)$ the subset of absolutely continuous curves in $X$. Mostly, we will be concerned with statements independent of parametrization, thus the choice of $I$ is immaterial. However, when we need to refer to the end points of the curve, then we will take $I=[0,1]$.

If $\gamma$ is a curve, its value at $t\in I$ is denoted by $\gamma_t\defeq \gamma(t)$. If $f:X \to \R^N$ is a function, we also use this notation as $(f\circ\gamma)_t=f(\gamma_t)$. The derivative of $f$ in the direction of $\gamma$ at $\gamma_t$, when it exists, is denoted $(f\circ\gamma)_t' = (f\circ\gamma)'(t)$. The metric derivative of the curve, in the sense of say \cite[Section I.1]{AGS08}, is defined as $|\gamma_t'| = \lim_{h\to 0}\frac{d(\gamma_{t+h},\gamma_t)}{h}$, when it exists. 
The metric derivative is defined almost everywhere on $I$ for $\gamma\in AC(I;X)$.

\subsection{Plans and modulus}
A finite measure $\bm\eta$ on $C(I;X)$ is called a \emph{plan} if it is concentrated on $AC(I;X)$, and a \emph{$q$-plan} if the \emph{barycenter} $\ud \bm\eta^\#:=e_*(|\gamma_t'|\ud t\ud \bm\eta)$ satisfies $\ud \bm\eta^\#=\rho\ud\mu$ for some $\rho\in L^q(\mu)$. We denote by $AC_q(I;X)$ the space of curves $\gamma\in AC(I;X)$ satisfying $\displaystyle \int_0^1|\gamma_t'|^q\ud t<\infty$, and say that a $q$-plan $\bm\eta\in \mathcal P(C(I;X))$ is a \emph{$q$-test plan}, if it is concentrated on $AC_q(I;X)$ and
\begin{align*}
e_{t*}\bm\eta\le C\mu\textrm{ for every $t\in I$, and }\int\int_0^1|\gamma_t'|^q\ud t\ud\bm\eta<\infty
\end{align*}
for some constant $C>0$. Here $e_t:C(I;X)\to X$ is the map $e_t(\gamma)=\gamma_t$.

\begin{remark} Every $q$-test plan is a also a $q$-plan. However, the converse can fail for two reasons. A $q$-test plan fixes a given parametrization for curves (with an integrability condition on the speed), and insists on a compression bound $e_{t*}(\bm\eta)\leq C\mu$. However, for each $q$-plan supported on $\Gamma\subset AC(I;X)$, one can construct associated $q$-test plans supported on reparametrized curves, which are subcurves of curves in $\Gamma$. 

The argument for this is a combination of two observations in \cite{amb13}. First, for each $q$-plan one can re-parametrize curves to get a plan with a good "parametric barycenter" \cite[Definition 8.1 and Theorem 8.3]{amb13}. The parametric barycenter depends on the parametrization, while the barycenter $\bm\eta^\#$ does not. The second point concerns the compression bound, where given the previous plan, one can take sub-segments of curves and average these over shifts to get a compression bound, which is explained as part of the proof  of \cite[Theorem 9.4]{amb13}.

 This remark would allow, for example, to phrase Theorem \ref{thm:curvewise-p=1} with test plans instead of plans, if one were so inclined. 
\end{remark}

If $\Gamma \subset C(I;X)$ is a family of curves, then a Borel function  $\rho \colon X \to [0,\infty]$ is called admissible, if $\int_\gamma \rho \ud s \geq 1$ for each rectifiable $\gamma \in \Gamma$. In particular, if there are no rectifiable curves, then this condition is vacuous. We define, for $p \in [1,\infty)$
$$\Mod_p(\Gamma) = \inf_{\rho} \int_X \rho^p \ud \mu,$$
where the infimum is over all admissible $\rho$. We remark, that due to Vitali-Caratheodory, such an infimum can always be taken with respect to lower semi-continuous functions. Notice that the modulus is supported on rectifiable curves and is independent of the parameterization of such curves. We say that a property holds for \emph{$p$-almost every curve} if there is a family of curves $\Gamma_B$ so that $\Mod_p(\Gamma_B)=0$, and the property holds for all $\gamma \in C(I;X) \setminus \Gamma_B$. Modulus is invariant of the parametrization of curves, but some of our statements depend on a parametrization. In those cases, we will say that the property holds for \emph{$p$-almost every absolutely continuous curve in $X$} (or $p$ a.e. $\gamma\in AC(I;X)$) to emphasize that the property holds for each $\gamma \in AC(I;X) \setminus \Gamma_B$ with $\Mod_p(\Gamma_B)=0$. The reader may consult \cite[Sections 4--7]{HKST07} for a more in-depth treatment of modulus, upper gradients and Vitali-Caratheodory.

\begin{remark}\label{rmk:modnull} A crucial fact we will use is that if $\Gamma$ satisfies $\Mod_p(\Gamma)=0$, then for any $q$-plan $\bm\eta$ we have $\bm\eta(\Gamma)=0$ (which holds for $p\in [1,\infty)$ and $q$ its dual exponent). The converse is also true for $p \in (1,\infty)$. See the arguments and discussion in \cite[Sections 4,9]{amb13}. One point here is that if we used $q$-test plans, this relationship would be more complex, and we would need to consider "stable" families of curves, see \cite[Theorem 9.4]{amb13}. The case of $p=1$ is also somewhat subtle, and we will deal with a special case of this issue in Section \ref{sec:curvewise}. The argument of Proposition \ref{prop:duality} would give the converse for compact families of curves and $p=1$. See also, \cite{exnerova2019plans} for a much more detailed exploration of this borderline case.
\end{remark}

The previous remark concerns an inequality relating modulus and $q$-plans. However, there is a closer connection, and in a sense these are dual to each other. Previously, this has been explored in \cite[Theorem 5.1]{amb13}  for $p>1$, and in \cite[Theorem 6.3]{exnerova2019plans} for $p=1$. Due to its importance for us, we summarize one main consequence of these results. We further briefly describe the main steps of a direct proof from \cite[Proposition 4.5]{davideb20}. A similar argument appeared previously in a more specific context in \cite[Theorem 3.7]{shanmunsemmes}.

\begin{prop}\label{prop:duality} Let $p \in [1,\infty)$ and $q$ its dual exponent with $p\inv+q\inv = 1$. If $K \subset C(I;X)$ is a compact family of curves, and  $\Mod_p(K) \in (0,\infty)$, then there exists a $q$-plan $\bm \eta$ with  $\spt(\bm\eta)\subset K$. 
\end{prop}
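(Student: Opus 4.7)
The strategy is to establish a convex duality between the minimization defining $\Mod_p(K)$ and a supremum over nonnegative measures on $K$, then extract the plan from an approximate maximizer. Specifically, I would prove
\[
\Mod_p(K)^{1/p} \;=\; \sup\bigl\{\bm\eta(K): \bm\eta\in\mathcal M_+(K),\ \bm\eta^\#=h\mu,\ h\in L^q(\mu),\ \|h\|_q\le 1\bigr\}.
\]

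The ``easy'' inequality $\le$ is H\"older's inequality: for any admissible lsc $\rho\ge 0$ and any $\bm\eta$ as on the right, Fubini gives
\[
\bm\eta(K)\le \int_K\!\!\int_\gamma \rho\,ds\,d\bm\eta(\gamma) = \int_X\rho h\,d\mu \le \|\rho\|_p\|h\|_q \le \|\rho\|_p,
\]
so taking the infimum over admissible $\rho$ bounds the supremum by $\Mod_p(K)^{1/p}$. For the reverse inequality I would apply Sion's minimax theorem (or equivalently a Hahn--Banach separation argument in the spirit of \cite{amb13,shanmunsemmes,exnerova2019plans}) to the bilinear pairing
\[
(\rho,\bm\eta) \;\mapsto\; \int_X\rho\,d\bm\eta^\# \;=\; \int_K\!\!\int_\gamma \rho\,ds\,d\bm\eta(\gamma),
\]
over $\rho$ in the weakly compact convex set $\{\rho\in L^p_+(\mu):\|\rho\|_p\le 1\}$ and $\bm\eta\in\mathcal P(K)$, which is narrowly compact since $K$ is compact. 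The key semicontinuity input is that $\gamma\mapsto \int_\gamma\rho\,ds$ is lower semicontinuous under uniform convergence of curves whenever $\rho$ is lsc nonnegative (a consequence of lsc of length and Vitali--Carath\'eodory), yielding the minimax identity needed for the duality.

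With the duality in hand, the hypothesis $\Mod_p(K)^{1/p}\in(0,\infty)$ forces the supremum to be strictly positive and finite. Hence there exists a nonzero $\bm\eta\in\mathcal M_+(K)$ whose barycenter $\bm\eta^\#$ has an $L^q(\mu)$-density; necessarily $\spt\bm\eta\subset K$ because $K$ is closed in $C(I;X)$. The family of non-rectifiable curves in $K$ has zero $p$-modulus (its image under any admissible $\rho$ gives infinite line integrals), and so is $\bm\eta$-negligible by Remark~\ref{rmk:modnull}; curves of zero length contribute nothing to $\bm\eta^\#$ and may also be discarded. Thus $\bm\eta$ is concentrated on $AC(I;X)\cap K$ and is a $q$-plan.

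The main obstacle is the minimax step: a careful verification of the convexity, compactness, and semicontinuity hypotheses, since the barycenter map $\bm\eta\mapsto\bm\eta^\#$ involves the metric derivative $|\gamma_t'|$, which is only lsc under uniform convergence. To close the loop one needs to show that a narrow limit of a maximizing sequence $\bm\eta_n$ with $L^q$-bounded barycenters retains an $L^q$-barycenter, which can be arranged by combining narrow compactness of $\mathcal P(K)$ with weak compactness of the unit ball of $L^q$ (using $p<\infty$, so $q>1$) and testing the duality against bounded continuous $\rho$ first, then extending by monotone approximation.
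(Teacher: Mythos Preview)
Your approach via Sion's minimax on the compact convex set $\mathcal P(K)$ is essentially the paper's proof. The paper makes the saddle functional explicit as $\Phi(\rho,\nu)=\|\rho\|_{L^p}-\Mod_p(K)^{1/p}\int_K\int_\gamma\rho\,\ud s\,\ud\nu$ and, instead of taking $\rho$ in the $L^p_+$ unit ball (which is not weakly compact when $p=1$, and for which the required upper semicontinuity of $\nu\mapsto\Phi(\rho,\nu)$ fails unless $\rho$ is lower semicontinuous), restricts $\rho$ to continuous compactly supported $[0,1]$-valued functions with the uniform topology---precisely the fix you propose in your final paragraph.
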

\begin{proof}
A power of the modulus $\Mod_p(K)^{1/p}$ arises from a convex optimization problem on $\rho$ with a constraint for every curve $\gamma\in K$. A dual formulation of this corresponds to a variable for each constraint, i.e. a measure $\nu$ supported on $K$. Thus, it is reasonable to consider a modified Lagrangian defined by
$$\Phi(\rho,\nu) = ||\rho||_{L^p}-\Mod_p(K)^{1/p} \int_K \int_\gamma \rho ~\ud s\ud \nu_\gamma,$$
where $\rho:X \to [0,\infty]$ is a function and $\nu$ is a probability measure supported on $K$. Let $P(K)$ be the collection of these probability measures supported on $K$ equipped with the topology of weak* convergence. In order to obtain required continuity, we will restrict to $\rho \in G$ with $G\defeq \{\rho:X \to [0,1], \rho \text{ compactly supported and  continuous in }X\}$. The set $G$ is equipped with the topology of uniform convergence. Then $\Phi:G\times P(K) \to \R$ is a functional with two properties: a) $\Phi(\cdot, \nu)$ is convex and continuous for each $\nu \in P(K)$, b) $\Phi(\rho,\cdot)$ is concave and upper semi-continuous for each  $\rho:X \to [0,1]$. Further $P(K)$ is compact and convex in the weak* topology and $G$ is a convex subset. 

By Sion's minimax theorem, see e.g. statement in \cite[Theorem 4.7]{davideb20}, we have
$$\sup_{\nu \in P(K)}\inf_{\rho \in G} G(\rho,\nu) = \inf_{\rho \in G} \sup_{\nu \in P(K)}G(\rho,\nu).$$

We can compute $\inf_{\rho \in G} \sup_{\nu \in P(K)}G(\rho,\nu)\geq 0$. Indeed, given any $\rho \in G$, we can use the definition of modulus to find a $\gamma\in K$ with $\int_\gamma \rho \ud s \leq \frac{||g||_p}{\Mod_p(K)^{-1/p}}$. If we choose $\nu=\delta_\gamma$, a Dirac measure on $\gamma$, the bound immediately follows.

Therefore, we have also $\sup_{\nu \in P(K)}\inf_{\rho \in G} G(\rho,\nu)\geq 0$. But, up to showing that this supremum is attained, there must be some $\bm\eta\in P(K)$ for which we get $\inf_{\rho \in G} G(\rho,\bm\eta)\geq 0$. After unwinding the definition of a $q$-plan, and an application of Radon-Nikodym on $X$, the measure $\bm\eta$ is our desired $q$-plan. 
\end{proof}


\subsection{Sobolev spaces and functions}
A function $f: (X,d_X) \to (Y,d_Y)$ between two metric spaces is called Lipschitz if $\LIP(f)\defeq \sup_{x,y \in X, x\neq y}\frac{d_Y(f(x),f(y))}{d_X(x,y)}<\infty$. A bijection $f:X\to Y$ is called Bi-Lipschitz if $f$ and $f\inv$ are Lipschtiz. Further, if $x \in X$, we define the local Lipschitz constant as
$$\Lip f(x) \defeq \limsup_{y\to x, y\neq x} \frac{d_Y(f(x),f(y))}{d_X(x,y)}.$$
Let $\LIP_b(X)$ be the collection of Lipschitz maps $f:X\to \R$ with bounded support.
\begin{defn}\label{def:ugalongpath}
Let $f:X\to \R\cup\{\pm\infty\}$ be measurable, $g:X\to [0,\infty]$ a Borel function, and $\gamma:I\to X$ a rectifiable path. We say that $g$ is an upper gradient of $f$ along a rectifiable curve $\gamma:[0,1]\to X$, if $\int_\gamma g ~\ud s<\infty$ and
\[
|f(\gamma_t)-f(\gamma_s)|\le \int_{\gamma|_{[s,t]}}g,
\]
for each $s<t$ with $s,t\in I$ with the convention $\infty-\infty=\infty$. We say that $g$ is an upper gradient of $f$, if it is an upper gradient along every rectifiable curve, and a $p$-weak upper gradient if $g$ is an upper gradient of $f$ along $p$-a.e. rectifiable curve.
\end{defn}

The space $N^{1,p}(X)$ is defined as all $\mu$-measurable functions  $f\in L^p(X)$ which have an upper gradient $g$ in $L^p(X)$. The (semi-)norm on this space is defined as $$\|f\|_{N^{1,p}}=\left( \|f\|_{L^p}^p+\inf\|g\|_{L^p}^p \right)^{1/p},$$ where the infimum is taken over all $L^p$-integrable upper gradients $g$ of $f$. The theory of these spaces was largely developed in \cite{sha00}, see also \cite{HKST07} for most of the classical theory. By the results there combined with an observation of Haj{\l}asz \cite{haj03} in the case of $p=1$,  one can show that there always exists a unique minimal $g_f$, which is an upper gradient along $p$-almost every path, and for which $\|f\|_{N^{1,p}}=\left(\|f\|_{L^p}^p+\|g_f\|_{L^p}^p \right)^{1/p}$. We call $g_f$ the \emph{minimal $p$-upper gradient}. 
Similarly, we can define $f\in N^{1,p}_{loc}(X)$ if $f\eta \in N^{1,p}$ whenever $\eta\in \LIP_b(X)$. In these cases we also can define a minimal $p$-upper gradient  $g_f$, so that $\eta g_f \in L^p(X)$ for every $\eta\in \LIP_b(X)$. In other words, $g_f \in L^p_{loc}(X)$.

We denote by $N^{1,p}(X;\R^N)\simeq \Ne pX^N$ the space of functions $\varphi\colon X\to\R^N$ so that each component is in $N^{1,p}$. Similarly, we define $\LIP_b(X;\R^N)\simeq \LIP_b(X)^N$.

Another notion of Sobolev space can be defined using \emph{$q$-test plans} and we denote it $W^{1,p}(X)$, with $|D f|_p$ denoting the minimal gradient of $f\in W^{1,p}(X)$. Namely, a function $f\in L^p(\mu)$ belongs to the Sobolev space $W^{1,p}(X)$ if there exists $g\in L^p(\mu)$ such that
\[
\int|f(\gamma_1)-f(\gamma_0)|\ud\bm\eta\le \int\int_0^1g(\gamma_t)|\gamma_t'|\ud t\ud\bm\eta
\]
for every $q$-test plan $\bm\eta$ on $X$. The space has a norm $\|f\|_{W^{1,p}}=\left(\|f\|_{L^p}^p + \inf_g \|g\|_{L^p}^p\right)^{1/p}$, where the infimum is over all such functions $g$. We refer to \cite{dimarino19} for details.

Note that any representative of an element of $W^{1,p}(X)$ still belongs to $W^{1,p}(X)$, whilst a representative of an element in $\Ne pX$ belongs to $\Ne pX$ if and only if they agree outside a $p$-exceptional set. The next theorem says that up to this ambiguity of representatives, the two approaches produce the same object. The measurability conclusion is also a corollary of \cite{seb2020}. We refer to \cite{amb15} for a proof.
\begin{thm}\label{thm:W=N}
	Let $p\in (1,\infty)$. If $f\in \Ne pX$, then $f\in W^{1,p}(X)$ and $g_f=|Df|_p$ $\mu$-a.e. Conversely, if $f\in W^{1,p}(X)$, then $f$ has a Borel representative $\bar f\in \Ne pX$ with $g_{\bar f}=|Df|_p$ $\mu$-a.e.
\end{thm}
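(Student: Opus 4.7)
The plan is to handle the two directions separately. The forward direction follows in a few lines from the fact that $q$-test plans, being $q$-plans, annihilate curve families of zero $p$-modulus (Remark \ref{rmk:modnull}). The converse --- constructing a Newtonian representative of a Sobolev function --- is the substantive part and rests on plan-modulus duality (Proposition \ref{prop:duality}).

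For the forward direction, let $f\in \Ne pX$ with minimal upper gradient $g_f$, and fix any $q$-test plan $\bm\eta$. By Remark \ref{rmk:modnull}, $\bm\eta$-a.e.\ curve $\gamma$ is rectifiable and admits $g_f$ as an upper gradient along it, so integrating
$$|f(\gamma_1)-f(\gamma_0)|\le\int_0^1 g_f(\gamma_t)|\gamma_t'|\,\ud t$$
against $\bm\eta$ yields $f\in W^{1,p}(X)$ with $|Df|_p\le g_f$ $\mu$-a.e.

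For the converse, let $f\in W^{1,p}(X)$ and set $G:=|Df|_p$. Following the scheme of Ambrosio--Gigli--Savar\'e, I would select a Borel representative $\bar f$ of $f$ --- the upper or lower approximate limit $\bar f(x):=\liminf_{r\to 0}\dashint_{B(x,r)} f\,\ud\mu$ being a natural choice --- and show that $G$ serves as an upper gradient of $\bar f$ along $p$-a.e.\ curve. Suppose, for contradiction, that the family of curves where the upper gradient inequality fails has positive $p$-modulus. By inner regularity of modulus (Vitali--Carath\'eodory), one can extract a compact subfamily $K$ of positive $p$-modulus on which the defect is quantified: say $|\bar f(\gamma_1)-\bar f(\gamma_0)|\ge\int_0^1 G(\gamma_t)|\gamma_t'|\,\ud t+\varepsilon$ for all $\gamma\in K$. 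Proposition \ref{prop:duality} supplies a $q$-plan $\bm\eta$ with $\spt\bm\eta\subset K$; after reparametrizing and averaging over subsegments, as explained in the remark following the definition of $q$-test plans (cf.\ \cite{amb13}), one upgrades $\bm\eta$ to a $q$-test plan concentrated on subcurves of $K$ that retain the quantified defect. Testing $f\in W^{1,p}(X)$ against this test plan contradicts the defect, so the exceptional family has zero $p$-modulus. This gives $\bar f\in\Ne pX$ with $g_{\bar f}\le G$; applying the forward direction to $\bar f$, and noting that $\bar f=f$ $\mu$-a.e.\ implies $|D\bar f|_p=|Df|_p=G$, yields the reverse inequality and hence $g_{\bar f}=|Df|_p$ $\mu$-a.e.

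The main obstacle is ensuring that the Borel representative $\bar f$ is compatible with the curvewise tests: its precise values at $\gamma_t$ along $p$-a.e.\ curve $\gamma$ must match the integrable function $\bar f\circ\gamma$, and one needs to reconcile the pointwise-defined $\bar f$ with the measure-theoretic $W^{1,p}$ property. This can be made rigorous via capacity estimates or via convergence of averages along $p$-a.e.\ curve. A secondary technicality is the upgrade from $q$-plan to $q$-test plan --- necessary to invoke the $W^{1,p}$ inequality --- which is standard but requires care with parametrizations and with preserving the quantified defect under reparametrization and averaging.
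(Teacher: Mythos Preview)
Your forward direction is correct and matches what the paper does.

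For the converse, the paper simply cites \cite{amb15}, though the LaTeX source contains a drafted (commented-out) argument that proceeds quite differently from yours: it invokes the energy density of Lipschitz functions in $W^{1,p}(X)$ (\cite[Theorem 2.8]{gig15}) to obtain $(h_n)\subset\LIP_b(X)$ with $h_n\to f$ and $\lip_a h_n\to |Df|_p$ in $L^p$, then applies \cite[Proposition 7.3.7]{HKST07} (a Fuglede-type convergence argument) to extract a Newtonian representative $\tilde h$ with $p$-weak upper gradient $|Df|_p$; the Borel representative is taken as $\bar h:=\limsup_n h_n$, which agrees with $\tilde h$ quasi-everywhere.

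Your duality route has a genuine gap at the upgrade step. The defect $|\bar f(\gamma_1)-\bar f(\gamma_0)|\ge\int_\gamma G+\varepsilon$ is an \emph{endpoint} condition, but the passage from $q$-plan to $q$-test plan (reparametrize, then average over random subsegments, as in the remark you cite) replaces each $\gamma\in K$ by subcurves with new endpoints. There is no reason the defect persists --- it could be localized on a short subinterval of $\gamma$ and be destroyed by almost every subcurve choice. The way \cite{amb13} actually uses duality for this equivalence is different: one first proves (for $p>1$) that $\Mod_p$-null families coincide with families annihilated by every $q$-test plan, and the Sobolev equivalence is then deduced from that identification of null sets rather than by constructing a single test plan witnessing an endpoint defect. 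Separately, the approximate-limit representative is not obviously compatible with curvewise evaluation --- you correctly flag this as the main obstacle, but it does not resolve itself, and your sketch offers no mechanism for it. The Lipschitz-density approach in the paper's drafted proof sidesteps both issues cleanly, at the cost of importing a nontrivial density theorem.
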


\section{Curvewise (almost) optimality of minimal upper gradients}\label{sec:curvewise}

\subsection{Upper gradients with respect to plans}

Given a plan $\bm\eta$, we can speak of a gradient along its curves.

\begin{defn} If ${\bm \eta}$ is a $q$-plan, and $f \in N^{1,p}$ then a Borel function $g$ is an ${\bm \eta}$-upper gradient if for ${\bm \eta}$-almost every $\gamma$, $g$ is an upper gradient of $f$ along $\gamma$. 
\end{defn}

The following lemma gives a notion of a minimal $\bm\eta$-upper gradient and shows how to compute it by using derivatives along curves.

\begin{lemma}\label{lem:uppergradient} Suppose $g_f$ is a minimal upper gradient, and $\bm \eta$ is any $q$-plan and $\ud \bm\pi = \ud\bm\eta |\gamma_t'|\ud t$ with disintegration $\bm\pi_x,$ then
	\begin{enumerate}
		\item $g_{\bm \eta} = ||(f \circ \gamma)_t'/\gamma_t'||_{L^{\infty}(\bm \pi_x)}$ is a $\bm \eta$-upper gradient.
		\item $g_{\bm \eta} \leq g$ for any other $\eta$-upper gradient for almost every $x \in X$.
		\item $g_{\bm \eta} \leq g_f$ for almost every $x \in X$.
		\item Suppose $\bm \eta'$ is another $q$-plan, and $\bm\eta \ll \bm\eta'$, then $g_{\bm \eta} \leq g_{\bm \eta'}$.
	\end{enumerate}

\end{lemma}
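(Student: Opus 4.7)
The plan is to leverage the identity $\ud\bm\pi=|\gamma_t'|\ud t\,\ud\bm\eta$ together with the equivalent infinitesimal formulation of the upper gradient inequality: a Borel function $g$ is an upper gradient of $f$ along an absolutely continuous $\gamma$ precisely when $f\circ\gamma$ is absolutely continuous and $|(f\circ\gamma)_t'|\le g(\gamma_t)|\gamma_t'|$ for a.e.\ $t$. This equivalence lets me translate cleanly between $\bm\eta$-a.e.\ curvewise statements, $\bm\pi$-a.e.\ pointwise statements on $\Diff(f)$, and $\mu$-a.e.\ statements on $X$. Before doing anything else I would verify that $g_{\bm\eta}$ is well-defined and $\mu$-measurable: since $f\in N^{1,p}(X)$ and $\bm\eta$, being a $q$-plan, charges no $p$-null family of curves by Remark \ref{rmk:modnull}, $f\circ\gamma$ is absolutely continuous for $\bm\eta$-a.e.\ $\gamma$, and the quotient $H(\gamma,t):=(f\circ\gamma)_t'/|\gamma_t'|$ is Borel on a $\bm\pi$-full subset of $\Diff(f)$. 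Standard measurable dependence of the disintegration $x\mapsto\bm\pi_x$ then gives Borel measurability of $x\mapsto \|H\|_{L^\infty(\bm\pi_x)}$ on $e_*\bm\pi$; since $e_*\bm\pi=\bm\eta^\#\ll\mu$, extending by zero elsewhere yields a $\mu$-measurable representative of $g_{\bm\eta}$.

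For (1), by the very meaning of the essential supremum, $|H(\gamma,t)|\le g_{\bm\eta}(\gamma_t)$ for $\bm\pi_x$-a.e.\ $(\gamma,t)$ and $e_*\bm\pi$-a.e.\ $x$, and reassembling through the disintegration formula gives this bound $\bm\pi$-a.e. Unwinding $\ud\bm\pi=|\gamma_t'|\ud t\,\ud\bm\eta$ and combining with the absolute continuity of $f\circ\gamma$ produces exactly the curvewise upper gradient inequality for $\bm\eta$-a.e.\ $\gamma$, so $g_{\bm\eta}$ is an $\bm\eta$-upper gradient. For (2), if $g$ is any $\bm\eta$-upper gradient, the same argument in reverse yields $|H|\le g\circ e$ $\bm\pi$-a.e.; disintegrating over the evaluation map and using that $\bm\pi_x$ is concentrated on $e^{-1}(x)$ (so $g(\gamma_t)=g(x)$) gives $g_{\bm\eta}(x)\le g(x)$ for $e_*\bm\pi$-a.e.\ $x$, hence $\mu$-a.e.\ where $g_{\bm\eta}>0$, with the inequality being trivial elsewhere.

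Items (3) and (4) both reduce immediately to (2). For (3), Remark \ref{rmk:modnull} again ensures that $q$-plans assign zero mass to $p$-modulus null families of curves, so $g_f$, being an upper gradient of $f$ along $p$-a.e.\ rectifiable curve, is in particular an $\bm\eta$-upper gradient; (2) then forces $g_{\bm\eta}\le g_f$ $\mu$-a.e. For (4), absolute continuity $\bm\eta\ll\bm\eta'$ implies that every $\bm\eta'$-a.e.\ property of curves is also $\bm\eta$-a.e., so $g_{\bm\eta'}$ is automatically an $\bm\eta$-upper gradient and (2) gives $g_{\bm\eta}\le g_{\bm\eta'}$. The main technical obstacle is purely bookkeeping: arranging the measurability of $g_{\bm\eta}$ and shuttling between the three flavors of ``almost every'' in play, all while keeping the restriction to $\Diff(f)$ consistent. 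Once this setup is in hand, each assertion is a one-line consequence of the infinitesimal form of the upper gradient inequality.
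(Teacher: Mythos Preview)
Your proposal is correct and follows essentially the same approach as the paper: set up the measurability via the appendix lemmas (Lemma~\ref{lem:nullborel}, Corollary~\ref{cor:borel}) and Remark~\ref{rmk:modnull}, then translate between $\bm\pi$-a.e.\ statements about the quotient $(f\circ\gamma)_t'/|\gamma_t'|$ and curvewise upper gradient statements via the disintegration, reducing (3) and (4) to (2). The paper's write-up of (3) inlines the argument rather than citing (2), but the content is identical.
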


\begin{proof}
	Let $g_f$ be the minimal $p$-upper gradient for $f$.  By Lemma \ref{lem:nullborel} there is a Borel family $\Gamma_0 \subset C(I;X)$, so that $f$ is absolutely continuous on each curve $\gamma \not\in \Gamma_0^c$ with upper gradient $g_f$ and so that $\bm\eta(\Gamma_0)=0$. By Corollary \ref{cor:borel} and Lemma \ref{lem:measurable-integral} there is a set $N \subset C(I;X) \times I$ so that for $\bm\pi(N)=0$, and for each $(\gamma,t) \not\in N$ both $(f\circ\gamma)'(t)$ and $|\gamma_t'|$ are defined and measurable. Let $M_0 = \Gamma_0 \times I \cup N$, we get $\bm\pi(M_0)=0$. For each curve $\gamma \not\in \Gamma_0$ the function $f$ is absolutely continuous with upper gradient $\frac{(f\circ\gamma)_t'}{|\gamma_t'|}$. Since $g_{\bm \eta}(\gamma_t)\geq \frac{(f\circ\gamma)_t'}{|\gamma_t'|}$ for $\bm\pi$-almost every $(\gamma,t) \in M_0$, we have that $g_{\bm\eta}$ is an $\bm\eta$ upper gradient.
	
	
	
	
	If $g$ is any other Borel $\bm \eta$-upper gradient, then the set of $(\gamma,t)\in \Diff(f)\setminus M_0$ with $(f \circ \gamma)'_t/|\gamma_t'| > g(\gamma_t)$ must have null measure, and thus the claim follows by Fubini and the definition in $(1)$.
	
	The function $g_f$ is an upper gradient for $f$ on curves in $\Gamma_0^c$, and thus the claim follows again from curvewise absolute continuity and by showing that  the set of $(\gamma,t)$ with $(f \circ \gamma)_t'/|\gamma_t'| > g_f(\gamma_t)$ must have null $\bm\pi$-measure.
	The final claim follows since $g_{\bm \eta'}$ must be a $\bm \eta$-upper gradient for $f$. 
	
\end{proof}

\subsection{Proof of Theorem \ref{thm:curvewise-p=1}} In this subsection we prove Theorem \ref{thm:curvewise-p=1}. The idea is that for each $q$-plan $\bm \eta$ we can associate a gradient "along" the curves of such a plan. Each such  gradient must be less than the minimal upper gradient, and thus the task is to show that by varying over different plans $\bm \eta$ we can obtain the minimal upper gradient through maximization. In order to show equality of the result of this maximization, we argue by contradiction, that if it were not a minimal upper gradient, then we could witness this by a given plan.  This is the core of the following result. It should be compared to \cite[Sections 9--11]{amb13}, where a similar analysis is done, but with different terminology and only for $p>1$.  In the following statement we will need to refer to end points of curves, and thus choose the domain of curves as $I=[0,1]$. 

\begin{lemma}\label{lem:lessthangradient}
Let $p \in [1,\infty)$, and $q$ be its dual exponent. Let $f \in N^{1,p}(X)$. Suppose $g$ is any non-negative Borel function so that $A=\{g<g_f\}$ has positive measure, then there exists a $q$-plan $\bm\eta$, so that for $\bm\eta$-almost every curve $\gamma:[0,1] \to X$ we have

\begin{equation}\label{eq:baddir}
|f(\gamma_1)-f(\gamma_0)| > \int_\gamma g \ud s.
\end{equation}
\end{lemma}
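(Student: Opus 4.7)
The plan is to combine plan--modulus duality (Proposition \ref{prop:duality}) with the minimality of $g_f$. Define the ``bad'' curve family
\[
\Gamma \defeq \{\gamma \in AC([0,1];X) : |f(\gamma_1)-f(\gamma_0)| > \int_\gamma g \, \ud s\}.
\]
The lemma reduces to finding a compact $K \subset \Gamma$ with $0 < \Mod_p(K) < \infty$, since Proposition \ref{prop:duality} will then produce a $q$-plan $\bm\eta$ supported on $K \subset \Gamma$ that satisfies the conclusion. The key step is therefore to show $\Mod_p(\Gamma) > 0$.

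I would show $\Mod_p(\Gamma) > 0$ by contrapositive: if $\Mod_p(\Gamma) = 0$, then $g$ is a $p$-weak upper gradient of $f$, contradicting minimality. For each $0 \leq a<b \leq 1$ set
\[
\Gamma^{a,b} \defeq \{\gamma \in AC([0,1];X) : |f(\gamma_b)-f(\gamma_a)| > \int_{\gamma|_{[a,b]}} g \, \ud s\}.
\]
Any admissible $\rho$ for $\Gamma$ is also admissible for $\Gamma^{a,b}$: reparametrizing $\gamma|_{[a,b]}$ to $[0,1]$ sends $\Gamma^{a,b}$ into $\Gamma$, so $\int_\gamma \rho \geq \int_{\gamma|_{[a,b]}} \rho \geq 1$ for $\gamma \in \Gamma^{a,b}$; reparametrization invariance of $\Mod_p$ then gives $\Mod_p(\Gamma^{a,b}) = 0$. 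Combining the countable union over rational $a,b$ with the standard modulus-zero family on which $f\circ\gamma$ fails to be absolutely continuous yields a $p$-exceptional family outside of which $|f(\gamma_b)-f(\gamma_a)| \leq \int_{\gamma|_{[a,b]}} g\,\ud s$ holds for all rational $a<b$; absolute continuity of $f\circ\gamma$ and dominated convergence extend this to all $a<b$, showing that $g$ is a $p$-weak upper gradient of $f$. Then $g_f \leq g$ $\mu$-a.e.\ by minimality, contradicting $\mu(\{g<g_f\})>0$.

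For the compact subfamily, I would reparametrize to constant speed (which preserves both $\Gamma$ and $\Mod_p$), truncate to lengths in $[1/n,n]$, and restrict to curves with image in a compact $K_n \subset X$ supplied by inner regularity of the Radon measure $\mu$. The resulting pieces lie inside the compact set of $n$-Lipschitz maps into $K_n$ (by Arzel\`a--Ascoli); countable subadditivity of $\Mod_p$ picks out one with positive modulus, and inner regularity of $p$-modulus with respect to compact sets (a Fuglede-type fact for Borel families) then delivers the compact $K \subset \Gamma$ of positive, finite modulus required. The hard part is the bootstrap in the second paragraph: turning strict inequality at endpoints of ``generic'' curves into a curvewise upper-gradient inequality along every subcurve. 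The key enablers are the reparametrization invariance of $\Mod_p$ applied to subcurves, together with the absolute continuity of $f\circ\gamma$ along $p$-almost every curve.
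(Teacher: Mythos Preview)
Your overall architecture is correct and matches the paper: show $\Mod_p(\Gamma)>0$ by minimality of $g_f$, then find a compact subfamily of positive modulus and invoke Proposition~\ref{prop:duality}. The contrapositive in your second paragraph is fine (and more detailed than the paper's one-line appeal to minimality).

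The genuine gap is in the compactness step. Restricting to curves whose \emph{entire image} lies in a compact $K_n\subset X$ can annihilate the modulus: even when $\mu(X\setminus K_n)\to 0$, a typical curve in $\Gamma$ may weave in and out of every $K_n$, so there is no reason $\Gamma\cap\{\gamma\subset K_n\}$ should have positive modulus for any $n$. Nor is there an ``inner regularity of $\Mod_p$ by compact sets'' available here as a black box; Choquet capacitability applies to Suslin families for $p>1$, but the lemma must also cover $p=1$, and in any case $\Gamma$ is defined via the values $f(\gamma_0),f(\gamma_1)$ for an a~priori only $\mu$-measurable $f$, so even Borelness of $\Gamma$ is not automatic. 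Finally, you do not argue that your truncated family is \emph{closed} under uniform convergence, which is needed for Arzel\`a--Ascoli to yield compactness; closedness requires $g$ to be lower semicontinuous and $f$ to be continuous at the endpoints, neither of which you arrange.

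The paper handles exactly these issues. It first replaces $g$ by a lower semicontinuous majorant via Vitali--Carath\'eodory, then uses Egorov to find compact sets $K_n$ with $f|_{K_n}$ continuous. It does \emph{not} force curves to lie in $K_n$; instead it requires only that the endpoints lie in $K_m$ and that the length spent outside each $K_n$ is quantitatively small. The resulting families $\Gamma^M_t$ are closed (lower semicontinuity of $g$, continuity of $f$ at endpoints) and precompact by the dedicated Lemma~\ref{lem:compact}, while a subcurve/covering argument together with a modulus estimate on $\Gamma^M\setminus\Gamma^M_t$ guarantees positive modulus is retained. Your compactness sketch would need to be rebuilt along these lines.
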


\begin{proof}  By by Vitali-Caratheodory we may find a lower semicontinuous $\tilde{g} \geq g$ which is integrable and so that 
$\tilde{A}=\{\tilde{g}<g_f\}$ has positive measure. We will suppress the tildes below to simplify notation and thus only consider the case of $g$ lower semicontinuous. Since $g<g_f$ on a positive measure subset, $g$ cannot be a minimal upper gradient, and thus there must exist a family $\Gamma \subset C(I;X)$ of curves with $\Mod_p(\Gamma)>0$, so that Estimate \eqref{eq:baddir} holds for each $\gamma \in \Gamma$. Modulus is invariant under reparametrization of curves and so we may only the subset of those $\gamma \in \Gamma$ which are Lipschitz.  We want to find a plan supported on $\Gamma$. However, the issue with this is that since $p=1$ is allowed the family $\Gamma$ may not be compact, the duality of modulus and $q$-plans may fail. So, we seek to ``cover'' $\Gamma$, up to a null modulus family by compact families. This covering is done in an iterative way.

Fix an $R$ so that the modulus of $\Gamma_R$ of those curves in $\Gamma$, which are contained in a ball $B(x_0, R)$ for some fixed $x_0\in X$, is positive. Since $f$ is measurable and $X$ is complete and separable, Egorov's theorem implies the existence of an increasing sequence of compact sets $K_n$ satisfying $\mu(B(x,R) \setminus \bigcup K_n)=0$ for which $f|_{K_n}$ is continuous for each $n$.  Define $\mu(B(x,R) \setminus K_n) = \epsilon_n$. By passing to a sub-sequence of $n$, we may assume that $\sum_n \sqrt{\epsilon_n} <1$.
	
	Define $\overline{\Gamma}$ to be the collection of $\gamma \in \Gamma_R$ so that $f$ is absolutely continuous on $\gamma$, and that $\mathcal{H}^1(\gamma \setminus (\bigcup_{n=1}^\infty K_n))=0$. This holds for $\Mod_p$-almost every curve, since $f \in N^{1,p}$ and since $p$-almost every curve spends measure zero in the null set $X \setminus \bigcup_{n=1}^\infty K_n$. Thus, $\Mod_p(\overline{\Gamma})>0$.
	
	Next, let $\Gamma^m$ be those curves $\gamma: I \to X$, which are $m$-Lipschitz, so that $\len(\gamma) \leq m|b-a|, \diam(\gamma) \geq \frac{1}{m}$, $\gamma_0, \gamma_1 \in K_m$ and Estimate \eqref{eq:baddir} holds. We will show that every $\gamma \in \overline{\Gamma}$ contains a subcurve, up to reparametrization, in  $\bigcup_{m=1}^\infty \Gamma^m$. From this, and Lemma \cite[Lemma 1.34]{bjo11}, it follows that $\Mod_p(\bigcup_{m=1}^\infty \Gamma^m)>0$, and thus there is some $M>0$ so that $\Mod_p(\Gamma^M)>0$. It is easy to show that $\Gamma^m$ is a closed family of curves in $C(I;X)$ with respect to uniform convergence, since $g$ is taken to be lower semicontinuous (see e.g. \cite[Proposition 4]{keith03}).
	
	To obtain the previous fact, consider a non-constant curve $\gamma \in \overline{\Gamma}$. We have
	$$|f(\gamma_1)-f(\gamma_0)| > \int_\gamma g ~\ud s.$$
	We may also parametrize $\gamma$ by constant speed as the claim is invariant under reparametrizations.
	
	By parametrization with unit speed, we have that $|I \setminus \bigcup_{n=1}^\infty \gamma^{-1}(K_n)|=0$ and that $f \circ \gamma$ is continuous. Since $\int_\gamma g \ud s < \infty$, and $f \circ \gamma$ is continuous, we can find (for all $n \geq N$ for some $N\in\N$) sequences $a_n,b_n \in [0,1]$ so that $\lim_{n \to \infty} a_n =a$, $\gamma_{a_n} \in K_n$, $\gamma_{b_n} \in K_n$ and $\lim_{n \to \infty} b_n =b$. Then, for sufficiently large $n$
	
	$$|f(\gamma_{b_n})-f(\gamma_{a_n})|> \int_{\gamma|_{[a_n,b_n]}} g \ud s.$$
	
	For $n$ large enough we also have $\len(\gamma_{[a_n,b_n]}) \leq n|b-a|, \diam(\gamma_{[a_n,b_n]})) \geq \frac{1}{n}$. Since the curves are parametrized by unit speed, they are then $n$-Lipschitz. So $\gamma' = \gamma_{[a_n,b_n]}$ is, up to a reparametrization, in $\Gamma^n$ for $n$ large enough, and the claim follows.

	Fix $M>0$ so that $\Mod_p(\Gamma^M)>0$. Next, choose $\delta<\min(\Mod_p(\Gamma^M),1)$. Define $\delta_n = \epsilon_n^{1/2p}$.  Choose $N$ so that $\sum_{n=N}^\infty \sqrt{\epsilon_n} < \delta^{1+p} / 2$.
	
	 Let $\Gamma^M_t$ be the family of curves $\gamma \in \Gamma^M$ so that $\int_{\gamma} 1_{X \setminus K_n} \ud s \leq \delta \delta_n $ for each $n \geq N$. Since $\left(\sum_{n\geq N} \left(\frac{1_{X \setminus K_n}}{\delta \delta_n}\right)^p \right)^{1/p}$ is a function admissible for $\Gamma^M \setminus \Gamma^M_t$, we have
	
	\[
	\Mod_p(\Gamma^M \setminus \Gamma^M_t) \leq \sum_{n \geq N} \frac{\epsilon_n}{\delta_{~	}^p \delta_n^p} < \delta/2.
	\]
	
	Thus, by sub-additivity of modulus, see e.g. \cite[Theorem 1]{fuglede1957},  $\Mod_p(\Gamma^M_t)\geq \Mod_p(\Gamma^M) - \Mod_p(\Gamma^M \setminus \Gamma^M_t)> \delta/2$. By Lemma \ref{lem:compact}, since $\Gamma^M$ is closed, the family $\Gamma^M_t \subset \Gamma^M$ is a compact family of curves in a complete space. 
	Then, by Proposition \ref{prop:duality} there exists a  $q$-plan $\bm \eta$ supported on $\Gamma^M_t$. Each curve $\gamma \in \Gamma^M_t$ satisfies Inequality \eqref{eq:baddir}, and thus the claim follows. 
	
	
	
\end{proof}

For the following proof, recall that if $A,B\subset X$, then $d(A,B) \defeq \inf_{a\in A}\inf_{b\in B} d(a,b)$, and $N_\epsilon(A) \defeq \cup_{a\in A} B(a,\epsilon)$ for $\epsilon>0$.

\begin{lemma}\label{lem:compact} Suppose that $K_n$ are a compact sets, $\eta_n>0$ constants with $\lim_{n \to \infty} \eta_n = 0$, $L>0$ and let $\Gamma \subset C(I;X)$ be a closed family of curves in a complete space $X$. Let $\Gamma^t$ be the family of curves $\gamma\in\Gamma$ for which $\len(\gamma) \leq L$, $\diam(\gamma) \geq \frac{1}{L}$ and which are $L$-Lipschitz with $\int_{\gamma} 1_{X \setminus K_n} \ud s \leq \eta_n$ for each $n\in\N$. Then $\Gamma^t$ is compact.
\end{lemma}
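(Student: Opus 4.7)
The strategy is to apply the Arzel\`a--Ascoli theorem in $C(I;X)$ and then verify that $\Gamma^t$ is closed under uniform limits. Equicontinuity is automatic from the $L$-Lipschitz bound, so the two nontrivial tasks are: (i) proving pointwise precompactness of $\{\gamma(t_0) : \gamma \in \Gamma^t\}$ in $X$ for each $t_0 \in I$, and (ii) verifying that every defining condition of $\Gamma^t$ is preserved under uniform convergence.

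For (i), the plan is to use the bound $\int_\gamma 1_{X \setminus K_n}\,\ud s \le \eta_n$ to force every curve in $\Gamma^t$ to remain close to $K_n$ for large $n$. First note that for $n$ with $\eta_n < 1/L$, the preimage $\gamma^{-1}(K_n) \subset I$ must be nonempty, since otherwise the integral would equal $\len(\gamma) \ge \diam(\gamma) \ge 1/L$. Taking $s_0 = \sup\{s \le t_0 : \gamma(s) \in K_n\}$ or $s_1 = \inf\{s \ge t_0 : \gamma(s) \in K_n\}$ (at least one exists, and both $\sup$ and $\inf$ are attained since $\gamma^{-1}(K_n)$ is closed), the subcurve between that time and $t_0$ lies entirely outside $K_n$, so its length is at most $\eta_n$; hence
\[
d(\gamma(t_0), K_n) \le \eta_n.
\]
Thus $\{\gamma(t_0) : \gamma \in \Gamma^t\} \subset N_{\eta_n}(K_n)$ is totally bounded for arbitrarily small $\eta_n$, and completeness of $X$ yields precompactness.

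For (ii), most conditions are clear: $\gamma \in \Gamma$ by closedness of $\Gamma$, the $L$-Lipschitz property passes to uniform limits, $\diam(\gamma) \ge 1/L$ is preserved by continuity of diameter under uniform convergence, and $\len(\gamma) \le L$ follows from lower semicontinuity of length. The main obstacle is preserving $\int_\gamma 1_{X \setminus K_n}\,\ud s \le \eta_n$ in the limit, which requires the lower semicontinuity of $\gamma \mapsto \int_\gamma h\,\ud s$ for lower semicontinuous $h \ge 0$ (here $h = 1_{X \setminus K_n}$ is lower semicontinuous since $X \setminus K_n$ is open) along uniformly $L$-Lipschitz sequences. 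I would establish this by reparametrizing each $\gamma_k$ by arclength and extending constantly to the common interval $[0, L]$, producing $1$-Lipschitz maps $\tilde\gamma_k$; extracting a uniformly convergent subsequence via the pointwise precompactness argument of (i), whose limit is an arclength reparametrization of $\gamma$; and applying Fatou's lemma to $1_{X \setminus K_n}(\tilde\gamma_k(s))\,1_{[0, \len(\gamma_k)]}(s)$, using that $\liminf_k \len(\gamma_k) \ge \len(\gamma)$ together with pointwise lower semicontinuity of the integrand. Combining (i) with (ii) via Arzel\`a--Ascoli then gives the compactness of $\Gamma^t$.
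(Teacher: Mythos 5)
Your proof is correct and takes essentially the same route as the paper: Arzel\`a--Ascoli, with equicontinuity from the $L$-Lipschitz bound and pointwise precompactness from the estimate $d(\gamma_{t_0},K_n)\le \eta_n$ forced by $\int_\gamma 1_{X\setminus K_n}\,\ud s\le\eta_n$ together with $\diam(\gamma)\ge 1/L$, plus closedness of $\Gamma^t$ under uniform limits --- a step the paper merely asserts, resting on the standard lower semicontinuity of $\gamma\mapsto\int_\gamma h\,\ud s$ for lower semicontinuous $h\ge 0$ (cf.\ the citation of \cite[Proposition 4]{keith03} in the proof of Lemma \ref{lem:lessthangradient}). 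The only imprecision is your claim that the uniform limit of the arclength reparametrizations is itself an arclength reparametrization of $\gamma$: in general it is only a $1$-Lipschitz monotone reparametrization of $\gamma$ (speed can drop in the limit), but since $h\ge 0$ and $|\tilde\gamma_s'|\le 1$ one still has $\int_\gamma h\,\ud s\le\int h(\tilde\gamma_s)\,\ud s$, so your Fatou argument is unaffected.
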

\begin{proof}Let $I=[a,b]$. Since $\Gamma$ and $\Gamma^t$ are closed, it suffices to show pre-compactness. 

Let $\gamma \in \Gamma^t$. We may suppose that $\eta_n < \frac{1}{2L}$ by restricting to large enough $n$. Then, we have for each $n$
$$\int_{\gamma} 1_{K_n} \ud s = \int_{\gamma} 1 \ud s - \int_{\gamma} 1_{X \setminus K_n} \ud s \geq \diam(\gamma) - \eta_n > \frac{1}{L}-\eta_n.$$ 

Thus $\gamma \cap K_n \neq \emptyset.$ Moreover, if $t \in I$, and $d(\gamma_t,K_n)=s$, then there will be a sub-segment of length at least $\min(s,\diam(\gamma_k)/2)$ in $X \setminus K_n$. This gives  $\min(s,\diam(\gamma)/2) \leq \eta_n < \frac{1}{2L}$. This is only possible if $s \leq \eta_n$, since  $\diam(\gamma)/2 \geq \frac{1}{L}$. Indeed, we have $d(\gamma,K_n) \leq \eta_n$.

To run the usual proof of Arzel\`a-Ascoli, since we have equicontinuity with the Lipschitz bound, we only need to show that for each $t\in I$ the set $A_t=\{\gamma_t : \gamma \in \Gamma^t\}$ is pre-compact. However, since $X$ is complete, it suffices to show that $A_t$ is totally bounded. Fix $\epsilon>0$. We concluded that $d(\gamma,K_n) \leq \eta_n$ for all $n\in\N$. 
Thus, we have for some large $n$ that $\eta_n \leq \epsilon/4$ and that $A_t \subset N_{\eta_n}(K_n) \subset N_{\epsilon/4}(K_n)$. Since $K_n$ is compact, it is totally bounded, and the claim follows by covering $K_n$ by finitely many $\epsilon/4$ balls and noting that $\epsilon>0$ is arbitrary. 
\end{proof}

\begin{proof}[Proof of Theorem \ref{thm:curvewise-p=1}] Let $\Pi_q$ be the set of all $q$- plans, and for each $\bm\eta \in \Pi_q$ with its disintegration being given by ${\bm \pi_x}$ define
	
	$$g_{\bm \eta}(x)= \left\| \frac{(f \circ \gamma)'_t}{|\gamma_t'|}\right\|_{L^{\infty}(\bm\pi_x)}.$$
	
	Finally, define
	
	$$|D_\pi f| = \esssup_{\bm \eta \in \Pi_\infty}g_\pi(x).$$
	\vskip.2cm
	\noindent \textbf{Claim 1:} There is a $q$-plan $\tilde{\bm\eta}$ so that $|D_\pi f|=g_{\tilde{\bm\eta}}.$
	\vskip.2cm
 By Lemma \ref{lem:essup}, we can find a sequence $\bm \eta_n$  so that $$g_{\bm\eta_n} \to |D_\pi f|$$ almost everywhere. Consider the measures $\ud\bm\pi^n:=|\gamma_t'|\ud\bm\eta_n\ud t$ on $AC(I;X)\times I$. Set  $a_n=1+ \bm \eta_n(C(I;X)) + ||\frac{d\bm \eta^\#}{d\mu}||_{L^q}+\bm\pi^n(AC(I;X)\times I)$, where $\bm \eta^\#$ is the barycenter of $\bm\eta$, which is absolutely continuous with respect to $\mu$. Let $\tilde{\bm\eta}=\sum_{n=1}^\infty a_n^{-1}2^{-n} \bm\eta_n$. This will be a plan with $g_{\tilde{\bm\eta}} \geq g_{\bm \eta_n}$ for each $n$ by Lemma \ref{lem:lessthangradient}. 
For $\mu$-almost every $x$, we have $g_{\tilde{\bm\eta}} \geq |D_\pi f|$. Then, by Lemma \ref{lem:lessthangradient} we have $\|\frac{(f \circ \gamma)'_t}{|\gamma_t'|}\|_{L^{\infty}(\bm\pi_x)}=|D_\pi f|$, as stated.
	\vskip.2cm
	\noindent \textbf{Claim 2:} We have $|D_\pi f|=g_f$ almost everywhere. 
	\vskip.2cm
	
	Since $g_f$ is an $p$-weak upper gradient, Lemma \ref{lem:lessthangradient} gives $|D_\pi f| \leq g_f$. Suppose for the sake of contradiction then that $|D_\pi f| < g_f$ on a positive measure subset. Then, by Lemma \ref{lem:lessthangradient}, there exists a plan $\bm \eta'$ so that 
	
	$$|f(\gamma_1)-f(\gamma_0)|>\int_\gamma |D_\pi f| \ud s$$
	for $\bm \eta'$-almost every $\gamma$.
	
	However,  by the definition of a plan upper gradient, we have for $\bm \eta'$ almost every curve that
	
	$$|f(\gamma_1)-f(\gamma_0)|\leq \int_\gamma g_{\bm \eta'} \ud s.$$
	
	Now, as $g_{\bm \eta'}\leq |D_\pi f|$ almost everywhere and as $\bm \eta'$ is a $q$-plan, we have for $\bm \eta'$-almost every curve $\gamma$ that 
	$$\int_\gamma g_{\bm \eta'} \ud s \leq \int_\gamma |D_\pi f| \ud s,$$
	which contradicts the above inequalities.
	
	Finally, since $|D_\pi f|=g_{ \tilde{\bm\eta}}=g_f$, then we must have $\mu|_D \ll \tilde{\bm \eta}^\#$. Indeed, otherwise there would be a non-null Borel set $E \subset D$ for which $\mu(E)>0$ and $\tilde{\bm\eta}^{\#}(E)=0$. However, then $g_{\tilde{\bm\eta}}|_E=0$, contradicting the equality $\mu$-almost everywhere.
	
\end{proof}

We now prove Corollary \ref{cor:integral}.

\begin{proof} Let $f \in N^{1,p}$ and consider the plan $\bm \eta'$ obtained from Theorem \ref{thm:curvewise-p=1}. Let $\bm \eta''=r_\ast(\bm\eta')$, where $r:C(I;X) \to C(I;X)$ is the reversal-map which reverses the orientation of every path. Define $\bm \eta = \bm \eta'' + \bm \eta'$. Fix $\epsilon>0$, and define $B=\{(\gamma,t) \in \Diff(f) : g_f(x) \geq \frac{(f\circ \gamma)_t'}{|\gamma_t'|} \geq (1-\epsilon) g_f\}$. 
Since $\left\| \frac{(f\circ \gamma)_t'}{|\gamma_t'|}\right\|_{L^\infty(\bm \pi'_x)}=g_f(x)$ for $\mu$-almost every  $x \in D$, where $\bm \pi'_x$ is the disintegration for $\bm \eta'$,
we have $\bm \pi_x(B)>0$ for $\mu$-almost every $x\in D$ where $\bm \pi_x$ is the disintegration corresponding to $\bm \eta$. Note that, we can remove the absolute values from the supremum norm since for each path $\gamma$ in the support of $\bm \eta'$ we include also its reversal, and $r$ preserves $\bm \eta$. 
\end{proof}

\subsection{Alternative curvewise characterizations of upper gradients when $p>1$}\label{subsec:pgtr1plan}

In this subsection we assume that $p,q\in (1,\infty)$ satisfy $1/p+1/q=1$ and prove a variant Theorem \ref{thm:curvewise-p=1} using test plans representing gradients, introduced by Gigli. 

Given $f\in N^{1,p}(X)$, a $q$-test plan $\bm\eta$ \emph{represents $g_f$}, if
\begin{align*}
\frac{f\circ e_t-f\circ e_0}{t\widetilde E_t^{1/q}}\to g_f\circ e_0\quad \textrm{ and }\quad \widetilde E_t^{1/p}\to g_f\circ e_0\quad \textrm{ in }L^p(\bm\eta),
\end{align*}
where
\begin{align*}
\widetilde E_t(\gamma)=\frac 1t\int_0^t|\gamma_s'|^q\ud s,\quad \gamma\in AC(I;X),\quad \widetilde E_t(\gamma)=+\infty\textrm{ otherwise.}
\end{align*}
A test plan $\bm\eta$ representing the gradient of a Sobolev map $f\in \Ne pX$ is concentrated on ``gradient curves'' of $f$ in an asymptotic and integrated sense. We refer to \cite{gig15, pasqualetto20b} for discussion of the definition we are using here. The following result of Gigli states that Sobolev functions always possess test plans representing their gradient. In the statement, $\mathcal P_q(X)$ denotes probability measures $\nu$ on $X$ with $\int d(x_0,x)^q\ud \nu(x)<\infty$ for some and thus any $x_0\in X$.

\begin{thm}[\cite{gig15}, Theorem 3.14]
	If $f\in \Ne pX$ and $\nu\in \mathcal P_q(X)$ satisfies $\nu\le C\mu$ for some $C>0$, there exists a $q$-test plan $\bm\eta$ \emph{representing} $g_f$, with $e_{0\ast}\bm\eta=\nu$. 
\end{thm}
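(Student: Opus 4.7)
The plan is to construct $\bm\eta$ as the superposition (via Lisini's theorem) of a gradient flow curve $t\mapsto\nu_t$ of the functional $F(\mu)\defeq-\int f\,\ud\mu$ in the Wasserstein space $\mathcal P_p(X)$, started from $\nu$, where $p$ is the conjugate exponent of $q$. Because $\nu\le C\mu$, the curve $\nu_t$ can be built via the minimizing-movement scheme: for $N\in\N$ and $k=1,\dots,N$, choose a minimizer
\[
\nu^N_{k/N}\in\operatorname{argmin}_\mu\Bigl\{\tfrac{N^{p-1}}{p}W_p^p(\nu^N_{(k-1)/N},\mu)-\int f\,\ud\mu\Bigr\},
\]
interpolate by $W_p$-geodesics to obtain $t\mapsto\nu^N_t$, and extract a subsequential limit $\nu_t$ defined on a short interval $[0,T]$. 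Existence of minimizers uses the $\mathcal P_p$-integrability of $\nu$ together with the boundedness of $f$ on bounded sets, and a standard convexity/lower-semicontinuity argument identifies the limit as a $p$-absolutely continuous curve satisfying the Energy Dissipation Equality for $F$.

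Lisini's superposition theorem then produces a probability measure $\tilde{\bm\eta}$ on $AC_q([0,T];X)$ with $(e_t)_\ast\tilde{\bm\eta}=\nu_t$ for every $t\in[0,T]$ and $\int\!\int_0^T|\gamma_t'|^q\,\ud t\,\ud\tilde{\bm\eta}\le\int_0^T|\dot\nu_t|^p\,\ud t<\infty$. The initial-marginal condition $e_{0\ast}\tilde{\bm\eta}=\nu$ is immediate from $\nu_0=\nu$. To conclude that $\tilde{\bm\eta}$ is a genuine $q$-test plan, one needs the compression bound $(e_t)_\ast\tilde{\bm\eta}\le C'\mu$ uniformly in $t\in[0,T]$; this follows from a Kuwada-type maximal-principle argument that exploits the Hopf--Lax representation of the minimizers, propagating the bounded-density hypothesis $\nu\le C\mu$ forward in time for a short interval.

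The main obstacle is verifying that the plan $\tilde{\bm\eta}$ \emph{represents} $g_f$ in the $L^p(\tilde{\bm\eta})$-sense required by the definition. The key elementary chain, valid for any $q$-test plan $\bm\eta$, reads
\[
\int(f\circ e_t-f\circ e_0)\,\ud\bm\eta\le\int\!\!\int_0^t g_f(\gamma_s)|\gamma_s'|\,\ud s\,\ud\bm\eta\le t^{1/p}\Bigl(\!\int g_f^p\,\ud(e_0)_\ast\bm\eta\!\Bigr)^{1/p}\Bigl(\!\int\widetilde E_t\,\ud\bm\eta\!\Bigr)^{1/q}\!,
\]
the first step being the $p$-weak upper-gradient inequality and the second H\"older. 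By construction both inequalities saturate to leading order in $t$: the leftmost quantity equals $F(\nu)-F(\nu_t)$, which the EDE identifies with the rightmost one as $t\to 0^+$. Saturation of H\"older forces the pointwise proportionality $|\gamma_0'|\propto g_f(\gamma_0)^{p-1}$ under $\tilde{\bm\eta}$, and saturation of the upper-gradient step forces $(f\circ\gamma)_0'=g_f(\gamma_0)|\gamma_0'|$ in an asymptotic sense. The technical heart of the argument is promoting these \emph{averaged} saturations to the \emph{strong} $L^p(\tilde{\bm\eta})$-convergences required by the definition of representing $g_f$: convergence of $L^p$-norms to a common limit, combined with the uniform integrability furnished by the compression bound, rules out concentration of defect and upgrades weak convergence to strong, yielding the two limits in the representation property simultaneously.
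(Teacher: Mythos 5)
Note first that the paper itself does not prove this statement: it is imported verbatim from Gigli \cite{gig15}, so the only proof to compare against is Gigli's. Your strategy (a steepest-descent construction in Wasserstein space, lifted to a plan on curves by superposition, then a Young/H\"older saturation argument) is in the same family as Gigli's variational construction, and your last paragraph correctly identifies the saturation mechanism. But as written the proposal has concrete gaps that would make it fail. (i) The exponents are mismatched: a $q$-test plan must be concentrated on $AC_q(I;X)$ with $\int\int_0^1|\gamma_t'|^q\,\ud t\,\ud\bm\eta<\infty$, and the hypothesis is $\nu\in\mathcal P_q(X)$; hence the whole scheme has to be run in $(\mathcal P_q(X),W_q)$. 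You set it in $(\mathcal P_p,W_p)$, and the asserted Lisini bound $\int\int_0^T|\gamma_t'|^q\,\ud t\,\ud\tilde{\bm\eta}\le\int_0^T|\dot\nu_t|^p\,\ud t$ is not what Lisini's theorem gives: the superposition of a $W_p$-absolutely continuous curve controls the $p$-energy of the lift, not the $q$-energy. (ii) The minimizing-movement step is unjustified: a Newtonian function $f\in N^{1,p}(X)$ is in general neither bounded on bounded sets nor (semi)continuous, so $\mu\mapsto-\int f\,\ud\mu$ need not be bounded below in the one-step problem, is not lower semicontinuous under narrow convergence, and is not geodesically convex; thus the ``standard convexity/lower-semicontinuity argument'' producing a limit curve satisfying an Energy Dissipation Equality is not available. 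Moreover the EDE you want presupposes that the descending slope of $-\int f\,\ud\mu$ equals $\bigl(\int g_f^p\,\ud\mu\bigr)^{1/p}$ and is an upper gradient of the functional, which is essentially the content of the theorem being proven.

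(iii) Most seriously, the compression bound $e_{t*}\bm\eta\le C'\mu$ cannot be obtained by a ``Kuwada-type maximal principle'': Kuwada's argument is tied to the heat/entropy flow, where the density bound propagates by the maximum principle for the heat semigroup. Here the descent of the linear functional transports mass along directions of steepest ascent of a merely Sobolev $f$, and such transport can concentrate mass instantly, so the bound $\nu\le C\mu$ does not propagate, even for short time. Without the compression bound your ``key elementary chain'' is also not justified, since the upper-gradient inequality along $\bm\eta$-a.e.\ curve is exactly what the test-plan property is needed for. In Gigli's proof these issues are handled differently: $f$ is reduced to a bounded function, the variational step is a single $W_q$-minimization in which the density bound is kept under control by construction rather than derived from a maximum principle, and the representation property is extracted from the resulting one-step Euler inequality via the upper-gradient inequality and the equality case in Young's inequality, rather than from a full gradient flow with EDE. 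Unless you repair (i)--(iii), the plan you construct need not exist, need not be a $q$-test plan, and need not satisfy the compression bound.
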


We now state the main result of this subsection.

\begin{thm}\label{thm:curvewise} Let $f\in \Ne pX$, and $g_f$ be a Borel representative of the minimal $p$-weak upper gradient of $f$, with $D:=\{ g_f>0 \}$ of positive $\mu$-measure. Let $\bm\eta$ be a $q$-test plan representing $g_f$ with $\mu|_D\ll e_{0*}\bm\eta\ll \mu|_D$.
	
	For every $\varepsilon>0$ there exists a Borel set $B\subset \Diff(f)$ such that $\ud\bm\pi:=\chi_B|\gamma_t'|\ud t\ud\bm\eta$ is a positive (finite) measure with $\mu|_D\ll e_*\bm\pi\ll \mu|_D$, whose disintegration $\{\bm\pi_x\}$ with respect to $e$ satisfies
	\begin{align*}
	(1-\varepsilon)g_f(x)\le \frac{(f\circ\gamma)_t'}{|\gamma_t'|}\le g_f(x)\ \textrm{ and }\
	(1-\varepsilon)g_f(x)^{p/q}\le |\gamma_t'|\le (1+\varepsilon)g_f(x)^{p/q}\quad \bm\pi_x-a.e.\ (\gamma,t)
	\end{align*}
	for $\mu$-almost every $x\in D$.
\end{thm}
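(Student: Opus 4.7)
The approach is to exploit the two $L^p(\bm\eta)$ convergences defining a test plan representing $g_f$ --- namely $\widetilde E_t^{1/p} \to g_f\circ e_0$ and $(f\circ e_t - f\circ e_0)/(t\widetilde E_t^{1/q}) \to g_f\circ e_0$ --- together with the upper gradient inequality and H\"older's inequality, to force pointwise near-equalities on a subset of positive mass concentrated in small time intervals. The first step would be to pass to a subsequence $t_n\to 0$ to convert these $L^p$-convergences to $\bm\eta$-a.e. limits, giving both $\widetilde E_{t_n}(\gamma)^{1/p} \to g_f(\gamma_0)$ and $(f(\gamma_{t_n}) - f(\gamma_0))/t_n \to g_f(\gamma_0)^p$ for $\bm\eta$-a.e. $\gamma$.

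The core computation is the chain
\[
\frac{f(\gamma_{t_n})-f(\gamma_0)}{t_n} \le \frac{1}{t_n}\int_0^{t_n} g_f(\gamma_s)|\gamma_s'|\,\ud s \le \left(\frac{1}{t_n}\int_0^{t_n}g_f(\gamma_s)^p\,\ud s\right)^{1/p}\widetilde E_{t_n}(\gamma)^{1/q},
\]
using the upper gradient inequality followed by H\"older. Since both extremes tend to $g_f(\gamma_0)^p = g_f(\gamma_0) \cdot g_f(\gamma_0)^{p/q}$, both inequalities must be asymptotically saturated: in particular the curvewise Lebesgue average $\frac{1}{t_n}\int_0^{t_n}g_f(\gamma_s)^p\,\ud s$ converges to $g_f(\gamma_0)^p$, while the defects in H\"older (equivalently in Young's inequality $ab \le a^p/p + b^q/q$) and in the upper gradient inequality tend to zero in $L^1(\bm\pi|_{[0,t_n]})$, where $\bm\pi = |\gamma_s'|\,\ud s\,\ud\bm\eta$.

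For each $\varepsilon>0$, I would then extract $t_n$ so small that the set
\[
B = B_\varepsilon := \left\{(\gamma, s) \in \Diff(f) : s \in (0, t_n],\ \text{both pointwise inequalities of the theorem hold at }(\gamma,s)\right\}
\]
captures most of the $\bm\pi$-mass carried by times in $(0,t_n]$. The strict convexity in Young's inequality converts the $L^1$ defect into a pointwise defect on a large fraction of $(\gamma,s)$-pairs; the comparison between $g_f(\gamma_s)$ and $g_f(\gamma_0)$ for $s$ small is absorbed into the $(1\pm\varepsilon)$ slack via the Lebesgue-differentiation statement established above. The upper bound $(f\circ\gamma)_s'/|\gamma_s'| \le g_f(\gamma_s)$ is automatic from the upper gradient inequality, so only the lower bounds and the two-sided control of $|\gamma_s'|$ need this quantitative work.

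Finally, I would verify the disintegration condition by computing the barycenter of $\chi_B |\gamma_s'|\,\ud s\,\ud\bm\eta$. Since $e_{0*}\bm\eta \sim \mu|_D$ by assumption and the density of $B$ in $(0,t_n]\times C(I;X)$ is forced to be positive (for $\bm\eta$-a.e.\ $\gamma$ starting above $D$), standard disintegration arguments give $e_*\bm\pi \sim \mu|_D$ and $\bm\pi_x(B)>0$ for $\mu$-a.e.\ $x \in D$, which is precisely the content of the theorem. The main obstacle will be the quantitative step just sketched: converting the asymptotic (integrated) tightness of H\"older and Young into uniform pointwise bounds on $B_\varepsilon$ while keeping the mass of $B_\varepsilon$ comparable to that of $\bm\pi|_{(0,t_n]}$, so that the disintegration over $D$ remains nontrivial.
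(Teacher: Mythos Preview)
Your overall strategy matches the paper's: exploit the $L^1(\bm\eta)$ convergences $D_t, G_t, \widetilde E_t \to g_f^p\circ e_0$, use the chain $D_t \le F_t \le \tfrac{1}{p}G_t + \tfrac{1}{q}\widetilde E_t$ to force asymptotic saturation of Young's inequality, and define $B$ as the pointwise sublevel set where a Young-type defect is small. The paper defines $B := \{(1-\delta_0)A(\gamma,t) \le (f\circ\gamma)_t'\}$ with $A(\gamma,t) = \tfrac{1}{p}g_f(\gamma_t)^p + \tfrac{1}{q}|\gamma_t'|^q$; a short stability lemma for Young's inequality then immediately yields the two-sided bounds in the statement, so the step you flag as the ``main obstacle'' is in fact routine.

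The genuine gap is in your last paragraph. You claim that $e_{0*}\bm\eta \sim \mu|_D$ together with ``density of $B$ in $(0,t_n]$ positive for $\bm\eta$-a.e.\ $\gamma$'' gives $e_*\bm\pi \sim \mu|_D$ by ``standard disintegration arguments''. This implication fails: $e_*\bm\pi$ records where curves are at the times $t\in B_\gamma$, not where they start. For a Borel $E\subset D$ with $\mu(E)>0$ you need positive $\bm\pi$-mass of pairs $(\gamma,t)\in B$ with $\gamma_t\in E$, and nothing you have said prevents the good times along a curve starting in $E$ from occurring entirely outside $E$. Relatedly, your plan to ``absorb'' the discrepancy between $g_f(\gamma_s)$ and $g_f(\gamma_0)$ via Lebesgue differentiation only works in an averaged sense, not pointwise, so it cannot by itself locate the good times inside $\gamma^{-1}(E)$.

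The paper addresses exactly this point with a separate lemma: for any Borel $E$, the localized average $D_{E,t}(\gamma) := \tfrac{1}{t}\int_0^t \chi_E(\gamma_s)(f\circ\gamma)_s'\,\ud s$ converges to $(\chi_E g_f^p)\circ e_0$ in $L^1(\bm\eta)$. The proof uses the continuity of $s\mapsto h\circ e_s$ in $L^p(\bm\eta)$ for $h\in L^p(\mu)$ (a consequence of the test-plan compression bound) to transfer $\chi_E(\gamma_0)$ to $\chi_E(\gamma_s)$ and $g_f(\gamma_0)$ to $g_f(\gamma_s)$. Combined with a positive-measure lemma (if $h\le g$ on $[0,T_n]$ with matching averages, then $\{(1-\varepsilon)g<h\}$ has positive measure in each $[0,T_n]$), this yields $\int_0^1 \chi_B(\gamma,s)\chi_E(\gamma_s)|\gamma_s'|\,\ud s>0$ for $\bm\eta$-a.e.\ $\gamma\in e_0^{-1}(E)$, and hence $e_*\bm\pi(E)>0$. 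This localization step is the real content of the proof; you should not treat it as standard.
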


For the proof, we present the following three elementary lemmas. Denote
\begin{align*}
D_t(\gamma)=\frac{f(\gamma_t)-f(\gamma_0)}{t},\quad \textrm{ and }\quad G_t(\gamma)=\frac 1t\int_0^tg_f(\gamma_s)^p\ud s,\quad \gamma\in AC(I;X),
\end{align*}
and $+\infty$ otherwise. The following observation is essentially made in \cite[Lemma 1.19]{pasqualetto20b} (we are using different notation for our purposes). See Lemma \ref{lem:measurable-integral}(3) for the Borel measurability of the functionals in the  claim.
\begin{lemma}\label{lem:conv}
	Suppose $f\in \Ne pX$ and suppose $\bm\eta$ is a $q$-test plan representing $g_f$. Then
	\begin{align*}
	D_t,G_t,\widetilde E_t\to g_f^p\circ e_0\textrm{ in }L^1(\bm\eta).
	\end{align*}
\end{lemma}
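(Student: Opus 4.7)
The plan is to prove the three convergences separately, exploiting (i) the two defining conditions of a test plan representing $g_f$, and (ii) the compression bound $e_{s\ast}\bm\eta\le C\mu$ coming from the $q$-test plan property. I expect the first two to be essentially book-keeping from the defining conditions, while $G_t$ requires a genuine Lebesgue-differentiation-type argument along the curves of $\bm\eta$, which is the main obstacle.

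For $\widetilde E_t$, the definition of a test plan representing $g_f$ gives $\widetilde E_t^{1/p}\to g_f\circ e_0$ in $L^p(\bm\eta)$. Setting $h_t=\widetilde E_t^{1/p}$ and $h=g_f\circ e_0$, one has the elementary inequality $|a^p-b^p|\le p(a^{p-1}+b^{p-1})|a-b|$ for $a,b\ge 0$. Integrating and applying H\"older with exponents $p$ and $q$ yields
\[
\|\widetilde E_t-g_f^p\circ e_0\|_{L^1(\bm\eta)}\le p\bigl(\|h_t\|_{L^p(\bm\eta)}^{p-1}+\|h\|_{L^p(\bm\eta)}^{p-1}\bigr)\|h_t-h\|_{L^p(\bm\eta)},
\]
which tends to $0$ as $t\to 0$. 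A similar manipulation, based on the inequality $|a^{1/q}-b^{1/q}|^q\le |a-b|$ (valid for $q\ge 1$ and $a,b\ge 0$ by superadditivity of $x\mapsto x^q$), upgrades this to $\widetilde E_t^{1/q}\to (g_f\circ e_0)^{p/q}$ in $L^q(\bm\eta)$.

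For $D_t$, I would factor
\[
D_t=A_tB_t,\qquad A_t:=\frac{f\circ e_t-f\circ e_0}{t\widetilde E_t^{1/q}},\quad B_t:=\widetilde E_t^{1/q},
\]
with the convention $A_t=0$ when $\widetilde E_t=0$. By the definition of a representing plan, $A_t\to g_f\circ e_0=:A$ in $L^p(\bm\eta)$, and by the above $B_t\to (g_f\circ e_0)^{p/q}=:B$ in $L^q(\bm\eta)$. Writing $A_tB_t-AB=(A_t-A)B_t+A(B_t-B)$ and applying H\"older gives
\[
\|D_t-AB\|_{L^1(\bm\eta)}\le\|A_t-A\|_{L^p(\bm\eta)}\|B_t\|_{L^q(\bm\eta)}+\|A\|_{L^p(\bm\eta)}\|B_t-B\|_{L^q(\bm\eta)}\to 0,
\]
and since $1+p/q=p$, the limit $AB=g_f^p\circ e_0$ is as required.

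The delicate step is $G_t$. Write $h=g_f^p\in L^1(\mu)$ (since $g_f\in L^p(\mu)$). For a bounded continuous $\widetilde h$ with bounded support, set $\widetilde G_t(\gamma):=\tfrac{1}{t}\int_0^t\widetilde h(\gamma_s)\,\mathrm ds$. By continuity of $\gamma$, $\widetilde G_t(\gamma)\to \widetilde h(\gamma_0)$ for every $\gamma\in AC(I;X)$, and the uniform bound $\|\widetilde h\|_\infty$ together with dominated convergence yields $\widetilde G_t\to \widetilde h\circ e_0$ in $L^1(\bm\eta)$. To control the error, Fubini together with the compression bound $e_{s\ast}\bm\eta\le C\mu$ gives
\[
\int|G_t-\widetilde G_t|\,\mathrm d\bm\eta\le\frac{1}{t}\int_0^t\int|h-\widetilde h|\,\mathrm d(e_{s\ast}\bm\eta)\,\mathrm ds\le C\|h-\widetilde h\|_{L^1(\mu)},
\]
and the same bound holds for $\int|h\circ e_0-\widetilde h\circ e_0|\,\mathrm d\bm\eta$. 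A triangle inequality argument combined with density of bounded continuous functions with bounded support in $L^1(\mu)$ (recall $\mu$ is Radon) then gives $G_t\to g_f^p\circ e_0$ in $L^1(\bm\eta)$, completing the proof.
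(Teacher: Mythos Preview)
Your proof is correct. The paper's own proof is much terser: it handles $\widetilde E_t$ by the same one-line argument you give, and for $D_t$ and $G_t$ it simply cites \cite[Lemma~1.19]{pasqualetto20b} and \cite[Proposition~2.11]{gig15}, respectively. Your self-contained arguments are precisely the standard ones behind those citations: the factorization $D_t=A_tB_t$ with H\"older is the natural way to combine the two defining convergences, and your $G_t$ argument is the usual proof that $s\mapsto h\circ e_s$ is $L^1(\bm\eta)$-continuous for $h\in L^1(\mu)$ under the compression bound (a fact the paper itself invokes later, in the proof of Lemma~\ref{lem:Econv}, via \cite[Proposition~2.1.4]{gigenr}). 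So the routes agree; you have simply unpacked the references.
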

\begin{proof}
	Since $\widetilde E_t^{1/p}\to g_f^p\circ e_0$ in $L^p(\bm\eta)$, it follows that $\widetilde E_t\to g_f^p\circ e_0$ in $L^1(\bm\eta)$. The convergence $D_t\to g_f^p\circ e_0$ is proven in  \cite[Lemma 1.19]{pasqualetto20b}, while $G_t\to g_f^p\circ e_0$ in $L^1(\bm\eta)$ follows from \cite[Proposition 2.11]{gig15}.
\end{proof}

\begin{lemma}\label{lem:youngstable}
	For every $\varepsilon>0$ there exists $\delta>0$ with the following property: if $a,b>0$ and $ \frac{a^p}{p}+\frac{b^q}{q}\le \frac{ab}{1-\delta}$, then $ \left|\frac{a^{p/q}}{b}-1\right|<\varepsilon$.
\end{lemma}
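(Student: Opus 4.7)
\medskip

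\noindent\textbf{Proof plan.} The inequality $\frac{a^p}{p}+\frac{b^q}{q}\ge ab$ is the classical Young inequality, with equality iff $a^p=b^q$, i.e. iff $a^{p/q}/b=1$. The lemma is therefore a stability statement: being close to equality in Young forces closeness of $a^{p/q}/b$ to $1$. My plan is to reduce to a one-variable problem by scaling, and then to conclude by a compactness / continuity argument based on the strict convexity of the resulting one-variable functional.

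\smallskip

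First, I would exploit the weighted homogeneity of the inequality. Both sides of the hypothesis are homogeneous under the rescaling $(a,b)\mapsto (\lambda^{1/p}a,\lambda^{1/q}b)$ (with common factor $\lambda$), and the quantity $a^{p/q}/b$ is invariant under it. Thus it suffices to treat the normalized case $b=1$. The hypothesis then reads
\[
g(a):=\frac{a^p}{p}+\frac{1}{q}-a\le \frac{\delta}{1-\delta}\,a,
\]
and the conclusion becomes $|a^{p/q}-1|<\varepsilon$.

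\smallskip

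Second, I would analyze $g$ on $(0,\infty)$. One has $g\ge 0$ (Young), $g(1)=0$, $g''(a)=(p-1)a^{p-2}>0$ so $g$ is strictly convex with unique minimum at $a=1$, and $g(a)\to\infty$ as $a\to\infty$. Since the map $a\mapsto a^{p/q}$ is continuous at $1$, choosing $\varepsilon'>0$ so that $|a-1|<\varepsilon'$ implies $|a^{p/q}-1|<\varepsilon$, it suffices to show: for all $\varepsilon'>0$ there is $\delta>0$ such that $g(a)\le \frac{\delta}{1-\delta}a$ implies $|a-1|<\varepsilon'$.

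\smallskip

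Finally, I would argue by contradiction and compactness. If the claim fails, there is a sequence $a_n>0$ with $g(a_n)\le \frac{1}{n-1}\,a_n$ but $|a_n-1|\ge\varepsilon'$. The dominant term in $g(a_n)$ is $a_n^p/p$, and the bound $a_n^p/p \le a_n + a_n/(n-1)$ forces $a_n^{p-1}$ to stay bounded (since $p>1$), hence $\{a_n\}$ is bounded. Passing to a subsequence $a_n\to a_\infty\in[0,\infty)$, the continuity of $g$ gives $g(a_\infty)=0$, so $a_\infty=1$ by strict convexity, contradicting $|a_n-1|\ge\varepsilon'$. The main (mild) subtlety is ruling out the escape $a_n\to\infty$, which is handled by the superlinear growth of $g$ relative to the right-hand side $\tfrac{\delta}{1-\delta}a$; everything else is soft.
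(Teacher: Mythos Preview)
Your argument is correct and follows essentially the same idea as the paper: reduce to a one-variable problem via the scaling symmetry and then exploit that the resulting convex function has a unique minimum at the equality case of Young's inequality. The only difference is cosmetic: the paper substitutes $t=a^{p/q}/b$ directly and argues via continuity of the inverses of $h(t)=\tfrac{1}{p}t+\tfrac{1}{q}t^{-q/p}$ on either side of its minimum, whereas you normalize $b=1$, run a compactness/contradiction argument on $a$, and then pass to $a^{p/q}$ by continuity---one extra step, but no new content.
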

\begin{proof}
	The function $	h:(0,\infty)\to (0,\infty)$, given by $h(t)=\frac 1p t+\frac 1q t^{-q/p}$, has a global minimum at $t=1$, with $h(1)=1$. Thus $h|_{(0,1]}$ and $h|_{[1,\infty)}$ have continuous inverses and it follows that for every $\varepsilon>0$ there exists $\delta>0$ such that if $|1-h(t)|<\delta$ then $|1-t|<\varepsilon$ (expressing the fact that both inverses are continuous at 1). The claim follows from this by noting that if $\frac{a^p}{p}+\frac{b^q}{q}\le \frac{ab}{1-\delta}$ then  $0\le h(t)-1<\delta$ where $t:=a^{p/q}/b$.
\end{proof}
\begin{lemma}\label{lem:posmeas}
	Let $h\le g$ be two integrable functions on $I$ with 
	\begin{align*}
	\liminf_{n\to\infty}\frac{1}{T_n}\int_0^{T_n}g\ud s=:A>0\ \textrm{ and }\ \lim_{n\to\infty}\frac {1}{T_n}\int_0^{T_n}[g-h]\ud s=0.
	\end{align*}
	Then for every $\varepsilon>0$ and $n$, the set $\{(1-\varepsilon)g<h \}\cap [0,T_n]$ has positive $\mathcal L^1$-measure.
\end{lemma}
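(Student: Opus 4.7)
The plan is to argue by contradiction, reducing the claim to a straightforward averaging estimate. Fix $\varepsilon>0$. Suppose, contrary to the conclusion, that there is a subsequence (which I will not relabel) of indices $n$ for which the set
\[
E_n:=\{(1-\varepsilon)g<h\}\cap[0,T_n]
\]
has $\mathcal L^1$-measure zero. On each such $[0,T_n]$ the pointwise inequality $h\le(1-\varepsilon)g$ holds almost everywhere, which (since $h\le g$ and hence $g$ must itself be nonnegative on the relevant set, up to using $\max\{g,0\}$ if needed, though in our application $g=g_f^p\ge 0$) gives $g-h\ge\varepsilon g$ a.e.\ on $[0,T_n]$.

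The next step is to integrate this inequality and divide by $T_n$, obtaining
\[
\frac{1}{T_n}\int_0^{T_n}[g-h]\,\ud s\ \ge\ \varepsilon\cdot\frac{1}{T_n}\int_0^{T_n}g\,\ud s.
\]
Letting $n\to\infty$ along the chosen subsequence, the left-hand side tends to $0$ by the second hypothesis, while the right-hand side is bounded below by $\varepsilon A>0$ by the first hypothesis. This gives $0\ge\varepsilon A>0$, the desired contradiction, so in fact $E_n$ has positive measure for all sufficiently large $n$ (and, by monotonicity in $n$, for every $T_n$ large enough that the averages are close to their asymptotic values).

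I expect no real obstacle here: the statement is essentially a ``if the average gap vanishes while the average of $g$ stays positive, then $h$ cannot be uniformly a definite fraction below $g$'' principle. The only mild subtlety to keep in mind is to ensure that the pointwise inequality $g-h\ge\varepsilon g$ is extracted a.e.\ on $[0,T_n]$ from the hypothesis $\mathcal L^1(E_n)=0$, which is immediate from the definition of $E_n$ and the fact that $h\le g$ (so $g\ge 0$ on $\{g>h\}$; in the intended application $g=g_f^p\circ\gamma$ is nonnegative throughout, so this point is automatic). After that, the inequality $0\ge\varepsilon A$ is the entire content of the argument.
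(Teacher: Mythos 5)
Your proof is correct and takes essentially the same route as the paper, which argues directly that $\int_0^{T_n}[(1-\varepsilon)g-h]\,\ud s<0$ for all large $n$ by comparing the two averages — precisely your integrated inequality read contrapositively. Note that the pointwise step $h\le(1-\varepsilon)g\Rightarrow g-h\ge\varepsilon g$ needs no sign assumption on $g$ (so your nonnegativity caveat is unnecessary), and, like the paper's own proof, your argument yields the conclusion only for all sufficiently large $n$, the remaining $n$ being handled in the intended application by the nestedness of the intervals $[0,T_n]$.
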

\begin{proof}
	For large enough $n$ we have that $\displaystyle 0<A/2<\frac{1}{T_n}\int_0^{T_n}g\ud s$ and $\displaystyle 0\le \frac{1}{T_n}\int_0^{T_n}[g-h]\ud s<\varepsilon A/2$. Thus, we may find some $n_0$ for which $\displaystyle \frac{1}{T_n}\int_0^{T_n}[g-h]\ud s<\frac{\varepsilon}{T_n}\int_0^{T_n}g\ud s$, for each $n>n_0. $	It follows that $\displaystyle  \int_0^{T_n}[(1-\varepsilon)g-h]\ud s<0$ for $n>n_0$, and the claim follows from this.
\end{proof}

We will also need the following technical result, compare Lemma \ref{lem:conv}.
\begin{lemma}\label{lem:Econv}
	Let $E\subset X$ be a Borel set, $t>0$, and let
	\begin{align*}
	D_{E,t}(\gamma):=\frac 1t\int_0^t\chi_E(\gamma_s)(f\circ \gamma)_s'\ud s,\quad \gamma\in \Gamma(f). 
	\end{align*}
	Then $D_{E,t}\to (\chi_Eg_f^p)\circ e_0$ in $L^1(\bm\eta)$.
\end{lemma}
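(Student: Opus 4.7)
The plan is to prove this by decomposing $D_{E,t} - (\chi_E g_f^p)\circ e_0$ into an ``upper gradient gap'' term (absorbed by Lemma \ref{lem:conv}) and a ``$\chi_E$-weighted differentiation along curves'' term, and reducing the latter to the continuous-weight case by approximating $\chi_E$ in a weighted $L^1$-norm. A key auxiliary quantity is the weighted speed average $H_t(\gamma) := \frac{1}{t}\int_0^t g_f(\gamma_s)|\gamma_s'|\,ds$. Using $(f\circ\gamma)_s' \le g_f(\gamma_s)|\gamma_s'|$ followed by Hölder's and Young's inequalities, one has the pointwise sandwich
\begin{align*}
D_t \le H_t \le G_t^{1/p}\widetilde E_t^{1/q} \le G_t/p + \widetilde E_t/q.
\end{align*}
Combined with Lemma \ref{lem:conv}, this forces $\|H_t\|_{L^1(\bm\eta)} \to \|g_f^p\circ e_0\|_{L^1(\bm\eta)}$, and since $H_t - D_t \ge 0$, both $H_t \to g_f^p\circ e_0$ and $H_t - D_t \to 0$ in $L^1(\bm\eta)$.

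Write $D_{E,t} - \chi_E(\gamma_0) g_f(\gamma_0)^p = I_1 + I_2$ with
\begin{align*}
I_1(\gamma,t) &:= \frac{1}{t}\int_0^t \chi_E(\gamma_s)\bigl[(f\circ\gamma)_s' - g_f(\gamma_s)|\gamma_s'|\bigr]\,ds, \\
I_2(\gamma,t) &:= \frac{1}{t}\int_0^t \chi_E(\gamma_s) g_f(\gamma_s)|\gamma_s'|\,ds - \chi_E(\gamma_0) g_f(\gamma_0)^p.
\end{align*}
The pointwise bound $|I_1| \le H_t - D_t$ (using $0 \le \chi_E \le 1$ and $g_f|\gamma_s'| - (f\circ\gamma)_s' \ge 0$) immediately gives $\|I_1\|_{L^1(\bm\eta)} \to 0$. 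To treat $I_2$, I first consider $\chi_E$ replaced by a bounded continuous $\psi$, writing the corresponding $I_2^\psi$ as $\frac{1}{t}\int_0^t [\psi(\gamma_s) - \psi(\gamma_0)] g_f(\gamma_s)|\gamma_s'|\,ds + \psi(\gamma_0)[H_t - g_f^p\circ e_0]$. The second summand has $L^1(\bm\eta)$-norm at most $\|\psi\|_\infty \|H_t - g_f^p\circ e_0\|_{L^1(\bm\eta)} \to 0$; the first is pointwise dominated by $\omega_\gamma(t) H_t(\gamma)$ with $\omega_\gamma(t) := \sup_{s\in [0,t]}|\psi(\gamma_s) - \psi(\gamma_0)| \to 0$ (continuity of $\psi\circ\gamma$) and $\omega_\gamma(t) \le 2\|\psi\|_\infty$, so splitting on $\{\omega_\gamma(t) > \varepsilon\}$ versus its complement and using the uniform integrability of $\{H_t\}$ (which follows from $L^1$-convergence) shows $\|I_2^\psi\|_{L^1(\bm\eta)} \to 0$.

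The main obstacle is the final step, namely promoting the convergence from continuous $\psi$ to the characteristic function $\chi_E$. The idea is to approximate $\chi_E$ in the weighted space $L^1(g_f^p\mu)$ by continuous $0 \le \psi_\varepsilon \le 1$, which is possible since $g_f^p\,d\mu$ is a finite Radon measure ($g_f \in L^p(\mu)$). The difference $I_2^{\chi_E} - I_2^{\psi_\varepsilon}$ splits into two terms: one bounded via the compression $e_{0*}\bm\eta \le C\mu$ directly by $C\int |\chi_E - \psi_\varepsilon| g_f^p\,d\mu$, and one of the form $\frac{1}{t}\int_0^t \int |\chi_E - \psi_\varepsilon|(\gamma_s) g_f(\gamma_s)|\gamma_s'|\,d\bm\eta\,ds$, which by Fubini, Hölder, the compression $e_{s*}\bm\eta \le C\mu$, the uniform bound $\int|\gamma_s'|^q\,d\bm\eta \le C'$ of the $q$-test plan, and $|\chi_E - \psi_\varepsilon|^p \le |\chi_E - \psi_\varepsilon|$, is dominated by a constant multiple of $\bigl(\int |\chi_E - \psi_\varepsilon| g_f^p\,d\mu\bigr)^{1/p}$ uniformly in $t$. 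Letting $\varepsilon \to 0$ after $t \to 0$ concludes $\|I_2^{\chi_E}\|_{L^1(\bm\eta)} \to 0$ and hence the lemma.
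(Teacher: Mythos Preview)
Your proof is correct, with one minor slip: the ``uniform bound $\int|\gamma_s'|^q\,\ud\bm\eta \le C'$'' you invoke is not a property of $q$-test plans (only the integrated bound $\int\int_0^1|\gamma_s'|^q\,\ud s\,\ud\bm\eta < \infty$ holds). This is easily repaired by applying H\"older jointly on the product space $[0,t]\times C(I;X)$ with respect to $\frac{1}{t}\ud s\,\ud\bm\eta$, which produces the factor $\bigl(\int\widetilde E_t\,\ud\bm\eta\bigr)^{1/q}$, bounded uniformly for small $t$ by Lemma~\ref{lem:conv}.

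Your route differs from the paper's. After the common first step establishing $H_t \to g_f^p\circ e_0$ and $H_t - D_t \to 0$ in $L^1(\bm\eta)$ (the paper writes $F_t$ for your $H_t$), the paper compares $D_{E,t}$ directly to $\chi_E(\gamma_0)H_t$ and bounds the difference by terms of the form $\bigl(\frac{1}{t}\int_0^t\|h\circ e_s - h\circ e_0\|_{L^p(\bm\eta)}^p\,\ud s\bigr)^{1/p}$ for $h = g_f$ and $h = \chi_E g_f$, which vanish as $t\to 0$ by the $L^p(\bm\eta)$-continuity of $s\mapsto h\circ e_s$ for $h\in L^p(\mu)$ (a known property of $q$-test plans, \cite[Proposition~2.1.4]{gigenr}). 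You instead first handle continuous weights $\psi$ via a modulus-of-continuity and uniform-integrability argument, and then approximate $\chi_E$ by bounded continuous functions in $L^1(g_f^p\mu)$. The paper's argument is shorter because that continuity lemma treats all $L^p$ weights at once; your approach is more self-contained (it avoids that external input) at the cost of the extra approximation layer.
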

\begin{proof}
	Denote
	\[
	F_t(\gamma):=\frac 1t\int_0^tg_f(\gamma_s)|\gamma_s'|\ud s.
	\]
	Since $D_t\le F_t\le \frac 1p G_t+\frac 1q\widetilde E_t$ $\bm\eta$-almost everywhere, Lemma \ref{lem:conv} implies that $F_t\to g_f^p\circ e_0$ and thus $(\chi_E\circ e_0)F_t\to (\chi_Eg_f^p)\circ e_0$ in $L^1(\bm\eta)$. We show that $(\chi_E\circ e_0) F_t-D_{E,t}\to 0$ in $L^1(\bm\eta)$. 
	
	For $\bm\eta$-almost every $\gamma$ we have that
	\begin{align*}
	&|\chi_E(\gamma_0)F_t(\gamma)-D_{E,t}(\gamma)|=\left|\frac 1t\int_0^t [\chi_E(\gamma_0)g_f(\gamma_s)|\gamma_s'|-\chi_E(\gamma_s)(f\circ \gamma)_s']\ud s\right|\\
	\le &  \frac 1t\int_0^t \bigg(|\chi_Eg_f(\gamma_s)-\chi_Eg_f(\gamma_0)||\gamma_s'|+\chi_E(\gamma_0)|g_f(\gamma_s)-g_f(\gamma_0)||\gamma_s'|\,  \\
	&\hspace{8cm} + \chi_E(\gamma_s)[g_f(\gamma_s)|\gamma_s'|-(f\circ\gamma)_s']\bigg)\ud s\\
	\le & \left[\left(\frac 1t\int_0^t|\chi_Eg_f(\gamma_s)-\chi_Eg_f(\gamma_0)|^p\ud s\right)^{1/p}+\left(\frac 1t\int_0^t|g_f(\gamma_s)-g_f(\gamma_0)|^p\ud s\right)^{1/p}\right]\left(\frac 1t\int_0^t|\gamma_s'|^q\ud s\right)^{1/q}\\
	&+F_t(\gamma)-D_t(\gamma).
	\end{align*}
	This estimate, together with the H\"older inequality and Lemma \ref{lem:conv}, yields
	\begin{align*}
	&\limsup_{t\to 0}\int|(\chi_E\circ e_0)F_t-D_{E,t}|\ud\bm\eta\\
	\le & \limsup_{t\to 0}\left[ \left(\int\frac 1t\int_0^t|g_f(\gamma_s)-g_f(\gamma_0)|^p\ud s\ud\bm\eta\right)^{1/p} + \left(\int\frac 1t\int_0^t|\chi_Eg_f(\gamma_s)-\chi_Eg_f(\gamma_0)|^p\ud s\ud\bm\eta\right)^{1/p} \right]\\
	&\times \left(\int g_f^p\circ e_0\ud\bm\eta\right)^{1/q}\\
	= & \limsup_{t\to 0}\left[\left(\frac 1t\int_0^t\|g_f\circ e_s-g_f\circ e_0\|_{L^p(\bm\eta)}^p\ud s\right)^{1/p}+\left(\frac 1t\int_0^t\|\chi_Eg_f\circ e_s-\chi_Eg_f\circ e_0\|_{L^p(\bm\eta)}^p\ud s\right)^{1/p}\right]\\
	&\times \left(\int g_f^p\circ e_0\ud\bm\eta\right)^{1/q}.
	\end{align*}
	Since $s\mapsto h\circ e_s$ is continuous in $L^p(\bm\eta)$ whenever $h\in L^p(\mu)$ (cf. \cite[Proposition 2.1.4]{gigenr}) all terms above tend to zero, proving the claimed convergence.
\end{proof}

\begin{proof}[Proof of Theorem \ref{thm:curvewise}]
	Let $C:=N^c$, where $N$ is as in Corollary \ref{cor:borel}. The function
	\[
	A(\gamma,t)=\frac 1pg_f(\gamma_t)^p+\frac 1q|\gamma_t'|^q, \quad (\gamma,t)\in C,\quad A(\gamma,t)=+\infty,\quad (\gamma,t)\notin C
	\]is Borel. Let $\bm\eta$ represent $g_f$ and satisfy $\mu|_D\ll e_{0*}\bm\eta\ll \mu|_D$. Fix $\varepsilon>0$, let $\delta>0$ be as in Lemma \ref{lem:youngstable}, and set $\delta_0=\min\{\varepsilon,\delta\}$. We define the Borel function 
	\begin{align*}
	H(\gamma,t)=(1-\delta_0)A(\gamma,t)-(f\circ \gamma)_t',\quad (\gamma,t)\in C,\quad H=+\infty\textrm{ otherwise},
	\end{align*}
	cf. Corollary \ref{cor:borel}. The set $B:=\{ H\le 0 \}$ is Borel and, for $(\gamma,t)\in C$, we have
	\begin{align}\label{eq:genericineq}
	(f\circ\gamma)_t'\le g_f(\gamma_t)|\gamma_t'|\le A(\gamma,t).
	\end{align}
	Note that
	\begin{align}\label{eq:conditions}
	H(\gamma,t)\le 0\textrm{ implies } (1-\varepsilon)g_f(\gamma_t)|\gamma_t'|\le (f\circ\gamma)_t'\textrm{ and } \left|1-\frac{g_f(\gamma_t)^{p/q}}{|\gamma_t'|}\right|<\varepsilon,
	\end{align}
	cf. \eqref{eq:genericineq} and Lemma \ref{lem:youngstable}. Once we show that $\ud\bm\pi:=\chi_B|\gamma_t'|\ud t\ud\bm\eta$ satisfies
	\begin{align*}
	\mu|_D\ll e_*\bm\pi\ll \mu|_D,
	\end{align*}
	it follows from \eqref{eq:genericineq} and \eqref{eq:conditions} that $\bm\pi':=\bm\pi/\bm\pi(C(I;X)\times I)\in \mathcal P(C(I;X)\times I)$ satisfies
	\begin{align*}
	(1-\varepsilon)g_f(\gamma_t)|\gamma_t'|\le (f\circ\gamma)_t'\le g_f(\gamma_t)\textrm{ and } \frac{g(\gamma_t)^{p/q}}{1+\varepsilon}\le |\gamma_t'|\le \frac{g(\gamma_t)^{p/q}}{1-\varepsilon}
	\end{align*}
	for $\bm\pi'$-almost every $(\gamma,t)$, which readily implies the inequalities in the theorem.
	
	\bigskip\noindent To prove $e_*\bm\pi\ll \mu|_D$ observe that \eqref{eq:conditions} implies $\chi_B|\gamma_t'|\ud t\ud\bm\eta\le (1+\varepsilon)g(\gamma_t)^{p/q}\ud t\ud \bm\eta$ and thus
	\begin{align*}
	\int\int_0^1\chi_B(\gamma,t)\chi_E(\gamma_t)|\gamma_t'|\ud t\ud\bm\eta\le (1+\varepsilon)\int_0^1\int_X\chi_Eg_f^{p/q}e_{t*}(\ud\bm\eta)\ud t\le C\int_Eg_f^{p/q}\ud\mu
	\end{align*}
	for any Borel set $E\subset X$.

	\bigskip\noindent It remains to prove that $\mu|_D\ll e_*\bm\pi$. Let $E\subset D$ be a Borel set with $\mu(E)>0$. Then $e_{0*}\bm\eta(E)=\bm\eta(\{ \gamma:\ \gamma_0\in E \})>0$. Since 
	\begin{align*}
	0\le \frac 1p\int_0^t\chi_E(\gamma_s)A(\gamma,s)\ud s-D_{E,t}(\gamma)&\le \frac 1pG_t(\gamma)+\frac 1q\widetilde E_t(\gamma)-D_t(\gamma)\stackrel{t\to 0}{\longrightarrow} 0,\\
	D_{E,t}&\stackrel{t\to 0}{\longrightarrow}\chi_Eg_f^p\circ e_0
	\end{align*}
	in $L^1(\bm\eta)$, cf. Lemmas \ref{lem:conv} and \ref{lem:Econv} respectively, there exist a sequence $T_n\to 0$ such that for $\bm\eta$-almost every $\gamma\in e_0\inv(E)$ the functions 
	\[
	h_\gamma(s):=\chi_E(\gamma_s)(f\circ\gamma)_s',\quad g_\gamma(s):=\chi_E(\gamma_s)A(\gamma,s)
	\]
	satisfy the hypotheses of Lemma \ref{lem:posmeas}. It follows that for $\bm\eta$-almost every $\gamma\in e_0\inv(E)$ the sets
	\begin{align*}
	I^n_\gamma:=\{s\in [0,T_n]:\ (1-\delta_0)g_\gamma(s)<h_\gamma(s) \}=\{s\in [0,t_n]: \gamma_s\in E,\ H(\gamma,s)\le 0\}
	\end{align*}
	have positive measure for all $n$. Notice that for $\bm\eta$-almost every $\gamma$, if $s\in I^n_\gamma$ then $\gamma_s\in E$ and $|\gamma_s'|>0,g_f(\gamma_s)>0$ (since $0<(f\circ\gamma)_s'\le g_f(\gamma_t)|\gamma_s'|$). Consequently 
	\[
	\int_0^1\chi_B(\gamma,s)\chi_{E}(\gamma_s)|\gamma_s'|\ud s\ge \int_{I^n_\gamma}|\gamma_s'|\ud s>0
	\]
	for $\bm\eta$-almost every $\gamma\in e_0\inv(E)$ which in turn implies that $e_*\bm\pi(E)>0$. Since $E\subset D$ is an arbitrary Borel set with positive $\mu$-measure, this completes the proof.
\end{proof}


\section{Charts and differentials}
\subsection{Notational remarks}
In what follows, define for any set $U \subset X$ the set of curves which spend positive \emph{length} in $U$:
\[
\Gamma_U^+=\{\gamma\in AC(I;X): \int_\gamma\chi_U \ud s>0 \}.
\]
Having positive length in $U$ is more restrictive than assuming that $\gamma\inv(U)$ has positive measure. We will also discuss $p$-weak differentials and co-vector fields of the form $\ud f: U \to (\R^N)^*$ or $\bm\xi: U \to (\R^N)^*$ for measurable subsets $U\subset X$. The value of such a map at $x\in U$ is denoted $\ud_x f, \bm\xi_x$, respectively.
\subsection{Canonical minimal gradients}

Let $p\ge 1$ and $N\ge 0$ be given. For the next three lemmas we fix $\varphi\in N^{1,p}_{loc}(X;\R^N)\simeq N^{1,p}_{loc}(X)^N$, 
with the convention $N^{1,p}_{loc}(X;\R^N)= N^{1,p}_{loc}(X)^N=\{0\}$ when $N=0$. Our aim is to construct a ``canonical'' representative of the minimal weak upper gradients $|D(\xi \circ \varphi)|_p$ of the functions $\xi\circ\varphi$. We will use a plan to represent it.

\begin{lemma}\label{lem:canonicalplan} 
There exists a $q$-plan $\bm\eta$ and a Borel set $D$ with $\mu|_D \ll \bm\eta^\#$ such that
\begin{equation}\label{eq:planeq}
\Phi_\xi(x):=\chi_D(x)\left\|\frac{\xi((\varphi\circ\gamma)'_t)}{|\gamma'_t|}\right\|_{L^\infty(\bm \pi_x)}
\end{equation}
is a representative of $|D(\xi \circ \varphi)|_p$ for every $\xi \in (\R^N)^*$. Here  $\{\bm\pi_x\}$ is the disintegration of $\ud\bm\pi:=|\gamma_t'|\ud \bm\eta\ud t$ with respect to the evaluation map $e$.

\end{lemma}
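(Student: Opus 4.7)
The plan is to combine, via a weighted sum, the single-function $q$-plans from Theorem \ref{thm:curvewise-p=1} applied to the functions $\xi_k\circ\varphi$ for a countable dense set $\{\xi_k\}_{k\in\N}\subset(\R^N)^*$, and then to extend the resulting representation from $\{\xi_k\}$ to arbitrary $\xi\in(\R^N)^*$ by continuity in $\xi$. A preliminary localization as in Remark \ref{rmk:local} reduces the statement to the case $\varphi\in N^{1,p}(X;\R^N)$.

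For each $k$, Theorem \ref{thm:curvewise-p=1} applied to $\xi_k\circ\varphi$ produces a $q$-plan $\bm\eta_k$ and the set $D_k:=\{|D(\xi_k\circ\varphi)|_p>0\}$ with $\mu|_{D_k}\ll\bm\eta_k^\#$, such that
\[
\left\|\frac{\xi_k((\varphi\circ\gamma)'_t)}{|\gamma_t'|}\right\|_{L^\infty(\bm\pi^{(k)}_x)}=|D(\xi_k\circ\varphi)|_p(x)\qquad\mu\text{-a.e. }x\in D_k,
\]
where $\bm\pi^{(k)}_x$ is the disintegration of $|\gamma_t'|\ud t\ud\bm\eta_k$. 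I then form the combined plan
\[
\bm\eta=\sum_{k\in\N} c_k\bm\eta_k,\qquad c_k:=2^{-k}\left(1+\bm\eta_k(C(I;X))+\left\|\tfrac{\ud\bm\eta_k^\#}{\ud\mu}\right\|_{L^q(\mu)}\right)\inv,
\]
following the weighting strategy of Claim~1 in the proof of Theorem \ref{thm:curvewise-p=1}, and set $D:=\bigcup_k D_k$. The choice of $c_k$ guarantees that $\bm\eta$ is a finite measure and that $\bm\eta^\#=\sum_k c_k\bm\eta_k^\#$ has $L^q$-density, hence $\bm\eta$ is a $q$-plan; moreover $\mu|_D\ll\bm\eta^\#$ follows from $\mu|_{D_k}\ll\bm\eta_k^\#\le c_k\inv\bm\eta^\#$.

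Since $c_k\bm\eta_k\le\bm\eta$, the monotonicity in Lemma \ref{lem:uppergradient}(4) combined with the upper bound in Lemma \ref{lem:uppergradient}(3), applied to $\xi_k\circ\varphi$, upgrades the above identity to
\[
\left\|\frac{\xi_k((\varphi\circ\gamma)'_t)}{|\gamma_t'|}\right\|_{L^\infty(\bm\pi_x)}=|D(\xi_k\circ\varphi)|_p(x)\quad \mu\textrm{-a.e. }x\in D_k,
\]
where $\bm\pi_x$ denotes the disintegration of $|\gamma_t'|\ud t\ud\bm\eta$; on $X\setminus D_k$ the left-hand side is $\le|D(\xi_k\circ\varphi)|_p=0$ $\mu$-a.e. by Lemma \ref{lem:uppergradient}(3). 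Thus $\Phi_{\xi_k}$ is a $\mu$-representative of $|D(\xi_k\circ\varphi)|_p$ for every $k$.

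The final and most delicate step is to pass from the dense family $\{\xi_k\}$ to arbitrary $\xi\in(\R^N)^*$; the difficulty is that each $|D(\xi\circ\varphi)|_p$ is only defined up to a $\xi$-dependent $\mu$-null set. I would argue using $L^p$-continuity: the map $\xi\mapsto\Phi_\xi(x)$ is a seminorm for each $x$, and Lemma \ref{lem:uppergradient}(3) gives
\[
\Phi_{\xi-\xi'}\le|D((\xi-\xi')\circ\varphi)|_p\le|\xi-\xi'|\,g\quad\mu\textrm{-a.e.,}
\]
where $g\in L^p(\mu)$ is a common $p$-weak upper gradient of the components of $\varphi$. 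Hence choosing $\xi_{k_j}\to\xi$ gives $\Phi_{\xi_{k_j}}\to\Phi_\xi$ in $L^p(\mu)$, while the same estimate applied to the minimal gradients yields $|D(\xi_{k_j}\circ\varphi)|_p\to|D(\xi\circ\varphi)|_p$ in $L^p(\mu)$. Since $\Phi_{\xi_{k_j}}=|D(\xi_{k_j}\circ\varphi)|_p$ $\mu$-a.e. for each $j$, the two $L^p$-limits coincide, yielding $\Phi_\xi=|D(\xi\circ\varphi)|_p$ $\mu$-a.e., as required. The main obstacle is precisely this combined handling of the $\xi$-dependent null sets together with the verification that the single plan $\bm\eta$ controls all directional gradients $|D(\xi\circ\varphi)|_p$ simultaneously.
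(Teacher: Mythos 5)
Your proposal is correct and is essentially the paper's own proof: apply Theorem \ref{thm:curvewise-p=1} to $\xi_k\circ\varphi$ for a countable dense set of directions, sum the resulting $q$-plans with normalizing weights, use Lemma \ref{lem:uppergradient} (parts (3) and (4)) to retain the equalities for the combined plan, and extend to all $\xi\in(\R^N)^*$ via the bound $|D((\xi-\xi')\circ\varphi)|_p\le|\xi-\xi'|\sum_k|D\varphi_k|_p$. The only differences are minor: the paper carries out the final extension pointwise $\mu$-a.e. (estimating the difference quotients $\bm\pi_x$-a.e. along curves) rather than through $L^p$-limits after localization, and its normalizing constants also include the total mass $\bm\pi^k(AC(I;X)\times I)$ so that $\ud\bm\pi=|\gamma_t'|\,\ud t\,\ud\bm\eta$ is a finite measure and the disintegration framework applies --- a detail your weights $c_k$ should likewise absorb.
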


\begin{proof}
Let $\{\xi_0,\xi_1,\ldots\}\subset (\R^N)^*$ be a countable dense set and, for each $n\in \N$, choose Borel representatives $\rho_n$ of $|D(\xi_n \circ \varphi)|_p$ and denote $D_n:=\{ \rho_n>0\}$. By Theorem \ref{thm:curvewise-p=1} and the Borel regularity of $\mu$, for each $n\in \N$ there exists a $q$-plan $\bm\eta_n$ and a Borel set $B_n\subset D_n$ with $\mu(D_n\setminus B_n)=0$ such that the disintegration $\{\bm\pi_x^n\}$ of $\ud\bm\pi^n:=|\gamma_t'|\ud\bm\eta_n\ud t$ satisfies
\begin{align*}
\left\|\frac{\xi_n((\varphi\circ\gamma)'_t)}{|\gamma'_t|}\right\|_{L^\infty(\bm \pi_x^n)}=\rho_n(x)
\end{align*}
for every $x\in B_\xi$.

Define $D:=\bigcup_{n\in\N}B_n$ and $\bm \eta = \sum_{n} 2^{-n}a_n^{-1} \bm \eta_{n}$, where $a_n=1+ \bm \eta_n(C(I;X)) + ||\frac{d\bm \eta_{n}^\#}{d\mu}||_{L^q} + \bm\pi^n(AC(I;X)\times I)$. Then $\mu|_D\ll\bm\eta^\#$. Define $\Phi_\xi(x)$ as in Equation \eqref{eq:planeq}. By Lemma \ref{lem:uppergradient} we have that $\rho_n=\Phi_{\xi_n}$ $\mu$-a.e. on $X$ and thus the claim holds for every $\xi_n\in A$. 

We prove the claim in the statement for arbitrary $\xi\in(\R^N)^*$. Let $(\xi_{n_l})_l\subset A$ be a sequence with $|\xi_{n_l}-\xi|<2^{-l}$ and denote by $\varphi_1,\ldots,\varphi_N\in N^{1,p}(X)$ the component functions of $\varphi$. Since
\begin{align*}
||D(\xi_{n_l}\circ\varphi)|_p-|D(\xi\circ\varphi)|_p|\le |D((\xi_{n_l}-\xi)\circ\varphi)|_p\le |\xi_{n_l}-\xi|\sum_k^N|D\varphi_k|_p
\end{align*}
$\mu$-a.e., we have that $\displaystyle |D(\xi\circ\varphi)|_p=\lim_{l\to\infty}\Phi_{\xi_{n_l}}$  $\mu$-a.e. on $X$. In particular, $|D(\xi\circ\varphi)|_p=0$ $\mu$-a.e. on $X\setminus D$. On the other hand, for $p$-a.e. curve $\gamma$, we have that $$|\xi_{n_l}((\varphi\circ\gamma)_t')-\xi((\varphi\circ\gamma)_t')|\le |\xi_{n_l}-\xi|\sum_k^N|D\varphi_k|_p(\gamma_t)|\gamma_t'| \quad \textrm{a.e. }t.$$ 
Since $\bm\eta$ is a $q$-plan with $\mu|_D\ll\bm\eta^\#$ this implies that 
\begin{align*}
\limsup_{l\to\infty}\left|\frac{\xi_{n_l}((\varphi\circ\gamma)'_t)}{|\gamma'_t|}-\frac{\xi((\varphi\circ\gamma)'_t)}{|\gamma'_t|}\right|\le \limsup_{l\to\infty}|\xi_{n_l}-\xi|\sum_k^N|D\varphi_k|_p(x)=0\quad\bm\pi_x-\textrm{a.e. }(\gamma,t)
\end{align*}
for $\mu$-a.e. $x\in D$. Thus $\Phi_\xi(x)=\lim_{l\to\infty}\Phi_{\xi_{n_l}}(x)$ for $\mu$-a.e. $x\in D$. Since $\Phi_\xi=0=|D(\xi\circ \varphi)|_p$ $\mu$-a.e. on $X\setminus D$, the proof is completed.
\end{proof}

In the next two lemmas we collect the properties of the Borel function constructed above.
\begin{lemma}\label{lem:representative} The map $\Phi:(\R^{N})^* \times X \to \R$ given by \eqref{eq:planeq} is Borel and satisfies the following.
	\begin{enumerate}
		\item[(1)] For every $\xi\in (\R^N)^*$, $\Phi_\xi:=\Phi(\xi,\cdot)$ is a representative of $|D(\xi\circ\varphi)|_p$, and 
		\item[(2)] for every $x\in X$, $\Phi^x:=\Phi(\cdot,x)$ is a seminorm in $(\R^N)^*$.
	\end{enumerate}
	Moreover, there exists a path family $\Gamma_B$ with $\Mod_p(\Gamma_B)=0$ and for each $\gamma\in AC(I;X) \setminus \Gamma_B$ a null-set $E_\gamma \subset I$ so that, for {\bf every } $\xi\in(\R^N)^*$, we have 
	\begin{itemize}
		\item[(3)] $\Phi_\xi$ is an upper gradient of $\xi\circ\varphi$ along $\gamma$, and 
		\item[(4)]\label{eq:derivativebound} $|(\xi \circ \varphi \circ \gamma)_t'| \leq \Phi_\xi(\gamma_t) |\gamma'_t|$ for $t\notin E_\gamma$.
	\end{itemize}
\end{lemma}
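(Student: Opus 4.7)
Parts (1) and (2) should follow almost immediately from the construction. For (1) there is nothing to prove beyond what Lemma~\ref{lem:canonicalplan} already gives. For (2), fix $x \in X$. For $\bm\pi_x$-a.e. $(\gamma, t)$ the metric derivative $|\gamma'_t| > 0$ exists and the componentwise derivatives $(\varphi\circ\gamma)'_t \in \R^N$ exist (by Corollary \ref{cor:borel} and the disintegration's concentration on $\Diff$-like sets), so $\xi \mapsto \xi((\varphi\circ\gamma)'_t)/|\gamma'_t|$ is linear in $\xi$. Since the $L^\infty(\bm\pi_x)$-norm of a linear family is a seminorm, $\Phi^x$ is a seminorm on $(\R^N)^*$.

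For joint Borel measurability I would fix a countable dense set $A = \{\xi_n\}_{n \ge 1} \subset (\R^N)^*$. Each $\Phi_{\xi_n}$ is Borel (as the minimal $\bm\eta$-upper gradient provided by Lemma~\ref{lem:uppergradient} and the $L^\infty$-essential supremum of a Borel function against the Borel disintegration $\{\bm\pi_x\}$). The seminorm property (2) combined with the pointwise bound
\[
\Phi^x(\xi - \eta) \le |\xi - \eta|\sum_{k=1}^N \Phi^x(e_k^*)
\]
(which follows from $|\xi((\varphi\circ\gamma)'_t)| \le |\xi| \sum_k |\varphi_k'|$ inside the $L^\infty$-norm) shows that for each $x$ the map $\xi \mapsto \Phi^x(\xi)$ is continuous, and bounded by the Borel function $x \mapsto \sum_k \Phi^x(e_k^*)$. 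Hence $\Phi(\xi, x) = \lim_{\xi_n \to \xi, \xi_n \in A} \Phi_{\xi_n}(x)$ for every $x$, which exhibits $\Phi$ as a pointwise limit of Borel functions and therefore Borel.

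For (3)--(4), the key is to produce a single exceptional family $\Gamma_B$ (and a single null set $E_\gamma$ in $I$ for each surviving $\gamma$) that works simultaneously for all $\xi \in (\R^N)^*$. I would start from the Borel null sets $\Gamma_n$ associated to $\xi_n \in A$ on which $\Phi_{\xi_n}$ fails to be an upper gradient of $\xi_n \circ \varphi$, together with a further null family $\Gamma_0$ controlling absolute continuity of each component $\varphi_k \circ \gamma$ and of the curve itself (so that $(\varphi\circ\gamma)'_t$ and $|\gamma'_t|$ exist a.e.\ and $(f\circ\gamma)'_t = \mathrm{d} f((\varphi\circ\gamma)'_t)$-type identities hold). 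Set $\Gamma_B := \Gamma_0 \cup \bigcup_n \Gamma_n$, which still has zero $p$-modulus by countable subadditivity. For $\gamma \notin \Gamma_B$ let $E_\gamma \subset I$ be the union of the measure-zero sets on which the derivatives above fail to exist or the derivative bound fails for some $\xi_n$. For a general $\xi$ pick $\xi_{n_l} \to \xi$ from $A$; then for $\gamma \notin \Gamma_B$ and $t \notin E_\gamma$,
\[
(\xi_{n_l} \circ \varphi \circ \gamma)'_t = \xi_{n_l}((\varphi\circ\gamma)'_t) \longrightarrow \xi((\varphi\circ\gamma)'_t) = (\xi \circ \varphi \circ \gamma)'_t,
\]
while $\Phi_{\xi_{n_l}}(\gamma_t) \to \Phi_\xi(\gamma_t)$ by continuity in $\xi$. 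Passing to the limit in the bound $|(\xi_{n_l}\circ\varphi\circ\gamma)'_t| \le \Phi_{\xi_{n_l}}(\gamma_t)|\gamma'_t|$ yields (4). Integrating (4) over subcurves and using the absolute continuity of $\xi \circ \varphi \circ \gamma$ (which follows from integrability of $\Phi_\xi(\gamma_t)|\gamma'_t|$ on $\gamma \notin \Gamma_B$) produces (3).

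The principal obstacle I anticipate is exactly this uniformity across uncountably many $\xi$: the $\bm\eta$-upper gradient property supplied by Lemma~\ref{lem:uppergradient} is proved one $\xi$ at a time, so the passage to a \emph{single} exceptional family $\Gamma_B$ must rely essentially on the continuous dependence of $\Phi^x(\xi)$ on $\xi$ together with the linearity $(\xi \circ \varphi \circ \gamma)'_t = \xi((\varphi \circ \gamma)'_t)$ valid whenever the componentwise derivative exists. Care is also needed to verify that the $L^\infty$-essential supremum in \eqref{eq:planeq} defines a genuinely Borel (not merely $\mu$-measurable) function; this is where restricting to the Borel representative $D$ and to the Borel set where the relevant derivatives exist (cf.\ Corollary \ref{cor:borel}) is crucial.
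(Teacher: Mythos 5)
Your proposal is correct and follows essentially the same route as the paper: parts (1)–(2) read off from Lemma \ref{lem:canonicalplan} and formula \eqref{eq:planeq}, and the uniform statements (3)–(4) obtained by fixing a countable dense set of functionals, taking the (countable) union of the exceptional null-modulus families and null sets in $I$, and extending to all $\xi\in(\R^N)^*$ via the linearity $(\xi\circ\varphi\circ\gamma)_t'=\xi((\varphi\circ\gamma)_t')$ and the Lipschitz continuity of $\xi\mapsto\Phi^x(\xi)$. The only (inessential) deviation is that you obtain (3) by integrating (4) along the curve, whereas the paper passes to the limit in the upper-gradient inequality for $\xi_l\to\xi$ directly and then gets (4) from continuity on a full-measure set of times.
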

\begin{proof}[Proof of Lemma \ref{lem:representative}]

Borel measurability follows from Lemmas \ref{lem:measurable-integral} and Corollary \ref{cor:borel}, and property (1) follows from Lemma \ref{lem:canonicalplan}, while (2) follows from Equation \eqref{eq:planeq}.

Fix a countable dense set $A \subset (\R^N)^*$ and one $\xi \in A$. We have that $\Phi(\xi,x)$ is a weak upper gradient for $\xi \circ\varphi$, so there is  family of curves $\Gamma_\xi$ so that $\xi \circ \varphi$ is absolutely continuous with upper gradient $|D(\xi \circ \varphi)|_p$ on each $\gamma \in \Gamma_i$, and so that $\Mod_p(\Gamma \setminus \Gamma_i) =0$. Let $\Gamma' = \cap_{\xi \in A} \Gamma_\xi$, whose complement $\Gamma_B =AC(I;X) \setminus \Gamma'$ has null $p$-modulus. 

Since $\xi\circ\varphi$ has as upper gradient $\Phi_\xi(x)$ on $\gamma$ for each $\xi \in A$, then by considering a sequence $\xi_l\to \xi$ for $\xi \not\in A$ we obtain the same conclusion. 

Finally, fixing an absolutely continuous curve $\gamma\not\in \Gamma_B$ there is a full measure set $F_\gamma^1$, where the components of $\varphi\circ\gamma_t$ are differentiable at $t\in F_\gamma^1$. Both sides of (4) are continuous and defined in $\xi$ on the set $F_\gamma^1$. Since $\Phi_\xi(x)$ is an upper gradient for $\xi\circ\varphi$ along $\gamma$, there is a full measure subset $F_\gamma\subset F_\gamma^1$, where the inequality holds for $\xi \in A$. Continuity then extends it for all $\xi \in (\R^N)^*$ and $t\in F_\gamma$ and the claim follows by setting $E_\gamma=I \setminus F_\gamma$.

\end{proof}

Next, we collect some basic properties of the canonical minimal gradient. Let $\Phi$ be the map given by \eqref{eq:planeq}.

\begin{lemma}\label{lem:properties}
	Set $\displaystyle I(\varphi)(x):=\inf_{\|\xi\|_*=1}\Phi^x(\xi)$ for $\mu$-a.e. $x\in X$. Then
\begin{enumerate}
	\item $\displaystyle I(\varphi)=\essinf_{\|\xi\|_*=1}\,|D(\xi\circ\varphi)|_p$ $\mu$-a.e. in $ X$;
	\item If $U\subset X$ and $\bm\xi:U\to (\R^N)^*$ are Borel, then $\Phi^x(\bm\xi_x)=0$ $\mu$-a.e. $x\in U$ if and only if $\bm\xi_{\gamma_t}((\varphi\circ\gamma)_t')=0$ a.e. $t\in \gamma\inv(U)$, for $p$-a.e. absolutely continuous $\gamma$ in $X$.
	\item If $\varphi$ is $p$-independent on $U$ and $f\in \Ne pX$, then the $p$-weak differential $\ud f$ with respect to $(U,\varphi)$, if it exists, must be unique.
\end{enumerate}
\end{lemma}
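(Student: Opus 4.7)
My plan is to prove the three parts in order, leveraging the plan $\bm\eta$ from Lemma \ref{lem:canonicalplan} together with the seminorm property of $\Phi^x$ (Lemma \ref{lem:representative}(2)) and the preliminary bound $\Phi^x(\xi)\le C\|\xi\|_*M(x)$, where $M(x):=\sum_{i=1}^N|D\varphi_i|_p(x)$; this bound follows from expanding $\xi$ in a basis, applying the seminorm inequality, and identifying $\Phi^x(e_i^*)=|D\varphi_i|_p(x)$ via Lemma \ref{lem:representative}(1). For part (1), I would fix a countable dense set $\{\xi_n\}\subset S^{N-1}$. Since $\Phi^x$ is continuous in $\xi$, the pointwise infimum $\inf_{\|\xi\|_*=1}\Phi^x(\xi)$ coincides with $\inf_n\Phi^x(\xi_n)$ for every $x$, while by Appendix \ref{subsec:esssup} the essential infimum $\essinf_{\|\xi\|_*=1}|D(\xi\circ\varphi)|_p$ coincides $\mu$-a.e. with $\inf_n|D(\xi_n\circ\varphi)|_p$. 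Lemma \ref{lem:representative}(1) identifies the two sequences pointwise $\mu$-a.e. for each $n$, so a countable union yields the equality of part (1).

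For the backward direction of part (2), I assume $\bm\xi_{\gamma_t}((\varphi\circ\gamma)_t')=0$ a.e.\ $t\in\gamma\inv(U)$ for $p$-a.e.\ $\gamma$. By Remark \ref{rmk:modnull} this extends to $\bm\eta$-a.e.\ $\gamma$; integrating in $t$ and disintegrating $\ud\bm\pi=|\gamma_t'|\ud t\,\ud\bm\eta$ with respect to $e$, and using that $\bm\pi_x$ concentrates on $\{\gamma_t=x\}$, I obtain $\bm\xi_x((\varphi\circ\gamma)_t')=0$ for $\bm\pi_x$-a.e.\ $(\gamma,t)$ for $e_*\bm\pi$-a.e.\ $x\in U$. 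Hence $\Phi^x(\bm\xi_x)=0$ for $e_*\bm\pi$-a.e.\ $x\in U$, which is $\mu|_{U\cap D}$-a.e.\ since $\mu|_D\ll\bm\eta^\#=e_*\bm\pi$; outside $D$, $\Phi^x\equiv 0$ by the definition of $\Phi$.

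The forward direction of part (2) is the main obstacle, as $\bm\xi$ need not be bounded. My plan is to decompose $U=\bigcup_m U_m$ with $U_m:=\{\|\bm\xi\|_*\le m\}$ and, on each $U_m$, approximate $\bm\xi$ uniformly by simple Borel maps $\bm\xi^{(n)}=\sum_k\xi^{(n)}_k\chi_{A^{(n)}_k}$ satisfying $\|\bm\xi^{(n)}-\bm\xi\|_{L^\infty(U_m)}\le 1/n$. The preliminary bound then gives $|D(\xi^{(n)}_k\circ\varphi)|_p(x)=\Phi^x(\xi^{(n)}_k)\le\Phi^x(\bm\xi_x)+\Phi^x(\xi^{(n)}_k-\bm\xi_x)\le CM(x)/n$ $\mu$-a.e.\ on $A^{(n)}_k$. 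Applying the upper gradient inequality to every $\xi^{(n)}_k\circ\varphi$ simultaneously (discarding a countable union of null-modulus curve families) then yields $|\bm\xi^{(n)}_{\gamma_t}((\varphi\circ\gamma)_t')|\le CM(\gamma_t)|\gamma_t'|/n$ a.e.\ $t\in\gamma\inv(U_m)$, for $p$-a.e.\ $\gamma$. Letting $n\to\infty$, the left-hand side tends to $\bm\xi_{\gamma_t}((\varphi\circ\gamma)_t')$ by uniform convergence on $U_m$ and the a.e.\ finiteness of $(\varphi\circ\gamma)_t'$, while the right-hand side tends to $0$; taking a countable union over $m$ completes the forward direction.

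For part (3), given two $p$-weak differentials $\ud f_1,\ud f_2$ of $f$ with respect to $(U,\varphi)$, I would subtract the defining identity \eqref{eq:differential} and divide by $|t-s|$ as $s\to t$; using that the vector derivative $(\varphi\circ\gamma)_t'$ exists a.e.\ for $p$-a.e.\ $\gamma$, this shows that $\bm\xi:=\ud f_1-\ud f_2$ satisfies $\bm\xi_{\gamma_t}((\varphi\circ\gamma)_t')=0$ a.e.\ $t\in\gamma\inv(U)$ for $p$-a.e.\ $\gamma$. Part (2) then forces $\Phi^x(\bm\xi_x)=0$ $\mu$-a.e.\ on $U$, while $p$-independence and part (1) give $I(\varphi)(x)>0$ $\mu$-a.e.\ on $U$, making $\Phi^x$ a norm equivalent to $\|\cdot\|_*$ there. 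Consequently $\bm\xi_x=0$ $\mu$-a.e.\ on $U$, yielding uniqueness.
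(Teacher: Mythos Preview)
Your argument is correct. Parts (1), (3), and the backward direction of (2) match the paper's proof essentially verbatim. The one genuine difference is the forward direction of (2).

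For that direction the paper simply invokes Lemma \ref{lem:representative}(4): outside a single null-modulus family $\Gamma_B$ and, for each $\gamma\notin\Gamma_B$, a single null set $E_\gamma\subset I$, the inequality $|\xi((\varphi\circ\gamma)_t')|\le \Phi^{\gamma_t}(\xi)|\gamma_t'|$ holds for \emph{every} $\xi\in(\R^N)^*$. One may therefore substitute $\xi=\bm\xi_{\gamma_t}$ directly and, after discarding the curves spending positive length in the $\mu$-null set $\{\Phi^x(\bm\xi_x)\ne 0\}$, conclude. Your approach instead rebuilds this uniform-in-$\xi$ bound from scratch: truncate to $U_m=\{\|\bm\xi\|_*\le m\}$, approximate uniformly by simple covector fields, apply the upper-gradient inequality to each of the (countably many) constant pieces, and pass to the limit using the Lipschitz estimate $\Phi^x(\cdot)\le CM(x)\|\cdot\|_*$. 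Both routes are valid; the paper's is shorter because the uniform curvewise bound was already packaged in Lemma \ref{lem:representative}(4), while yours is more self-contained and would work even without that lemma.

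One small imprecision in part (1): you appeal to Appendix \ref{subsec:esssup} to identify $\essinf_{\|\xi\|_*=1}|D(\xi\circ\varphi)|_p$ with $\inf_n|D(\xi_n\circ\varphi)|_p$ for your \emph{chosen} dense sequence, but Lemma \ref{lem:essup} only asserts the existence of \emph{some} extracting sequence. The identification for your specific $\{\xi_n\}$ follows instead from the continuity of $\Phi^x$ in $\xi$ (which you already invoke) together with Lemma \ref{lem:representative}(1); this is precisely the argument the paper gives.
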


\begin{proof}[Proof of Lemma \ref{lem:properties}]
First, we show (1). For any $\xi$ in the unit sphere of $(\R^N)^*$, we have $\Phi_\xi(x)=|D(\xi \circ \varphi)|_p$ almost everywhere by Lemma \ref{lem:canonicalplan}. Taking an infimum on the left then gives
$$\inf_{\|\zeta\|_*=1} \Phi_\zeta(x)\leq |D(\xi \circ \varphi)|_p,$$
i.e. $\inf_{\|\zeta\|_*=1} \Phi_\zeta(x) \leq \essinf_{\|\xi\|_*=1}\, |D(\xi \circ \varphi)|_p$ almost everywhere by the definition of an essential infimum, see Definition \ref{def:essup}.  

On the other hand, if $\xi_n$, for $n\in\N$, is a countably dense collection in the unit sphere of $(\R^N)^*$, then we have $\Phi_{\xi_n}(x)=|D(\xi_n \circ \varphi)|_p\geq \essinf_{\|\xi\|_*=1} \,|D(\xi \circ \varphi)|_p$ almost everywhere. By intersecting the sets where this holds for different $\xi_n$ and since the collection is countable, we have that these hold simultaneously on a full-measure set.
 Specifically, $\inf_{n\in\N} \Phi_{\xi_n}(x) \geq \essinf_{\|\xi\|_*=1}\, |D(\xi \circ \varphi)|_p$. By Lemma \ref{lem:representative}, we have that $\xi\to \Phi_\xi(x)$ is Lipschitz. Thus, almost everywhere,
$$\inf_{\|\xi\|_*=1} \Phi(\xi,x) = \inf_{n\in\N}\Phi(\xi_n,x)\geq \essinf_{\|\xi\|_*=1} |D(\xi \circ \varphi)|_p,$$ which gives the claim.

Next fix $\bm\xi:U \to (\R^N)^*$ as in (2). Assume first that $\Phi^x(\bm\xi_x)=0$ for $\mu$-a.e. $x\in U$. Set $C = \{x : \Phi^{x}(\bm\xi_{\gamma_x})\neq 0\}$ with $\mu(C) = 0$. Since $\mu(C)=0$, we have $\Mod_p(\Gamma^+_C)=0$. Let $\Gamma_B$ be the family of curves from Lemma \ref{lem:representative}. We will show the claim for $\gamma\in AC(I;X)\setminus (\Gamma_B\cup \Gamma^+_C)$. By Lemma \ref{lem:representative}(4), we obtain a null set $E_\gamma$ so that for any $\xi\in (\R^N)^*$ we have
$|(\xi \circ \varphi \circ \gamma)_t'| \leq \Phi_\xi(\gamma_t) |\gamma'_t|$ and $t\notin E_\gamma$. Let $F_\gamma$ be the set of $t\not\in E_\gamma$ so that $|\gamma'_t|>0$ and $\Phi^{\gamma_t}(\bm\xi_{\gamma_t}) \neq 0$.  Since $0=\int_{\gamma} 1_C \ud s \geq \int_{F_ \gamma} |\gamma_t'| \ud t$, we have that the measure of $F_\gamma$ is null. 
Now, if $t\not\in E_\gamma \cup F_\gamma$, then either $|\gamma'_t|=0$ (and the condition is vacuously satisfied), or the claim follows from $\Phi^{\gamma_t}(\bm\xi_{\gamma_t})=0$.

On the other hand, suppose that $\bm\xi_{\gamma_t}((\varphi\circ\gamma)_t')=0$ for a.e. $t\in \gamma\inv(U)$ and $p$-a.e. absolutely continuous curve $\gamma$. Let $\bm\eta$ be the $q$-plan from Lemma \ref{lem:canonicalplan} and $\{\bm\pi_x\}$ the disintegration given there. The equality $\bm\xi_{\gamma_t}((\varphi\circ\gamma)_t')=0$ holds then for $\bm\eta$-a.e. curve and a.e. $t\in\gamma\inv(U)$, since $\bm \eta$ is a $q$-plan (recall Remark \ref{rmk:modnull}). Then for $\mu$-a.e. $x$ we have $\Phi_{\bm\xi}(x)=0$ or we have $\Phi_{\bm\xi_x}(x)=\left\|\frac{\bm \xi_x((\varphi\circ\gamma)'_t)}{|\gamma'_t|}\right\|_{L^\infty(\bm \pi_x)}$. 
In the latter case, since $\bm\eta$ is a $q$-plan, we have for $\mu$-a.e. such $x$ and $\bm\pi_x$-a.e. $(\gamma,t) \in {\rm Diff}(f)\cap e^{-1}(x)$  that $\bm\xi_{x}((\varphi\circ\gamma)_t')=0$. Thus, the claim follows together with the properties of disintegrations and Corollary \ref{cor:borel}, since the essential supremum then vanishes. 

The final claim about uniqueness follows since, if $\ud_i f$ were two $p$-weak differentials for $i=1,2$, then we could define $\bm\xi_x=\frac{\ud_1 f-\ud_2 f}{\|\ud_1 f-\ud_2 \|_{x,*}}$ when $\ud_1 f \neq \ud_2 f$ and otherwise $\bm\xi_x = 0$. We then get immediately form the definition and the second part that $\Phi^x(\bm\xi) = 0$, for $\mu$-a.e. $x\in U$. This would contradict independence.
 
\end{proof}

\subsection{Charts} The presentation here should be compared to \cite[Section 4]{che99}, and specifically to the proof of \cite[Theorem 4.38]{che99}, where similar arguments are employed. We first consider 0-dimensional $p$-weak charts. These correspond to regions of the space where no curve spends positive time. 
\begin{prop}\label{prop:zerochart}
Suppose $(U,\varphi)$ is a 0-dimensional $p$-weak chart. Then 
\begin{align}\label{eq:modzero}
\Mod_p(\Gamma_U^+)=0.
\end{align}
Conversely, if $U\subset X$ is Borel and satisfies \eqref{eq:modzero}, then $(U,0)$ is a 0-dimensional $p$-weak chart of $X$. 
\end{prop}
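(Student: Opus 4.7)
The convention stated after Definition \ref{def:pweakchart} identifies ``$(U, 0)$ being a $0$-dimensional $p$-weak chart'' with the condition that $|Df|_p = 0$ $\mu$-a.e. on $U$ for every $f \in \LIP_b(X)$. The proposition thus amounts to the equivalence of this condition with $\Mod_p(\Gamma_U^+) = 0$, and the plan is to prove each implication by direct manipulation of upper gradients along curves.

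For the converse direction, the plan is to modify an arbitrary upper gradient on $U$ without breaking the gradient inequality. Assume $\Mod_p(\Gamma_U^+) = 0$, fix $f \in \LIP_b(X) \subset N^{1,p}(X)$ with an $L^p$-upper gradient $g$, and set $\tilde g := g\chi_{X \setminus U}$. For any $\gamma \in AC(I;X) \setminus \Gamma_U^+$ the identity $\int_\gamma \chi_U \ud s = \int_{\gamma\inv(U)} |\gamma_t'| \ud t = 0$ forces $|\gamma_t'| = 0$ for a.e. $t \in \gamma\inv(U)$, so $\int_\gamma g\chi_U \ud s = 0$ and hence $\int_\gamma \tilde g \ud s = \int_\gamma g \ud s \geq |f(\gamma_1) - f(\gamma_0)|$. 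Since $\Gamma_U^+$ has zero $p$-modulus, $\tilde g \in L^p(X)$ is a $p$-weak upper gradient of $f$, and minimality yields $|Df|_p \leq \tilde g = 0$ $\mu$-a.e. on $U$.

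For the forward direction, the plan is to test the chart condition against a countable family of Lipschitz functions whose derivatives along curves reproduce the metric speed. Fix a dense sequence $\{y_n\} \subset X$ and define the truncated distance functions $f_{n,k}(x) := (k - d(x, y_n))_+ \in \LIP_b(X)$ for $n, k \in \N$. By hypothesis $|Df_{n,k}|_p = 0$ $\mu$-a.e. on $U$, and for $p$-a.e. $\gamma$ one has the curvewise bound $|(f_{n,k} \circ \gamma)_t'| \leq |Df_{n,k}|_p(\gamma_t) |\gamma_t'|$. Intersecting the resulting $p$-exceptional families over the countable collection produces a single $p$-null family $\Gamma_B$ outside of which $(f_{n,k} \circ \gamma)_t' = 0$ a.e. on $\gamma\inv(U)$ simultaneously for all $(n, k)$.

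Suppose towards contradiction that $\Mod_p(\Gamma_U^+) > 0$, and choose $\gamma \in \Gamma_U^+ \setminus \Gamma_B$. Since $\gamma(I)$ is compact and hence bounded, for each fixed $n$ and all sufficiently large $k$ the identity $f_{n,k}(\gamma_t) = k - d(\gamma_t, y_n)$ holds on $I$, forcing $(d(\cdot, y_n) \circ \gamma)_t' = 0$ a.e. on $\gamma\inv(U)$ for every $n$. The classical formula $|\gamma_t'| = \sup_n |(d(\cdot, y_n) \circ \gamma)_t'|$, valid a.e. for absolutely continuous curves in a separable metric space, then gives $|\gamma_t'| = 0$ a.e. on $\gamma\inv(U)$, contradicting $\int_\gamma \chi_U \ud s > 0$. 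The main obstacle is bridging the a.e.\ vanishing of $|Df|_p$ on $U$ to the vanishing of the metric derivative along curves spending positive length in $U$; this is resolved by combining the truncation device (to remain in $\LIP_b$) with the standard separating-family identity for the metric speed.
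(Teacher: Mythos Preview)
Your proof is correct. The forward direction is essentially the same as the paper's: both pick a countable dense set, use truncated distance functions to stay in $\LIP_b(X)$, invoke the curvewise bound $|(f\circ\gamma)_t'|\le |Df|_p(\gamma_t)|\gamma_t'|$, and finish with the classical identity $|\gamma_t'|=\sup_n|(d(\cdot,y_n)\circ\gamma)_t'|$. Your extra truncation parameter $k$ plays the same role as the paper's observation that, for each $t$, some $x_n$ satisfies $d(\gamma_t,x_n)<1$.

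The converse direction is where your argument genuinely differs. The paper shows $(f\circ\gamma)_t'=0$ a.e.\ on $\gamma^{-1}(U)$ for $p$-a.e.\ $\gamma$ and then appeals to Theorem~\ref{thm:curvewise-p=1} (the curvewise representation of the minimal upper gradient) to conclude $|Df|_p=0$ on $U$. You instead take any $L^p$-upper gradient $g$ of $f$, observe that $\tilde g:=g\chi_{X\setminus U}$ satisfies the upper gradient inequality along every $\gamma\notin\Gamma_U^+$ (since $\int_\gamma g\chi_U\,\ud s=0$ there), and deduce $|Df|_p\le\tilde g=0$ on $U$ directly from minimality. Your route is more elementary and self-contained: it avoids invoking the paper's main disintegration/duality result, at the cost of not illustrating how that machinery applies. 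The paper's route, by contrast, showcases Theorem~\ref{thm:curvewise-p=1} in the simplest possible setting.
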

\begin{proof}
Since $(U,\varphi)$ is a 0-dimensional $p$-weak chart, we have that 
\begin{align}\label{eq:0chart}
|Df|_p=0\quad \mu-\textrm{a.e. in }U
\end{align}
for every $f\in \LIP_b(X)$. Let $\{x_n\}\subset X$ be a countable dense subset, and $f_n:=\max\{1-d(x_n,\cdot),0\}$. By \cite[Thm 1.1.2]{AGS08} (see also its proof) and \eqref{eq:0chart} we have that $$|\gamma_t'|=\sup_n|(f_n\circ\gamma)_t'|\le \sup_n|Df_n|_p(\gamma_t)|\gamma_t'|=0 \quad a.e\ t\in \gamma\inv(U)$$
for $p$-a.e. $\gamma\in AC(I;X)$. It follows that $\int_\gamma\chi_U\ud s=0$ for $p$-a.e. $\gamma\in AC(I;X)$, proving \eqref{eq:modzero}.

In the converse direction, \eqref{eq:modzero} implies, for any $f\in \LIP_b(X)$, that
\[ \int_0^1\chi_U(\gamma_t)|(f\circ\gamma)_t'|\ud t\le \LIP(f)\int_0^1\chi_U(\gamma_t)|\gamma_t'|\ud t=0\]
for $p$-a.e $\gamma\in AC(I;X)$. Thus $|(f\circ\gamma)_t'|=0$ for $p$-a.e. $\gamma\in AC(I;X)$ and a.e. $t\in\gamma\inv(U)$. Then, by Theorem \ref{thm:curvewise-p=1}, together with measurability considerations from Corollary \ref{cor:borel}, this gives $|Df|_p=0$ $\mu$-a.e. on $U$ for every $f\in \LIP_b(X)$, showing that $(U,0)$ is a 0-dimensional $p$-weak chart. 
\end{proof}

For the remainder of this subsection we assume that $N\ge 1$ and that $(U,\varphi)$ is an $N$-dimensional chart of $X$. Denote by  $\Phi$ the canonical minimal gradient of $\varphi$ (see Lemma \ref{lem:canonicalplan}).

\begin{lemma}\label{lem:difflip}
	The function $\xi \mapsto \Phi^x(\xi)$ is a norm on $(\R^N)^*$ for $\mu$-a.e. $x\in U$. Moreover, for every $f\in \LIP(X)$ there exists a $p$-weak differential $\ud f$. That is, a Borel measurable map $\ud f:U\to (\R^N)^*$ satisfying
	\begin{align*}
	(f\circ\gamma)_t'=\ud_{\gamma_t}f((\varphi\circ\gamma)'_t)\quad a.e. \ t\in \gamma\inv(U),
	\end{align*}
	for $p$-a.e. absolutely continuous curve $\gamma$ in $X$. The map $\ud f$ is uniquely determined a.e. in $U$ and satisfies $|Df|_p(x)=\Phi^x(\ud f)$ $\mu$-a.e. in $U$.
\end{lemma}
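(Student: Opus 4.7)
The plan is to construct $\ud f$ via a kernel-finding argument applied to the enlarged map $\psi := (f,\varphi):X\to\R^{N+1}$, exploiting $p$-maximality; throughout, $\tilde\Phi$ will denote the canonical gradient map of $\psi$ given by Lemma \ref{lem:canonicalplan}.

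For the norm assertion, $\Phi^x$ is a seminorm by Lemma \ref{lem:representative}(2), so one only checks positive definiteness: Lemma \ref{lem:properties}(1) gives $\inf_{\|\xi\|_*=1}\Phi^x(\xi) = \essinf_{\|\xi\|_*=1}|D(\xi\circ\varphi)|_p(x)$ $\mu$-a.e., which is positive $\mu$-a.e.\ on $U$ by $p$-independence of $\varphi$. For the differential, observe that $p$-maximality forces $\psi$ to fail $p$-independence on every positive-measure Borel subset $V\subset U$; since this failure means $\{x\in V:\essinf_{\|\xi\|_*=1}|D(\xi\circ\psi)|_p(x)=0\}$ has positive measure, applying this to $V := U\cap\{\essinf>0\}$ forces $\essinf_{\|\xi\|_*=1}|D(\xi\circ\psi)|_p = 0$ $\mu$-a.e.\ on $U$. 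Lemma \ref{lem:properties}(1) applied to $\psi$ then yields $\inf_{\|\xi\|_*=1}\tilde\Phi^x(\xi)=0$ $\mu$-a.e.\ on $U$; compactness of the unit sphere and continuity of $\tilde\Phi^x$ ensure the infimum is attained, so the kernel $K^x := \{\xi:\tilde\Phi^x(\xi)=0\}$ is nontrivial.

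Via the embedding $(\R^N)^*\hookrightarrow(\R^{N+1})^*,\ \eta\mapsto(0,\eta)$, both $\tilde\Phi^x((0,\eta))$ and $\Phi^x(\eta)$ are representatives of $|D(\eta\circ\varphi)|_p$; by Lipschitz continuity in $\eta$ and density in $(\R^N)^*$, they agree for all $\eta$ on a $\mu$-full subset of $U$. Thus $\tilde\Phi^x((0,\cdot))$ equals the norm $\Phi^x(\cdot)$ on $(\R^N)^*$ for $\mu$-a.e.\ $x\in U$, forcing $K^x\cap(\{0\}\times(\R^N)^*)=\{0\}$; the first-coordinate projection restricted to $K^x$ is therefore injective, so $\dim K^x = 1$. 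Hence there is a unique element of $K^x$ of the form $(1,-\ud_x f)$, which defines $\ud_x f\in(\R^N)^*$. Borel measurability of $x\mapsto\ud_x f$ on $U$ follows since the graph $\{(x,\eta)\in U\times(\R^N)^*:\tilde\Phi^x((1,-\eta))=0\}$ is Borel by Lemma \ref{lem:representative} and singleton-valued $\mu$-a.e., via a standard measurable selection theorem.

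Setting $\bm\xi_x := (1,-\ud_x f)$, the equation $\tilde\Phi^x(\bm\xi_x)=0$ and Lemma \ref{lem:properties}(2) applied to $\psi$ directly yield $(f\circ\gamma)_t' = \ud_{\gamma_t}f((\varphi\circ\gamma)_t')$ for a.e.\ $t\in\gamma^{-1}(U)$ and $p$-a.e.\ absolutely continuous curve $\gamma$; uniqueness then follows from Lemma \ref{lem:properties}(3). For the norm identity, the bound $\Phi^x(\ud_x f)\le|Df|_p(x)$ follows from the plan formula for $\Phi^x$ (substituting $\ud_x f((\varphi\circ\gamma)_t')=(f\circ\gamma)_t'$, valid $\bm\pi_x$-a.e.\ since $\gamma_t=x\in U$) together with Lemma \ref{lem:uppergradient}(3) applied to $f$; the reverse bound combines Theorem \ref{thm:curvewise-p=1} applied to $f$ with the pointwise estimate $|\ud_{\gamma_t}f((\varphi\circ\gamma)_t')|\le\Phi^{\gamma_t}(\ud_{\gamma_t}f)|\gamma_t'|$ from Lemma \ref{lem:representative}(4). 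The main obstacle is the third paragraph: pinning down $\dim K^x=1$ precisely and extracting a Borel-measurable selection from the implicit characterization of $\ud_x f$.
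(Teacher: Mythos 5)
Your construction is essentially the paper's: it too passes to the augmented map $\psi=(\varphi,f)$ with canonical minimal gradient $\Psi$, uses $p$-maximality to force $I(\psi)=0$ $\mu$-a.e.\ on $U$ and $p$-independence (so that $\Psi^x(\cdot,0)=\Phi^x(\cdot)$ is a norm) to conclude that $\ker\Psi^x$ is one-dimensional with nonvanishing $f$-coordinate, defines $\ud_xf$ by $\Psi^x(\ud_xf,-1)=0$, obtains Borel measurability by a measurable selection theorem, and gets the curvewise identity and uniqueness from Lemma \ref{lem:properties}(2)--(3). Two differences are worth noting. First, for the identity $|Df|_p=\Phi^x(\ud f)$ the paper argues in one line via the reverse triangle inequality for the seminorm $\Psi^x$, namely $\bigl|\,|Df|_p(x)-\Phi^x(\ud_xf)\,\bigr|\le\Psi^x(\ud_xf,-1)=0$, whereas you re-derive the two inequalities from the plan representations (Lemma \ref{lem:uppergradient}(3) for one direction, Theorem \ref{thm:curvewise-p=1} together with Lemma \ref{lem:representative}(4) for the other); your route is correct modulo the $\bm\pi_x$-a.e.\ substitution bookkeeping you sketch, but it is longer and leans on statements formulated for $f\in N^{1,p}(X)$. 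Second, and this is the one point you should add: a general $f\in\LIP(X)$ need not belong to $N^{1,p}(X)$ when $X$ is unbounded, so, as in the paper, first prove the lemma for $f\in\LIP_b(X)$ and then pass to arbitrary Lipschitz $f$ by the cutoff argument ($f_n=\eta_n f$ with $\eta_n\equiv 1$ on $B(x_0,n-1)$), gluing the resulting differentials by locality, or else invoke the local versions of the relevant statements as in Remark \ref{rmk:local}. With that patch your argument is complete.
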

\begin{remark}\label{rmk:curv-domain} The equation in the statement is an equivalent formulation of the definition of the $p$-weak differential in Definition \ref{def:p-weakdiff}. Indeed, the latter follows by integration of the first, and conversely, the first follows by Lebesgue differentiation. Further, it would be enough to consider only p-a.e. curve $\gamma \in \Gamma_U^+$. Indeed, if a curve $\gamma$ does not spend positive length in the set $U$, then $|\gamma_t'|=0$ for a.e. $t\in \gamma\inv(U)$ and both sides of the equation vanish.
\end{remark}
\begin{proof} First, consider $f\in \LIP_b(X)$.
	Since $\Phi^x$ is a norm if and only if $I(\varphi)(x)>0$, Lemma \ref{lem:properties}(1) and \eqref{eq:weakcha} imply that $\Phi^x$ is a norm for $\mu$-a.e. $x\in U$. 
	
	Next, let $f\in \LIP_b(X)$ and consider the map $\psi=(\varphi,f):X\to \R^{N+1}$. Let $\Psi$ be the canonical minimal gradient of $\psi$. Given $\xi\in (\R^N)^*$ and $a\in \R$, we use the notation $$(\xi,a)\in(\R^{N+1})^*,\quad v=(v',v_{N+1})\mapsto \xi(v')+av_{N+1}.$$
	For $\mu$-a.e. $x\in U$, we have that $\Psi^x(\xi,0)=\Phi^x(\xi)$ and $\Psi^x(0,a)=|a||Df|_p(x)$ for every $\xi\in (\R^N)^*$, $a\in \R$ (cf. Lemma \ref{lem:representative}(3) and (4)). Since $\varphi$ is a chart, we have that $I(\psi)=0$ almost everywhere. Thus, given that $I(\varphi)>0$,  $\ker\Psi^x$ is a 1-dimensional subspace of $(\R^{N+1})^*$. 

	Thus for $\mu$-a.e. $x\in U$ there exists a unique $\xi:=\ud_xf\in (\R^N)^*$ such that $\Psi^x(\ud_xf,-1)=0$, and the map $x\mapsto \ud_xf$ is Borel, see e.g. \cite[Lemma 6.7.1]{bogachev07}. By Lemma \ref{lem:properties}(2), $\ud f:U\to(\R^N)^*$ satisfies
	\[
	0=(\ud_{\gamma_t}f,-1)((\psi\circ\gamma)_t')= \ud_{\gamma_t}f((\varphi\circ\gamma)_t')-(f\circ\gamma)_t' \quad a.e.\ t\in \gamma\inv(U),
	\]
	for $p$-a.e. $\gamma$. Moreover, we have
	\[
	\left| |Df|_p(x)-\Phi^x(\ud_xf)\right|\le \left|\Psi^x(0,-1)-\Psi^x(\ud_xf,0)\right|\le \Psi^x(\ud_xf,-1)=0
	\]
	for $\mu$-a.e. $x\in U$, completing the proof in the case $f\in \LIP_b(X)$.
	
	The case of $f\in \LIP(X)$ follows through localization. Indeed, let $x_0\in X$ be arbitrary, and consider the functions $\eta_n(x) \defeq \min\{\max\{n-d(x_0,d),0\},1\}$ for $n\in \N$. Then, define $f_n = \eta_n f$ so that $f_n|_{B(x_0,n-1)} = f|_{B(x_0,n-1)}$. For each $f_n$ we can define a differential $\ud f_n$, and $\ud f_n|_{B(x_0,\min(m,n)-1)} = \ud f_m |_{B(x_0,\min(m,n)-1)}$ (a.e.) for each $n,m\in \N$. Thus, we can define $\ud f(x) = \ud f_n(x)$ for $x\in B(x_0,n-1)$ with only an ambiguity on a null set. It is easy to check that $\ud f$ is a differential.
\end{proof}

\subsection{Differential and pointwise norm}

Denote $|\cdot|_x:=\Phi^x$ and define $$\Gamma_p(T^*U)=\{\bm\xi:U\to (\R^N)^*\ \textrm{Borel}:\ \|\bm\xi\|_{\Gamma_p(T^*U)}<\infty\},\quad \|\bm\xi\|_{\Gamma_p(T^*U)}:=\left(\int_U|{\bm\xi}|_x^p\ud\mu\right)^{1/p},$$
(with the usual identification of elements that agree $\mu$-a.e.). Then $(\Gamma_p(T^*U),\|\cdot\|_{\Gamma_p(T^*U)})$ is a normed space. Observe that, if $V_j:=U\cap \{I(\varphi)\ge 1/j\}$, the sets $U_j:=V_j\setminus\bigcup_{i<j}V_i$ partition $U$ up to a null-set and we have an isometric identification
\begin{equation}\label{eq:isomdecomp}
\Gamma_p(T^*U)\simeq \bigoplus_{l^p}\Gamma_p(T^*U_j),\quad \textrm{where}\quad \Gamma_p(T^*U_j)\simeq L^p(U_j;(\R^N)^*).
\end{equation}
Thus $(\Gamma_p(T^*U),\|\cdot\|_{\Gamma_p(T^*U)})$ is a Banach space.  Recall, that an $\ell_p$-direct sum of Banach spaces $B_i$ with norms $\|\cdot\|_{B_i}$ with countable index set $I$ is defined by
$$\bigoplus_{\ell^p} B_i \defeq \{(v_i)_{i\in I}: \|(v_i)_{i\in I}\|=\left(\|v_i\|_{B_i}^p \right)^{1/p}, v_i \in B_i\}.$$

\begin{lemma}\label{lem:diff}
	Suppose $(f_n)\subset \LIP_{b}(X)$ is a sequence such that $f_n\to f$ in $L^p(X)$ and $\ud f_n\to \bm\xi$ in $\Gamma_p(T^*U)$ for some $f\in N^{1,p}(X)$ and $\bm\xi\in \Gamma_p(T^*U)$. Then $\bm\xi$ is the (uniquely defined) differential of $f$ in $U$, and
	\begin{align*}
\lim_{n\to\infty}\int_U|D(f_n-f)|_p^p\ud\mu=0.
	\end{align*}
	In particular, $\Phi(\bm\xi,\cdot)=|Df|_p$ $\mu$-a.e. in $U$.
\end{lemma}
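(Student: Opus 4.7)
The strategy is to first establish the $L^p(U)$-convergence $|D(f_n - f)|_p \to 0$ via a lower semicontinuity argument, then to use this bound together with Fuglede's lemma along $p$-a.e.~curve to identify $\bm\xi$ as the $p$-weak differential of $f$ on $U$. Once both are in hand, the pointwise norm identity follows from a triangle inequality between $|Df|_p$ and $|Df_n|_p$.

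For the first step, linearity of the defining equation for the differential together with the uniqueness statement in Lemma \ref{lem:properties}(3) yields $\ud(f_n - f_m) = \ud f_n - \ud f_m$ on $U$, and hence $|D(f_n - f_m)|_p = \Phi^x(\ud f_n - \ud f_m)$ $\mu$-a.e.~on $U$ by Lemma \ref{lem:difflip}. Fix $n$ and pass to a subsequence in $m$, using Fuglede's lemma applied to $f_m \to f$ in $L^p(X)$ and to $\ud f_m \to \bm\xi$ in $\Gamma_p(T^*U)$, so that $\ud f_m \to \bm\xi$ $\mu$-a.e.~on $U$ and, along $p$-a.e.~absolutely continuous curve $\gamma$, $f_m \circ \gamma \to f \circ \gamma$ at Lebesgue-a.e.~$t$ with $|\gamma_t'|>0$. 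A Fatou argument along $p$-a.e.~curve shows that $\liminf_m |D(f_m - f_n)|_p$ is a $p$-weak upper gradient of $f - f_n$, so by minimality $|D(f - f_n)|_p \le \liminf_m |D(f_m - f_n)|_p$ $\mu$-a.e. Since on $U$ this $\liminf$ equals $\Phi^x(\bm\xi - \ud f_n)$, integration gives
\[
\int_U |D(f - f_n)|_p^p \ud\mu \le \|\bm\xi - \ud f_n\|_{\Gamma_p(T^*U)}^p \to 0.
\]

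To identify $\bm\xi$ as the differential, combine the identity $(f_n \circ \gamma)'(t) = \ud f_n|_{\gamma_t}((\varphi\circ\gamma)'(t))$ on $\gamma\inv(U)$ (Lemma \ref{lem:difflip}) with the $p$-weak upper gradient bound $|((f - f_n) \circ \gamma)'(t)| \le |D(f - f_n)|_p(\gamma_t)|\gamma_t'|$ valid along $p$-a.e.~curve, to obtain
\[
|(f\circ\gamma)'(t) - \bm\xi_{\gamma_t}((\varphi\circ\gamma)'(t))| \le \bigl(\Phi^{\gamma_t}(\ud f_n - \bm\xi) + |D(f - f_n)|_p(\gamma_t)\bigr)|\gamma_t'|
\]
for a.e.~$t \in \gamma\inv(U)$. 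Multiplying by $\chi_U(\gamma_t)$, integrating over $[0,1]$ and applying Fuglede's lemma to the $L^p(X)$-convergences $\chi_U \Phi^\cdot(\ud f_n - \bm\xi) \to 0$ (hypothesis) and $\chi_U|D(f - f_n)|_p \to 0$ (from the previous step), we can extract a subsequence along which the right-hand side tends to $0$ along $p$-a.e.~curve. The left-hand side is independent of $n$, so it vanishes; in view of Remark \ref{rmk:curv-domain} this shows that $\bm\xi$ satisfies the defining equation for a $p$-weak differential of $f$ on $U$, and the uniqueness in Lemma \ref{lem:properties}(3) gives $\bm\xi = \ud f$.

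The main obstacle is that the hypothesis controls $\ud f_n$ only on $U$, with no direct information on $|D(f - f_n)|_p$ outside $U$. The Fatou-based lower semicontinuity step circumvents this by producing a \emph{global} $p$-weak upper gradient of $f - f_n$ whose restriction to $U$ is identified with $\Phi^x(\bm\xi - \ud f_n)$; no control off $U$ is needed. Finally, the pointwise norm assertion follows from the triangle inequality $||Df|_p - |Df_n|_p| \le |D(f - f_n)|_p$ applied in $L^p(U)$, combined with $|Df_n|_p = \Phi^x(\ud f_n)$ on $U$ (Lemma \ref{lem:difflip}) and the hypothesis $\ud f_n \to \bm\xi$ in $\Gamma_p(T^*U)$.
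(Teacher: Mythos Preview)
Your proof is correct and uses the same ingredients as the paper's argument (Fuglede's lemma along $p$-a.e.\ curve, Lemma~\ref{lem:difflip}, and lower semicontinuity of the minimal $p$-weak upper gradient under $L^p$-convergence), the only difference being that you reverse the order of the two main steps: the paper first identifies $\bm\xi$ as $\ud f$ directly from curvewise $L^1$-convergence of $(f_n\circ\gamma)'$ and $f_n\circ\gamma$, and only then derives the $L^p(U)$-convergence of $|D(f_n-f)|_p$ via the same lower semicontinuity step you make explicit.
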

\begin{proof}
By Lemma \ref{lem:difflip} and Fuglede's Theorem \cite[Theorem 3(f)]{fuglede1957} (applied to the sequence of functions $h_n=\chi_U(\gamma_t)|\ud_{\gamma_t}f_n-\bm\xi_{\gamma_t}|_{\gamma_t}$ and $f_n$) we can pass to a subsequence so that
\begin{align}\label{eq:curvewiseconv}
\lim_{n\to\infty}&\int_0^1\chi_U(\gamma_t)|(f_n\circ\gamma)_t'-\bm\xi_{\gamma_t}((\varphi\circ\gamma)'_t)|\ud t\le\lim_{n\to\infty} \int_0^1\chi_U(\gamma_t)|\ud_{\gamma_t}f_n-\bm\xi_{\gamma_t}|_{\gamma_t}|\gamma_t'|\ud t=0,\nonumber\\
\lim_{n\to\infty}&\int_0^1|f_n(\gamma_t)-f(\gamma_t)||\gamma_t'|\ud t=0
\end{align}
for $p$-a.e. $\gamma\in AC(I;X)$. Fix a curve $\gamma$ where \eqref{eq:curvewiseconv} holds and $f_n\circ\gamma$, $f\circ\gamma$ are absolutely continuous. We may assume that $\gamma$ is constant speed parametrized. By \eqref{eq:curvewiseconv} $f_n\circ\gamma\to f\circ\gamma$ in $L^1([0,1])$ and $(f_n\circ\gamma)'\to g$ in $L^1(\gamma\inv(U))$, where $g(t):= \chi_U(\gamma_t)\bm\xi_{\gamma_t}((\varphi\circ\gamma)_t')$. It follows that
\[
(f\circ\gamma)_t'=\bm\xi_{\gamma_t}((\varphi\circ\gamma)_t')\quad \textrm{ a.e. }t\in \gamma\inv(U).
\]
This shows that $\bm\xi$ is the differential of $f$, and uniqueness follows from Lemma \ref{lem:properties}(3). The identity  $((f-f_n)\circ\gamma)_t'=(\bm\xi_{\gamma_t}-\ud f_n)((\varphi\circ\gamma)_t')$ a.e. $t\in \gamma\inv(U)$ for  $p$-a.e. $\gamma\in AC(I;X)$, together with Lemma \ref{lem:uppergradient}(3), 
implies that $\Phi^x(\bm\xi-\ud f_n)\le |D(f-f_n)|_p$ for $\mu$-a.e. $x\in U$. By the convergence $\ud (f_m-f_n)\to\bm\xi-\ud f_n$ (as $m\to\infty$) we have that $|D(f_m-f_n)|_p\to_{m\to\infty} \Phi^x(\bm\xi-\ud f_n)$ in $L^p(U)$, and thus $|D(f-f_n)|_p\le \Phi^x(\bm\xi-\ud f_n)$ $\mu$-a.e. in $U$. Thus $|D(f-f_n)|_p=\Phi^x(\bm\xi-\ud f_n)$ converges to zero in $L^p(U)$. The equality $\Phi_{\bm\xi}=|Df|_p$ follows, completing the proof.
\end{proof}

We say that a sequence $(\bm\xi_n)_n\subset\Gamma_p(T^*U)$ is equi-integrable if the sequence $\{|\bm\xi_n|_x\}_n\subset L^p(U)$ is equi-integrable. 
Recall, that a collection of integrable functions $\mathcal{F}$ is called equi-integrable, if there is a $M$ so that $\int_X |f|^p~d\mu \leq M$ for every $f\in \mathcal{F}$ and if for every $\epsilon>0$, there is an $\delta>0$ and a positive measure subset $\Omega_\epsilon$, so that for any measurable set $E$ with $\mu(E)\leq \delta$, we have $\int_{\Omega_\epsilon^c\cup E} |f|^p \ud \mu \leq \epsilon$ for each $f\in \mathcal{F}$. By the Dunford-Pettis Theorem a set of $L^1$ functions is equi-integrable if and only if it is sequentially compact, see for example \cite[Theorem IV.8.9]{dunsch}.

\begin{remark}\label{rmk:equi-int}
	It follows from \eqref{eq:isomdecomp} that, if $(\bm\xi_n)_n\subset \Gamma_p(T^*U)$ is equi-integrable, then there exists $\bm\xi\in\Gamma_p(T^*U)$ such that $\bm\xi_n\rightharpoonup \bm\xi$ weakly in $\Gamma_p(T^*U)$ up to a subsequence and, by Mazur's lemma, that a convex combination of $\bm\xi_n$'s converges to $\bm\xi$ in $\Gamma_p(T^*U)$. Indeed, the $p>1$ case is direct and the $p=1$ case uses the Dunford-Pettis argument above.
\end{remark}

Next, we show that any Sobolev function $f\in N^{1,p}$ has a uniquely defined differential with respect to a chart. Note, however, that here we still postulate the existence of charts.

\begin{proof}[Proof of Theorem \ref{thm:differentiability}]
The measurable norm $|\cdot|_x$ is given by Lemma \ref{lem:difflip}. Let $f\in \Ne pX$. Lemma \ref{lem:properties}(3) implies that $\ud f$, if it exists, is a.e. uniquely determined on $U$. Let $(f_n)\subset \LIP_b(X)$ be such that $f_n\to f$ and $|Df_n|_p\to |Df|_p$ in $L^p(\mu)$ as $n\to\infty$, which exists by \cite[Theorem 1.1]{seb2020}. By Lemma \ref{lem:difflip}, $(\ud f_n)_n\subset \Gamma_p(T^*U)$ is equi-integrable. It follows that there exists $\bm\xi\in\Gamma_p(T^*U)$ such that $\ud f_n\rightharpoonup \bm\xi$ weakly in $L^p(T^*U)$, cf. Remark \ref{rmk:equi-int}. By Mazur's lemma, a sequence $(g_n)\subset \LIP_b(X)$ of convex combinations of the $f_n$'s converges to $f$ in $L^p(\mu)$ and $\ud g_n\to\bm\xi$ in $\Gamma_p(T^*U)$. By Lemma \ref{lem:diff}, $\bm\xi=:\ud f$ is the differential of $f$. The linearity of $f\mapsto \ud f$ follows from the uniqueness of differentials, Lemma \ref{lem:properties}(3).
\end{proof}
The proof above also yields the following corollary. Note that, while the claim initially holds only after passing to a subsequence, since the limit is unique, the convergence holds along the full sequence.
\begin{cor}\label{cor:localnormdensity}
	Let $(U,\varphi)$ be a $p$-weak chart of $X$. Suppose that $f\in \Ne pX$ and $(f_n)\subset \LIP_b(X)$ converges to $f$ in energy, that is, $f_n\to_{L^p} f$ and $|Df_n|_p \to_{L^p} |Df|_p$. Then we have that $\ud f_n \rightharpoonup \ud f$ weakly in $\Gamma_p(T^*U)$. 
\end{cor}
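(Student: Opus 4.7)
I would prove this by the standard subsequence principle: to conclude $\ud f_n \rightharpoonup \ud f$ weakly in $\Gamma_p(T^*U)$ along the whole sequence, it suffices to show that every subsequence of $(\ud f_n)$ admits a further subsequence converging weakly to $\ud f$. The corollary then follows from the observation in the statement that the limit is unique.

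The first step is to establish equi-integrability of $(\ud f_n)$ in $\Gamma_p(T^*U)$. Since $f_n \to f$ in energy, the nonnegative real-valued sequence $|Df_n|_p$ converges to $|Df|_p$ in $L^p(\mu)$, so $(|Df_n|_p^p)$ is equi-integrable in $L^1(\mu)$. By Lemma \ref{lem:difflip} we have $|\ud f_n|_x = |Df_n|_p(x)$ for $\mu$-a.e. $x \in U$, and thus $(\ud f_n)$ is equi-integrable in $\Gamma_p(T^*U)$ in the sense of Remark \ref{rmk:equi-int}.

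Given an arbitrary subsequence $(\ud f_{n_k})$, Remark \ref{rmk:equi-int} produces a further subsequence (not relabeled) and some $\bm\xi \in \Gamma_p(T^*U)$ with $\ud f_{n_k} \rightharpoonup \bm\xi$ weakly. Mazur's lemma then yields convex combinations $g_k = \sum_j \lambda_{k,j} f_{n_j}$, with finitely many nonzero coefficients summing to $1$ and indices $j \geq k$, such that $\ud g_k = \sum_j \lambda_{k,j} \ud f_{n_j} \to \bm\xi$ strongly in $\Gamma_p(T^*U)$. Since $f_{n_j} \to f$ in $L^p(\mu)$, the same convex combinations give $g_k \to f$ strongly in $L^p(\mu)$. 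Invoking Lemma \ref{lem:diff} with $(g_k) \subset \LIP_b(X)$ in place of the approximating sequence, we conclude $\bm\xi = \ud f$.

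The main substantive point is keeping careful track of the fact that the Mazur convex combinations preserve $L^p$-convergence of the functions themselves (not merely strong convergence of their differentials), so that both hypotheses of Lemma \ref{lem:diff} are met simultaneously. Once this is in place, the subsequence principle closes the argument: equi-integrability yields relative weak compactness, and since every weakly convergent subsequence of $(\ud f_n)$ has the same limit $\ud f$, the full sequence must converge weakly to $\ud f$ in $\Gamma_p(T^*U)$.
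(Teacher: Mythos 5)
Your proof is correct and follows essentially the same route as the paper: equi-integrability of $(\ud f_n)$ via Lemma \ref{lem:difflip}, weak compactness from Remark \ref{rmk:equi-int}, Mazur's lemma plus Lemma \ref{lem:diff} to identify the limit of any weakly convergent subsequence as $\ud f$, and then uniqueness of the limit (the subsequence principle) to upgrade to convergence of the full sequence, which is exactly the remark the paper makes after the proof of Theorem \ref{thm:differentiability}.
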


Using Lemma \ref{lem:properties} we prove that the differential satisfies natural rules of calculation.

\begin{prop}\label{prop:calcrules}
	Let $(U,\varphi)$ be an $N$-dimensional $p$-weak chart of $X$, $f,g\in \Ne pX$, and $F:X\to Y$ a Lipschitz map into a metric measure space $(Y,d,\nu)$ with $F_\ast\mu\le C\nu$ for some $C>0$.
	\begin{enumerate}
	\item If $(V,\psi)$ is a $p$-weak chart with $\varphi|_{U\cap V}=\psi|_{U\cap V}$ then the $p$-weak differentials of $f$ with respect to both charts agree $\mu$-a.e. on $U\cap V$.
	\item If $f,g\in L^\infty(X)$, then $\ud(fg)=f\ud g+g\ud f$ $\mu$-a.e. on $U$.
	\item Let $(V,\psi)$ be an $M$-dimensional $p$-weak chart of $Y$ with $\nu(U\cap F\inv(V))>0$. For $\mu$-a.e. $U\cap F\inv(V)$ there exists a unique linear map $D_xF:\R^N\to \R^M$ satisfying the following: if $h\in \Ne pY$ and $E$ is the set of $y\in V$ where the differential $\ud_yh$ does not exist, then $\mu(U\cap F\inv(E))=0$ and
	$$
	\ud_x(h\circ F)=\ud_{F(x)}h\circ D_{x}F	\quad\mu-\textrm{a.e. }x\in U\cap F\inv(V\setminus E).
	$$
	\end{enumerate}
\end{prop}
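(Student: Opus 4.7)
For part (1), the plan is to observe that if $\varphi|_{U\cap V}=\psi|_{U\cap V}$, then the defining relation \eqref{eq:differential} for the $p$-weak differential of $f$ with respect to either chart becomes the \emph{same} equation on $U\cap V$. Both differentials therefore serve as $p$-weak differentials of $f$ with respect to the restricted chart $(U\cap V, \varphi|_{U\cap V})$, which inherits $p$-independence from $(U,\varphi)$, so the uniqueness statement in Lemma \ref{lem:properties}(3) applies and yields the claim.

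For part (2), I first note that $fg \in N^{1,p}(X)$ since $f,g$ are bounded Newton--Sobolev. For $p$-almost every absolutely continuous curve $\gamma$, both $f\circ\gamma$ and $g\circ\gamma$ are absolutely continuous and bounded, so the classical Leibniz rule gives
\[
((fg)\circ\gamma)_t' = f(\gamma_t)(g\circ\gamma)_t' + g(\gamma_t)(f\circ\gamma)_t'\quad \text{for a.e. }t.
\]
Substituting the defining relations for $\ud f$ and $\ud g$ on $\gamma\inv(U)$ shows that the Borel map $x\mapsto f(x)\ud_x g + g(x)\ud_x f$ satisfies \eqref{eq:differential} for $fg$ with respect to $(U,\varphi)$, and the Leibniz formula then follows from Lemma \ref{lem:properties}(3).

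Part (3) is the heart of the proposition. Writing $\psi=(\psi_1,\ldots,\psi_M)$, the plan is to define, for $\mu$-a.e. $x\in U$, the linear map $D_xF:\R^N\to\R^M$ by
\[
D_xF(v) := \bigl(\ud_x(\psi_1\circ F)(v),\ldots,\ud_x(\psi_M\circ F)(v)\bigr),\quad v\in\R^N,
\]
where $\ud(\psi_j\circ F)$ is the $p$-weak differential of the Lipschitz function $\psi_j\circ F:X\to\R$ with respect to $(U,\varphi)$ (using the truncation of Remark \ref{rmk:local}). This produces a Borel-measurable assignment $x\mapsto D_xF$ on a full-measure subset of $U$, and an equivalent characterization is $\xi(D_xFv) = \ud_x(\xi\circ\psi\circ F)(v)$ for all $\xi\in(\R^M)^*$, from which uniqueness of $D_xF$ is immediate. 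To verify the chain rule I exploit the Lipschitz bound on $F$ together with $F_*\mu \le C\nu$: the admissible density $\LIP(F)\cdot\rho\circ F$ shows that preimages under $F$ of $p$-modulus zero families in $AC(I;Y)$ are $p$-modulus zero families in $AC(I;X)$. This yields $\mu(U\cap F\inv(E))\le C\nu(E) = 0$, the fact that $h\circ F\in\Ne pX$, and that $F\circ\gamma$ lies in the good family for $h$ for $p$-a.e. $\gamma\in AC(I;X)$. For such $\gamma$ and a.e. $t\in\gamma\inv(U\cap F\inv(V\setminus E))$, combining the differential relations for the $\psi_j\circ F$ with that of $h$ gives
\[
(h\circ F\circ\gamma)_t' = \ud_{F(\gamma_t)}h((\psi\circ F\circ\gamma)_t') = \ud_{F(\gamma_t)}h\bigl(D_{\gamma_t}F((\varphi\circ\gamma)_t')\bigr),
\]
so $x\mapsto \ud_{F(x)}h\circ D_xF$ is a $p$-weak differential of $h\circ F$ on $U\cap F\inv(V\setminus E)$, and the stated identity follows from Lemma \ref{lem:properties}(3).

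The main obstacle is this transfer of curvewise data under $F$ in part (3): every negligible event on the target side --- $h$ failing to be differentiable, the set $E$, and the bad family for $h$ --- must be pulled back to a negligible event for $p$-a.e. curve $\gamma$ in $X$. This is precisely what the compression bound $F_*\mu\le C\nu$ combined with the Lipschitz modulus estimate provides, and once this transfer is in place the chain rule reduces to concatenating two instances of the pointwise curvewise identity \eqref{eq:differential}.
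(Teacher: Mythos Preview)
Your proposal is correct and follows essentially the same approach as the paper: in each part you reduce to the curvewise identity \eqref{eq:differential} and invoke the uniqueness of the $p$-weak differential from Lemma \ref{lem:properties}(3), and in part (3) you define $D_xF$ via the $p$-weak differentials of the components $\psi_j\circ F$ and pull back the exceptional family using the estimate $\Mod_p(F^{-1}\Gamma_0)\le C\,\LIP(F)^p\Mod_p(\Gamma_0)$, exactly as the paper does. Your write-up is somewhat more explicit than the paper's (you spell out the admissible density $\LIP(F)\cdot\rho\circ F$, note $h\circ F\in N^{1,p}(X)$, and indicate how uniqueness of $D_xF$ follows by testing against $h=\xi\circ\psi$), but the strategy is identical.
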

\begin{proof}
	Claim (1) follows from Lemma \ref{lem:properties}(2) and the fact that $(\varphi\circ\gamma)_t'=(\psi\circ\gamma)_t'$ a.e. $t\in \gamma\inv(U\cap V)$, for $p$-a.e. $\gamma \in AC(I;X)$. 
	
	To prove (2) note that, since we have $(fg\circ\gamma)_t'=g(\gamma_t)(f\circ\gamma)_t'+f(\gamma_t)(g\circ\gamma)_t'$ a.e $t$ for $p$-a.e. curve $\gamma \in AC(I;X)$, it follows from \eqref{eq:differential} that $$\ud_{\gamma_t}(fg)((\varphi\circ\gamma)_t')=g(\gamma_t)\ud f_{\gamma_t}((\varphi\circ\gamma)_t')+f(\gamma_t)\ud g_{\gamma_t}((\varphi\circ\gamma)_t')\quad a.e.\ t\in \gamma\inv(U)$$ for $p$-a.e. $\gamma\in AC(I;X)$. By Lemma \ref{lem:properties}(2) and (3) the claimed equality holds.
	
	Finally, for (3), let $G=(G_1,\ldots,G_M)=\psi\circ F\in \LIP(X;\R^M)$ and define the expression $D_xF:=(\ud_x G_1,\ldots,\ud_x G_M):\R^N\to \R^M$ for $\mu$-a.e. $x\in U\cap F\inv(V)$. We have that
	$$
	(\psi\circ F\circ\gamma)_t'=D_{\gamma_t}F((\varphi\circ\gamma)_t')\quad a.e.\ t\in \gamma\inv(U)
	$$
	for $p$-a.e. $\gamma \in AC(I;X)$. Note that if $h$ and $E$ are as in the claim, then $\mu(U\cap F\inv(E))\le C\nu(E)=0$. To show the claimed identity, let $\Gamma_0\subset C(I;Y)$ be a path family with $\Mod_p\Gamma_0=0$ such that
	$$
	(h\circ\alpha)_t'=\ud_{\alpha_t}h((\psi\circ\alpha)_t') \quad a.e.\ t\in \alpha\inv(V)
	$$
	for every absolutely continuous $\alpha\notin \Gamma_0$, and set $\Gamma_1=F\inv\Gamma_0:=\{\gamma\in C(I;X):\ F\circ\gamma\in \Gamma_0\}$. Since $\Mod_{p}\Gamma_1\le C\LIP(F)^p\Mod_p(\Gamma_0)=0$ it follows from the two identities above that 
$$
(h\circ F\circ\gamma)_t'=\ud_{F(\gamma_t)}h((\psi\circ F\circ\gamma)_t')=\ud_{F(\gamma_t)}h(D_{\gamma_t}((\varphi\circ\gamma)_t'))\quad a.e.\ t\in \gamma\inv(U\cap F\inv(V))
$$
for $p$-a.e. $\gamma\in AC(I;X)$. Lemma \ref{lem:properties}(2) and (3) imply the claim.	
\end{proof}

\subsection{Dimension bound}\label{subsec:dimbound}

In this section we give a geometric condition which guarantees that finite dimensional weak $p$-charts exist. This involves a bound on the size of $p$-independent Lipschitz maps. 

As a technical tool we need the notion of a decomposability bundle $V(\nu)$ of a Radon measure $\nu$ on $\R^m$, see \cite{almar16}. We will not fully define this here, as we only need some of its properties. Firstly, let ${\rm Gr}(m)$ be the set of linear subspaces of $\R^m$ equipped with a metric $d(V,V')$ defined as the Hausdorff distance of $V \cap \overline{B(0,1)}$ to $V' \cap \overline{B(0,1)}$. The linear dimension of a subspace $V$ is denoted ${\rm dim}(V)$. 
The decomposability bundle is then a certain Borel measurable map $\R^m \to {\rm Gr}(m)$, which associates to every $x\in \R^m$ a subspace $V(\nu)_x \in {\rm Gr}(m)$. In a sense, this bundle measures the directions in which a Lipschitz function must be differentiable in (at almost every point). We collect the main properties we need for this bundle and briefly cite where the proofs of these claims can be found. 

\begin{thm}\label{thm:decompbundle} Suppose that $\nu$ is a Radon measure on $\R^m$, then there exists a decomposability bundle $V(\nu)$ with the following property.
	
	\begin{enumerate}
		\item If ${\rm dim}(V(\nu)_x) = m$ for $\nu$-a.e. $x\in \R^m$, then $\nu \ll \lambda$.
		\item There is a Lipschitz function $f:\R^m \to \R$ so that for $\nu$-a.e. $x\in \R^m$ we have that the directional derivative of $f$ does not exist in the direction $v$ for any $v\not\in V(\nu)_x$.
		\item If $\nu'\ll\nu$, then $V(\nu')_x=V(\nu)_x$ for $\nu'$-a.e. $x\in\R^m$.
	\end{enumerate}
\end{thm}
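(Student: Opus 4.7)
The plan is to define $V(\nu)$ following Alberti--Marchese \cite{almar16} and then extract the three properties directly from (or as immediate corollaries of) their main theorems. Concretely, $V(\nu)_x$ is the $\nu$-a.e.\ defined linear span of the directions realized by Alberti representations of $\nu$: one considers families of Lipschitz curves in $\R^m$ that decompose $\nu$ into one-dimensional measures along their tangents, and $V(\nu)_x$ records which directions appear at $x$. That this can be done in a Borel measurable way, yielding a well-defined map into ${\rm Gr}(m)$, is part of the construction in \cite[Sections 2--3]{almar16}.

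For (1), I would cite the central theorem of \cite{almar16}: a Radon measure $\nu$ on $\R^m$ is absolutely continuous with respect to Lebesgue measure if and only if $\dim V(\nu)_x = m$ for $\nu$-a.e.\ $x$. The nontrivial implication (which is what we need) combines a Rademacher-type differentiability result along the Alberti decomposition with the Besicovitch--Federer projection theorem: if $V(\nu)_x = \R^m$ $\nu$-a.e., any purely singular component of $\nu$ would be forced to be differentiated in all directions by Lipschitz maps, contradicting the construction of non-differentiable functions on purely unrectifiable sets.

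For (2), I would invoke the ``optimality'' half of \cite{almar16}, which constructs a universal Lipschitz $f\colon\R^m\to\R$ whose directional derivative $\partial_v f(x)$ fails to exist whenever $v\notin V(\nu)_x$, for $\nu$-a.e.\ $x$. The construction is a careful Whitney-type assembly of ``bad'' Lipschitz functions on sets perpendicular to $V(\nu)$, using the Borel measurability of $x\mapsto V(\nu)_x$ to arrange the defects coherently.

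For (3), the key observation is that the definition of $V(\nu)$ is local in the measure-theoretic sense: if $\nu' \ll \nu$ with Radon--Nikodym density $\rho$, then any Alberti representation of $\nu$ restricts (multiplying the disintegration by $\rho$) to one of $\nu'$, so $V(\nu)_x \subset V(\nu')_x$ for $\nu'$-a.e.\ $x$. The converse inclusion, $V(\nu')_x\subset V(\nu)_x$ for $\nu'$-a.e.\ $x$, follows from \cite[Proposition 2.9 or Corollary 6.4]{almar16} stating that the decomposability bundle is invariant under absolutely continuous modifications. The main obstacle in a self-contained write-up would be (1), the deepest of the three statements; since we are treating the bundle as a black box, the proof is essentially a bookkeeping exercise of matching notation with \cite{almar16}.
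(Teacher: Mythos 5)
Your treatment of (2) and (3) matches the paper: (2) is exactly \cite[Theorem 1.1(ii)]{almar16} and (3) is \cite[Proposition 2.9(i)]{almar16}, so those parts are fine as citation bookkeeping.

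The genuine gap is in (1). The implication ``$\dim V(\nu)_x=m$ for $\nu$-a.e.\ $x$ $\Rightarrow$ $\nu\ll\lambda$'' is \emph{not} the central theorem of \cite{almar16}, and is not proved there: Alberti--Marchese prove the Rademacher-type statement (every Lipschitz function is differentiable $\nu$-a.e.\ along $V(\nu)_x$) and the optimality statement (2), but the absolute-continuity conclusion is precisely the converse-of-Rademacher problem, which lies deeper. The paper obtains (1) by combining \cite[Theorem 1.1(i)]{almar16} (if the bundle is full, every Lipschitz function is fully differentiable $\nu$-a.e.) with the De Philippis--Rindler structure theorem \cite[Theorem 1.14]{de2016structure}, which says that a Radon measure with respect to which every Lipschitz function is differentiable a.e.\ must be absolutely continuous. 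Your proposed substitute argument --- Rademacher along the decomposition plus the Besicovitch--Federer projection theorem, ruling out a singular part because it would sit on a purely unrectifiable set --- does not work as stated: a singular measure with full decomposability bundle need not be carried by a purely unrectifiable (or even lower-dimensional) set, and the projection theorem by itself does not produce the required everywhere-bad Lipschitz function; even in the plane the known argument of this flavour (Alberti--Cs\"ornyei--Preiss) relies on their deep structure theorem for null sets, and in general dimension the only available proof is the PDE-based one of \cite{de2016structure}. So for (1) you must cite \cite[Theorem 1.14]{de2016structure} (together with \cite[Theorem 1.1(i)]{almar16}) rather than \cite{almar16} alone; with that correction the rest of your write-up is sound.
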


\begin{proof} The first follows from \cite[Theorem 1.14]{de2016structure} when combined with \cite[Theorem 1.1(i)]{almar16}. The second claim follows from \cite[Theorem 1.1(ii)]{almar16}. Note that the second claim is vacuous for those points $x\in \R^m$ where the decomposability bundle has dimension $m$. The third claim is \cite[Proposition 2.9(i)]{almar16}.
\end{proof}

The following lemma gives a modulus perspective to the decomposability bundle.

\begin{lemma}\label{lem:planinc} Assume $N\geq 1$, $\varphi: X\to\R^N$ is Lipschitz, $U \subset X$ is a Borel set of bounded measure and $\nu =\varphi_*(\mu|_U)$. Then, for $p$-a.e. curve $\gamma$ and almost every $t\in \gamma\inv(U)$ we have that $(\varphi\circ\gamma)_t'$ exists and $(\varphi \circ \gamma)_t'\in V(\nu)_{\varphi(\gamma_t)}$.
\end{lemma}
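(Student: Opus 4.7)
The strategy is to witness the decomposability bundle by a concrete Lipschitz function $f:\R^N\to\R$ via Theorem~\ref{thm:decompbundle}(2), transport this witness to $X$ along $\varphi$, and combine curvewise absolute continuity with the chain rule for Lipschitz compositions. The entire argument amounts to saying: along a generic curve, $(\varphi\circ\gamma)'_t$ forces existence of the directional derivative of $f$ at $\varphi(\gamma_t)$ in that direction, which by construction can only happen inside $V(\nu)_{\varphi(\gamma_t)}$.

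First I would fix, via Theorem~\ref{thm:decompbundle}(2), a Lipschitz function $f:\R^N\to\R$ and a Borel set $E\subset\R^N$ with $\nu(\R^N\setminus E)=0$ such that for every $x\in E$ the directional derivative $\partial_v f(x)$ fails to exist for every $v\in\R^N\setminus V(\nu)_x$. Here $E$ is obtained from the a.e.~statement of Theorem~\ref{thm:decompbundle}(2) by outer regularity of $\nu$, using Borel measurability of $V(\nu)$. Set $F:=f\circ\varphi\in\LIP(X)$. For $p$-almost every $\gamma\in AC(I;X)$, both $\varphi\circ\gamma:I\to\R^N$ and $F\circ\gamma:I\to\R$ are absolutely continuous and therefore differentiable a.e.\ on $I$. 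At any $t$ where $(\varphi\circ\gamma)'_t$ and $(F\circ\gamma)'_t$ both exist, writing $v:=(\varphi\circ\gamma)'_t$ and using $\varphi(\gamma_{t+h})=\varphi(\gamma_t)+hv+o(h)$, the Lipschitz bound
\[ |f(\varphi(\gamma_{t+h}))-f(\varphi(\gamma_t)+hv)|\le\LIP(f)\cdot|o(h)| \]
yields
\[ \frac{F(\gamma_{t+h})-F(\gamma_t)}{h}=\frac{f(\varphi(\gamma_t)+hv)-f(\varphi(\gamma_t))}{h}+o(1). \]
Since the left-hand side converges as $h\to 0$, the directional derivative $\partial_v f(\varphi(\gamma_t))$ exists.

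Finally, set $B:=U\cap\varphi^{-1}(\R^N\setminus E)$, which is Borel with $\mu(B)=\varphi_\ast(\mu|_U)(\R^N\setminus E)=\nu(\R^N\setminus E)=0$. Borel $\mu$-null sets are traversed with zero length by $p$-almost every curve: for each $n\in\N$ the density $\rho_n:=n\chi_B$ satisfies $\int_X\rho_n^p\,\ud\mu=0$ and is admissible for $\{\gamma:\int_\gamma\chi_B\,\ud s\ge 1/n\}$, so subadditivity of $\Mod_p$ gives $\int_\gamma\chi_B\,\ud s=0$ for $p$-a.e.\ $\gamma$. Consequently, for $p$-a.e.\ $\gamma$ and a.e.\ $t\in\gamma^{-1}(U)$ with $|\gamma'_t|>0$ we have $\varphi(\gamma_t)\in E$, and by the previous paragraph $\partial_{(\varphi\circ\gamma)'_t}f$ exists at $\varphi(\gamma_t)$; the defining property of $E$ then forces $(\varphi\circ\gamma)'_t\in V(\nu)_{\varphi(\gamma_t)}$. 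Where $|\gamma'_t|=0$, the derivative $(\varphi\circ\gamma)'_t=0$ lies trivially in any subspace, finishing the proof.

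The only subtlety is the Borel regularization producing $E$ (so that the admissibility argument is rigorous); once that is in place, the proof is a routine combination of the curvewise Lipschitz chain rule with the defining property of the decomposability bundle.
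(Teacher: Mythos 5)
Your proposal is correct and follows essentially the same route as the paper: use the witness Lipschitz function from Theorem \ref{thm:decompbundle}(2), note that $p$-a.e.\ curve spends zero length in the $\mu$-null set $U\cap\varphi^{-1}$ of the bad points, and conclude via the curvewise chain-rule argument that existence of $(\varphi\circ\gamma)'_t$ and $(f\circ\varphi\circ\gamma)'_t$ forces the direction into $V(\nu)_{\varphi(\gamma_t)}$. The only differences are cosmetic: you spell out the Lipschitz chain-rule estimate and the Borel regularization of the full-measure set, which the paper states more tersely.
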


\begin{proof} By part (ii) of Theorem \ref{thm:decompbundle}, there is a Lipschitz function $f:\R^N\to \R$, so that for $\nu$-almost every $x \in \R^N$ and any $v \not\in V(\nu)_{x}$ we have that the directional derivative $D_v(f)=\lim_{h\to 0} \frac{f(x+hv)-f(x)}{h}$ does not exist. Let $A \subset \R^N$ be a full $\nu$-measure Borel set so that this claim holds. 
	
	Let $B = \varphi\inv(\R^N \setminus A)\cap U$, which is $\mu$-null. The family $\Gamma_B^+$ has null $p$-modulus. We will show that the claim holds for $p$-a.e. $\gamma\in 
	AC(I;X)\setminus \Gamma_B^+$. The derivatives $(\varphi \circ \gamma)'_t$ and $(f\circ\varphi\circ \gamma)'_t$ exist for almost every $t\in \gamma\inv(U)$. Also, for a.e. $t \in I$ we can either take $|\gamma_t'|=0$ or $\gamma_t\not\in B$ and so $(\varphi\circ\gamma)_t \not\in A$, since $\gamma\not\in \Gamma_B^+$. If $|\gamma_t'|=0$, then $(\varphi\circ \gamma)_t' = 0 \in V(\nu)_{\varphi(\gamma_t)}$. In the other case, when $\gamma_t\not\in B$, the function $f$ does not have a directional derivative for $v\not\in V(\nu)_{(\varphi\circ \gamma)_t}$. The only way for both $(\varphi \circ \gamma)'_t$ and $(f\circ\varphi\circ \gamma)'_t$ to exist then is if $(\varphi \circ \gamma)'_t \in V(\nu)_{\varphi(\gamma_t)}$ which gives the claim.
	
	
	
\end{proof}

The following should be compared to \cite[Lemma 4.37]{che99}.

\begin{prop}\label{prop:chahausd}
	Suppose $\varphi\in\LIP(X;\R^N)$ is $p$-independent on $U$. Then $N\le \dim_HU$.
\end{prop}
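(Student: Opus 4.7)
The plan is to argue by contradiction: assume $N > \dim_H U$ and exhibit a Borel unit covector field $\bm\xi$ on a positive measure subset $A\subset U$ for which $\Phi^x(\bm\xi_x) = 0$ $\mu$-a.e., contradicting the characterization $I(\varphi) > 0$ a.e.\ on $U$ from Lemma \ref{lem:properties}(1).

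First I would push the measure forward to $\R^N$ and invoke the decomposability bundle theory. Set $\nu := \varphi_*(\mu|_U)$. Since $\varphi$ is Lipschitz, $\dim_H \varphi(U) \le \dim_H U < N$, so $\nu$ is concentrated on an $\mathcal{L}^N$-null set and hence is not absolutely continuous with respect to Lebesgue measure on $\R^N$ (the case $\mu(U)=0$ is vacuous). The contrapositive of Theorem \ref{thm:decompbundle}(1) then yields that the Borel set $B := \{y \in \R^N : \dim V(\nu)_y < N\}$ has positive $\nu$-measure.

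Next I would build the vector field via a countable partition, avoiding a nontrivial measurable selection theorem. The map $y \mapsto V(\nu)_y$ is Borel into the Grassmannian $\mathrm{Gr}(N)$, so the orthogonal projection $P_y : \R^N \to V(\nu)_y^\perp$ depends Borel-measurably on $y$. For $y \in B$ one has $P_y \ne 0$, so some standard basis vector projects nontrivially. Letting $B_i := \{y \in B : P_y(e_i) \ne 0,\ P_y(e_j) = 0 \text{ for all } j<i\}$, these are disjoint Borel sets with union $B$, so $\nu(B_{i_0})>0$ for some $i_0$. On $B_{i_0}$ the formula $v_y := P_y(e_{i_0})/\|P_y(e_{i_0})\|$ defines a Borel unit vector in $V(\nu)_y^\perp$. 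Setting $A := \varphi\inv(B_{i_0}) \cap U$ one has $\mu(A)>0$, and $\bm\xi_x := v_{\varphi(x)}$, viewed as an element of $(\R^N)^*$, is a Borel map $A \to (\R^N)^*$ with $\|\bm\xi_x\|_* \equiv 1$.

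Finally I would combine Lemma \ref{lem:planinc} with Lemma \ref{lem:properties} to close the argument. By Lemma \ref{lem:planinc}, for $p$-a.e. $\gamma$ and almost every $t \in \gamma\inv(A)$ the derivative $(\varphi\circ\gamma)_t'$ lies in $V(\nu)_{\varphi(\gamma_t)}$; since $\bm\xi_{\gamma_t} = v_{\varphi(\gamma_t)}$ is perpendicular to $V(\nu)_{\varphi(\gamma_t)}$, the pairing $\bm\xi_{\gamma_t}((\varphi\circ\gamma)_t')$ vanishes. Applying Lemma \ref{lem:properties}(2) with $A$ in place of $U$ then gives $\Phi^x(\bm\xi_x) = 0$ $\mu$-a.e. on $A$. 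On the other hand, the $p$-independence hypothesis, combined with Lemma \ref{lem:properties}(1), forces $\Phi^x(\bm\xi_x) \ge I(\varphi)(x) > 0$ $\mu$-a.e. on $A$, the desired contradiction. The step that I expect to need the most care is checking Borel measurability of $y \mapsto P_y$ and of the sets $B_i$, but this should follow directly from the Borel measurability of the decomposability bundle together with the continuous dependence of orthogonal projection on a subspace in the Grassmannian.
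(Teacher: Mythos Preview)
Your proof is correct and follows essentially the same route as the paper: push forward to $\nu=\varphi_*(\mu|_U)$, use the Alberti--Marchese decomposability bundle to find a positive $\nu$-measure set where the bundle is a proper subspace, pick a unit normal $\bm\xi$, and then combine Lemma~\ref{lem:planinc} with Lemma~\ref{lem:properties}(2) to force $\Phi^x(\bm\xi_x)=0$, contradicting $p$-independence. The paper organizes the argument as a dichotomy (either $V(\nu)$ is full-dimensional $\nu$-a.e., whence $\nu\ll\mathcal L^N$ and the dimension bound follows directly, or not, whence the same contradiction you derive); your contrapositive formulation is logically equivalent.

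Two small remarks. First, you should restrict at the outset to a positive-measure subset of $U$ with finite $\mu$-measure (e.g.\ $U\cap B(x_0,R)$); otherwise $\nu$ need not be Radon and Theorem~\ref{thm:decompbundle} and Lemma~\ref{lem:planinc} (which explicitly asks for bounded measure) are not available. The paper makes this reduction in its first sentence. Second, your explicit partition $B=\bigcup_i B_i$ via the first nonvanishing projection $P_y(e_i)$ is a pleasant way to sidestep the general Borel selection theorem the paper invokes; it works because $y\mapsto P_y$ is continuous on the Grassmannian, so the $B_i$ are indeed Borel and $v_y$ is Borel on each piece.
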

\begin{proof}
By restriction to a subset of the form $U\cap B(x_0,R)$, for $x_0\in X, R>0$, of positive measure, it suffices to assume that $U$ has finite measure. 
	The claim is automatic, if $\dim_HU=\infty$. Thus, assume that the Hausdorff dimension is finite. 
	
	Set $\nu = \varphi_*(\mu|_U)$ and let $V(\nu)$ be the decomposability bundle of $\nu$.  If $V(\nu)_x$ has dimension $N$ for almost every $x$ with respect to $\nu$, then $\nu\ll\lambda$ by Theorem \ref{thm:decompbundle}(1) and thus $\mathcal{H}^{N}(\varphi(U))>0$, since $\nu$ is concentrated on $\varphi(U)$. Then $N\leq \dim_H(\varphi(U)) \leq \dim_H(U)$.
	
	Suppose then to the contrary, that there exists a subset $A\subset U$ with positive $\nu$-measure where $V(\nu)_x$ has dimension less than $\dim_H(U)$ for each $x\in A$. 
	We can take $A$ to be Borel. Consider $\mu'= \mu|_{\varphi^{-1}(A)}$, which has push-forward $\nu' = \nu|_A=\varphi_*(\mu')$. By the third part in Theorem \ref{thm:decompbundle} we have that $V(\nu')_x = V(\nu)_x$ for $\nu'$-a.e. $x\in A$. Further $\varphi^{-1}(A) \subset U$, so $\varphi$ is still $p$-independent on $\varphi^{-1}(A)=U'$. 
	Now, by considering $U'$ instead of $U$ and $\nu'$ instead of $\nu$, we have that $V(\nu')_{\varphi(x)}$ has dimension less than $N$ for $\nu'$-almost every $x \in U$. In the following, we simplify notation by dropping the primes, and restricting to the positive measure subset $U'$ so constructed. 
	
	For $\nu$-almost every $x \in U$, we have that $V(\nu)_{\varphi(x)}$ is a strict subspace of $\R^N$, 
	and thus there are vectors perpendicular to these. Since $x\to V(\nu)_{\varphi(x)}$ is Borel, we can choose a Borel map $x\to \bm\xi_x\in (\R^N)^*$ so that $\bm\xi_x$ is a unit vector that vanishes on $V(\nu)_{\varphi(x)}$ for $\mu$-a.e. $x\in U$ (see e.g. \cite[Theorem 6.9.1]{bogachev07} which is an instance of a Borel selection theorem). Let $\tilde{U} \subset U$ be the full measure subset where these properties hold for every $x\in \tilde{U}$. Now, by Lemma \ref{lem:planinc} we have for $p$-a.e. curve $\gamma$ that $(\varphi\circ\gamma)'_t\in V(\nu)_{\varphi(\gamma_t)}$ for almost every $t\in \gamma\inv(U)$. The set $U \setminus \tilde{U}$ has null measure, and thus $\Gamma_{U \setminus \tilde{U}}^+$ has null modulus. 
	
	Thus, for $p$-a.e. curve $\gamma\in AC(I;X)$ and a.e. $t\in \gamma\inv(U)$ we can further assume $\gamma_t\in U$ or $|\gamma_t'|=0$. Therefore, $\bm\xi_{\gamma_t}((\varphi\circ\gamma)'_t)=0$ for almost every $t \in \gamma\inv(U)$ and such curves $\gamma$. By part (2) of Lemma \ref{lem:properties}, we have that $I(\varphi)  \leq \Phi^x(\bm\xi_x)=0$ for $\mu$-a.e. $x\in U$. This contradicts $p$-independence and proves the claim.
	
	
\end{proof}

\subsection{Sobolev charts}\label{sec:sobchart} By definition, a $p$-weak chart is a Lipschitz map which has target of maximal dimension with respect to Lipschitz maps. The notions of $p$-independence and maximality however are well-defined for any Sobolev map, and in fact $p$-weak charts \emph{could} be required to have Sobolev (instead of Lipschitz) regularity. Despite the apparent difference of the alternative definition, the existence of maximal $p$-independent Sobolev maps also guarantees the existence of $p$-weak chart of the same dimension. This follows from the energy density of Lipschitz functions, see \cite{seb2020}, together with results of the previous subsection.

\begin{prop}\label{prop:sobtolipchart}
	Suppose $p\ge 1$, and $\varphi\in N^{1,p}(X;\R^N)$ is $p$-independent and $p$-maximal in a bounded Borel set $U\subset X$. For any $\varepsilon>0$ there exists $V\subset U$ with $\mu(U\setminus V)<\varepsilon$, and a Lipschitz function $\psi:X\to \R^N$ such that $(V,\psi)$ is an $N$-dimensional $p$-weak chart.
\end{prop}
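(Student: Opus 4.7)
\medskip
\noindent\textit{Proof proposal.} The plan is to approximate $\varphi$ componentwise by Lipschitz maps that are close in $N^{1,p}$-norm via the energy density of Lipschitz functions, use Egorov to upgrade $L^p$-smallness of gradients of differences to uniform pointwise smallness on a large subset $V$, and appeal to the continuity in $\xi$ of the canonical minimal gradient from Lemma \ref{lem:representative} to promote a lower bound at countably many directions to one that holds for every unit $\xi$. First, since $\varphi$ is $p$-independent on $U$, Lemma \ref{lem:properties}(1) gives $I(\varphi)>0$ $\mu$-a.e.\ on $U$; as $U$ is bounded (hence of finite measure), one can fix $\delta>0$ and a Borel set $V_0\subset U$ with $\mu(U\setminus V_0)<\varepsilon/2$ and $I(\varphi)\geq 2\delta$ on $V_0$. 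Second, for each scalar component $\varphi^i$, the energy density of Lipschitz functions \cite{seb2020} supplies a sequence $\psi_n^i\in\LIP_b(X)$ with $\psi_n^i\to\varphi^i$ in $L^p(\mu)$ and $|D\psi_n^i|_p\to|D\varphi^i|_p$ in $L^p(\mu)$. For $p>1$, the reflexivity of $N^{1,p}(X)$ combined with Mazur's lemma upgrades this (after passing to convex combinations, still denoted $\psi_n^i$) to norm convergence in $N^{1,p}(X)$, so in particular $|D(\psi_n^i-\varphi^i)|_p\to 0$ in $L^p$. For $p=1$, the equi-integrability of $(|D\psi_n^i|_1)_n$ implied by strong $L^1$-convergence, together with Dunford--Pettis and Mazur's lemma, yields the same conclusion, mirroring the $p=1$ portion of the proof of Theorem \ref{thm:differentiability} (cf.\ Remark \ref{rmk:equi-int}). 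Set $\psi_n:=(\psi_n^1,\ldots,\psi_n^N)$ and $h_n:=\sum_{i=1}^N|D(\psi_n^i-\varphi^i)|_p$, so $h_n\to 0$ in $L^p$.

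Third, by subadditivity of minimal $p$-weak upper gradients, for every $\xi$ in the closed unit ball of $(\R^N)^*$,
\[
\bigl||D(\xi\circ\psi_n)|_p-|D(\xi\circ\varphi)|_p\bigr|\leq |D(\xi\circ(\psi_n-\varphi))|_p\leq h_n\quad \mu\textrm{-a.e.}
\]
After passing to an a.e.-convergent subsequence of $(h_n)$, Egorov on the bounded set $V_0$ provides a Borel $V\subset V_0$ with $\mu(V_0\setminus V)<\varepsilon/2$ on which $h_n\to 0$ uniformly; fix $n_0$ with $h_{n_0}<\delta$ throughout $V$ and set $\psi:=\psi_{n_0}$. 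For a countable dense family $\{v_k\}\subset S^{N-1}$, the canonical minimal gradient $\Phi_\psi$ of Lemma \ref{lem:representative} satisfies $\Phi_\psi^x(v_k)=|D(v_k\cdot\psi)|_p(x)\geq |D(v_k\cdot\varphi)|_p(x)-\delta\geq \delta$ for $\mu$-a.e.\ $x\in V$. Since $\xi\mapsto \Phi_\psi^x(\xi)$ is a seminorm (Lemma \ref{lem:representative}(2)), hence uniformly continuous on the unit sphere, the bound $\Phi_\psi^x(\xi)\geq \delta$ extends by density from $\{v_k\}$ to every unit $\xi$ simultaneously for $\mu$-a.e.\ $x\in V$, giving $I(\psi)\geq\delta$ on $V$. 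Hence $\psi$ is $p$-independent on $V$, while $p$-maximality is inherited from $\varphi$: any Lipschitz map into a higher-dimensional target that were $p$-independent on a positive measure subset of $V\subset U$ would contradict the $p$-maximality of $\varphi$ on $U$. Consequently $(V,\psi)$ is an $N$-dimensional $p$-weak chart with $\mu(U\setminus V)<\varepsilon$.

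The main obstacle is the step of converting the energy density of \cite{seb2020} into an $N^{1,p}$-norm closeness of a Lipschitz approximation to each scalar component $\varphi^i$. For $p>1$ this is a routine reflexivity-plus-Mazur maneuver, but for $p=1$ it requires the Dunford--Pettis/Mazur argument of Remark \ref{rmk:equi-int}, carried out at the level of scalar Sobolev functions in the absence of a pre-existing chart. Once the strong convergence $|D(\psi_n^i-\varphi^i)|_p\to 0$ is secured, the remainder of the argument is a direct combination of Egorov with the seminorm continuity of the canonical minimal gradient in its linear argument, and the trivial inheritance of $p$-maximality.
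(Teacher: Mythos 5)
Your overall skeleton (Lipschitz approximation of the components, comparison of canonical minimal gradients uniformly in the direction $\xi$ via a countable dense set and seminorm continuity, an Egorov/convergence-in-measure step to select $V$, and the trivial inheritance of $p$-maximality) matches the paper's proof. But the step you yourself flag as the main obstacle is exactly where the argument breaks: you cannot get $|D(\psi_n^i-\varphi^i)|_p\to 0$ in $L^p$ the way you propose. For $p>1$ you invoke reflexivity of $N^{1,p}(X)$, which is not available at this point: in this paper reflexivity is a downstream consequence (Corollary \ref{cor:giglicor}, via Theorem \ref{thm:gigli}, whose proof uses Proposition \ref{prop:sobtolipchart}), it is not known for general metric measure spaces, and the hypotheses here only concern the bounded set $U$, so no global structural assumption on $X$ can be smuggled in. For $p=1$ the Dunford--Pettis/Mazur argument of Remark \ref{rmk:equi-int} cannot be ``carried out at the level of scalar Sobolev functions'': equi-integrability and weak $L^1$-compactness apply to the scalar moduli $|D\psi_n^i|_1$, and the map $f\mapsto |Df|_p$ is not linear, so a weak limit of the moduli (or Mazur convex combinations of them) gives no control whatsoever on $|D(\psi_n^i-\varphi^i)|_1$; the subadditivity $|D(\psi_n^i-\varphi^i)|_p\le |D\psi_n^i|_p+|D\varphi^i|_p$ runs in the wrong direction. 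In general, energy convergence ($f_n\to f$ in $L^p$ and $|Df_n|_p\to|Df|_p$ in $L^p$) does not imply $|D(f_n-f)|_p\to 0$ without some linear differential structure to which weak compactness and Mazur can be applied.

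The paper supplies precisely that structure before approximating: using the $p$-maximality of $\varphi$ to bound chart dimensions and Keith's decomposition \cite[Proposition 3.1]{keith04}, it first partitions $U$ up to a null set into Lipschitz $p$-weak charts $V_i$ of dimension $\le N$. On each $V_i$ the Lipschitz approximants have honest differentials in $\Gamma_p(T^*V_i)$ (Lemma \ref{lem:difflip}), which are sections of a finite-dimensional bundle, i.e.\ genuinely linear objects; energy density \cite{seb2020} plus Corollary \ref{cor:localnormdensity}, Mazur's lemma, and Lemma \ref{lem:diff} then yield convex combinations $\psi_k^n\in\LIP_b(X)$ with $|D(\varphi_k-\psi_k^n)|_p\to 0$ in $L^p(V_i)$, and a diagonal argument gives convergence in $L^p(U)$. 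Once this convergence is in hand, your remaining steps (the estimate $\sup_{\|\xi\|_*=1}|\Phi(\xi,\cdot)-\Psi_n(\xi,\cdot)|\le\sum_k|D(\varphi_k-\psi_k^n)|_p$, the selection of $V$, and maximality of $\psi$ on $V$) are correct and coincide with the paper's conclusion of the proof, so the fix is to replace your approximation step by the chart-decomposition route rather than by reflexivity or scalar Dunford--Pettis.
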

\begin{proof}
For any $V\subset U$ with $\mu(V)>0$, let $n_V$ be the supremum of numbers $n$ so that there exists $\psi\in \LIP_b(X;\R^n)$ which is $p$-independent on a positive measure subset of $V$. By the maximality of $N$ we have that $n_V\le N$. Thus $n_V$ is attained for every such $V$ and, by \cite[Proposition 3.1]{keith04}, there is a partition of $U$ up to a null-set by $p$-weak charts $V_i, i\in \N$ of dimension $\le N$. By \cite[Theorem 1.1]{seb2020}, Corollary \ref{cor:localnormdensity} (with a diagonal argument) and Mazur's lemma we have that, for each component $\varphi_k\in N^{1,p}(X)$ of $\varphi$, there exists a sequence $(\psi_k^n)\subset \LIP_b(X)$ with $|D(\varphi_k-\psi_k^n)|_p\to 0$ in $L^p(V_i)$. Thus, $|D(\varphi_k-\psi_k^n)|_p\to 0$ in $L^p(U)$. Here, we use that $|D(\varphi_k-\psi_k^n)|_p\leq |D\varphi_k|_p+|D\psi_k^n|_p$ and the $L^p$-convergence of the right hand side from \cite{seb2020}. 

If $\Phi$ and $\Psi_n$ denote the canonical minimal gradients associated to $\varphi$ and $\psi^n:=(\psi_1^n,\ldots,\psi_N^n)$ we have that
\begin{align*}
\sup_{\|\xi\|_*=1}|\Phi(\xi,\cdot)-\Psi_n(\xi,\cdot)|\le \underset{\|\xi\|_*=1}\esssup |D(\xi\circ(\varphi-\psi^n))|_p\le \sum_{k=1}^N|D(\varphi_k-\psi_k^n)|_p \quad \mu-\textrm{a.e. in }U.
\end{align*}
It follows that
\[
\lim_{n\to\infty}\mu(U\setminus \{ I(\psi^n)>0 \})=0,
\]
completing the proof, since $\psi^n$ is $p$-independent and maximal on the set $ \{ I(\psi^n)>0 \}$.
\end{proof}

Another condition in this context is strong maximality: 
a map $\varphi\in N^{1,p}(X;\R^N)$ is \emph{strongly maximal} in $U\subset X$ if no positive measure subset $V\subset U$ admits a $p$-independent Sobolev map into a higher dimensional Euclidean space. This condition excludes not only Lipschitz, but also Sobolev functions into higher dimensional targets, and is thus a priori stronger than maximality. However, it follows from Proposition \ref{prop:sobtolipchart} that a maximal $p$-independent Sobolev map is also strongly maximal. Conversely, if one has a Lipschitz chart, then the Lipschitz chart is also strongly maximal.

\subsection{$p$-weak charts in Poincar\'e spaces}\label{sec:poincare-diff} Recall that a metric measure space $X=(X,d,\mu)$ is said to be a $p$-PI space if $\mu$ is doubling, and $X$ supports a weak $(1,p)$-Poincar\'e inequality: there exist constants $C,\sigma>0$ so that, for any $f\in L^1(X)$ with upper gradient $g$, we have
\begin{align*}
\dashint_B|f-f_B|\ud\mu\le Cr\left(\dashint_{\sigma B}g^p\ud\mu\right)^{1/p}
\end{align*}
for all balls $B\subset X$ of radius $r$. Here $h_B=\dashint_B h\ud\mu=\frac{1}{\mu(B)}\int_Bh\ud\mu$ for a ball $B\subset X$ and $h\in L^1(B)$. Cheeger's celebrated result, from \cite{che99}, states that a PI-space admits a Lipschitz differentiable structure. We will return to this structure in Section \ref{sec:lipdiff}, but here recall the constructions from \cite[Section 4]{che99}. Cheeger's paper does not employ the following terminology,  but it simplifies and clarifies our presentation.

Given a Lipschitz map $\varphi:X \to \R^N$ and a positive measure subset $U \subset X$ the pair $(U,\varphi)$ is called \emph{a Cheeger chart} if for every Lipschitz map $f:X \to \R$ and a.e. $x\in U$ there is a unique element $\ud_{C,x} f\in (\R^N)^*$ satisfying 
\begin{equation}\label{eq:cheeger-Lip}
\Lip (\ud_{C,x} f\circ \varphi-f)(x)=0.
\end{equation}
This equality is equivalent to Equation \eqref{eq:cheegerdiff}.

\begin{proof}[Proof of Theorem \ref{thm:PI}]
Let $(U,\varphi)$ be a $p$-weak chart of dimension $N$ and let $f\in \LIP(X)$. Denote by $\Phi$ the canonical minimal gradient of $(\varphi,f):X\to\R^{N+1}$, cf. Lemma \ref{lem:canonicalplan}. Since $X$ is a $p$-PI space, it follows that $\Lip h=|Dh|_p$ $\mu$-a.e  for any $h\in \LIP(X)$, see \cite[Theorem 6.1]{che99}. (In fact, the slightly easier comparability from \cite[Lemma 4.35]{che99} suffices for the following.) Then, for any $\xi \in (\R^N)^*$ and for $\mu$-a.e. $x\in U$, we have
\begin{align*}
\Lip(\xi\circ\varphi-f)(x)=\Phi^x(\xi,-1),\quad \xi\in (\R^N)^*.
\end{align*}
Arguing using in the proof of Lemmas \ref{lem:canonicalplan} and \ref{lem:difflip} we obtain this equality, simultaneously, for a.e. $x\in U$ and for any $\xi \in A$ for a dense subset of $A \subset (\R^N)^*$. From this, and the continuity of both sides in $\xi$, we obtain that for $\mu$-a.e. $x\in U$, the equality holds simultaneously for all $\xi\in(\R^N)^*$.

Since the $p$-weak differential $\ud f$ is characterized by the property $\Phi^x(\ud f,-1)=0$ for $\mu$-a.e. $x\in U$, it follows that for $\mu$-a.e. $x\in U$, $\ud_xf\in (\R^N)^*$ satisfies Equation \eqref{eq:cheeger-Lip}. Thus $(U,\varphi)$ is a Cheeger chart. The uniqueness follows from the equality in a similar way.
\end{proof}
\begin{remark}\label{rmk:pweak-to-cheeger}
	The proof of Theorem \ref{thm:PI} also yields the claim under the weaker assumption $\Lip f\le \omega(|Df|_p)$ for some collection of moduli of continuity $\omega$ (compare Theorem \ref{thm:bundles}) since the equality $\Lip f =|Df|_p$ follows from this by \cite[Theorem 1.1]{tet2020}.
\end{remark}

\section{The $p$-weak differentiable structure}\label{sec:differentialstruct}

\subsection{The $p$-weak cotangent bundle}

A \emph{ measurable $L^\infty$-bundle} $\mathcal{T}$ over $X$ consists of a collection $(\{U_i,V_{i,x}\})_{i\in I}$ together with a collection $(\{\phi_{i,j,x}\} )$ of transformations with a countable index set $I$, where, 
\begin{enumerate}
	\item $U_i\subset X$ are Borel sets for each $i\in I$, and cover $X$ up to a $\mu$-null set;
	\item for any $i\in I$ and $\mu$-a.e. $x\in U_i$, $V_{i,x}=(V_i,|\cdot|_{i,x})$ is a finite dimensional normed space so that $x\mapsto |v|_{i,x}$ is Borel for any $v\in V_i$;
	\item for any $i,j\in I$ and $\mu$-a.e. $x\in U_i\cap U_j$, $\phi_{i,j,x}\colon V_{i,x} \to V_{j,x}$ is an isometric bijective linear map satisfying the \emph{cocycle condition}: for any $i,j,k \in I$ and $\mu$-a.e. $x \in U_i \cap U_j \cap U_k$, we have $\phi_{j,k,x} \circ \phi_{i,j,x}=\phi_{i,k,x}$.
\end{enumerate}

For each $i\in I$ and $\mu$-a.e. $x\in U_i$, we denote $\mathcal T_x$ the equivalence class of the normed vector space $V_{i,x}$ under identification by isometric isomorphisms. By (3), $\mathcal T_x$ is well-defined for $\mu$-a.e. $x\in X$.

We now show that a $p$-weak differentiable structure $\mathscr A$ on $X$ gives rise to a measurable bundle.

\begin{prop}\label{prop:measbundle}
	Let $p\ge 1$, and let $\{(U_i,\varphi_i)\}$ be an atlas of $p$-weak charts on $X$. The collection $\{(U_i,(\R^{N_i})^*,|\cdot|_{i,x}) \}$ forms a measurable bundle over $X$, the transformations given by the collection $\{ D\Phi_{i,j,x}\}$ constructed in Lemma \ref{lem:transformation}.
\end{prop}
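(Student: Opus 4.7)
The plan is to verify the three defining conditions of a measurable $L^\infty$-bundle in turn. Condition (1) is immediate from the definition of a $p$-weak differentiable structure: the charts $(U_i,\varphi_i)$ cover $X$ up to a $\mu$-null set by hypothesis, and each $U_i$ is Borel. Condition (2) is supplied directly by Theorem \ref{thm:differentiability}, which on each chart $(U_i,\varphi_i)$ produces a finite-dimensional norm $|\cdot|_{i,x}$ on $(\R^{N_i})^*$ with $x\mapsto |\xi|_{i,x}$ Borel for each fixed $\xi\in(\R^{N_i})^*$. So the core of the argument lies in verifying condition (3) for the transition maps $D\Phi_{i,j,x}$ produced by Lemma \ref{lem:transformation}.

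The key tools will be the uniqueness of the $p$-weak differential (Lemma \ref{lem:properties}(3)), the identity $|\ud_xf|_{i,x}=|Df|_p(x)$ from Theorem \ref{thm:differentiability}, and the chain-rule compatibility between differentials with respect to different charts. Concretely, one expects $D\Phi_{i,j,x}\colon V_{i,x}\to V_{j,x}$ to be characterized by the change-of-chart relation
\[
\ud_x^{(j)}f = D\Phi_{i,j,x}\bigl(\ud_x^{(i)}f\bigr)\quad \mu\text{-a.e.\ on }U_i\cap U_j
\]
for every $f\in N^{1,p}(X)$, where $\ud^{(k)}$ denotes the $p$-weak differential with respect to the chart $(U_k,\varphi_k)$. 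Linearity of $D\Phi_{i,j,x}$ will then follow from the linearity asserted in Theorem \ref{thm:differentiability}, and Borel measurability in $x$ will come from the construction in Lemma \ref{lem:transformation}.

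The isometry property is then read off from Theorem \ref{thm:differentiability}: for a countable dense subset $\{\xi_n\}\subset (\R^{N_i})^*$, take $f_n=\xi_n\circ\varphi_i$. Arguing as in the proof of Lemma \ref{lem:difflip}, one obtains $\ud^{(i)} f_n=\xi_n$ $\mu$-a.e.\ on $U_i$, so that
\[
|D\Phi_{i,j,x}(\xi_n)|_{j,x}=|\ud_x^{(j)}f_n|_{j,x}=|Df_n|_p(x)=|\xi_n|_{i,x}
\]
for $\mu$-a.e.\ $x\in U_i\cap U_j$. By continuity of both norms in the argument, the isometry extends to all $\xi\in(\R^{N_i})^*$. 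Bijectivity is obtained symmetrically: applying the change-of-chart relation twice and invoking uniqueness of the differential forces $D\Phi_{j,i,x}\circ D\Phi_{i,j,x}=\mathrm{id}$ $\mu$-a.e. The same uniqueness principle, applied to the differential of $f$ on a threefold intersection $U_i\cap U_j\cap U_k$, yields the cocycle identity $D\Phi_{j,k,x}\circ D\Phi_{i,j,x}=D\Phi_{i,k,x}$.

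The main obstacle is organizational rather than conceptual: each identity above holds only outside some $\mu$-null exceptional set depending on $f$ (and on the pair or triple of indices involved). One has to intersect countably many such null sets, indexed by the countable atlas and by the countable dense subset $\{\xi_n\}$, to obtain a single null set away from which all the bundle axioms hold simultaneously. The countability of the atlas together with the continuity of $\xi\mapsto D\Phi_{i,j,x}(\xi)$ and $\xi\mapsto |\xi|_{k,x}$ makes this bookkeeping straightforward.
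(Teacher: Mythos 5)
Your proposal is correct and follows essentially the same route as the paper: conditions (1) and (2) come from the measurability and norm statements already established for the charts (the paper cites Lemma \ref{lem:representative}, you cite Theorem \ref{thm:differentiability}, which rests on the same lemmas), and the cocycle condition is deduced from the uniqueness of the $p$-weak differential exactly as in Lemma \ref{lem:transformation}. The only difference is that you re-verify the isometry and bijectivity of the transition maps, which the proposition already takes as given from Lemma \ref{lem:transformation}; this is harmless redundancy rather than a different argument.
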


First, we construct the transformation maps.

\begin{lemma}\label{lem:transformation}
	Let $(U_i,\varphi^i)$ be $N_i$-dimensional $p$-weak charts on $X$, with corresponding differentials $\ud^i$ and norms $|\cdot|_{i,x}$, for $i=1,2$. If $\mu(U_1\cap U_2)>0$, then $N_1=N_2:=N$ and, for $\mu$-a.e. $x\in U_1\cap U_2$, there exists a unique bijective isometric isomorphism $D\Phi_{1,2,x}:((\R^N)^*,|\cdot|_{1,x})\to ((\R^N)^*,|\cdot|_{2,x})$ such that $\ud^1f=\ud^2f\circ D\Phi_{1,2,x}$. Further $D\Phi_{1,2,x}$ satisfies the measurability constraint (2) for
\end{lemma}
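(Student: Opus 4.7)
The plan is to establish equality of dimensions first, then to construct the transformation via the chain rule for $p$-weak differentials applied to the identity map, and finally to check that it is a bijective isometry implementing the claimed intertwining relation, with measurability inherited from its components.

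For the equality of dimensions, I would argue by $p$-maximality. Since $\varphi^1 \in \LIP(X;\R^{N_1})$ is $p$-independent in $U_1$, it is in particular $p$-independent on the positive measure set $U_1 \cap U_2 \subset U_2$, so the $p$-maximality of $(U_2,\varphi^2)$ forces $N_1 \le N_2$. The symmetric argument gives $N_2 \le N_1$, hence $N_1 = N_2 =: N$.

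Next, I would apply Proposition \ref{prop:calcrules}(3) with $F = \mathrm{id}_X\colon X\to X$ (the hypothesis $F_\ast\mu \le \mu$ being trivial) using $\varphi^1$ on the source and $\varphi^2$ on the target. This produces, for $\mu$-a.e. $x \in U_1 \cap U_2$, a unique linear map $\Psi_x\colon \R^{N}\to \R^{N}$ with components $\ud^1_x \varphi^2_k$ (which are Borel in $x$ by Theorem \ref{thm:differentiability}), satisfying
\[
\ud^1_x h = \ud^2_x h \circ \Psi_x
\]
for every $h \in N^{1,p}(X)$. Running the same construction with the roles of the two charts exchanged produces a map $\Psi'_x$ with $\ud^2_x h = \ud^1_x h \circ \Psi'_x$, and the uniqueness of $p$-weak differentials (Lemma \ref{lem:properties}(3)) forces $\Psi'_x\circ \Psi_x = \mathrm{id} = \Psi_x \circ \Psi'_x$ a.e. on $U_1\cap U_2$, so $\Psi_x$ is a linear bijection. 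The sought transformation is then its pullback $D\Phi_{1,2,x}(\xi) := \xi \circ \Psi_x$, a linear bijection of $(\R^N)^*$ implementing the required relation between $\ud^1 f$ and $\ud^2 f$.

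To verify the isometry between $((\R^N)^*,|\cdot|_{2,x})$ and $((\R^N)^*,|\cdot|_{1,x})$, I would test against the functions $f_\xi := \xi\circ \varphi^2 \in \LIP_b(X)$ for $\xi \in (\R^N)^*$. Uniqueness of the differential forces $\ud^2_x f_\xi = \xi$ $\mu$-a.e. on $U_2$, whence $\ud^1_x f_\xi = D\Phi_{1,2,x}(\xi)$, and Theorem \ref{thm:differentiability} together with the fact that the minimal $p$-weak upper gradient $|Df_\xi|_p$ does not depend on a chart gives
\[
|D\Phi_{1,2,x}(\xi)|_{1,x} = |\ud^1_x f_\xi|_{1,x} = |Df_\xi|_p(x) = |\ud^2_x f_\xi|_{2,x} = |\xi|_{2,x}
\]
for $\xi$ ranging over a countable dense subset of $(\R^N)^*$ and $\mu$-a.e. $x\in U_1\cap U_2$. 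Continuity of the pointwise norm in $\xi$, granted by Lemma \ref{lem:representative}, extends this identity to every $\xi\in(\R^N)^*$. Borel measurability of $x\mapsto D\Phi_{1,2,x}$ follows immediately from that of its components $\ud^1 \varphi^2_k$, yielding the measurability constraint required to enter the definition of a measurable $L^\infty$-bundle.

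The main technical obstacle will be the careful bookkeeping of exceptional null sets: each invocation of the chain rule, of uniqueness of the differential, and of the norm identity produces its own full-measure set depending on the test function used. The point is to intersect these over the countable families (the components $\varphi^j_k$ and a countable dense class of $\xi$'s) to obtain a single full-measure subset of $U_1\cap U_2$ on which all identities hold simultaneously, and only then to extend the norm-preservation statement to all $\xi\in(\R^N)^*$ by continuity.
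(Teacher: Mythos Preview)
Your proposal is correct and follows essentially the same route as the paper: both construct the transition map from the $p$-weak differentials of one chart's coordinate functions with respect to the other, reverse the roles of the charts to obtain bijectivity, and verify the isometry via the identity $|\ud f|_x=|Df|_p$. The only cosmetic differences are that you establish $N_1=N_2$ up front from $p$-maximality (the paper extracts it from the isometric embedding $D^*$), and you package the construction by invoking the chain rule Proposition~\ref{prop:calcrules}(3) with $F=\mathrm{id}_X$ rather than writing the curve-wise identity \eqref{eq:transformation} by hand.
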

In the proof, we denote by $\varphi_1^i,\ldots, \varphi_{N_i}^i$ the components of $\varphi^i$.
\begin{proof}
	For $\mu$-a.e. $x\in U_1\cap U_2$, define
	\begin{align*}
	D_x=D=(\ud^1\varphi_1^1,\ldots,\ud^2\varphi_{N_1}^1):\R^{N_2}\to \R^{N_1}.
	\end{align*}
	$D$ is a linear map satisfying, for all $\xi\in (\R^{N_1})^*$
	\begin{align}\label{eq:transformation}
	\xi\circ D((\varphi^2\circ\gamma)_t')=\xi((\varphi^1\circ\gamma)_t')\quad a.e.\ t\in \gamma\inv(U_1\cap U_2)
	\end{align}
	for $p$-a.e. $\gamma\in \Gamma_{U_1\cap U_2}^+$. Note that, by the uniqueness of differentials, $D$ is the unique linear map satisfying \eqref{eq:transformation} for $p$-a.e. curve. By Lemma \ref{lem:properties}(2) it follows that 
	\begin{align*}
	|\xi\circ D|_{2,x}=|\xi|_{1,x},\quad \xi\in (\R^{N_1})^*
	\end{align*}
	for $\mu$-a.e. $x\in U_1\cap U_2$. Thus $D^*$ is an isometric embedding and in particular $N_1\le N_2$. Reversing the roles of $\varphi^1$ and $\varphi^2$ we obtain that $N_1=N_2$ and consequently $D\Phi_{1,2,x}:=D^*_x:((\R^{N_1})^*,|\cdot|_{1,x})\to ((\R^{N_2})^*,|\cdot|_{2,x})$ is an isometric isomorphism for $\mu$-a.e. $x\in U_i\cap U_j$.
	
	For any $f\in \Ne pX$, the identity $\ud^1_xf=\ud^2_x f\circ D\Phi_{1,2,x}$ for $\mu$-a.e. $x\in U_1\cap U_2$ follows from \eqref{eq:transformation} and \eqref{eq:differential}.

\end{proof}

\begin{proof}[Proof of Proposition \ref{prop:measbundle}]
	Conditions (1) and (2) are satisfied by Lemma \ref{lem:representative}. The cocycle condition follows from Lemma \ref{lem:transformation}. 
\end{proof}
\begin{defn}
	We call the measurable bundle given by Proposition \ref{prop:measbundle} the $p$-weak cotangent bundle and denote it by $T^*_pX$. We denote $T_{p,x}^*X=((\R^N)^*,|\cdot|_x)$ and $T_{p,x}X=(\R^N,|\cdot|_{\ast,x})$ for almost every $x\in U$ where $(U,\varphi)$ is an $N$-dimensional $p$-weak chart and $|\cdot|_x$ the norm given by the canonical minimal gradient $\Phi$, cf. Lemmas \ref{lem:canonicalplan} and \ref{lem:difflip}.  
	The spaces $T_{p,x}$ are here defined pointwise almost everywhere. By considering the adjoints of transition maps in the definition above, one can patch these together to form a measurable $L^\infty$ tangent bundle, which is dual to $T^*_p X$, whose fibers are $T_{p,x}X$.
\end{defn}

The next proposition establishes the existence of a $p$-weak differentiable structure under a mild finite dimensionality condition.
\begin{prop}\label{prop:exists-p-weak-diff-str}
	Suppose $X$ is a metric measure space and $\{X_i\}_{i\in\N}$ a covering of $X$ with $\dim_HX_i<\infty$. Then, for any $p\ge 1$, $X$ admits a $p$-weak differentiable structure. Moreover, $N\le \dim_HX_i$ whenever $(U,\varphi)$ is an $N$-dimensional $p$-weak chart with $\mu(U\cap X_i)>0$.
\end{prop}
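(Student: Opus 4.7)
The plan is to localize and exhaust inside each $X_i$, using Proposition \ref{prop:chahausd} to bound chart dimensions uniformly by $\dim_H X_i$. First, since $\mu$ is Radon and finite on balls, one intersects with a countable cover by balls, replacing $\{X_i\}$ by a countable Borel partition of $X$ into sets of finite measure, each still of finite Hausdorff dimension. Fix one such piece $Y$ with $d := \dim_H Y < \infty$.

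The key step to establish is: every Borel $V \subset Y$ with $\mu(V)>0$ contains a $p$-weak chart $(W,\varphi)$ with $\mu(W)>0$. Set
\[
n(V) = \sup\{ n\ge 0 : \exists\, \varphi\in\LIP_b(X;\R^n) \text{ that is $p$-independent on some positive measure Borel subset of }V \}.
\]
By Proposition \ref{prop:chahausd} we have $n(V) \le d$, so the supremum is attained by some integer $N$. When $N\ge 1$, take a witness $\varphi\in\LIP_b(X;\R^N)$ and $W\subset V$ of positive measure on which $\varphi$ is $p$-independent; such $\varphi$ is automatically $p$-maximal on $W$, since a Lipschitz map into a larger $\R^M$ that is $p$-independent on a positive measure subset of $W$ would witness $n(V)\ge M > N$, contradicting the definition of $n(V)$. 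Hence $(W,\varphi)$ is an $N$-dimensional $p$-weak chart. When $N=0$, no $f\in\LIP_b(X)$ is $p$-independent on any positive measure subset of $V$; for one-dimensional targets, $p$-independence of $f$ on $W$ is exactly $|Df|_p>0$ $\mu$-a.e.\ on $W$, so this forces $|Df|_p=0$ $\mu$-a.e.\ on $V$ for every $f\in\LIP_b(X)$, making $(V,0)$ a zero-dimensional $p$-weak chart.

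Given the key step, a standard exhaustion produces countably many disjoint $p$-weak charts $(W_k,\varphi_k)$ covering $Y$ up to a null set: at stage $k$, select a chart inside the remainder $Y\setminus\bigcup_{j<k}W_j$ whose measure is at least half the supremum of chart measures available in that remainder. Since $\mu(Y)<\infty$ and the $W_k$ are disjoint, $\sum_k\mu(W_k)<\infty$ and the stagewise suprema decay to zero. By the key step, the complement $Y\setminus\bigcup_k W_k$ must then be null, for any chart found inside it would contradict this decay. Unioning such collections across the countably many pieces $Y$ gives a $p$-weak differentiable structure on $X$.

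The ``moreover'' clause is immediate from Proposition \ref{prop:chahausd}: if $(U,\varphi)$ is an $N$-dimensional $p$-weak chart with $\mu(U\cap X_i)>0$, then $\varphi$ is $p$-independent on the positive measure subset $U\cap X_i\subset X_i$, whence $N\le \dim_H(U\cap X_i)\le \dim_H X_i$. The main conceptual point is the dichotomy in the key step between the $N\ge 1$ and $N=0$ regimes; the exhaustion and the final dimension bound are routine consequences of Proposition \ref{prop:chahausd} and the definitions.
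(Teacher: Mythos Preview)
Your proof is correct and follows essentially the same route as the paper's: both use Proposition~\ref{prop:chahausd} to bound the maximal chart dimension on each piece of finite Hausdorff dimension, and then exhaust. The only difference is cosmetic: the paper cites \cite[Proposition~3.1]{kei04} as a black box for the exhaustion step, whereas you spell out the greedy half-of-the-supremum argument and the $N=0$ case by hand.
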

\begin{proof}
	For any Borel set $U\subset X$ with $\mu(U)>0$ there exists $i\in \N$ such that $\mu(U\cap X_i)>0$. By Proposition \ref{prop:chahausd} we have that $N\le \dim_H(U\cap X_i)$ whenever $\varphi\in \LIP_b(X;\R^N)$ is $p$-independent in a positive measure subset of $U\cap X_i$. Using \cite[Proposition 3.1]{kei04} we can cover $X$ up to a null-set by Borel sets $U_k$ for which there exists $\varphi_k\in\LIP_b(X;\R^{N_k})$ that are $p$-independent and $p$-maximal on $U_k$. The collection $\{(U_k,\varphi_k)\}_{k\in \N}$ is a $p$-weak differentiable structure on $X$. The last claim follows by the argument above.
\end{proof}

\subsection{Sections of measurable bundles}
A  measurable bundle $\mathcal T$ over $X$ comes with a projection map $\pi:\mathcal T\to X,\quad (x,v)\mapsto x$, and a {\bf section} of $\mathcal T$ is a collection $\omega = \{\omega_i\colon U_i \to V_i\}$ of Borel measurable maps satisfying $\pi\circ \omega_i={\rm id}_{U_i}$ $\mu$-a.e. and $\phi_{i,j,x}(\omega_i)=\omega_j$ for each $i,j \in I$ and almost every $x \in U_i\cap U_j$. Observe that the map $x\mapsto |\omega(x)|_x$ given by
\begin{align}\label{eq:ptwisenorm}
|\omega(x)|_x:=|\omega_i(x)|_{i,x} \quad \mu-\textrm{a.e. }x\in U_i
\end{align}
is well-defined up to negligible sets by the cocycle condition and the fact that $\phi_{i,j,x}$ is isometric.

\begin{defn}\label{def:bundlemodule}
For  $p\in [1,\infty]$, let $\Gamma_p(\mathcal T)$ be the space of sections $\omega$ of $\mathcal T$ with
\begin{align*}
\|\omega\|_p:=\|x\mapsto |\omega(x)|_x\|_{L^p(\mu)}<\infty.
\end{align*}
We call $\Gamma_p(\mathcal T)$ the space of $p$-integrable sections of $\mathcal T$. The space $\Gamma_p(T_p^*X)$ is called the $p$-weak cotangent module.
\end{defn}

Note that $\Gamma_p(\mathcal T)$, equipped with the pointwise norm \eqref{eq:ptwisenorm} and the natural additition and multiplication operations, is a normed module in the sense of \cite{gig15}. Recall that an $L^p$-normed $L^\infty$-module over $X$ is a Banach module $(\mathscr M,\|\cdot\|)$ over $L^\infty(X)$, equipped with a pointwise norm $|\cdot|:X\to \R$ that satisfies
\begin{align*}
|gm|=|g||m|\quad\textrm{and}\quad\|m\|=\left(\int_X|m|_x^p\ud\mu(x)\right)^{1/p}
\end{align*}
for all $m\in \mathscr M$ and $g\in L^\infty(X)$. We refer to \cite{gig15,gig18} for a detailed account of the theory of normed modules.


Next we consider the $p$-weak cotangent module $\Gamma_p(T^*_pX)$. For a $p$-weak chart $(U,\varphi)$ of $X$ and $f\in \Ne pX$, denote by $\ud_{(U,\varphi)}f$ the differential of $f$ with respect to $(U,\varphi)$. Lemma \ref{lem:transformation} implies that the collection of differentials with respect to different charts satisfies the compatibility condition above.
\begin{defn}\label{def:diffsection}
	Let $p\ge 1$, and suppose $\mathscr A$ is a $p$-weak differentiable atlas of $X$. For any $f\in \Ne pX$, the differential $\ud f\in \Gamma_p(T_p^*X)$ is the element in the $p$-weak cotangent module defined by the collection $\{\ud_{(U,\varphi)} f:U\to(\R^N)^*\}_{(U,\varphi)\in \mathscr A}$. 
\end{defn}

We record the following properties of the differential. The claims follow directly from Proposition \ref{prop:calcrules} together with the compatibility condition of sections. We omit the proof. 

\begin{prop}\label{prop:sectioncalcrules}
Let $A\subset X$ be a Borel set and $F:X\to Y$ a Lipschitz map to a metric measure space $(Y,d,\nu)$ admitting a $p$-weak differentiable structure, with $F_\ast\mu\le C\nu$.
\begin{enumerate}  
\item If $f,g\in \Ne pX$ agree on $A\subset X$, then $\ud f=\ud g$ $\mu$-a.e. on $A$.

\item If $f,g\in \Ne pX \cap L^\infty(X)$, then $\ud (fg)=g\ud f+f\ud g$ $\mu$-a.e.

\item If $E$ is the set of $y\in Y$for which $T_{p,y}^*Y$ does not exist, then $\mu(F\inv(E))=0$ and, for $\mu$-a.e. $x\in X\setminus F\inv(E)$ there exists a unique linear map $D_xF:T_{p,x}X\to T_{p,F(x)}Y$ such that
$$
\ud_x(h\circ F)=\ud_{F(x)} h\circ D_xF\quad\mu-\textrm{a.e. }x
$$
for every $h\in \Ne pY$.
\end{enumerate}
\end{prop}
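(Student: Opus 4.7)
The overarching strategy is to reduce each claim to its chart-wise counterpart in Proposition \ref{prop:calcrules} and then to patch the local statements together using the compatibility of differentials under the transition maps $D\Phi_{i,i',x}$ from Lemma \ref{lem:transformation}. Since these transitions are linear isometries, any identity among differentials whose form is preserved by linear isometries glues to a global identity of sections.

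For (1), the cleanest route avoids chartwise work altogether: $f-g \in N^{1,p}(X)$ vanishes on $A$, so by the standard locality of minimal $p$-weak upper gradients we have $|D(f-g)|_p = 0$ $\mu$-a.e.\ on $A$. Theorem \ref{thm:differentiability} then gives $|\ud(f-g)|_x = 0$ a.e.\ on $A$, and the linearity of $f \mapsto \ud f$ asserted in the same theorem yields $\ud f = \ud g$ a.e.\ on $A$. A chartwise alternative is also available: in a chart $(U,\varphi)$ one combines the curvewise identity of Lemma \ref{lem:difflip} with Lemma \ref{lem:properties}(2) applied to $\bm\xi_x := \ud_x f - \ud_x g$ to conclude $\Phi^x(\bm\xi_x) = 0$ a.e.\ on $A \cap U$, and then $p$-independence of $\varphi$ forces $\bm\xi = 0$ there.

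Claim (2) is immediate: Proposition \ref{prop:calcrules}(2) gives the Leibniz identity $\ud_{(U_i,\varphi_i)}(fg) = f\, \ud_{(U_i,\varphi_i)} g + g\, \ud_{(U_i,\varphi_i)} f$ on each chart domain $U_i$, and the linearity of the transitions $D\Phi_{i,i',x}$ guarantees that both sides transform identically under a change of chart. Hence the local identities glue into the stated global identity of sections in $\Gamma_p(T_p^*X)$.

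For (3), I fix atlases $\{(U_i,\varphi_i)\}$ and $\{(V_j,\psi_j)\}$ of $X$ and $Y$. The set $E$ agrees, up to a $\nu$-null set, with the complement of $\bigcup_j V_j$, hence $\nu(E) = 0$, and the bound $F_\ast\mu \le C\nu$ gives $\mu(F\inv(E)) = 0$. For each pair $(i,j)$ with $\mu(U_i \cap F\inv(V_j)) > 0$, Proposition \ref{prop:calcrules}(3) yields a unique linear map on $U_i \cap F\inv(V_j)$ satisfying $\ud_x(h\circ F) = \ud_{F(x)}h \circ D_xF$ a.e.\ for every $h\in N^{1,p}(Y)$. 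Reinterpreting it as a map $T_{p,x}X \to T_{p,F(x)}Y$ through the identifications of $T_{p,x}X$ with $\R^{N_i}$ and of $T_{p,F(x)}Y$ with $\R^{M_j}$, the cocycle conditions on $X$ and on $Y$ together with the chart-wise uniqueness clause of Proposition \ref{prop:calcrules}(3) force these local maps to agree on overlaps, producing a globally well-defined $D_xF$ on $X \setminus F\inv(E)$. The main technical point to verify is exactly this overlap compatibility: it amounts to checking that $D_xF$ intertwines the transition maps on $X$ with those on $Y$, which is precisely what the uniqueness statement in Proposition \ref{prop:calcrules}(3) delivers by testing against a generating family of $h \in N^{1,p}(Y)$.
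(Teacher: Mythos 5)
Your proposal is correct and follows essentially the route the paper intends: the paper omits the proof, noting only that the claims ``follow directly from Proposition \ref{prop:calcrules} together with the compatibility condition of sections,'' which is exactly the chartwise reduction plus gluing via the transition maps of Lemma \ref{lem:transformation} that you carry out. Your part (1) additionally invokes the standard locality of minimal $p$-weak upper gradients combined with Theorem \ref{thm:differentiability}, a harmless and correct shortcut, while the chartwise alternative you sketch (via Lemma \ref{lem:properties}(2) and $p$-independence) is the paper's implicit argument.
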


We finish the subsection with a proof of the density of Lipschitz functions in Newtonian spaces.

\begin{proof}[Proof of Theorem \ref{thm:lipdense}]
	Let $f\in N^{1,p}(X)$. By \cite{seb2020}, there exists a sequence $(f_n)\subset\LIP_b(X)$ with $f_n\to f$ and $|Df_n|_p\to |Df|_p$ in $L^p(\mu)$. It follows that $(\ud f_n)\subset \Gamma_p(T^*X)$ is equi-integrable, and Remark \ref{rmk:equi-int}, Lemma \ref{lem:diff} together with a diagonalization argument over a union of charts covering $X$ shows that $\ud \tilde f_n\to \ud f$ in $\Gamma_p(T^*X)$ for convex combinations $\tilde f_n \in \LIP_b(X)$ of $f_n$'s. Consequently $|D(\tilde f_n-f)|_p\to 0$ in $L^p(\mu)$.
\end{proof}

\subsection{Dependence of the $p$-weak differentiable structures on $p$}\label{sec:relationship}

Suppose $1\le p<q$. We have that $|Df|_p\le |Df|_q$ $\mu$-a.e. for every $f\in \LIP_b(X)$, and the inequality may be strict, see \cite{garethdimarino}. As a consequence, if $\varphi\in \LIP_b(X;\R^N)$ is $q$-maximal in $U\subset X$, then it is $p$-maximal. It follows (using this dimension upper bound and \cite[Proposition 3.1]{kei04}) that if $X$ admits a $q$-weak differentiable structure then $X$ also admits a $p$-weak differentiable structure. We remark that the structures may be different.

For the following statement we say that a \emph{bundle map} $\pi:\mathcal T\to\mathcal T'$ between two measurable bundles $\mathcal T=(\{U_i,V_{i,x}\},\{\phi_{i,l,x}\})_{i\in I}$ and $\mathcal T'=(\{U_j',V_{j,x}'\},\{\psi_{j,k,x}\})_{j\in J}$ over $X$ is a collection of linear maps $\{\pi_{i,j,x}:V_i\to V_j'\}$ for $\mu$-a.e.  $x\in U_i\cap U'_j$ such that
\begin{itemize}
	\item[(a)] for each $i\in I$, $j\in J$, the map $x\mapsto \pi_{i,j,x}(v):U_i\cap U_j'\to V_j'$ is Borel for any $v\in V_i$; 
	\item[(b)] for each $i,l\in I$, $j,k\in J$, and $\mu$-a.e. $x\in U_i\cap U_l\cap U_j'\cap U_k'$, we have the compatibility condition: $\psi_{j,k,x}\circ\pi_{i,j,x}=\phi_{l,j,x}\circ\pi_{i,l,x}$.
\end{itemize}

When the underlying index sets agree and $U_i=V_i$ for all $i\in I$, it is sufficient to consider the family $\{\pi_{i,x}:=\pi_{i,i,x}\}$, since these determine a unique bundle map.

\begin{prop}\label{prop:pq}
	Suppose $q> p\ge 1$ and $X$ admits a $q$-weak differentiable structure. Then $X$ admits $p$-weak differentiable structure and there is a bundle map $\pi_{p,q}:T^*_qX\to T^*_pX$ which is a linear 1-Lipschitz surjection $\mu$-a.e. Moreover, this map satisfies $\pi_{p,q}=\pi_{p,s}\circ\pi_{s,q}$ for $q>s>p$, and $\pi_{p,q}(\ud_q f) = \ud_p f$ for any $f\in \LIP_b(X)$ where $\ud_q f, \ud_pf$ are the $p$- and $q$-weak differentials respectively.
\end{prop}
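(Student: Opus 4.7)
Existence of the $p$-weak differentiable structure was noted in the paragraph preceding the statement, so the main task is to construct the bundle map $\pi_{p,q}$ with the claimed properties. I would work on overlaps of $q$-weak and $p$-weak charts. Fix a $q$-weak chart $(U,\varphi)$ of dimension $N$ and a $p$-weak chart $(V,\psi)$ of dimension $M$ with $\mu(U\cap V)>0$, and let $\Phi_p,\Phi_q$ denote the canonical $p$- and $q$-gradients of $\varphi$ from Lemma \ref{lem:canonicalplan}. Since $\xi\circ\varphi\in\LIP_b(X)$ and $|D(\xi\circ\varphi)|_p\le|D(\xi\circ\varphi)|_q$ for each $\xi\in(\R^N)^*$, we have $\Phi_p^x(\xi)\le\Phi_q^x(\xi)$ for $\mu$-a.e.\ $x\in U$ and every $\xi\in(\R^N)^*$.

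On $U\cap V$, define $\pi_{p,q,x}:=(D_x\varphi)^*\colon(\R^N)^*\to(\R^M)^*$, where $D_x\varphi\colon\R^M\to\R^N$ comes from Proposition \ref{prop:calcrules}(3) applied to the Lipschitz map $\varphi\colon X\to\R^N$ with the trivial $p$-chart on the target. Proposition \ref{prop:calcrules}(3) gives $\ud_p(\xi\circ\varphi)(x)=(D_x\varphi)^*(\xi)$ for each $\xi\in(\R^N)^*$, hence
\[
|\pi_{p,q,x}(\xi)|_{p,x}=\Phi_p^x(\xi)\le\Phi_q^x(\xi)=|\xi|_{q,x},
\]
so $\pi_{p,q,x}$ is $1$-Lipschitz. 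Surjectivity reduces to showing that $\ud_p\chi(x)$ lies in the image of $(D_x\varphi)^*$ for every $\chi\in\LIP_b(X)$ and $\mu$-a.e.\ $x\in U\cap V$ (since the differentials of the coordinates of $\psi$ span $(\R^M)^*$). This follows from the $q$-maximality of $\varphi$: at a.e.\ $x\in U$ the continuous function $(c,b)\mapsto|D(c\cdot\varphi+b\chi)|_q(x)$ on the unit sphere of $(\R^{N+1})^*$ has a zero, the $q$-independence of $\varphi$ forces $b\neq 0$, and then the inequality $|Df|_p\le|Df|_q$ applied to $f=c\cdot\varphi+b\chi$ yields $\ud_p\chi(x)=-b^{-1}(D_x\varphi)^*(c)$.

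For the identity $\pi_{p,q}(\ud_q f)=\ud_p f$ with $f\in\LIP_b(X)$, I would combine the $q$-differential identity $(f\circ\gamma)_t'=\ud_q f(\gamma_t)((\varphi\circ\gamma)_t')$ (which holds $q$-a.e.\ and hence $p$-a.e.\ on bounded subfamilies via $\Mod_p\le C\Mod_q$) with the identity $(\varphi\circ\gamma)_t'=D_{\gamma_t}\varphi\cdot(\psi\circ\gamma)_t'$ from Proposition \ref{prop:calcrules}(3), and appeal to the uniqueness of the $p$-weak differential (Lemma \ref{lem:properties}(3)). Compatibility of the local maps $\pi_{p,q,x}$ across overlapping $q$-charts (so that they assemble into a bundle map in the sense of conditions (a) and (b)) and the cocycle relation $\pi_{p,q}=\pi_{p,s}\circ\pi_{s,q}$ for $q>s>p$ both follow by this same uniqueness property, together with the density of Lipschitz differentials in $\Gamma_p(T_p^*X)$ from Theorem \ref{thm:lipdense}.

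The main technical obstacle I anticipate is the modulus comparison $\Mod_p\le C\Mod_q$ used in the last paragraph; it holds on bounded curve families (with $C$ depending on the $\mu$-mass of the ambient region) by H\"older's inequality, but to use it globally for the identity $\pi_{p,q}(\ud_q f)=\ud_p f$ one must localize via a cut-off argument, particularly at the endpoint $p=1$ where the plan-modulus duality needs to be handled with care.
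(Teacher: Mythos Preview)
Your approach is essentially the same as the paper's: both define the bundle map on a common refinement of $q$- and $p$-charts by taking the $p$-weak differentials of the components of the $q$-chart map, verify the $1$-Lipschitz bound from $|Df|_p\le|Df|_q$, and deduce compatibility and the cocycle identity from uniqueness of the $p$-differential. Two remarks are worth making.

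First, your appeal to Proposition~\ref{prop:calcrules}(3) is not quite clean: that statement requires $F_\ast\mu\le C\nu$ for some measure $\nu$ on the target for which the identity is a $p$-weak chart, and there is no reason $\varphi_\ast\mu$ should be controlled by Lebesgue measure. The paper sidesteps this by simply \emph{defining} the matrix $A_x$ to have rows $\ud_p\varphi_1(x),\ldots,\ud_p\varphi_N(x)$ and observing directly (by linearity of the $p$-differential) that $\ud_p(\xi\circ\varphi)=\xi\circ A_x$; your $(D_x\varphi)^*$ is exactly this, so just write it down without invoking the chain rule.

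Second, the modulus comparison you flag as the ``main technical obstacle'' is not needed. You already observed that the canonical seminorms satisfy $\Phi_p^x\le\Phi_q^x$ for $\varphi$; the same holds for the canonical $(N{+}1)$-gradient $\Psi$ of $(\varphi,f)$. The $q$-differential is characterised by $\Psi_q^x(\ud_qf(x),-1)=0$, hence also $\Psi_p^x(\ud_qf(x),-1)=0$ for $\mu$-a.e.\ $x$. Now Lemma~\ref{lem:properties}(2) applied to the Borel covector field $x\mapsto(\ud_qf(x),-1)$ gives $(f\circ\gamma)_t'=\ud_qf(\gamma_t)((\varphi\circ\gamma)_t')$ for $p$-a.e.\ $\gamma$ and a.e.\ $t$ directly, with no detour through $\Mod_p\le C\Mod_q$. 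This is presumably what the paper's terse ``by the definition of the differential, we get immediately'' is pointing to, and it handles $p=1$ without any special care. Your surjectivity argument is in fact more explicit than the paper's (which just says ``clearly''), and once the identity $\pi_{p,q}(\ud_qf)=\ud_pf$ is in hand, surjectivity also follows immediately since the $\ud_p\psi_j$ span.
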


\begin{proof}
Since $X$ admits a $q$-differential structure, we can find  $q$-charts $(U_i,\varphi_{q,i})$ so that $X = \bigcup_{i \in \N} U_i \cup N$ with $\mu(N)=0$, and $\varphi_{q,i}\in N^{1,p}(X;\R^{m_i})$ is Lipschitz. Assume that $U_i$ are chosen to be pairwise disjoint. As $|Df|_p\le |Df|_q$ (a.e.) for any $f \in \LIP_b(X)$, any $p$-independent map is also $q$-independent. 
Any map $\varphi\in N^{1,p}(X;\R^n)$ which is $p$-independent on some positive measure subset of $U_i$ must have $n \leq m_i$, see Proposition \ref{prop:sobtolipchart}. By \cite[Proposition 3.1]{kei04} and this dimension bound we can cover $X$ by maximal $p$-independent maps, i.e. charts, $(V_j, \varphi_{p,j})$. By considering the countable collection of sets $V_i \cap U_j$, and re-indexing, we may assume that $(U_i,\varphi_{q,i})$ and $(U_i,\varphi_{p,i})$ are $q$- and $p$- charts, respectively.

We define the matrix $A_x$ for $x \in U_i$, by taking as rows the vectors $\ud_{i,p}\varphi^k_{q,i}$ for each component $k=1,\dots, m_i$. 
We define the bundle map $\pi_{p,q}$ by setting $\pi_{p,q}^x(\xi)=\xi \circ A_x$ for $\mu$-a.e. $x\in U_i$. For each $\xi$ we get that $\ud_p(\xi\circ \varphi_{q,i}) = \xi \circ A_x$. Thus, for $p$-a.e. curve $\gamma \in AC(I;X)$ and a.e. $t\in \gamma\inv(U)$ we have

$$\xi(\varphi_{q,i} \circ \gamma)'_t = (\xi \circ A_x)(\varphi_{p,i} \circ \gamma)_t'.$$

By the definition of the differential, we get immediately that $\pi_{p,q}(\ud_{q} f) = \ud_{p} f$ for every $f \in \LIP_b(X)$. Thus, the 1-Lipschitz property follows immediately from the definition of norms combined with $|Df|_p\le |Df|_q$. The map is clearly a surjective bundle map as well, and by uniqueness of the $p$-differential, we automatically get $\pi_{p,s}\circ\pi_{s,q}= \pi_{p,q}$.
\end{proof}

\section{Relationship Cheeger's and Gigli's differentiable structures} \label{sec:connection-gigli-cheeger}

\subsection{Gigli's cotangent module}

Fix $p\ge 1$. Gigli's cotangent module is the $L^p$-normed $L^\infty$-module given by the following theorem.

\begin{thm}\label{thm:cotangmodule}
	There exists an $L^p$-normed $L^\infty$-module $L^p(T^*X)$, with pointwise norm denoted $|\cdot|_G$, and a bounded linear map $\ud_G:\Ne pX\to L^p(T^*X)$ satisfying
	\begin{align}\label{eq:giglidiff}
	|\ud_Gf|_G=|Df|_p,\quad f\in \Ne pX,
	\end{align}
	such that the subspace $\mathcal{V}$ defined by
	\begin{align*}
	\mathcal V:= \left\{ \sum_j^M\chi_{A_j}\ud_Gf_j:\ (A_j)_j\textrm{ Borel partition of }X,\ f_j\in \Ne pX \right\}
	\end{align*}
	is dense in $L^p(T^*X)$. The module $L^p(T^*X)$ is uniquely determined up to isometric isomorphism of normed modules by these properties.
\end{thm}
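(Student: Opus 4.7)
The plan is to follow the standard ``pre-cotangent module'' construction of Gigli. First I would introduce the set $\mathcal V_{\mathrm{pre}}$ of formal finite sums $\sum_j \chi_{A_j}\ud_G f_j$, where $\{A_j\}$ is a Borel partition of $X$ and $f_j\in N^{1,p}(X)$. On this set define a candidate pointwise norm by
\[
\left|\sum_j \chi_{A_j}\ud_G f_j\right|_G(x) \defeq \sum_j \chi_{A_j}(x)|Df_j|_p(x),
\]
and declare two formal sums equivalent if their candidate norms agree $\mu$-a.e.\ after taking the common refinement of the two partitions. The critical input here is the \emph{locality} of the minimal $p$-weak upper gradient: if $f,g\in N^{1,p}(X)$ and $A$ is Borel, then $|D(f-g)|_p = 0$ $\mu$-a.e.\ on $A\cap \{f=g\}$, hence $|Df|_p = |Dg|_p$ $\mu$-a.e.\ on $\{f=g\}$. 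This makes the candidate pointwise norm well defined on equivalence classes and moreover shows that the natural $L^\infty$-multiplication $\chi_B\cdot\left(\sum \chi_{A_j}\ud_G f_j\right)\defeq \sum \chi_{B\cap A_j}\ud_G f_j$ and addition descend to $\mathcal V_{\mathrm{pre}}/{\sim}$.

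Next I would check that $\mathcal V_{\mathrm{pre}}/{\sim}$, equipped with the pointwise norm and the global norm $\|\omega\|\defeq \|\,|\omega|_G\,\|_{L^p(\mu)}$, becomes a normed $L^\infty$-module (in the sense recalled in the excerpt), with $L^\infty$-linear multiplication extended from characteristic functions to simple functions and then to $L^\infty(X)$ by density/continuity. I would then take the abstract metric completion; the norm, pointwise norm, $L^\infty$-action, and the map $\ud_G:N^{1,p}(X)\to \mathcal V_{\mathrm{pre}}/{\sim}$ extend uniquely to the completion, which I define to be $L^p(T^*X)$. By construction, $|\ud_G f|_G = |Df|_p$ $\mu$-a.e.\ and $\mathcal V$ is dense. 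Boundedness and linearity of $f\mapsto \ud_G f$ also follow directly, using that $|D(\alpha f + \beta g)|_p\le |\alpha||Df|_p + |\beta||Dg|_p$ and again locality to absorb the dependence on the partition.

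For uniqueness, suppose $(M_1,|\cdot|_1,\ud_1)$ and $(M_2,|\cdot|_2,\ud_2)$ both satisfy the stated properties. I would define a map $T:\mathcal V_1\to M_2$ on the dense subspace of simple elements by $T\bigl(\sum_j \chi_{A_j}\ud_1 f_j\bigr) \defeq \sum_j \chi_{A_j}\ud_2 f_j$, and verify that it is well-defined and isometric with respect to the pointwise norms. The point is that the identity $|\sum_j \chi_{A_j}\ud_i f_j|_i = \sum_j \chi_{A_j}|Df_j|_p$ $\mu$-a.e.\ must hold in both modules; this requires the locality property of $\ud_i$, which is a formal consequence of the normed-module axioms together with $|\ud_i f|_i = |Df|_p$. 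Then $T$ extends to an isometric $L^\infty$-linear isomorphism $M_1\to M_2$ intertwining $\ud_1$ and $\ud_2$.

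The main obstacle, as in Gigli's original proof, is establishing that the candidate pointwise norm is well defined on the quotient, i.e.\ that a suitable locality property for $\ud_G$ holds at the \emph{pre-module} stage, before completion. Concretely, for a finite family $f_1,\dots,f_n\in N^{1,p}(X)$ and a Borel partition $\{A_j\}$ one needs $\bigl|\sum_j \chi_{A_j}\ud_G f_j\bigr|_G = \sum_j \chi_{A_j}|Df_j|_p$ a.e.; the nontrivial inequality is reduced by a refinement argument to the pointwise locality of minimal upper gradients on the level sets $\{f_i=f_j\}$, which is a standard consequence of the Newtonian definition and curve-wise absolute continuity. Once this is handled, the rest of the argument is essentially formal.
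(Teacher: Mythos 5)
The paper itself offers no proof of Theorem \ref{thm:cotangmodule}: it is imported verbatim from Gigli \cite{gig15,gig18} and used as a black box, so your proposal can only be compared with Gigli's construction, which is indeed the route you sketch (pre-module of simple formal sums, pointwise norm, quotient, completion, uniqueness via the dense class of simple elements).

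There is, however, a concrete error at the foundational step. You declare two formal sums $\sum_j\chi_{A_j}\ud_G f_j$ and $\sum_k\chi_{B_k}\ud_G g_k$ equivalent when their candidate pointwise norms agree $\mu$-a.e.\ on the common refinement, i.e.\ when $|Df_j|_p=|Dg_k|_p$ a.e.\ on $A_j\cap B_k$. That relation is too coarse: it identifies $\ud_G f$ with $\ud_G(-f)$ for every $f$, and addition does not descend to the quotient, since adding $\ud_G f$ to these two equivalent representatives yields $\ud_G(2f)$ and $\ud_G 0$, whose candidate norms are $2|Df|_p$ and $0$; so the quotient is not even a vector space and the subsequent completion and extension steps have nothing to act on. The correct relation, as in Gigli, is that the candidate norm of the \emph{difference} vanishes, i.e.\ $|D(f_j-g_k)|_p=0$ $\mu$-a.e.\ on $A_j\cap B_k$. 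With that relation, well-definedness of the pointwise norm is exactly the inequality $\bigl||Df|_p-|Dg|_p\bigr|\le |D(f-g)|_p$ a.e.\ (this, rather than the level-set locality on $\{f=g\}$ that you invoke, is the locality needed at this step; the level-set statement is true but addresses a different issue), and the same subadditivity gives compatibility of addition and of the $L^\infty$-action. Once the equivalence relation is corrected, the rest of your sketch --- completion, extension of $\ud_G$, and the uniqueness argument via the pointwise-norm-preserving, $L^\infty$-linear map defined on the dense class of simple elements, whose well-definedness is indeed a formal consequence of the module axioms together with $|\ud_i f|_i=|Df|_p$ --- is sound and coincides with the standard argument the paper cites.
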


Following \cite[Definition 1.4.1]{gig18} we say that a collection $\{v_1,\ldots,v_N\}\subset L^p(T^*X)$ is \emph{linearly independent} in a Borel set $U\subset X$ if, whenever $g_1,\ldots,g_N\in L^\infty(X)$ satisfy $\left|\sum_j^Ng_jv_j\right|_G=0$ $\mu$-a.e. on $U$, we have that $g_1=\cdots=g_N=0$ $\mu$-a.e. in $U$. A linearly independent collection $\{v_1,\ldots,v_N\}$ in $U$ is a \emph{basis} of $L^p(T^*X)$ in $U$ if, for any $v\in L^p(T^*X)$ there exists a Borel partition $\{U_i\}_{i\in\N}$ of $U$ and $g_1^i,\ldots,g_N^i\in L^\infty(X)$ such that
$\left|v- \sum_j^Ng_j^iv_j\right|_G=0$ $\mu$-a.e. on $U_i$, for every $i\in \N$.

\begin{defn}
	Let $p\ge 1$. The cotangent module $L^p(T^*X)$ is locally finitely generated if there exists a Borel partition such that $L^p(T^*X)$  has a finite basis in each set of the partition.
\end{defn}
By \cite[Proposition 1.4.5]{gig18}, there exists a Borel partition $\{A_N\}_{N\in\N\cup\{\infty\}}$ of of $X$ such that $L^p(T^*X)$ has a basis of $N$ elements on $A_N$, for each $N\in \N\cup\{\infty\}$. We call the partition $\{A_N\}$ the \emph{dimensional decomposition} of $X$. Notice that $L^p(T^*X)$ is locally finitely generated if and only if $\mu(A_\infty)=0$. 

In the forthcoming discussion we identify vectors (and vector fields) $\xi\in \R^N$ with their dual element $v\mapsto v\cdot\xi$ where necessary.
\begin{lemma}\label{lem:isometry}
	Let $p\ge 1$, $N\ge 0$, $\varphi=(\varphi_1,\ldots,\varphi_N)\in \Ne pX^N$, and $\Phi$ the canonical minimal gradient associated to $\varphi$. If $\bm g=(g_1,\ldots,g_N)\in L^\infty(X;(\R^N)^*)$, then
	\begin{align*}
	\left|\sum_{k=1}^N g_k\ud_G\varphi_k\right|_{G,x}=\Phi^x(\bm g)\quad \mu-\textrm{a.e. }x\in X.
	\end{align*}
	In particular, $\varphi$ is $p$-independent on $U\subset X$ if and only if $\ud_G\varphi_1,\ldots,\ud_G\varphi_N\in L^p(T^*X)$ are linearly independent on $U$.
\end{lemma}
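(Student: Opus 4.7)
The plan is to reduce the claimed identity to the pointwise case of constant covectors $\xi \in (\R^N)^*$ via linearity of $\ud_G$, then bootstrap through locality and approximation to arbitrary $\bm g \in L^\infty(X;(\R^N)^*)$. Throughout, the key inputs are Theorem \ref{thm:cotangmodule}, which gives $|\ud_G f|_G = |Df|_p$ $\mu$-a.e., together with Lemma \ref{lem:canonicalplan}, which identifies this quantity with $\Phi^x(\xi)$ when $f = \xi \circ \varphi$.

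First I would fix $\xi \in (\R^N)^*$ and observe that by linearity of the Gigli differential, $\ud_G(\xi \circ \varphi) = \sum_k \xi_k \ud_G \varphi_k$. Combining \eqref{eq:giglidiff} with Lemma \ref{lem:canonicalplan} then gives
\[
\left|\sum_{k=1}^N \xi_k\,\ud_G\varphi_k\right|_{G,x} = |D(\xi\circ\varphi)|_p(x) = \Phi^x(\xi) \quad \mu\textrm{-a.e. }x\in X.
\]
Picking a countable dense set $\{\xi^n\}\subset (\R^N)^*$ and intersecting the corresponding full measure sets, I get a single Borel set of full measure on which the identity holds simultaneously for every $\xi^n$.

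Next, for a simple function $\bm g = \sum_j \chi_{A_j}\xi^{n_j}$ with $\{A_j\}$ a finite Borel partition, the $L^\infty$-linearity and locality of the pointwise norm on $L^p(T^*X)$ yield
\[
\left|\sum_{k=1}^N g_k\,\ud_G\varphi_k\right|_{G,x} = \left|\sum_{k=1}^N \xi^{n_j}_k\,\ud_G\varphi_k\right|_{G,x} = \Phi^x(\xi^{n_j}) = \Phi^x(\bm g(x))
\]
for $\mu$-a.e. $x\in A_j$, and hence $\mu$-a.e. on $X$. For general $\bm g \in L^\infty(X;(\R^N)^*)$, approximate uniformly by simple functions $\bm g_m \to \bm g$ taking values in the chosen countable dense set. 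Both sides pass to the limit: the left by the triangle inequality and $L^\infty$-linearity of the module norm (so $|\sum_k (g_k - g_{m,k})\ud_G\varphi_k|_G \le \|\bm g - \bm g_m\|_\infty \sum_k |\ud_G\varphi_k|_G \to 0$ $\mu$-a.e. after a subsequence), the right by Lipschitz continuity of the seminorm $\Phi^x$. The one technical point to check carefully is this approximation argument, but it is routine given the explicit bound above.

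For the final statement, recall from Lemma \ref{lem:properties}(1) that $I(\varphi)(x) = \inf_{\|\xi\|_* = 1}\Phi^x(\xi)$ $\mu$-a.e., so $p$-independence of $\varphi$ on $U$ is equivalent to $\Phi^x$ being a norm on $(\R^N)^*$ for $\mu$-a.e. $x \in U$. By the just-proved identity, $\sum_k g_k \ud_G \varphi_k$ vanishes $\mu$-a.e. on $U$ (in the module sense) if and only if $\Phi^x(\bm g(x)) = 0$ $\mu$-a.e. on $U$. If $\Phi^x$ is a norm, this forces $\bm g = 0$ $\mu$-a.e. on $U$, giving linear independence. Conversely, if $p$-independence fails, then $\ker \Phi^x \ne \{0\}$ on a set of positive measure, and a Borel selection argument (as in the proof of Proposition \ref{prop:chahausd}, using \cite[Theorem 6.9.1]{bogachev07}) yields a nonzero $\bm g \in L^\infty(X;(\R^N)^*)$ with $\Phi^x(\bm g) \equiv 0$ there, violating linear independence.
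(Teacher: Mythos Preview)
Your proof is correct and follows essentially the same approach as the paper: reduce to constant covectors via linearity of $\ud_G$ and \eqref{eq:giglidiff}, extend to simple functions by locality, then pass to general $\bm g\in L^\infty$ by approximation using the bound $|\sum_k h_k\ud_G\varphi_k|_G\le C|\bm h|\sum_k|D\varphi_k|_p$. The paper does not bother fixing a countable dense set of covectors (for a single simple function only finitely many $\xi_j$ appear, so intersecting the full-measure sets is already harmless), and it leaves the ``in particular'' clause as a direct remark, whereas you spell out the Borel selection for the converse direction; these are cosmetic differences only.
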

\begin{proof}
	If $g_1,\ldots,g_N$ are simple functions, then $\bm g=\sum_j^M\chi_{A_j}\xi_j$ for disjoint Borel $A_j$ and some $\xi_j\in (\R^{N})^*$. It follows that $\sum_{k=1}^N g_k\ud_G\varphi_k= \sum_j^M\chi_{A_j}\ud_G(\xi_j\circ\varphi)$ as elements of $L^p(T^*X)$. Thus 
	\begin{align*}
	\left|\sum_{k=1}^N g_k\ud_G\varphi_k\right|_x=\left|\sum_j^M\chi_{A_j}\ud_G(\xi_j\circ\varphi)\right|_x=\sum_j^M\chi_{A_j}|D(\xi_j\circ\varphi)|_p=\Phi^x(\bm g)
	\end{align*}
	for $\mu$-a.e. $x\in X$.
	
	The estimate
	\begin{align*}
	\Phi^x(\bm g)\le \left(\sum_k^N|g_k|^q\right)^{1/q}\left(\sum_k^N|D\varphi_k|^p_p\right)^{1/p}\le C|\bm g|\sum_k^N|D\varphi_k|_p,
	\end{align*}
	valid for all simple vector valued $\bm g$, implies that the equality in the claim is stable under local $L^\infty$-convergence of $\bm g$. Since simple functions are dense in $L^\infty$, the claim follows. The remaining claim follows in a straightforward way from the equality.
\end{proof}

\begin{remark}\label{rmk:differential-G} If $\varphi \in \LIP(X;\R^N)$ is a chart in $U$, and $f \in N^{1,p}(X)$, then for the canonical minimal upper gradient $\Phi^x(a, \bm \xi))$ of $(f,\varphi)\in N^{1,p}_{loc}(X;\R^{N+1})$ we have by Lemma \ref{lem:properties}(2) that $\Phi^x(1, -\ud f) = 0$. Thus, by the previous lemma, we get that $\ud_G f - \sum_{k=1}^Ng^k \ud_G\varphi_k=0$, where $g^k$ are the components of $\ud f$ and $\varphi_k$ are the components of $\varphi$.  Indeed, this follows by considering this first on the sets $U_M=\{x \in U: |g^k(x)| \leq M, k=1, \dots, N\}$ and sending $M \to \infty$ combined with locality.
\end{remark}

\begin{lemma}\label{lem:indep}
	If $(U,\varphi)$ is an $N$-dimensional $p$-weak chart in $X$, then the differentials of the component functions $\ud_G\varphi_1,\ldots,\ud_G\varphi_N$ form a basis of $L^p(T^*X)$ in $U$.
\end{lemma}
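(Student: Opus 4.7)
The plan is to verify both conditions in the definition of a basis: \emph{linear independence} of $\{\ud_G\varphi_1, \ldots, \ud_G\varphi_N\}$ in $U$, and the existence, for every $v \in L^p(T^*X)$, of a Borel partition of $U$ carrying a representation of $\chi_U v$ in terms of these generators with $L^\infty$ coefficients.

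Linear independence is immediate from Lemma \ref{lem:isometry}: for any $g_1, \ldots, g_N \in L^\infty(X)$,
\[
\left| \sum_{k=1}^N g_k \ud_G \varphi_k \right|_{G,x} = \Phi^x\bigl( g_1(x), \ldots, g_N(x) \bigr) \quad \mu\text{-a.e.},
\]
and the $p$-independence of $\varphi$ on $U$ (which is part of the definition of a $p$-weak chart) means $I(\varphi) > 0$ $\mu$-a.e.\ on $U$, so $\Phi^x$ is a genuine norm on $(\R^N)^*$ for a.e.\ $x \in U$. Thus vanishing of the sum on $U$ forces every $g_k$ to vanish there.

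For the spanning property I would start from Remark \ref{rmk:differential-G}: for each $f \in \Ne pX$ and each $M \in \N$, on the Borel set $U_M = \{x \in U : \max_k |(\ud f)_k(x)| \leq M\}$ one has
\[
\chi_{U_M} \ud_G f = \sum_{k=1}^N \chi_{U_M} (\ud f)_k \, \ud_G \varphi_k
\]
in $L^p(T^*X)$, with $L^\infty$ coefficients. Since the $(\ud f)_k$ are finite-valued a.e., $U = \bigcup_M U_M$ up to a null set, and every $\chi_U \ud_G f$ admits a basis representation on the partition $\{U_M \setminus U_{M-1}\}_{M \geq 1}$. For a general $v \in L^p(T^*X)$, I would approximate in $L^p$ by a sequence $v_n \in \mathcal V$ from Theorem \ref{thm:cotangmodule}; each $v_n$ is a finite sum $\sum_j \chi_{A_j^n} \ud_G f_j^n$ and thus, by the previous step applied summand-wise on a common refinement of the $A_j^n$, admits a basis representation $\chi_U v_n = \sum_k g_k^n \ud_G \varphi_k$ with Borel $g_k^n$ on $U$. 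To pass to the limit I would decompose $U$ by the level sets $U^{(l)} = \{1/(l+1) \leq I(\varphi) < 1/l\}$: on each $U^{(l)}$, Lemma \ref{lem:isometry} combined with equivalence of norms in finite-dimensional $(\R^N)^*$ yields a constant $c_l > 0$ with
\[
|v_n - v_{n'}|_{G,x} \geq c_l \max_k |g_k^n(x) - g_k^{n'}(x)| \quad \text{for a.e. } x \in U^{(l)},
\]
so each $g_k^n$ is Cauchy in $L^p(U^{(l)})$ with a Borel limit $g_k$, and $\chi_{U^{(l)}} v = \sum_k g_k \ud_G \varphi_k$. A further partition of each $U^{(l)}$ by boundedness of the $g_k$ produces $L^\infty$ coefficients, and the resulting countable Borel partition of $U$ realises the required representation of $v$.

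The main obstacle is that $\Phi^x$ is a genuine norm only a.e.\ on $U$ and its comparison constant with the Euclidean norm --- namely $I(\varphi)(x)$ --- is not uniformly bounded away from zero on $U$. Handling this via the partition $\{U^{(l)}\}$ before passing to the limit is the only nonroutine ingredient; the rest of the argument combines density of $\mathcal V$ in $L^p(T^*X)$, Remark \ref{rmk:differential-G}, and countable refinements of Borel partitions together with truncation of Borel coefficients.
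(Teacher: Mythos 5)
Your proof is correct and follows the paper's own route: linear independence via Lemma \ref{lem:isometry}, and the spanning property by writing $\ud_G f=\sum_{k}(\ud f)(e_k)\,\ud_G\varphi_k$ on $U$ through Remark \ref{rmk:differential-G} and then using that the differentials $\ud_G f$ generate $L^p(T^*X)$. The only difference is that you spell out explicitly (via the level sets of $I(\varphi)$, a Cauchy argument for the coefficients, and truncation to get $L^\infty$ coefficients) the passage from the dense generating family $\mathcal V$ of Theorem \ref{thm:cotangmodule} to exact local representation of an arbitrary $v$, a closure step the paper leaves implicit by appealing to the fact that the abstract differentials span $L^p(T^*X)$.
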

\begin{proof}
	By Lemma \ref{lem:isometry}, $\ud_G\varphi_1,\ldots,\ud_G\varphi_N\in L^p(T^*X)$ are linearly independent on $U$. To see that they span $L^p(T^*X)$ in $U$, let $f\in N^{1,p}(X)$, and set $g_k:=\ud f(e_k)$ for each $k=1,\ldots,N$, where $e_k$ is the standard basis of $\R^N$. Then, since $\ud\varphi_k = e^k$, where $e^k$ is the dual basis of $(\R^N)^*$, we get $\ud f=\sum_{k=1}^Ng_k\ud \varphi_k$.  
	Thus, by Remark \ref{rmk:differential-G} we have $\ud_Gf=\sum_{k=1}^Ng_k\ud_G\varphi_k.$ Since the abstract differentials $\ud_Gf$ span $L^p(T^*X)$, this completes the proof.
\end{proof}

\begin{lemma}\label{lem:compatible}
Suppose $p\ge 1$ and $X$ admits a $p$-weak differentiable structure. There exists an isometric isomorphism $\iota:\Gamma_p(T^*_pX)\to L^p(T^*X)$ of normed modules satisfying
\begin{align}\label{eq:compatible}
\iota(\ud f)=\ud_Gf, \quad f\in N^{1,p}(X).
\end{align}
The map $\iota$ is uniquely determined by \eqref{eq:compatible}.
\end{lemma}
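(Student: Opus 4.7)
The plan is to exploit a $p$-weak atlas $\{(U_i,\varphi_i)\}_{i\in\N}$ of dimensions $N_i$ to build $\iota$ chart-by-chart, obtaining the isometric property from Lemma \ref{lem:isometry} and surjectivity from the density of $\mathcal V$ in $L^p(T^*X)$. After intersecting and refining, I may assume the $U_i$ are pairwise disjoint, so each section $\omega\in\Gamma_p(T^*_pX)$ decomposes uniquely in the standard dual bases as $\omega|_{U_i}=\sum_{k=1}^{N_i}g_k^i e^k$, with Borel coefficients $g_k^i$ satisfying $\sum_i\int_{U_i}\Phi_i^x(\omega|_{U_i})^p\,\ud\mu<\infty$, where $\Phi_i$ denotes the canonical minimal gradient of $\varphi_i$.

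To define $\iota(\omega)$, I would first approximate each $g_k^i$ by simple functions $g_k^{i,n}$ in $L^p(U_i)$, form the step-approximated section $\omega_n$, and set $v_n:=\sum_i\sum_kg_k^{i,n}\,\ud_G\varphi_{i,k}\in L^p(T^*X)$. Lemma \ref{lem:isometry} then gives $|v_n-v_m|_{G,x}=\Phi^x((\omega_n-\omega_m)(x))$ pointwise $\mu$-a.e., so $(v_n)$ is Cauchy in $L^p(T^*X)$, and the limit $\iota(\omega):=\lim_n v_n$ satisfies $|\iota(\omega)|_{G,x}=|\omega(x)|_x$ pointwise a.e. This yields an $L^\infty$-linear isometry onto its image.

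To verify $\iota(\ud f)=\ud_Gf$ for $f\in N^{1,p}(X)$, Remark \ref{rmk:differential-G} identifies $\ud_Gf|_{U_i}$ with $\sum_k\ud f(e_k)\,\ud_G\varphi_{i,k}$, which is precisely $\iota(\ud f)|_{U_i}$, since the dual-basis components of $\ud f$ are $\ud f(e_k)$. Surjectivity then follows because every generator $\chi_A\,\ud_Gf$ of $\mathcal V$ equals $\iota(\chi_A\,\ud f)$, so the image of $\iota$ contains the dense subspace $\mathcal V$ and is closed by the isometry. Uniqueness is automatic: any candidate $\iota'$ agreeing on Newtonian differentials agrees with $\iota$ on $\mathcal V$ by $L^\infty$-linearity, and density closes the argument.

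The main obstacle I would need to address is the compatibility of the local definition across overlapping charts; this is sidestepped by refining to a disjoint atlas, after which the entire content reduces to the pointwise identities in Lemma \ref{lem:isometry} and Remark \ref{rmk:differential-G}. A secondary technical point is justifying $L^p$-convergence of the possibly infinite sum defining $\iota(\omega)$; this is controlled by the same isometric identity applied to finite truncations $\sum_{i\le K}\sum_k g_k^{i,n}\,\ud_G\varphi_{i,k}$, whose pointwise $|\cdot|_G$-norms equal $\Phi^x$ of the corresponding truncated section and therefore inherit an $L^p$-bound from $\omega\in\Gamma_p(T^*_pX)$.
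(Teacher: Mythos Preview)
Your proposal is correct and relies on the same two ingredients as the paper's proof (Lemma~\ref{lem:isometry} and Remark~\ref{rmk:differential-G}), but the construction of $\iota$ is organized differently. The paper works directly on the dense subspace
\[
\mathcal W=\Big\{\sum_j^M\chi_{A_j}\ud f_j:\ (A_j)_j \text{ Borel partition of }X,\ f_j\in N^{1,p}(X)\Big\}\subset \Gamma_p(T^*_pX),
\]
declares $\iota\big(\sum_j\chi_{A_j}\ud f_j\big):=\sum_j\chi_{A_j}\ud_Gf_j$, and reads off the pointwise norm identity from $|\ud f_j|_x=|Df_j|_p=|\ud_Gf_j|_G$; well-definedness and linearity come from Remark~\ref{rmk:differential-G}, and the isometric extension to all of $\Gamma_p(T^*_pX)$ is then a single density step (noting $\iota(\mathcal W)=\mathcal V$ for surjectivity). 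Your version instead fixes a disjoint atlas, expresses a general section in chart coordinates, and builds $\iota(\omega)$ as an $L^p$-limit of simple combinations of the basis elements $\ud_G\varphi_{i,k}$, invoking Lemma~\ref{lem:isometry} for the Cauchy estimate. Both routes land on the same map; the paper's is shorter because it never needs to pick bases or run an explicit approximation, while yours makes the chartwise structure and the role of Lemma~\ref{lem:isometry} more visible. One small wording fix: in your uniqueness argument you should say the two candidates agree on the dense set $\mathcal W\subset\Gamma_p(T^*_pX)$ (not on $\mathcal V$, which lives in the codomain).
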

Uniqueness here means that if $A:\Gamma_p(T^*_pX)\to L^p(T^*X)$ is $L^\infty$-linear and satisfies \eqref{eq:compatible} then $A=\iota$.
\begin{proof}
The set
\[
\mathcal W=\left\{ \sum_j^M\chi_{A_j}\ud f_j:\ (A_j)_j \textrm{ Borel partition of $X$, }f_j\in \Ne pX \right\}
\]
is dense in $\Gamma_p(T^*_pX)$, since it contains all the simple Borel sections of $T_p^*X$. We set
\begin{align*}
\iota(v):=\sum_j^M\chi_{A_j}\ud_Gf_j,\quad v=\sum_j^M\chi_{A_j}\ud f_j\in\mathcal W,
\end{align*}
We have that
$$
|\iota(v)|_G=\sum_j^M\chi_{A_j}|Df_j|_p=\sum_j^M\chi_{A_j}|\ud f_j|=|v|\quad\mu-a.e.,
$$
for $v\in \mathcal W$. This implies that $\iota$ is well-defined and preserves the pointwise norm on the dense set $\mathcal W$. By Remark \ref{rmk:differential-G} we have that $\iota$ is linear. Since $\iota(\mathcal W)=\mathcal V$, it follows that $\iota$ extends to an isometric isomorphism $\iota:\Gamma_p(T_p^*X)\to L^p(T^*X)$. Note that $\iota(\ud f)=\ud_Gf$ for every $f\in \Ne pX$, establishing \eqref{eq:compatible}.

To prove uniqueness, note that if $A:\Gamma_p(T_p^*X)\to L_p(T^*X)$ is linear and satisfies \eqref{eq:compatible}, then
$A(v)=\iota(v)$ for all $v\in \mathcal W$ which implies that $A=\iota$ by the density of $\mathcal W$.
\end{proof}

\begin{proof}[Proof of Theorem \ref{thm:gigli}]
	If $X$ admits a $p$-weak differentiable structure, Lemma \ref{lem:indep} implies that $L^p(T^*X)$ is locally finitely generated. To prove the converse implication, suppose $\{A_N\}_{N\in\N\cup\{\infty\}}$ is the dimensional decomposition of $X$ and $\mu(A_\infty)=0$.
	
	Let $N\in\N$ be such that $\mu(A_N)\ge \mu(V)>0$ for some Borel set $V$, and $v_1,\ldots,v_N\in L^p(T^*X)$ is a basis of $L^p(T^*X)$ on $V$. By possibly passing to a smaller subset of $V$, we may assume that there exists $C>0$ for which 
	\begin{equation}\label{eq:lowerboundbasis}
	\int_V\left|\sum_k^Ng_kv_k\right|_G^p\ud\mu\ge \frac 1C\int_V|\bm g|^p\ud\mu,\quad \bm g=(g_1,\ldots,g_N)\in L^\infty.
	\end{equation}
	
	For each $k=1,\ldots, N$ there are sequences 
	\begin{align*}
v_k^n=\sum_j^{M_k^n}\chi_{A_{j,k}^n}\ud_Gf_{j,k}^n
	\end{align*}
	with $\{ A_{j,k}^n \}_j$ a Borel partition of $X$ and $(f_j^n)\subset \Ne pX$ such that $v_k^n\to v_k$ in $L^p(T^*X)$ as $n\to\infty$, by the definition of $L^p(T^*X)$.
	
	We set $J^n=\{1,\ldots,M_1^n\}\times\cdots\times\{1,\ldots,M_N^n\}$ and define new partitions $A^n_{\bar \jmath}:=A^n_{j_1,1}\cap\cdots\cap A^n_{j_N,N}$ indexed by $\bar \jmath=(j_1,\ldots,j_N)\in J^n$. Then 
	\begin{align}\label{eq:limn}
	&v_k^n=\sum_{\bar \jmath\in J^n}\chi_{A_{\bar \jmath}^n}\ud_G(f^n_{j_k,k}),\quad \mu(V)=\sum_{\bar\jmath\in J^n}\mu(A_{\bar\jmath}^n\cap V),\textrm{ and }\nonumber\\
	&\lim_{n\to\infty}\int_X|v^n_k-v_k|^p_G\ud\mu=\lim_{n\to\infty}\sum_{\bar\jmath\in J^n}\int_{A^n_{\bar\jmath}}|\ud_G\varphi^{n,\bar\jmath}_k-v_k|_G^p\ud\mu=0
	\end{align}
	for all $n$ and $k=1,\ldots,N$. We claim that there exists $n$ so that $\varphi^{n,\bar\jmath}:=(f^n_{j_1,1},\ldots,f^n_{j_N,N})\in N^{1,p}(X;\R^N)$ is $p$-independent on a positive measure subset of $A^n_{\bar\jmath}\cap V$, for some $\bar\jmath \in J^n$.
	
	By \eqref{eq:lowerboundbasis} we have the inequality
	\begin{align*}
	\frac 1C\int_{A^n_{\bar\jmath}\cap V}|\bm g|^p\ud\mu\le &\int_{A^n_{\bar\jmath}\cap V}\left|\sum_k^Ng_kv_k\right|^p\ud\mu \\
	\le & C'\int_{A^n_{\bar\jmath}\cap V}\left|\sum_k^Ng_k\ud_G\varphi^{n,\bar\jmath}_k\right|_G^p\ud\mu+C'\int_{A^n_{\bar\jmath}\cap V}\left|\sum_k^Ng_k(\ud_G\varphi^{n,\bar\jmath}_k-v_k)\right|_G^p\ud\mu\\
	\le &C'\int_{A^n_{\bar\jmath}\cap V}\Phi_{n,\bar\jmath}(\bm g(x),x)\ud\mu+C''\int_{A^n_{\bar\jmath}\cap V}|\bm g|^p\left(\sum_k^N|\ud_G\varphi^{n,\bar\jmath}_k-v_k|_G^p\right)\ud\mu
	\end{align*}
	for all $\bm g=(g_1,\ldots,g_N)\in L^\infty$, where $\Phi_{n,\bar\jmath}$ is the canonical minimal gradient of $\varphi^{n,\bar\jmath}$ (cf. Lemma \ref{lem:isometry}). By \eqref{eq:limn} there exists $n\in \N$, $\bar\jmath\in J^n$ and a Borel set $U\subset A^n_{\bar\jmath}\cap V$ with $0<\mu(U)\le \mu(A^n_{\bar\jmath}\cap V)$ such that $\sum_k^N|\ud_G\varphi^{n,\bar\jmath}_k-v_k|_G^p<\varepsilon$ on $U$, where $C''\varepsilon<\frac{1}{2C}$. Thus 
	\begin{align*}
	\frac 1C\int_{U}|\bm g|^p\ud\mu\le C'\int_{U}\Phi_{n,\bar\jmath}(\bm g(x),x)\ud\mu+\frac{1}{2C}\int_{U}|\bm g|^p\ud\mu	
	\end{align*}
	for all $\bm g=(g_1,\ldots,g_N)\in L^\infty(U;\R^N)$ by extending $g$ by zero to $V \setminus U$. This readily implies that $I(\varphi^{n,\bar\jmath})>0$ a.e. in $U$, proving the $p$-independence of $\varphi^{n,\bar\jmath}$ in $U$. Note that $\varphi^{n,\bar\jmath}$ is also maximal, since the existence of a Lipschitz map on a positive measure subset of $U$ with a higher dimensional target would imply that the local dimension of $L^p(T^*X)$ in $V$ would be $>N$, cf. Lemma \ref{lem:isometry}. By Proposition \ref{prop:sobtolipchart}, $U$ contains a $N$-dimensional $p$-weak chart, and \cite[Proposition 3.1]{kei04} implies that $X$ admits a differentiable structure.

	The argument above shows that each $A_N$ with $\mu(A_N)>0$ can be covered up to a null-set by $N$-dimensional $p$-weak charts, proving (b), while (a) follows directly from Lemma \ref{lem:compatible}. Finally, (c) is implied by Proposition  \ref{prop:chahausd}.
	\end{proof}

Theorem \ref{thm:gigli} and \cite[Chapter 2]{gig18} immediately yield the following corollary.

\begin{cor}\label{cor:giglicor}
	Let $p\ge 1$ and suppose $X$ admits a $p$-weak differentiable structure.
	\begin{itemize}
		\item[(i)] If $p>1$, then $\Ne pX$ is reflexive.
		\item[(ii)] If $p=2$, then $\Ne 2X$ is infinitesimally Hilbertian if and only if, for $\mu$-a.e. $x\in X$, the pointwise norm $|\cdot|_x$ (cf. Theorem \ref{thm:differentiability}) is induced by an inner product.
	\end{itemize}
\qed
\end{cor}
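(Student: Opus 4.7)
The plan is to invoke the isometric isomorphism $\iota\colon\Gamma_p(T^*_pX)\to L^p(T^*X)$ from Theorem \ref{thm:gigli}(a), which preserves the pointwise norm, and transport Banach-space properties between the two modules. From there, Gigli's machinery in \cite[Chapter 2]{gig18} bridges to $N^{1,p}(X)$ itself: the embedding $\Delta\colon f\mapsto(f,\ud_Gf)$ realizes $N^{1,p}(X)$ as a closed subspace of $L^p(\mu)\oplus L^p(T^*X)$ (with the $\ell^p$-product norm), and $N^{1,2}(X)$ is infinitesimally Hilbertian precisely when the pointwise norm $|\cdot|_G$ on $L^2(T^*X)$ satisfies the parallelogram identity $\mu$-a.e.

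For (i) I would first establish that $\Gamma_p(T^*_pX)$ is reflexive when $p>1$. Fix a countable atlas $\{(U_k,\varphi_k)\}$ of $p$-weak charts of dimensions $N_k$ and subdivide each $U_k$ into the sub-level sets $U_{k,j}=U_k\cap\{1/(j+1)<I(\varphi_k)\le 1/j\}$, on which the fiberwise norm $|\cdot|_x$ is bi-Lipschitz to the Euclidean norm on $(\R^{N_k})^*$. Summing over charts and levels as in \eqref{eq:isomdecomp}, $\Gamma_p(T^*_pX)$ is identified isometrically with an $\ell^p$-direct sum of Bochner spaces $L^p(U_{k,j};((\R^{N_k})^*,|\cdot|_x))$ with finite-dimensional (hence reflexive) fibers. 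Such Bochner spaces are reflexive for $p>1$, and reflexivity is stable under $\ell^p$-direct sums, so $\Gamma_p(T^*_pX)$ is reflexive. Pushing this through $\iota$ makes $L^p(T^*X)$ reflexive, and $\Delta(N^{1,p}(X))$ is then a closed subspace of a reflexive space, giving reflexivity of $N^{1,p}(X)$.

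For (ii) I would argue through the chain of equivalences
\[
N^{1,2}(X)\text{ is infinitesimally Hilbertian}\iff |\cdot|_G\text{ satisfies the parallelogram identity a.e. on }L^2(T^*X)
\]
\[
\iff |\cdot|_x\text{ satisfies the parallelogram identity a.e. on }\Gamma_2(T^*_pX)\iff |\cdot|_x\text{ on }T^*_{p,x}X\text{ comes from an inner product a.e.}
\]
The first equivalence is Gigli's characterization; the second is immediate since $\iota$ is a pointwise-norm-preserving bijection; the third is a fiberwise translation.

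The only genuine obstacle I anticipate is the last equivalence: the module-level parallelogram identity is a statement about \emph{all} pairs of sections, whereas the fiberwise statement is a pointwise condition on $(\R^{N_k})^*$. To reduce one to the other I would test the parallelogram law on constant sections $\chi_{U_{k,j}}\xi_n$ for a countable dense family $\{\xi_n\}\subset(\R^{N_k})^*$, obtain the identity at a.e.\ $x$ for each pair $(\xi_n,\xi_m)$, and extend by continuity of $\xi\mapsto|\xi|_x$ (Lemma \ref{lem:representative}) to all of $(\R^{N_k})^*$. No further difficulty arises, the corollary being in essence a bookkeeping consequence of Theorem \ref{thm:gigli} together with standard $L^p$-module theory.
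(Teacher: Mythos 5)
Your argument is correct, and its skeleton is the same as the paper's: the paper's proof is literally a one-line appeal to Theorem \ref{thm:gigli} together with \cite[Chapter 2]{gig18}, i.e.\ transport everything through the pointwise-norm-preserving isomorphism $\iota:\Gamma_p(T^*_pX)\to L^p(T^*X)$ and then use Gigli's results that $W^{1,p}$ embeds as a closed subspace of $L^p(\mu)\oplus L^p(T^*X)$, that locally finitely generated $L^p$-normed modules are reflexive for $p>1$, and that infinitesimal Hilbertianity is equivalent to the pointwise parallelogram identity on $L^2(T^*X)$. Where you genuinely diverge is in part (i): instead of citing Gigli's abstract reflexivity result you reprove it concretely, decomposing $\Gamma_p(T^*_pX)$ as in \eqref{eq:isomdecomp} into an $\ell^p$-sum of spaces isomorphic to finite-dimensional Bochner spaces; this is precisely the Cheeger-style route the introduction mentions as an alternative (``following the argument in \cite[Section 4]{che99}''), and it buys a self-contained proof at the cost of a little bookkeeping. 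One small point worth making explicit in that step: the uniform upper bound in your bi-Lipschitz comparison of $|\cdot|_x$ with the Euclidean norm on $U_{k,j}$ is not automatic from the sub-level decomposition (which only gives the lower bound via $I(\varphi_k)\ge 1/(j+1)$); it comes from the fact that charts are Lipschitz, so $\Phi^x(\xi)\le \|\xi\|_*\,C(\LIP(\varphi_k))$, after which the isomorphism with the Bochner space and the stability of reflexivity under $\ell^p$-sums go through as you say. Your countable-dense-family argument reducing the module-level parallelogram identity to the fiberwise one, using the continuity of $\xi\mapsto\Phi^x(\xi)$ from Lemma \ref{lem:representative}, is also fine and is the content implicitly delegated to Gigli's notion of Hilbert module in the paper's citation.
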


\subsection{Lipschtiz differentiability spaces}\label{sec:lipdiff}

A space $X$ is said to be a Lipschitz differentiability space if it admits a Cheeger structure. Recall that a Cheeger structure is a countable collection of Cheeger charts $(U_i,\varphi_i)$, see Section \ref{sec:poincare-diff}, so that $\mu(X \setminus \bigcup_i U_i) = 0$. Following \cite[Section 4, p. 458]{che99}, we note that the differentials $\ud_{C,i} f$ of a Lipschitz function $f$ with respect to overlapping charts satisfy a co-cycle condition almost everywhere and the transition maps preserve the pointwise norm.  Thus, they define a measurable  $L^\infty$-bundle $T^*_C X$ called the measurable cotangent bundle.

Suppose now that $X$ admits a Cheeger structure. Denote by $T^*_CX$ the associated measurable cotangent bundle, and by 
\begin{align*}
|\xi|_{C,x}:=\Lip(\xi\circ\varphi)(x),\quad \xi\in (\R^N)^*
\end{align*}
the pointwise norm for $\mu$-a.e. $x\in U$, where $(U,\varphi)$ is an $N$-dimensional Cheeger chart of $X$.

Fix $p\ge 1$. Any Lipschitz differentiability space $X$ admits a $p$-weak differentiable structure. Indeed, the asymptotic doubling property of the measure (cf. \cite{bate2013differentiability}) implies, by \cite[Lemma 8.3]{bate12diff}, that $X$ decomposes into finite-dimensional pieces. The existence of the $p$-weak differentiable structure now follows from Proposition \ref{prop:exists-p-weak-diff-str}, and the associated measurable cotangent bundle is denoted $T^*_pX$. We have the following result from \cite[Theorem 3.4]{tet2020}:
\begin{thm}\label{thm:tet}
Let $p\ge 1$. There exists morphism $P:\Gamma_p(T^*_CX)\to L^p(T^*X)$ of normed modules such that 
	\begin{itemize}
		\item[(a)]	$P(\ud_Cf)=\ud_Gf$ for every $f\in \LIP(X)$;
		\item[(b)] $|P(\omega)|_G\le |\omega|_C$ for every $\omega\in \Gamma_p(T^*_CX)$; and
		\item[(c)] for every $w\in L^p(T^*X)$ there exists $\omega\in P\inv(w)$ with $|w|_G=|\omega|_c$.
	\end{itemize}
\end{thm}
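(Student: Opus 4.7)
The plan is to build $P$ by first defining it on the dense subspace
$$\mathcal W_C := \left\{\sum_{j=1}^M\chi_{A_j}\ud_C f_j : (A_j)\text{ Borel partition of }X,\ f_j\in\LIP(X)\right\}\subset\Gamma_p(T^*_CX)$$
via the natural formula $P\left(\sum_j\chi_{A_j}\ud_Cf_j\right):=\sum_j\chi_{A_j}\ud_Gf_j$, and then extending by continuity. For well-definedness and for (b), I would work locally on a Cheeger chart $(U,\varphi)$ of dimension $N$ and identify any $\omega\in\mathcal W_C$ restricted to $U$ with a Borel section $g\colon U\to(\R^N)^*$ (taking the value $\ud_{C,x}f_j$ on $A_j\cap U$), so that $|\omega|_{C,x}=|g|_{C,x}$ is the Cheeger pointwise norm and, by Lemma \ref{lem:isometry}, $|P(\omega)|_{G,x}=\Phi^x(g)$ where $\Phi$ is the canonical minimal gradient of $\varphi$. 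Since $|D(\xi\circ\varphi)|_p\le\Lip(\xi\circ\varphi)$ $\mu$-a.e.\ for each $\xi$, a countable dense argument over $\xi\in(\R^N)^*$ combined with the continuity of both norms in $\xi$ yields $\Phi^x(\xi)\le|\xi|_{C,x}$ simultaneously for all $\xi$ and $\mu$-a.e.\ $x\in U$. This establishes (b) on $\mathcal W_C$, hence well-definedness (by taking $\omega=0$), and $P$ extends to all of $\Gamma_p(T^*_CX)$ by density and the bound. Property (a) is immediate from the definition.

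The substantive part is (c): given $w\in L^p(T^*X)$, produce a preimage $\omega$ with $|\omega|_{C,x}=|w|_{G,x}$ $\mu$-a.e. Covering $X$ by Cheeger charts $(U,\varphi)$ reduces the problem to a chartwise construction, and then pasting via the cocycle conditions. On such a chart, by Theorem \ref{thm:gigli} one can write $w|_U=g\cdot\ud_G\varphi$ for some Borel $g\colon U\to(\R^N)^*$ with $|w|_{G,x}=\Phi^x(g)$. The naive lift $g\cdot\ud_C\varphi$ has Cheeger norm $|g|_{C,x}\ge\Phi^x(g)$, possibly strictly, so one must exploit the non-injectivity of $g\mapsto g\cdot\ud_G\varphi$: its pointwise kernel is precisely $\ker\Phi^x$, so replacing $g$ by $g+h$ with $h$ a Borel section of $\ker\Phi^x$ does not change $w$. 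Thus (c) reduces to the pointwise assertion
$$\inf_{h\in\ker\Phi^x}|g+h|_{C,x}=\Phi^x(g)\quad\text{for $\mu$-a.e. }x\in U,$$
together with a Borel selection of minimizers.

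I expect the pointwise quotient-norm identification to be the main obstacle: the inequality $\ge$ is trivial from $\Phi^x\le|\cdot|_{C,x}$ and $\Phi^x(h)=0$, but the reverse direction is a Hahn-Banach–type statement about the relationship between Cheeger's norm (which sees every Lipschitz direction) and Gigli's norm (which sees only curvewise-detectable directions). To prove it, I would take a functional $\xi\in T_{C,x}X$ attaining $\Phi^x(g)$ in the dual pairing with $g/\Phi^x(g)$, use the plan-based representation of Theorem \ref{thm:curvewise-p=1} applied to $\xi\circ\varphi$ (which gives a curve in the support of a $q$-plan on which the Cheeger and $p$-weak directional derivatives agree up to arbitrarily small error), and argue that this forces the existence of a representative of minimal Cheeger norm in the equivalence class of $g$. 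Once the pointwise equality is established, the measurable selection of $h$ from the closed-valued Borel multifunction $x\mapsto\{h\in\ker\Phi^x:|g+h|_{C,x}=\Phi^x(g)\}$ follows from the Kuratowski–Ryll-Nardzewski theorem, completing the construction of the required lift $\omega$.
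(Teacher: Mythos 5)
First, on comparison: the paper gives no internal proof of Theorem \ref{thm:tet} — it is imported verbatim from \cite[Theorem 3.4]{tet2020}, and the remark following it only explains how to adapt that proof to $p=1$ (energy density from \cite{seb2020} and an equicontinuity/weak-compactness argument to get the convergent subsequence). So your attempt is judged against what a complete proof needs. Your construction of $P$ and the verification of (a) and (b) are essentially sound: definition on simple sections, the pointwise bound $\Phi^x(\xi)\le|\xi|_{C,x}$ via a countable dense family of $\xi$, and extension by density all work, provided you also prove the chart compatibility $\ud_G f=\sum_k(\ud_{C}f)_k\,\ud_G\varphi_k$ on a Cheeger chart (the Cheeger analogue of Remark \ref{rmk:differential-G}); this needs the standard locality step (approximate $\ud_C f$ by constants $\xi$ and use $|D(f-\xi\circ\varphi)|_p\le\Lip(f-\xi\circ\varphi)$ together with the chart property), which you did not state but is routine.

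The genuine gap is in (c). Reducing it to the pointwise identity $\inf_{h\in\ker\Phi^x}|g+h|_{C,x}=\Phi^x(g)$ plus a measurable selection is a legitimate reformulation, but note that this identity is exactly the a.e.\ submetry statement of Theorem \ref{thm:bundles}, which the paper \emph{deduces from} Theorem \ref{thm:tet}(c); it therefore carries the entire difficulty of the theorem, and your sketch does not prove it. Theorem \ref{thm:curvewise-p=1} represents $\Phi^x(\eta)=|D(\eta\circ\varphi)|_p(x)$ as a supremum of directional derivatives along plan-generic curves, so it can only certify lower bounds for $\Phi^x$ — i.e.\ the trivial inequality $\Phi^x(g)\le\inf_{h\in\ker\Phi^x}|g+h|_{C,x}$, which you already have from $\Phi^x\le|\cdot|_{C,x}$ and $\Phi^x(h)=0$. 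The inequality you actually need is an \emph{upper} bound on a pointwise Lipschitz constant, $|g(x)+h|_{C,x}=\Lip\bigl((g(x)+h)\circ\varphi\bigr)(x)$, which is a limsup over \emph{all} nearby points, not over points lying on curves charged by a $q$-plan; a curve on which Cheeger and $p$-weak directional derivatives nearly agree gives no control of it (in the fat Cantor set example every curvewise derivative vanishes while $\Lip$ does not, so the two quantities are pointwise unrelated). In the extreme case $\ker\Phi^x=\{0\}$ your identity asserts $\Phi^x=|\cdot|_{C,x}$, a ``$\Lip=|Df|_p$'' self-improvement on that piece, and nothing in the sketch produces the kernel perturbation $h$ (or the equality) that collapses the Lipschitz constant to the Sobolev norm. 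This step is where the Lipschitz-differentiability structure must be used in an essential way, via a compactness/duality argument at the level of sections (as in \cite{tet2020}, consistent with the paper's remark about weakly convergent subsequences of differentials), rather than a pointwise Hahn--Banach argument attached to a single generic curve; the final measurable-selection step is fine, but only once that identity is actually established.
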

\begin{remark}
The proof of \cite[Theorem 3.4]{tet2020} can be modified to cover the case $p=1$: the energy density of Lipschitz functions holds for $p=1$ by \cite{seb2020}, and equicontinuity can be used instead of $L^p$-boundedness to obtain the weakly convergent subsequence in the proof.

\end{remark}

\begin{proof}[Proof of Theorem \ref{thm:bundles}]
Arguing as in the proof of Proposition \ref{prop:pq} we may assume that $X$ has a Borel partition $\{U_i\}$ and Lipschitz maps $\varphi_p^i=(\varphi_{p,1}^i,\ldots,\varphi_{p,N_i}^i),\varphi_C^i=(\varphi_{C,1}^i,\ldots,\varphi_{C,M_i}^i)$ such that $(U_i,\varphi_p^i)$ is a $p$-weak chart and $(U_i,\varphi_C^i)$ is a Cheeger chart on $X$ (of possibly different dimensions $N_i$ and $M_i$) for each $i\in \N$. For each $i$ and $\mu$-a.e. $x\in U_i$ define
$$
\sigma_{i,x}=(\ud_{p,x}\varphi_{C,1}^i,\ldots,\ud_{p,x}\varphi_{C,M_i}^i):\R^{N_i}\to \R^{M_i}.
$$
It is easy to see that the collection $\{\pi_{i,x}=\sigma_{i,x}^*\}$ defines a bundle map $T_C^*X\to T_p^*X$ satisfying
$$
\ud_{p,x}f=\ud_{C,x}f\circ \sigma_x\quad \mu-a.e.\ x\in X.
$$
for every $f\in \Ne pX$. This proves Equation \eqref{eq:ch-pweak-compatible}. In particular, for each $i\in \N$ and $\xi\in (\R^{M_i})^*$ we have $\pi_{i,x}(\xi)=\xi\circ\sigma_{i,x}=\ud_{p,x}(\xi\circ\varphi_{C}^i),$ and consequently $$|\pi_{i,x}(\xi)|_x=|D(\xi\circ\varphi_C^i)|_p(x)\le \Lip(\xi\circ\varphi_C^i)(x)=|\xi|_{C,x},$$
for $\mu$-a.e. $x\in U_i$. Moreover, for any $\zeta\in (\R^{N_i})^*$ setting $\xi:=\ud_{C,x}(\zeta\circ\varphi_p^i)$, we have that $$\pi_{i,x}(\xi)=\ud_{C,x}(\zeta\circ\varphi_p^i)\circ\sigma_x=\ud_{p,x}(\zeta\circ\varphi_p^i)=\zeta,$$ proving that $\pi_{i,x}$ is surjective for $\mu$-a.e. $x\in U_i$.

To prove that $\pi_{i,x}$ is a submetry for $\mu$-a.e. $x\in U_i$, suppose to the contrary that there exist a Borel set $B\subset U_i$ with $0<\mu(B)<\infty$ such that $\pi_{i,x}$ is not a submetry for $x\in B$. Then there exists a Borel map $\bm\zeta:B\to (\R^{N_i})^*$ with $|\bm\zeta_x|_x=1$ and
\begin{align}\label{eq:contradiction}
|\bm\zeta_x|_x=1\quad\textrm{ and }\quad \inf_{\xi\in \pi_{i,x}\inv(\bm\zeta_x)}|\xi|_{C,x}>1\quad \textrm{ for $\mu$-a.e. }\ x\in B.
\end{align}

We derive a contradiction using Theorem \ref{thm:tet} and the isometric isomorphism $\iota:\Gamma_p(T_p^*X)\to L^p(T^*X)$ from Theorem \ref{thm:gigli}(a). We may view $\bm\zeta$ as an element of $\Gamma_p(T_p^*X)$ by extending it by zero outside $B$. Set $w:=\iota(\bm\zeta)\in L^p(T^*X)$. Then $|w|_G=\chi_B$. By Theorem \ref{thm:tet}(c) there exists $\omega\in \Gamma_p(T_C^*X)$ with $P(\omega)=w$ and $|\omega|_{C}=|w|_G=\chi_B$ $\mu$-a.e. However, since $\omega_x\in \pi_{i,x}\inv(\bm\zeta_x)$ for $\mu$-a.e. $x\in B$, we have $|\omega|_{C,x}\ge \inf_{\xi\in \pi_{i,x}\inv(\bm\zeta_x)}|\xi|_{C,x}>1$ for $\mu$-a.e. $x\in B$ by \eqref{eq:contradiction}, which is a contradiction. This completes the proof that $\pi_{i,x}$ is a submetry for $\mu$-a.e. $x\in U_i$.


If $\Lip f\le \omega(|Df|_p)$ holds for every $f\in \LIP_b(X)$, then by \cite[Theorem 1.1]{tet2020} we have that $|Df|_p=\Lip f$ $\mu$-a.e. for every $f\in \LIP_b(X)$. It follows that $p$-weak charts are Cheeger charts (cf. Theorem \ref{thm:PI} and Remark \ref{rmk:pweak-to-cheeger}) and that the pointwise norms agree $\mu$-almost everywhere. This implies that the maps $\pi_{i,x}$ are isometric bijections for $\mu$-a.e. $x$.
\end{proof}

\appendix
\section{General measure theory}

\subsection{Measurability questions}

Here we record a host of measurability statements that are needed throughout the paper. See \cite{gigenr,AGS08,bogachev07} for more details. Given $f\in \Ne pX$ and a Borel representative $g$ of $p$-weak upper gradient of $f$, we denote  

\begin{align*}
\Gamma(f):=&\{ \gamma\in AC(I;X): f\circ\gamma\in AC(I;\R) \},\\
\Gamma(f,g):=&\{ \gamma\in AC(I;X): g\textrm{ upper gradient of $f$ along }\gamma \}\subset \Gamma(f)
\end{align*}
and 
\begin{align*}
MD&=\{(\gamma,t) \in AC(I;X)\times I : |\gamma_t'| \textrm{ exists}\},\\
\Diff(f)&=\{(\gamma,t)\in AC(I;X)\times I: \gamma\in \Gamma(f),\ (f\circ \gamma)_t' \textrm{ and }|\gamma_t'|>0 \textrm{ exist}\},\\
\Diff(f,g)&=\{ (\gamma,t)\in \Diff(f): \gamma\in \Gamma(f,g),\ |(f\circ\gamma)_t'|\le g_f(\gamma_t)|\gamma_t'| \}.
\end{align*}

Also, let $\len(\gamma)$ be the length of a curve $\gamma$, if the curve is rectifiable, and otherwise infinity.  The function  ${\rm der}$ is defined by ${\rm der}(\gamma,t) \defeq |\gamma_t'| =  \lim_{h \to 0} \frac{d(\gamma_{t+h},\gamma_t)}{|h|}$, when the limit exists, and otherwise is infinity.

\begin{lemma}\label{lem:measurable-integral} 
	\begin{enumerate}
		\item The functions $\len: C(I;X) \to [0,\infty]$ and ${\rm der}:AC(I;X)\times I\to [0,\infty]$ are Borel measurable.
		\item If $g:X \to [0,\infty]$ is a Borel function, then $I:AC(I;X) \to \R$, given by $\gamma \mapsto \int_\gamma g \, \ud s$ or $\infty$ if the curve is not rectifiable, is Borel.
		\item If $H:AC(I;X)\times I\to [0,\infty]$ is Borel, then $I_H(\gamma):=\int_0^1H(\gamma,s)\ud s:AC(I;X)\to [0,\infty]$ is Borel.
		\item The set $MD$ is Borel, and the map $MD\to\R$ defined by $(\gamma,t)\to|\gamma_t'|$ is Borel.
	\end{enumerate} 
\end{lemma}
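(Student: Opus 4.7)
I would establish the four claims in the order (1), (4), (3), (2), since (2) and (3) depend on joint Borel measurability of the metric derivative, and the length argument is a warm-up for the derivative argument.

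For the length part of (1), I would write
\[
\len(\gamma) = \sup\Bigl\{ \sum_{i=1}^n d(\gamma_{t_{i-1}}, \gamma_{t_i}) : n\in\N,\ 0 = t_0 < \cdots < t_n = 1,\ t_i \in \Q \Bigr\},
\]
where the restriction to rational partitions does not alter the supremum by continuity of $\gamma$. For each fixed rational partition the summand is continuous on $C(I;X)$ in the uniform metric (since the evaluation $\gamma \mapsto \gamma_t$ is continuous for each $t$), so $\len$ is a countable supremum of continuous functions, hence lower semicontinuous and therefore Borel.

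For (4), set $Q_n(t) := \{ h \in \Q : 0 < |h| \leq 1/n,\ t+h \in I \}$ and define
\[
D_n(\gamma,t) := \sup_{h \in Q_n(t)} \frac{d(\gamma_{t+h}, \gamma_t)}{|h|}, \qquad d_n(\gamma,t) := \inf_{h \in Q_n(t)} \frac{d(\gamma_{t+h}, \gamma_t)}{|h|}.
\]
For each admissible $h$, the map $(\gamma,t)\mapsto d(\gamma_{t+h},\gamma_t)/|h|$ is continuous on the open set $\{t+h \in I\}$, so $D_n, d_n$ are Borel as countable sup/inf of continuous functions on Borel sets. Continuity of $\gamma$ ensures that $|\gamma_t'|$ exists (as a two-sided limit) iff $\lim_n D_n(\gamma,t) = \lim_n d_n(\gamma,t) < \infty$, and the common value is $|\gamma_t'|$. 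Since $AC(I;X)$ is Borel in $C(I;X)$, the set
\[
MD = \bigl( AC(I;X) \times I \bigr) \cap \Bigl\{ (\gamma,t) : \lim_n D_n(\gamma,t) = \lim_n d_n(\gamma,t) < \infty \Bigr\}
\]
is Borel, and $(\gamma,t)\mapsto |\gamma_t'|$ is Borel on $MD$. Extending by $+\infty$ off $MD$ yields a globally Borel function, which simultaneously proves the ``der''-part of (1).

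For (3), joint Borel measurability of $H$ on the Polish product $AC(I;X) \times I$ together with Fubini--Tonelli for nonnegative Borel functions gives that $\gamma \mapsto \int_0^1 H(\gamma,s)\,\ud s$ is Borel. For (2), I would write $\int_\gamma g\,\ud s = \int_0^1 g(\gamma_t)\,|\gamma_t'|\,\ud t$ for $\gamma \in AC(I;X)$ and set $H(\gamma,t) := g(\gamma_t)\,|\gamma_t'|$ (using that $|\gamma_t'| = +\infty$ off $MD$, which is a $\ud t$-null set for any fixed $\gamma \in AC(I;X)$). Joint measurability of $H$ follows from continuity of the evaluation map $e(\gamma,t) = \gamma_t$ (so that $g \circ e$ is Borel) combined with (4), and part (3) finishes the proof; the convention assigning $+\infty$ to nonrectifiable curves is vacuous on $AC(I;X)$. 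The main obstacle is (4): establishing joint Borel measurability of the metric derivative requires care in writing it as a genuinely countable limit of jointly Borel expressions, but once this is done the remaining parts follow from Fubini and the standard product-space measure theory.
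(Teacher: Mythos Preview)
Your proof is correct and follows essentially the same approach as the paper: both express $\len$ via lower semicontinuity and obtain the joint Borel measurability of the metric derivative by writing its existence and value through countable sup/inf/limits of jointly continuous difference quotients over rational parameters. The only cosmetic difference is that for (2) and (3) you invoke the measurability part of Tonelli's theorem directly (and reduce (2) to (3) via $\int_\gamma g\,\ud s=\int_0^1 g(\gamma_t)|\gamma_t'|\,\ud t$), whereas the paper spells out the underlying monotone class argument starting from characteristic functions of open (product) sets.
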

\begin{proof} \begin{enumerate}
		
		\item[(1)] The length function is a lower semicontinuous function with respect to uniform convergence, and thus is Borel. Fix $r,p \in \Q$ positive. Then define $A_{p,r}=\cup_{n \in \N} \cap_{q \in \Q \cap (-\frac{1}{n}, \frac{1}{n})} \{(\gamma,t) : |d(\gamma_{t+q},\gamma_t)-q p|<r|q|\}$, and thus Borel. The set $M$ where the metric derivative exists is of the form $\cap_{r \in \Q \cap (0,\infty)} \cup_{p \in \Q \cap (0,\infty)}A_{p,r}$. On this set we have $M \cap A_{p,r} = {\rm der}^{-1}(B(p,r))$ and thus ${\rm der}(\gamma,t)$ is Borel.
		
		\item[(2)]The claims for the integral function being Borel follow from a monotone family argument, and considering $g$ first a characteristic function of an open set and using lower semi-continuity of the integral in that case.
		
		 \item[(3)]If $H$ is a characteristic function of a product set $A \times B$, where $A$ and $B$ are open sets such that $A \subset C(I;X), B \subset I$, then the claim follows just as in Statement (2). Again, by a monotone family argument, we obtain the claim for all Borel measurable functions.
		 \item[(4)]Define for every $q\in \Q$ and $\varepsilon,h>0$ the sets $A(\varepsilon,q,h)$ and $B(\varepsilon,q)$ by
	\begin{align*}
	A(\varepsilon,q,h)&\defeq \left\{ (\gamma,t)\in C(I;X)\times I:\ \left|\frac{d(\gamma_{t+h},\gamma_t)}{|h|}-q\right|<\varepsilon \right\}\\
	B(\varepsilon,q)&\defeq \bigcup_{\delta\in \Q_+}\bigcap_{h\in (0,\delta)\cap\Q}A(\varepsilon,q,h).
	\end{align*}
 We note that $|\gamma_t'|$ exists if and only if $\displaystyle (\gamma,t)\in \bigcap_{j\in\N}\bigcup_{q\in\Q}B(2^{-j},q)=MD$. On the set $MD$, where the limit exists, we can write $|\gamma_t'|=\lim_{n\to\infty}n(d(\gamma_{t+n^{-1}},\gamma_t))$, which shows measurability.
		
		
	\end{enumerate}
\end{proof}

\begin{lemma}\label{lem:nullborel}
	Let $g$ be a Borel $p$-weak upper gradient of $f\in \Ne pX$. There exists a Borel set $\Gamma_0\subset AC(I;X)$ with $\Mod_p(\Gamma_0)=0$ such that $AC\setminus\Gamma_0\subset \Gamma(f,g)$.
	
	Suppose moreover that $f$ is Borel. Then the set $A:=\Gamma_0^c\times I\cap \Diff(f,g)$ is Borel, and $\bm\pi(A^c)=0$ whenever $\bm\pi=\mathcal L^1\times \bm\eta$ and $\bm\eta$ is a $q$-test plan.
	
	If $f$ is Lipschitz, and $g=\Lip[f]$, then we can choose $\Gamma_0=\emptyset$, and $\Diff(f,g)=\Diff(f)$ is Borel.
\end{lemma}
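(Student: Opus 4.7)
The plan is to follow a standard three-step strategy, one step per assertion of the lemma. For the first assertion, the idea is to convert the $p$-weak upper gradient property, which asserts that $\Gamma_{\mathrm{exc}}:=AC(I;X)\setminus\Gamma(f,g)$ has $p$-modulus zero, into a Borel witness. By Vitali--Carath\'eodory, for each $n\in\N$ one obtains a lower semicontinuous admissible function $\rho_n$ for $\Gamma_{\mathrm{exc}}$ with $\|\rho_n\|_{L^p}\le 2^{-n}$. Setting $\rho:=\sum_n \rho_n$, the function $\rho\in L^p(X)$ is Borel and blows up when integrated over any $\gamma\in\Gamma_{\mathrm{exc}}$. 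The set
\[
\Gamma_0:=\left\{\gamma\in AC(I;X):\int_\gamma\rho\,\ud s=+\infty\right\}
\]
is then Borel by Lemma \ref{lem:measurable-integral}(2), contains $\Gamma_{\mathrm{exc}}$, and satisfies $\Mod_p(\Gamma_0)=0$ because $\rho/N$ is admissible for every $N\in\N$.

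Next, I would address the Borelness of $A$ under the assumption that $f$ is Borel. The composition $(\gamma,t)\mapsto f(\gamma_t)$ is Borel by composition with the continuous evaluation map $(\gamma,t)\mapsto\gamma_t$. The set where $(f\circ\gamma)'_t$ exists, and the resulting derivative itself, are Borel by a rational-increment argument paralleling Lemma \ref{lem:measurable-integral}(4). Combined with Borel measurability of $|\gamma'_t|$ and $g(\gamma_t)$, this proves that $\Diff(f,g)\cap (\Gamma_0^c\times I) = A$ is Borel. For the claim $\bm\pi(A^c)=0$, note that $\bm\eta(\Gamma_0)=0$ by Remark \ref{rmk:modnull} (every $q$-test plan is a $q$-plan), so $\bm\pi(\Gamma_0\times I)=0$. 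For $\gamma\in\Gamma_0^c\subset \Gamma(f,g)$, absolute continuity of $f\circ\gamma$ gives $(f\circ\gamma)'_t$ a.e., the metric derivative $|\gamma'_t|$ exists a.e., and Lebesgue differentiation of the curvewise upper gradient inequality yields $|(f\circ\gamma)'_t|\le g(\gamma_t)|\gamma'_t|$ a.e.; a Fubini argument then removes the rest of $A^c$.

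Finally, for the Lipschitz case with $g=\Lip[f]$, a classical computation shows that the pointwise Lipschitz constant is an upper gradient of $f$ along every absolutely continuous curve, so $\Gamma(f,g)=AC(I;X)$ and $\Gamma_0=\emptyset$ is admissible for the first assertion. Since Lipschitz functions are Borel, the Borelness of $\Diff(f)=\Diff(f,g)$ follows exactly as in the previous step.

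The technical core of the argument is the measurability portion of the second step: encoding existence of the derivative $(f\circ\gamma)'_t$ as a Borel condition jointly in $(\gamma,t)$, which requires a careful adaptation of the rational-increment technique from Lemma \ref{lem:measurable-integral}(4) to the composed function $f\circ\gamma$. Once this is in place, the remainder of the proof is a routine application of Fubini combined with standard curvewise differentiation.
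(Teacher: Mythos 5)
Your proposal is correct and follows essentially the same route as the paper: a Fuglede-type Borel function $\rho$ with $\int_\gamma\rho\,\ud s=\infty$ on the exceptional family defines the Borel set $\Gamma_0$, the existence of $(f\circ\gamma)'_t$ is Borel-encoded jointly in $(\gamma,t)$ by rational increments exactly as in Lemma \ref{lem:measurable-integral}(4), and the conclusion $\bm\pi(A^c)=0$ follows from $\bm\eta(\Gamma_0)=0$ together with curvewise a.e.\ differentiation and Fubini, with the Lipschitz case handled by $\Gamma(f,\Lip f)=AC(I;X)$. The only cosmetic difference is that you rebuild the witness $\rho$ via Vitali--Carath\'eodory and a $2^{-n}$ summation, whereas the paper simply invokes the existence of such a Borel $L^p$ function for a null-modulus family.
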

Note that we make no claims about the Borel measurability of the set $\Gamma(f,g)$.
\begin{proof}
We model the argument after \cite[Lemma 1.9]{pasqualetto20b}.
Since $\Mod_p(\Gamma(f,g)^c)=0$ there exists an $L^p$-integrable Borel function $\rho:X\to [0,\infty]$ with $\int_\gamma\rho \ud s=\infty$ for every $\gamma\notin \Gamma_{f,g}$. Then  $\Gamma_0:=\{ \gamma \in AC(I;X): \int_\gamma\rho \ud s=\infty\}\supset \Gamma_{f,g}^c$ is a Borel set, by Lemma \ref{lem:measurable-integral} and $\bm\eta(\Gamma_0)=0$ for every $q$-plan $\bm\eta$ (see Remark \ref{rmk:modnull}). If $f$ is Lipschitz, then $\Gamma(f,g) = AC(I;X)$. Thus, we can choose $\Gamma_0 = \emptyset$.
	
	For the second part assume $f\in \Ne pX$ is Borel,  and set
	\begin{align*}
	A(\varepsilon,q,h)&=\left\{ (\gamma,t)\in \Gamma_0^c\times I:\ \left|\frac{f(\gamma_{t+h})-f(\gamma_t)}{h}-q\right|<\varepsilon \right\}\\
	B(\varepsilon,q)&=\bigcup_{\delta\in \Q_+}\bigcap_{h\in (0,\delta)\cap\Q}A(\varepsilon,q,h)
	\end{align*}
	for each $q\in \Q$ and $\varepsilon,h>0$. It is easy to see that for each $\gamma\notin \Gamma_0$, $(f\circ\gamma)_t'$ exists if and only if $$\displaystyle (\gamma,t)\in \bigcap_{j\in\N}\bigcup_{q\in\Q}B(2^{-j},q)=:A.$$ Note that $A$ is a Borel set with $A\cap MD\subset \Diff(f)$. Moreover, $(\gamma,t)\mapsto (f\circ\gamma)_t'$ is Borel when restricted to $A\cap MD$
	
	Define the Borel function $H(\gamma,t)=(f\circ\gamma)_t'$ if $(\gamma,t)\in A\cap MD$ and $H=+\infty$ otherwise, and $G(\gamma,t)=|H|-g(\gamma_t)|\gamma_t'|$ (here we use the convention $\infty-\infty=\infty$). Then the set
	\[ \{ G\le 0 \}=\Gamma_0^c\times I\cap\Diff(f,g) \]
	is Borel.

	Set $N:=\{ G>0 \}$, suppose $\bm\eta$ is a $q$-test plan and $\bm\pi:=\mathcal L^1\times\bm\eta$. Note that
	\[
	N\subset \Gamma_0\times I\cup\{ (\gamma,t)\in\Gamma_0^c\times I:\ G(\gamma,t)>0 \}.
	\]
	But for all $\gamma\notin\Gamma_0$, we have that $G(\gamma,t)\le 0$ for $\mathcal L^1$-a.e. $t\in I$. Thus 
	\[
	\bm\pi(N)\le \bm\eta(\Gamma_0)+\int_{\Gamma_0^c}\int_0^1\chi_{\{G(\gamma,\cdot)>0\}}(t)\ud t\ud\bm\eta(\gamma)=0,
	\]
	finishing the proof of the second part.
\end{proof}

\begin{cor}\label{cor:borel}
	Every pointwise defined function $f\in \Ne pX$ has a Borel representative $\bar f\in \Ne pX$. Moreover, if $f\in \Ne pX$ and $g$ is a Borel $p$-weak upper gradient of $f$, there exists a Borel set $N\subset C(I;X)\times I$ with $N^c\subset \Diff(f,g)$ and $\bm\pi(N)=0$ whenever $\bm\pi=\mathcal L^1\times \bm\eta$, $\bm\eta$ a $q$-test plan. The map $(\gamma,t)\mapsto (f\circ\gamma)_t'$ if $(\gamma,t)\notin N$ and $+\infty$ otherwise is Borel. If $f$ is Lipschitz the representative can be chosen as the same function.
\end{cor}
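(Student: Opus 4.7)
The plan is to handle the two claims separately, using the energy density of Lipschitz functions to obtain the Borel representative and then leveraging Lemma \ref{lem:nullborel} to produce the set $N$.

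For the first claim, I would invoke the energy density of Lipschitz functions in $\Ne pX$ (classical for $p>1$; for $p=1$ this is \cite{seb2020}) to select $f_n \in \LIP_b(X)$ with $\|f_n-f\|_{N^{1,p}} \to 0$. Convergence in $N^{1,p}$-norm implies convergence in $p$-capacity, so along a subsequence $f_{n_k} \to f$ pointwise outside a $p$-exceptional set $E$. Define $\bar f := \limsup_k f_{n_k}$, which is Borel since each $f_{n_k}$ is continuous. Since $\bar f = f$ outside $E$ and $p$-exceptional sets do not alter Newtonian representatives, $\bar f \in \Ne pX$ with the same minimal upper gradient as $f$. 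If $f$ is itself Lipschitz, it is continuous (hence Borel) and we may take $\bar f = f$.

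For the second claim, I apply Lemma \ref{lem:nullborel} to the Borel pair $(\bar f,g)$; note that $g$ remains a $p$-weak upper gradient of $\bar f$ because $f$ and $\bar f$ differ only on $E$, which is missed by $p$-a.e.\ curve. The lemma supplies a Borel $\Gamma_0 \subset AC(I;X)$ of zero $p$-modulus and a Borel set $A_0 := \Gamma_0^c \times I \cap \Diff(\bar f,g)$ with $\bm\pi(A_0^c) = 0$ for $\bm\pi = \mathcal L^1 \times \bm\eta$ and $\bm\eta$ a $q$-test plan. To transfer the conclusion from $\bar f$ to $f$, I would enlarge $\Gamma_0$ so as to omit also every curve that meets $E$: since $E$ is $p$-exceptional, there is a Borel $\rho \in L^p(X)$ with $\int_\gamma \rho\,ds = \infty$ whenever $\gamma$ hits $E$, and by Lemma \ref{lem:measurable-integral}(2) the set $\Gamma_1 := \{\gamma \in AC(I;X): \int_\gamma \rho\,ds = \infty\}$ is Borel and of zero $p$-modulus. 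Replacing $\Gamma_0$ by $\Gamma_0 \cup \Gamma_1$ and accordingly shrinking $A_0$ to $A$, the set $N := A^c$ is Borel; for $(\gamma,t) \in N^c$ one has $\gamma \notin \Gamma_1$, whence $f \circ \gamma = \bar f \circ \gamma$ on all of $I$, which gives $(\gamma,t) \in \Diff(f,g)$. The Borel measurability of $(\gamma,t) \mapsto (f\circ\gamma)'_t$ on $N^c$ is inherited from the analogous function $H$ constructed in the proof of Lemma \ref{lem:nullborel} (with $\bar f$ in place of $f$), upgraded to the expression for $f$ by the pointwise identity just noted.

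The Lipschitz case is immediate: $f$ is continuous and hence Borel, $f \circ \gamma$ is absolutely continuous along every rectifiable $\gamma$, so one may take $\Gamma_0 = \emptyset$; with $g = \Lip[f]$ the inequality $|(f\circ\gamma)'_t| \leq g(\gamma_t)|\gamma_t'|$ holds at every point where both derivatives exist, and therefore $N^c$ coincides with $\Diff(f,g) = \Diff(f)$, whose Borel measurability follows by combining Lemma \ref{lem:measurable-integral}(4) with the standard rational-difference-quotient construction applied to the Borel function $f$. The main technical nuisance throughout will be keeping careful track of the distinct notions of null, $p$-exceptional, and zero $p$-modulus sets, and using the correct one at each step so that the conclusion holds genuinely for $f$ rather than only for the Borel surrogate $\bar f$.
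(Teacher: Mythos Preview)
Your proposal is correct and follows essentially the same route as the paper's proof: obtain a Borel representative $\bar f$ via Lipschitz energy-density (the paper simply cites \cite{seb2020}), apply Lemma~\ref{lem:nullborel} to $(\bar f,g)$, and then excise the curves that meet the exceptional set $E=\{f\ne\bar f\}$ so that the conclusion transfers back to $f$. Your use of an admissible $\rho\in L^p$ to produce a Borel $\Gamma_1\supset\Gamma_E$ is in fact slightly more careful than the paper, which removes $\Gamma_E\times I$ directly without commenting on its Borel measurability.
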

\begin{proof}
	The first claim follows directly from \cite[Theorem 1.1]{seb2020}. To see the second, let $\bar f\in \Ne pX$ be a Borel representative of $f$. The set $E:=\{f\ne \bar f \}$ is $p$-exceptional, i.e. $\Gamma_E:=\{ \gamma: \gamma\inv(E)\ne \varnothing \}$ has zero $p$-modulus. Note that, if $f$ is Lipschitz, then $f$ is automatically Borel and we do not need to change representatives, and we can set $\Gamma_E = \emptyset$.
	
	If $\bar A$ is the set in Lemma \ref{lem:nullborel} for $\bar f, g$, then $A:=\bar A\setminus (\Gamma_E\times I)\subset \Diff(f,g)$ and $N:=A^c$ satisfies the claim since it is Borel and $N\subset \Gamma_E\times I\cup \bar A^c$.
	
	The  last claim follows since $N^c$ is Borel and, if $(\gamma,t)\notin N$, we have that $$(f\circ\gamma)_t'=\lim_{n\to\infty}n(f(\gamma_{t+1/n})-f(\gamma_t)).$$
\end{proof}

\subsection{Essential supremum} \label{subsec:esssup}

\begin{defn}\label{def:essup} Let $X$ be a $\sigma$-finite measure space and $\mathcal{F}$ a collection of measurable functions on $X$, then there exists a function $g: X \to \R \cup \{\infty,-\infty\}$ which is measurable, and
	\begin{enumerate}
		\item[A] For each $f \in \mathcal{F}$, $$f \leq g$$ almost everywhere.
		\item[B] For each $g'$ that satisfies [A], will satisfy $g \leq g'$ almost everywhere.
	\end{enumerate}
	
	We call $g =\esssup_{f \in \mathcal{F}} \, f$.
	Similarly, we define $g = \essinf_{f \in \mathcal{F}}\,f$, by switching the directions of the inequalities and assuming $g: X \to \R \cup \{\infty,-\infty\}.$
\end{defn}

We will need the following standard lemma. While its proof is standard, we provide it for the sake of  completeness.

\begin{lemma}\label{lem:essup} If $X$ is any $
	\sigma$-finite measure space and $\mathcal{F}$ is any collection of measurable functions, then $\esssup_{f \in \mathcal{F}} f$ and $\essinf_{f \in \mathcal{F}} f$ exists, and further, there are sequences $f_n,g_n \in \mathcal{F}$ so that $\esssup_{f \in \mathcal{F}} f= \sup_n f_n $ and $\essinf_{f \in \mathcal{F}} f= \inf_n g_n $ almost everywhere.
\end{lemma}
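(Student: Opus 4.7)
The plan is to derive the essential supremum case by a classical maximization argument; the essential infimum case then follows by applying the result to $\{-f : f \in \mathcal F\}$.

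First I would reduce to a finite measure space and to uniformly bounded functions. By $\sigma$-finiteness we may write $X = \bigsqcup_n X_n$ with $0 < \mu(X_n) < \infty$ and define an equivalent finite measure $\nu$ by $\tfrac{d\nu}{d\mu} = \sum_n 2^{-n}(1+\mu(X_n))^{-1}\chi_{X_n}$; since $\nu$ and $\mu$ share the same null sets, any a.e. statement for $\nu$ is one for $\mu$. Replacing each $f \in \mathcal F$ by $\varphi \circ f$ where $\varphi:[-\infty,\infty]\to[-1,1]$ is a strictly increasing homeomorphism (e.g.\ $\varphi(t)=t/(1+|t|)$ with $\varphi(\pm\infty)=\pm1$), we may assume that every $f \in \mathcal F$ satisfies $|f|\le 1$, since $\varphi$ commutes with essential supremum thanks to monotonicity.

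Next I would perform the maximization. For each countable subfamily $\mathcal G \subset \mathcal F$, the function $h_{\mathcal G} := \sup_{f \in \mathcal G} f$ is measurable, and $\int h_{\mathcal G}\, d\nu$ is a finite real number bounded by $\nu(X)$. Let
\[
M := \sup\Bigl\{ \int h_{\mathcal G}\, d\nu : \mathcal G \subset \mathcal F \text{ countable}\Bigr\} \in \R.
\]
Choose countable $\mathcal G_k \subset \mathcal F$ with $\int h_{\mathcal G_k}\, d\nu \to M$, and set $\mathcal G := \bigcup_k \mathcal G_k$. Enumerate $\mathcal G = \{f_n\}_{n\in\N}$ and define $g := \sup_n f_n$. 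Then $\mathcal G$ is countable, $g = h_{\mathcal G}$ is measurable, and $\int g \, d\nu \ge \int h_{\mathcal G_k}\, d\nu$ for every $k$, so $\int g\, d\nu = M$.

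The crucial step is to verify that $g$ is the essential supremum. For any $f \in \mathcal F$, the family $\mathcal G \cup \{f\}$ is countable, so $\int \max(g,f)\, d\nu \le M = \int g\, d\nu$. Since $\max(g,f) \ge g$ pointwise, this forces $\max(g,f) = g$ a.e., that is $f \le g$ a.e., giving property A. If $g'$ is any measurable function with $f \le g'$ a.e.\ for every $f \in \mathcal F$, then in particular $f_n \le g'$ a.e.\ for each $n$; intersecting the countably many full-measure sets yields $g = \sup_n f_n \le g'$ a.e., giving property B. Uniqueness a.e.\ of the essential supremum follows from properties A and B applied in both directions, and the sequence $\{f_n\}$ is the desired one with $\esssup_{f\in\mathcal F} f = \sup_n f_n$ a.e. I don't anticipate a real obstacle here; the only subtlety is making sure the countable union of the $\mathcal G_k$ remains countable and that the dominated/monotone convergence step implicit in $\int h_{\mathcal G}\, d\nu = M$ is available, which it is because the integrands are uniformly bounded on a finite measure space.
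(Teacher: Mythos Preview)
Your proof is correct and follows essentially the same classical maximization argument as the paper: reduce to uniformly bounded functions on a finite measure space, then maximize the integral of the supremum over countable (the paper uses finite) subfamilies of $\mathcal F$ to extract the realizing sequence. The only cosmetic differences are that the paper uses $\arctan$ and the lattice of finite maxima $\max(f_1,\dots,f_k)$, while you use an explicit equivalent finite measure and countable suprema directly; neither change affects the substance.
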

\begin{proof}
	By considering $\{\arctan(f) : f \in \mathcal{F}\}$, we can assume that the collection is bounded. Further, by $\sigma$-finiteness, and after exhausting the space by finite measure sets, it suffices to consider a bounded measure. Define $\mathcal{G}$ to be the collection of all functions of the form $\max(f_1, \dots, f_k)$ for some $f_i \in \mathcal{F}$. By construction, if $g,g' \in G$, then $\max(g,g') \in G'$. 
	
	Consider $U = \sup_{g \in \mathcal{G}} \int g \ud\mu$. There is a sequence $g_n$ so that $\lim_{n \to \infty} \int g_n \ud\mu = U$. By modifying the sequence if necessary, we may take it increasing in $n$, and define $g = \lim_{n \to \infty} g_n$. 
	
	We claim that $g$ is an essential supremum for $\mathcal{F}$. First, if $f 
	\in \mathcal{F}$, and $f> g$ on a positive measure set, then $\lim_{n \to \infty }\int \max(f,g_n) \ud \mu>U$, contradicting the definition of $U$. Thus the condition A in the definition is satisfied. 
	
	Now, if $h$ is any other function satisfying A, then $h \geq g_n$, and thus $h \geq g$ almost everywhere, by construction. Thus B is also satisfied. Finally, the construction gives a countable collection $g_n$ formed each from finitely many $f_i \in \mathcal{F}$, and thus gives the final claim in the statement.
\end{proof}

\bibliographystyle{plain}
\bibliography{abib}

\def\cprime{$'$}
\begin{thebibliography}{10}

\bibitem{almar16}
G.~Alberti and A.~Marchese.
\newblock On the differentiability of {L}ipschitz functions with respect to
  measures in the {E}uclidean space.
\newblock {\em Geom. Funct. Anal.}, 26(1):1--66, 2016.

\bibitem{amb15}
L.~Ambrosio, M.~Colombo, and S.~Di~Marino.
\newblock Sobolev spaces in metric measure spaces: reflexivity and lower
  semicontinuity of slope.
\newblock In {\em Variational methods for evolving objects}, volume~67 of {\em
  Adv. Stud. Pure Math.}, pages 1--58. Math. Soc. Japan, [Tokyo], 2015.

\bibitem{amb13}
L.~Ambrosio, S.~Di~Marino, and G.~Savar{\'e}.
\newblock On the duality between p-modulus and probability measures.
\newblock {\em J. Eur. Math. Soc.}, 17:1817--1853, 2015.

\bibitem{AGS08}
L.~Ambrosio, N.~Gigli, and G.~Savar{\'e}.
\newblock {\em Gradient flows in metric spaces and in the space of probability
  measures}.
\newblock Lectures in Mathematics ETH Z\"urich. Birkh\"auser Verlag, Basel,
  second edition, 2008.

\bibitem{ambgigsav}
L.~Ambrosio, N.~Gigli, and G.~Savar\'{e}.
\newblock Density of {L}ipschitz functions and equivalence of weak gradients in
  metric measure spaces.
\newblock {\em Rev. Mat. Iberoam.}, 29(3):969--996, 2013.

\bibitem{AGS14}
Luigi Ambrosio, Nicola Gigli, and Giuseppe Savaré.
\newblock Calculus and heat flow in metric measure spaces and applications to
  spaces with ricci bounds from below.
\newblock {\em Inventiones mathematicae}, 195:289–391, 06 2014.

\bibitem{APS14}
Luigi Ambrosio, Andrea Pinamonti, and Gareth Speight.
\newblock Tensorization of cheeger energies, the space $h^{1,1}$ and the area
  formula for graphs.
\newblock {\em Advances in Mathematics}, 281, 07 2014.

\bibitem{bate12diff}
D.~Bate.
\newblock Structure of measures in {Lipschitz} differentiability spaces.
\newblock {\em Journal of the American Mathematical Society}, 28(2):421--482,
  2015.

\bibitem{bate2013differentiability}
D.~Bate and G.~Speight.
\newblock Differentiability, porosity and doubling in metric measure spaces.
\newblock {\em Proceedings of the American Mathematical Society},
  141(3):971--985, 2013.

\bibitem{bjo11}
A.~Bj{\"o}rn and J.~Bj{\"o}rn.
\newblock {\em {Nonlinear potential theory on metric spaces}}, volume~17 of
  {\em {EMS Tracts in Mathematics}}.
\newblock European Mathematical Society (EMS), Z{\"u}rich, 2011.

\bibitem{bogachev07}
V.~I. Bogachev.
\newblock {\em Measure theory. Vol. 2}.
\newblock Springer Science \& Business Media, 2007.

\bibitem{che99}
J.~Cheeger.
\newblock {Differentiability of {L}ipschitz functions on metric measure
  spaces}.
\newblock {\em Geom. Funct. Anal.}, 9(3):428--517, 1999.

\bibitem{cheegerkleiner}
J.~Cheeger and B.~Kleiner.
\newblock Differentiability of {L}ipschitz maps from metric measure spaces to
  {B}anach spaces with the {R}adon-{N}ikod\'{y}m property.
\newblock {\em Geom. Funct. Anal.}, 19(4):1017--1028, 2009.

\bibitem{CKS}
J.~Cheeger, B.~Kleiner, and A.~Schioppa.
\newblock Infinitesimal structure of differentiability spaces, and metric
  differentiation.
\newblock {\em Anal. Geom. Metr. Spaces}, 4(1):104--159, 2016.

\bibitem{luvcic2020characterisation}
L.~Danka, E.~Pasqualetto, and T.~Rajala.
\newblock Characterisation of upper gradients on the weighted euclidean space
  and applications.
\newblock {\em preprint (arXiv:2007.11904)}, 2020.

\bibitem{davideb20}
G.~C. David and S.~Eriksson-Bique.
\newblock Infinitesimal splitting for spaces with thick curve families and
  euclidean embeddings.
\newblock {\em preprint (arXiv:2006.10668)}, 2020.

\bibitem{de2016structure}
G.~De~Philippis and F.~Rindler.
\newblock On the structure of $\mathcal{A}$-free measures and applications.
\newblock {\em Annals of Mathematics}, pages 1017--1039, 2016.

\bibitem{garethdimarino}
S.~Di~Marino and G.~Speight.
\newblock The {$p$}-weak gradient depends on {$p$}.
\newblock {\em Proc. Amer. Math. Soc.}, 143(12):5239--5252, 2015.

\bibitem{dimarino19}
S.~{Di Marino} and M.~Squassina.
\newblock New characterizations of sobolev metric spaces.
\newblock {\em Journal of Functional Analysis}, 276(6):1853 -- 1874, 2019.

\bibitem{dunsch}
N.~Dunford and J.~T. Schwartz.
\newblock {\em Linear operators. {P}art {I}}.
\newblock Wiley Classics Library. John Wiley \& Sons, Inc., New York, 1988.
\newblock General theory, With the assistance of William G. Bade and Robert G.
  Bartle, Reprint of the 1958 original, A Wiley-Interscience Publication.

\bibitem{shanmunsemmes}
E.~Durand-Cartagena, S.~Eriksson-Bique, R.~Korte, and N.~Shanmugalingam.
\newblock Equivalence of two bv classes of functions in metric spaces, and
  existence of a semmes family of curves under a 1-poincar{\'e} inequality.
\newblock {\em Advances in Calculus of Variations}, 1(ahead-of-print), 2019.

\bibitem{seb2020}
S.~Eriksson-Bique.
\newblock Density of lipschitz functions in energy.
\newblock {\em preprint (arXiv:2012.01892)}, 2020.

\bibitem{exnerova2019plans}
V.~H. Exnerov{\'a}, O.~F.K. Kalenda, J.~Mal{\`y}, and O.~Martio.
\newblock Plans on measures and am-modulus.
\newblock {\em preprint (arXiv:1904.04527)}, 2019.

\bibitem{fuglede1957}
B.~Fuglede.
\newblock Extremal length and functional completion.
\newblock {\em Acta Math.}, 98:171--219, 1957.

\bibitem{gig15}
N.~Gigli.
\newblock On the differential structure of metric measure spaces and
  applications.
\newblock {\em Mem. Amer. Math. Soc.}, 236(1113):vi+91, 2015.

\bibitem{gig18}
N.~Gigli.
\newblock Nonsmooth differential geometry---an approach tailored for spaces
  with {R}icci curvature bounded from below.
\newblock {\em Mem. Amer. Math. Soc.}, 251(1196):v+161, 2018.

\bibitem{gigenr}
N.~Gigli and E.~Pasqualetto.
\newblock {\em Lectures on Nonsmooth Differential Geometry}.
\newblock SISSA Springer Series (Volume 2). 2020.

\bibitem{haj96}
P.~Haj{\l}asz.
\newblock {Sobolev spaces on an arbitrary metric space}.
\newblock {\em Potential Anal.}, 5(4):403--415, 1996.

\bibitem{haj03}
P.~Haj{\l}asz.
\newblock {Sobolev spaces on metric-measure spaces}.
\newblock In {\em {Heat kernels and analysis on manifolds, graphs, and metric
  spaces ({P}aris, 2002)}}, volume 338 of {\em {Contemp. Math.}}, pages
  173--218. Amer. Math. Soc., Providence, RI, 2003.

\bibitem{hei98}
J.~Heinonen and P.~Koskela.
\newblock {Quasiconformal maps in metric spaces with controlled geometry}.
\newblock {\em Acta Math.}, 181(1):1--61, 1998.

\bibitem{HKST07}
J.~Heinonen, P.~Koskela, N.~Shanmugalingam, and J.~Tyson.
\newblock {\em {Sobolev spaces on metric measure spaces: an approach based on
  upper gradients}}.
\newblock {New Mathematical Monographs}. Cambridge University Press, United
  Kingdom, first edition, 2015.

\bibitem{tet2020}
T.~Ikonen, E.~Pasqualetto, and E.~Soultanis.
\newblock Abstract and concrete tangent modules on lipschitz differentiability
  spaces.
\newblock {\em preprint (arXiv:2011.15092)}, 2020.

\bibitem{keith03}
S.~Keith.
\newblock Modulus and the {P}oincar\'e inequality on metric measure spaces.
\newblock {\em Mathematische Zeitschrift}, 245(2):255--292, 2003.

\bibitem{keith04}
S.~Keith.
\newblock A differentiable structure for metric measure spaces.
\newblock {\em Advances in Mathematics}, 183(2):271 -- 315, 2004.

\bibitem{kei04}
S.~Keith.
\newblock {Measurable differentiable structures and the {P}oincar{\'e}
  inequality}.
\newblock {\em Indiana Univ. Math. J.}, 53(4):1127--1150, 2004.

\bibitem{mac13}
J.~M. Mackay, J.~T. Tyson, and K.~Wildrick.
\newblock {Modulus and {P}oincar{\'e} inequalities on non-self-similar
  {S}ierpi{\'n}ski carpets}.
\newblock {\em Geom. Funct. Anal.}, 23(3):985--1034, 2013.

\bibitem{pasqualetto20b}
E.~Paqualetto.
\newblock Testing the sobolev property with a single test plan.
\newblock {\em preprint (arxiv:2006.03628)}, 2020.

\bibitem{rud}
W.~Rudin.
\newblock {\em Function theory in the unit ball of {$\Bbb C^n$}}.
\newblock Classics in Mathematics. Springer-Verlag, Berlin, 2008.
\newblock Reprint of the 1980 edition.

\bibitem{sch16b}
A.~Schioppa.
\newblock Derivations and alberti representations.
\newblock {\em Advances in Mathematics}, 293:436--528, 2016.

\bibitem{sch16}
A.~Schioppa.
\newblock {Metric currents and {A}lberti representations}.
\newblock {\em J. Funct. Anal.}, 271(11):3007--3081, 2016.

\bibitem{sem96}
S.~Semmes.
\newblock {Finding curves on general spaces through quantitative topology, with
  applications to {S}obolev and {P}oincar{\'e} inequalities}.
\newblock {\em Selecta Math. (N.S.)}, 2(2):155--295, 1996.

\bibitem{sha00}
N.~Shanmugalingam.
\newblock {Newtonian spaces: an extension of {S}obolev spaces to metric measure
  spaces}.
\newblock {\em Rev. Mat. Iberoamericana}, 16(2):243--279, 2000.

\end{thebibliography}
\end{document}